\newtheorem{theorem}{Theorem}
\newtheorem{lemma}[theorem]{Lemma}
\newtheorem{proposition}[theorem]{Proposition}
\newtheorem{corollary}[theorem]{Corollary}
\newtheorem{remark}[theorem]{Remark}
\numberwithin{equation}{section}
\newcommand \p {\partial}
\newcommand \lra {\longrightarrow}
\newcommand \wt [1]{\widetilde{#1}}
\newcommand \ov [1]{\overline{#1}}
\newcommand{\la}{\langle}
\newcommand{\ra}{\rangle}
\newcommand{\lp}{\left(}
\newcommand{\rp}{\right)}
\newcommand{\set}[1]{\left\{ #1 \right\} }
\newcommand{\norm}[2][]{\left \| #2 \right \|_{#1} }
\newcommand{\Mtan}{\Xi}
\DeclareMathOperator{\cl}{cl}
\DeclareMathOperator{\dist}{dist}
\DeclareMathOperator{\Vol}{Vol}
\DeclareMathOperator{\dVol}{dVol}
\DeclareMathOperator{\End}{End}
\DeclareMathOperator{\eend}{end}
\DeclareMathOperator{\diag}{diag}
\DeclareMathOperator{\id}{Id}
\DeclareMathOperator{\tf}{tf}
\DeclareMathOperator{\ff}{ff}
\DeclareMathOperator{\nff}{nf}
\DeclareMathOperator{\lf}{lf}
\DeclareMathOperator{\rf}{rf}
\DeclareMathOperator{\sff}{sf}
\DeclareMathOperator{\tb}{tb}
\DeclareMathOperator{\phg}{\mathrm {phg}}
\DeclareMathOperator \Rfacet {R}
\DeclareMathOperator \Cfacet {C}
\DeclareMathOperator \Lfacet {L}
\DeclareMathOperator \Tfacet {tb'}
\DeclareMathOperator{\fff}{ff_1}
\DeclareMathOperator{\Tr}{Tr}
\DeclareMathOperator{\codim}{codim}
\newcommand \fffC {\ff_1^C}
\newcommand \fffL {\ff_1^L}
\newcommand \fffR {\ff_1^R}
\newcommand \fffb {\ff_1^\bullet}
\newcommand{\Mtrip}{M^3_{\operatorname{heat}}}
\newcommand{\Mtripa}{M^3_{\operatorname{heat}, 0}}
\newcommand{\Mtripb}{M^3_{\operatorname{heat}, 1}}
\newcommand\bl{{\mathrm b}}
\newcommand\Diff{\mathrm{Diff}}
\newcommand\Diffb{\mathrm{Diff}_{\bl}}
\newcommand{\Mheat}{M^2_{\operatorname{heat}}}
\newcommand{\Xh}{M^2_{\operatorname{heat}}}
\newcommand{\Mheata}{M^2_{\operatorname{heat}, 1}}
\newcommand{\Mheatb}{M^2_{\operatorname{heat}, 2}}
\newcommand{\mm}{\mathfrak{m}}
\newcommand{\pp}{\mathfrak{p}}
\newcommand\re{\operatorname{Re}}
\newcommand{\XX}{\mathcal{X}}
\newcommand{\YY}{\mathcal{Y}}
\newcommand{\TT}{\mathcal{T}}
\newcommand{\PP}{\mathcal{P}}
\newcommand{\PS}{\Psi}
\newcommand{\NPP}{\overline{\mathcal{P}}}
\newcommand{\NPS}{\overline{\Psi}}
\newcommand{\dpi}{\ov{\pi}}
\newcommand{\km}{\textbf{k}}
\newcommand{\Tb}{{}^bT}
\newcommand{\Tbc}{{}^bT^*}
\newcommand\xlra[1]{\xrightarrow{\phantom{x} #1 \phantom{x}}}
\newcommand\fib{\textrm{fib}}
\newcommand\Sym{\operatorname{Sym}}
\newcommand \Ticec {{}^{ice}T^{*}}
\newcommand {\ice}{\mathrm{ice}}
\newcommand \dimY {b}
\newcommand \Lice {{}^{\textrm{ice}}\Lambda^{*}}
\newcommand \Tice {{}^{ice} T}
\DeclareMathOperator \NN {\mathbf{N}}
\newcommand \Endbd {\mbox{End}(\Lice)}
\newcommand \Harmm {\mathcal{H}}
\newcommand \Nff {N_{\ff}(t(\p_{t} + \Delta^{g}))}
\newcommand \Nfff {N_{\fff}(t(\p_{t} + \Delta^{g}))}
\newcommand \NHfff {N_{\fff}(H)}
\newcommand \NHff {N_{\ff}(H)}
\newcommand \push {\sigma}
\newcommand{\wtf}{\mathfrak{t}}
\title{Spectral and Hodge theory of `Witt' incomplete cusp edge spaces}
\author{Jesse Gell-Redman}
\address{Department of Mathematics, Johns Hopkins University}
\email{jgell@math.jhu.edu}
\author{Jan Swoboda}
\address{Mathematisches Institut der LMU M\"unchen}
\email{swoboda@math.lmu.de}
\thanks{The authors are happy to thank Rafe Mazzeo for helpful
  conversations, and to Richard Melrose for explaining to us his work with
Xuwen  Zhu on the Riemann moduli space.  The authors would also like
to thank the Max Planck Institute for Mathematics in Bonn for its support during the
early stages of this project.}
\begin{document}

\begin{abstract}Incomplete cusp edges model the behavior of the
  Weil-Petersson metric on the compactified Riemann moduli space near the interior
  of a divisor.  Assuming such a space is Witt, we construct a fundamental solution to the heat
  equation, and using a precise description of its asymptotic behavior
  at the singular set, we prove that the Hodge-Laplacian on
  differential forms is essentially self-adjoint, with discrete spectrum satisfying Weyl
  asymptotics.  We go on to prove bounds on the growth of
  $L^2$-harmonic forms at the singular set and to prove a Hodge
  theorem, namely that the space of $L^2$-harmonic forms is naturally
  isomorphic to the middle-perversity intersection cohomology.
  Moreover, we develop an asymptotic expansion for the heat trace near $t = 0$.
\end{abstract}

\maketitle

\section{Introduction}

On a compact manifold $M$ with boundary $\p M$ which is the total
space of a fiber bundle
\begin{equation}\label{eq:boundary-fibration}
Z \hookrightarrow \p M \xlra{\pi} Y,
\end{equation}
with $Z, Y$ closed manifolds, an
\emph{incomplete cusp edge metric} $g_{ice}$ is, roughly speaking, a
smooth Riemannian metric on the interior of $M$ which near the
boundary takes the form
\begin{equation}
  \label{eq:fake-incomplete-cusp-edge}
  g_{ice} = dx^2 + x^{2k} g_Z + \pi^* g_Y + \wt{g}, \qquad k > 1,
\end{equation}
where $g_Y$ is a metric on the base $Y$, $g_Z$ is positive definite
restricted to the fibers, $x$ is the distance to the boundary (to
first order), and $\wt{g}$ is a higher order term.    Thus near the boundary $(M, g_{ice})$ is a
bundle of geometric horns over a smooth Riemannian manifold $Y$.  When $k = 3$, such metrics model
the singular behavior of the Weil-Petersson metric on the moduli space
of Riemann surfaces, as we discuss below.

In this paper, we study the Hodge-Laplacian 
\begin{equation}
  \label{eq:Hodge-Laplacian}
  \Delta := \Delta^{g_{ice}} =  d \delta + \delta d
\end{equation}
acting on differential forms.  Our first result shows that under
conditions which contain the main examples of interest, one need not
impose `ideal boundary  conditions' at $\p M$ in order to obtain a
self-adjoint operator.
\begin{theorem}\label{thm:essentiallyselfadjoint}
  Let $(M, g_{ice})$ be an incomplete cusp edge manifold that is `Witt',
  meaning that either $\dim Z = f$ is odd or
  \begin{equation}
    \label{eq:witt}
    H^{f/2}(Z) = \{0\}.
  \end{equation}
  Assume furthermore that $g_{ice}$ satisfies \eqref{eq:cuspedgemetric}
  below and that the parameter $k$ in \eqref{eq:fake-incomplete-cusp-edge} satisfies
  \begin{equation}
    \label{eq:k_3}
    k \ge 3.
  \end{equation}
Then the Hodge-Laplacian $\Delta^{g_{ice}}$ acting on differential forms is essentially   
  self-adjoint and has discrete spectrum.
\end{theorem}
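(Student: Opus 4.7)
The plan is to deduce both essential self-adjointness and discreteness of the spectrum from a careful microlocal construction of the heat kernel on a resolved double space, using the Witt hypothesis together with $k \ge 3$ to rule out borderline behavior at the singular stratum.

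First I would build the heat kernel as a polyhomogeneous distribution on a parabolic blow-up $\Mheat$ of $M \times M \times [0, \infty)_t$, obtained by successively blowing up the parabolic fiber diagonal at $t = 0$ together with the singular strata arising from $\partial M \times \partial M \times \{0\}$ and their iterated lifts. On $\Mheat$ the heat operator $\heat$ is uniformly of a tractable type whose leading order behavior at each new boundary face is governed by a model normal operator. At the face encoding the edge geometry, this normal operator is a heat equation on the product of a model cone $\lp [0, \infty)_x \times Z, dx^2 + x^{2k} g_Z \rp$ with the base $Y$; by separation of variables it reduces to a family of ODEs on $[0, \infty)_x$ whose indicial roots are determined by the spectrum of the form-Laplacian on $Z$.

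The crucial algebraic input is the Witt condition combined with $k \ge 3$. Middle-degree harmonic forms on $Z$ would produce indicial roots straddling the critical $L^2$-weight, obstructing invertibility of the normal operator in the weighted $L^2$ spaces required for the parametrix construction. The vanishing $H^{f/2}(Z) = \{0\}$, or the odd-dimensional case, removes these modes, opening a spectral gap between indicial roots on either side of the $L^2$-line; the hypothesis $k \ge 3$ ensures the higher order error $\wt{g}$ in \eqref{eq:fake-incomplete-cusp-edge} can be absorbed perturbatively into the normal problem. The normal operator is then invertible on suitable weighted spaces, and one can iterate to produce a true fundamental solution with polyhomogeneous expansions at every boundary face of $\Mheat$.

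Once the heat kernel is in hand, discreteness of the spectrum is essentially immediate: the expansion of its Schwartz kernel as $t \to 0^+$ together with uniform off-diagonal decay shows that $e^{-t\Delta}$ is trace class for each $t > 0$, so $\Delta$ has compact resolvent. For essential self-adjointness I would argue by approximation: for $u$ in the maximal domain of $\Delta$, the regularization $e^{-\epsilon \Delta} u$ is smooth on the interior, and the indicial gap above forces $u$ itself to admit a polyhomogeneous expansion at $\partial M$ with no exceptional terms straddling the $L^2$-threshold. Multiplying by cutoffs supported away from $\partial M$ and letting these cutoffs approach $1$ as $\epsilon \to 0$ produces an approximating sequence in $C^\infty_c(M^\circ)$ converging to $u$ in the graph norm of $\Delta$, so the minimal and maximal extensions coincide.

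The main obstacle will be the normal operator analysis at the edge face. Inverting this model operator on weighted Sobolev spaces with enough precision to extract a polyhomogeneous expansion of the inverse — not merely its existence — and then propagating this information through all the deeper boundary faces via composition theorems for polyhomogeneous distributions is the technical heart of the construction. The Witt hypothesis and the quantitative condition $k \ge 3$ are invoked throughout to supply the requisite indicial gaps and decay rates at each step of the iteration.
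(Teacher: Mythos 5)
Your overall architecture matches the paper's: construct a polyhomogeneous heat kernel $H_t$ on an iterated parabolic blow-up $\Mheat$ of $M\times M\times[0,\infty)_t$, use its polyhomogeneity to control the maximal domain, and obtain discreteness from compactness of $H_t$ on $L^2$. However, two points require correction.

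First, you misattribute the role of $k\ge 3$. The higher-order error $\wt g = O(x^k)$ in \eqref{eq:fake-incomplete-cusp-edge} is perturbatively negligible at the model faces for any $k>1$, so that is not where $k\ge3$ enters. Rather, $k\ge3$ lower-bounds the $L^2$-admissible indicial roots themselves: for the fiber-harmonic degree $\NN$ nearest to $f/2$ with $f$ odd, the admissible indicial root of $P_{\alpha(\NN),\beta(\NN)}$ is $-kf/2 + k/2$, and the chain of estimates closes only when $k/2\ge 3/2$. The threshold $-kf/2 + 3/2$ is exactly the critical exponent below which the logarithmic cutoff argument underlying $\mathcal D_{\min}\cap\mathcal A_{\phg} = \mathcal D_{\max}\cap\mathcal A_{\phg}$ fails. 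So $k\ge3$ is a quantitative condition on the indicial gap, working in tandem with (not orthogonal to) the Witt hypothesis, rather than an error-absorption condition.

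Second, and more seriously, your final approximation step has a real gap. You assert that the indicial gap "forces $u$ itself to admit a polyhomogeneous expansion" for $u\in\mathcal D_{\max}$. This is false in general: an arbitrary $u$ with $u,\Delta u\in L^2$ need not be polyhomogeneous. The paper avoids this by regularizing first: $H_t u$ \emph{is} polyhomogeneous because the kernel of $H_t$ is, and it is at this regularized object that one applies the cutoff argument to place $H_t u\in\mathcal D_{\min}$. But even then one must show $H_t u\to u$ in the \emph{graph} norm, i.e.\ that $\Delta H_t u\to\Delta u$ in $L^2$, and this is not automatic for a fundamental solution constructed by hand rather than from the spectral theorem. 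It hinges on the commutation identity $\Delta H_t u = H_t\Delta u$ for $u\in\mathcal D_{\max}$, which in the paper is extracted from $\Delta H_t = -\partial_t H_t$ together with self-adjointness of both $H_t$ and $\partial_t H_t$ on $L^2$ and the duality $(\mathcal D_{\min})^* = \mathcal D_{\max}$. Without this step your cutoff construction does not close, and the deduction of $\mathcal D_{\min}=\mathcal D_{\max}$ does not follow.
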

Thus, by the spectral theorem \cite{taylor:vol2}, there exists an orthonormal basis of $L^2(\Omega^p(M))$ of
eigenforms $\Delta^{g_{ice}} \alpha_{j,p} = \lambda^2_{j,p} \alpha_{j,p}$.  We also prove that
the distribution of eigenvalues satisfies ``Weyl asymptotics,''
concretely, for fixed degree $p$
\begin{equation}
  \label{eq:weyl-asymptotics}
  \# \{ j \mid \lambda^2_{j, p} < \lambda^2 \} = c_n \Vol(M, g_{ice}) \lambda^n +
  o(\lambda^n) \mbox{ as } \lambda \to \infty.
\end{equation}
See \textsection \ref{sec:spectral-theory} for the proofs of Theorem \ref{thm:essentiallyselfadjoint} and of the
asymptotic formula in \eqref{eq:weyl-asymptotics}.

Having established these fundamental properties of the Hodge-Laplacian
on such spaces, we turn to the next natural topic: Hodge Theory.
Here the object of study is ``Hodge cohomology'', or the space of $L^2$ harmonic forms, 
\begin{equation}
  \label{eq:hodge-cohomology}
  \mathcal{H}^p_{L^2}(M, g_{ice}) = \set{ \alpha \in L^2(\Omega^p(M), g_{ice}) \mid
    d\alpha = 0 = \delta \alpha},
\end{equation}
and one phrasing of the Hodge theory problem is to find a
parametrization for $\mathcal{H}^*_{L^2}(M, g_{ice})$ in terms of a
topological invariant.   As
described in \cite{HHM2004}, in analogous settings the
relevant topological space for Hodge theoretic statements is not the manifold $M$, but the
stratified space $X$ obtained by collapsing the fibration at the
boundary over the base,
\begin{equation}
  \label{eq:collapsed-manifold}
  X := M / \{ p \sim q \mid p, q \in \p M \mbox{ and } \pi(p) = \pi(q)\}.
\end{equation}
In \textsection\ref{sec:hodge-theory} we will prove the following. 
\begin{theorem}\label{thm:hodge}
For a cusp edge space $(M, g_{ice})$ whose link $Z$ satisfies the Witt
condition \eqref{eq:witt}, there is a natural isomorphism
  \begin{equation}
    \label{eq:Hodge-isomorphism-general}
    \mathcal{H}^*_{L^2}(M, g_{ice}) \simeq I\!H_{\ov{\mm}}(X),
  \end{equation}
where $I\!H_{\ov{\mm}}$ is the middle perversity intersection
cohomology of $X$.
 Furthermore,
differential forms $\gamma \in \mathcal{H}^*_{L^2}(M, g_{ice})$ admit
asymptotic expansions at the boundary of $M.$ 

Moreover, if $Z \simeq \mathbb{S}^{f}$, the sphere of
  dimension $f$, then $X$ is homeomorphic to a differentiable manifold and the
  isomorphism \eqref{eq:Hodge-isomorphism-general} becomes
  \begin{equation}
    \label{eq:Hodge-isomorphism}
    \mathcal{H}^*_{L^2}(M, g_{ice}) \simeq H^*_{dR}(X),
  \end{equation}
where the latter is the de Rham cohomology of $X$.
\end{theorem}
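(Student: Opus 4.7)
The plan is to combine the detailed structure of the heat kernel (constructed earlier in the paper) with the essential self-adjointness of $\Delta^{g_{ice}}$ from Theorem~\ref{thm:essentiallyselfadjoint}. Since $\Delta^{g_{ice}}$ is essentially self-adjoint with discrete spectrum, the space $\mathcal{H}^*_{L^2}$ is finite-dimensional and the $L^2$ Hodge decomposition
\begin{equation*}
L^2(\Omega^*(M), g_{ice}) = \mathcal{H}^*_{L^2}(M, g_{ice}) \oplus \mathrm{Im}(d) \oplus \mathrm{Im}(\delta)
\end{equation*}
holds as an orthogonal decomposition into closed subspaces. This reduces the Hodge theorem to identifying the kernel with the topologically defined group $I\!H_{\ov{\mm}}(X)$.

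The first main step is to establish polyhomogeneous asymptotic expansions for harmonic forms at $\p M$. I would use the explicit structure of the heat kernel as a polyhomogeneous conormal distribution on $\Mheat$ (or equivalently the structure of the resolvent $(\Delta + 1)^{-1}$) to bootstrap $L^2$ regularity into weighted Sobolev regularity. The indicial roots of $\Delta^{g_{ice}}$ at $\p M$ are computable by decomposing fiberwise via the Hodge decomposition on $Z$: fiber-harmonic components contribute an Euler-type ODE in $x$ whose roots depend explicitly on $k$, the form degree, and $\dim Y$ and $\dim Z$, while fiberwise non-harmonic modes contribute exponentially smaller terms (as is typical for cusp metrics with $k \ge 3$). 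The Witt hypothesis~\eqref{eq:witt} is used precisely here: it guarantees that no indicial root lies on the critical $L^2$ threshold, so iterative inversion of the normal operator produces a full polyhomogeneous expansion for each $\gamma \in \mathcal{H}^*_{L^2}$ without logarithmic obstructions at the boundary.

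Next, to construct the natural isomorphism $\mathcal{H}^*_{L^2} \simeq I\!H_{\ov{\mm}}(X)$, I would follow the sheaf-theoretic approach of Cheeger as refined in \cite{HHM2004}. Using the polyhomogeneous expansions from the previous step, one verifies that $L^2$ harmonic forms, together with their images under $d$, have leading orders precisely matching the middle-perversity growth conditions on $X$. Locally near a point of the singular stratum in $X$, the model neighborhood is $U \times C_k(Z)$ with $U \subset Y$ open and $C_k(Z)$ carrying $dx^2 + x^{2k} g_Z$; on this model, one computes the local $L^2$ cohomology directly from the admissible indicial roots and matches it against the local intersection cohomology determined by $\ov{\mm}$. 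A Mayer--Vietoris/sheaf argument over $X$ then globalizes the local agreement, and naturality of the isomorphism follows from the geometric definition of the comparison map.

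Finally, for the sphere case $Z \simeq \mathbb{S}^f$, each point of the singular stratum in $X$ has a neighborhood obtained by collapsing the boundary sphere of the fiber disk bundle, hence modeled topologically on $\mathbb{R}^{\dim Y} \times \mathbb{R}^{f+1}$; thus $X$ inherits the structure of a closed differentiable manifold. Since middle-perversity intersection cohomology of a manifold coincides with its de Rham cohomology, \eqref{eq:Hodge-isomorphism-general} specializes to \eqref{eq:Hodge-isomorphism}. The principal obstacle in this program is the first step: establishing polyhomogeneity of $L^2$ harmonic forms requires tightly coupling the heat-kernel construction to the indicial analysis, and the Witt hypothesis enters essentially in excluding the boundary indicial root that would otherwise destroy both the expansion and the identification with intersection cohomology.
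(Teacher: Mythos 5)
Your overall plan is sound, and the pieces you invoke---essential self-adjointness, polyhomogeneity of $L^2$ harmonic forms, local computation on canonical neighborhoods, globalization via a sheaf argument, and the Kodaira/Hodge decomposition---are all genuinely present in the paper's argument. But the way you wire them together differs from the paper in one important structural respect, and you are somewhat imprecise about where two of the hypotheses (polyhomogeneity and the Witt condition) actually enter.

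The paper's comparison $\mathcal{H}^*_{L^2}(M,g)\simeq I\!H_{\ov{\mm}}(X)$ does \emph{not} proceed by showing that harmonic forms satisfy middle-perversity growth conditions and then matching them to intersection chains. Instead it works entirely at the level of $L^2$-cohomology: following \cite{HR2012}, it suffices to check that on each canonical neighborhood $U=V\times C_1(Z)$ the local de Rham complex of $L^2$-forms has $L^2H^p(U,g)\simeq I\!H^p_{\ov{\mm}}(U)$. That local computation is done with Zucker's K\"unneth formula \cite[Cor.~2.34]{Z1982}, reducing everything to one-dimensional weighted $L^2$-cohomology $W\!H^i((0,1),dx^2,a)$ for $a=k(p-i-f/2)$; the Witt condition enters exactly in deciding which of these groups vanish and aligning the answer with the local intersection cohomology \eqref{eq:local-intersection-cohomology}. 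This local-to-global step does not use a single fact about harmonic forms or indicial roots of $\Delta^g$. The isomorphism with $\mathcal{H}^*_{L^2}$ is then an afterthought: finite-dimensionality of $L^2H^p$ forces closed range of $d$, so the Kodaira decomposition identifies $L^2H^p\simeq\mathcal{H}^p_{L^2}$.

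Polyhomogeneity of harmonic forms \emph{is} needed, but for a different purpose than you state. It is used to show that $\mathcal{H}^*_{L^2}(M,g)=\ker(\Delta^g\colon L^2\to L^2)$: for $\gamma\in\ker\Delta^g$, the identity $\gamma=H_t\gamma$ for all $t>0$, together with the polyhomogeneous structure of $H_t$ at $\lf$ (Theorem~\ref{thm:heatkernel} and \eqref{eq:image-is-phg}), shows that $\gamma$ is polyhomogeneous with a tame expansion, and this in turn (via Lemma~\ref{lem:AmaxequalsAmin}) legitimizes the integration by parts $0=\langle\Delta^g\gamma,\gamma\rangle_{L^2}=\|d\gamma\|^2_{L^2}+\|\delta\gamma\|^2_{L^2}$. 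It also supplies the asymptotic-expansion claim in the theorem statement. So your ``first main step'' is real and needed, but your second paragraph tries to make the sheaf comparison ride on it, which is both unnecessary and delicate: harmonic forms are global and do not sheafify, whereas the $L^2$ de Rham complex does. Your plan can probably be made to work as a Cheeger-style direct comparison, but it is more work than what the paper does, and as written ``computes the local $L^2$ cohomology directly from the admissible indicial roots'' elides the fact that the relevant indicial roots are those of the weighted one-dimensional $d$-complex in the K\"unneth decomposition, not the Laplacian's indicial roots.

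The sphere case and the reduction of $I\!H_{\ov{\mm}}$ to de Rham cohomology for a manifold are handled correctly.
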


We recall the relevant facts about intersection cohomology, originally
defined by Goresky and MacPherson in \cite{GM1980, GM1983}, in Section
\ref{sec:hodge-theory} below.  The equivalence in
\eqref{eq:Hodge-isomorphism-general} will follow using the arguments
from Hunsicker and Rochon's recent work \cite{HR2012} on iterated
fibered cusp edge metrics (which are \emph{complete}, non-compact
Riemannian manifolds).  To elaborate on the asymptotic expansion for
$L^2$-harmonic forms $\gamma$, we will show in Lemma
\ref{thm:phg-kernel} below that in fact 
$$
\mathcal{H}^*_{L^2}(M, g_{ice}) = \set{  \alpha \in L^2(\Omega^p(M), g_{ice}) \mid
    \Delta^{g_{ice}} \alpha = 0 },
$$
(that the former is included into
the latter is obvious), and we show that elements in the $L^2$ kernel of
$\Delta^{g_{ice}}$ have
expansions at $\p M$ analogous to Taylor expansions but with
non-integer powers, a statement which can be be interpreted as a sort
of elliptic regularity at the boundary of $M.$

One application of these results, and to putative further work we
describe below, is to the analysis on the Riemann moduli spaces $\mathcal M_{\gamma,\ell}$ of
Riemann surfaces of genus $\gamma \geq 0$ with $\ell\geq0$ marked
points. These spaces carry a natural $L^2$ metric, the
Weil-Petersson metric $g_{WP}$, which near the interior of a divisor is an
incomplete cusp edge metric with $k=3$. In general divisors
may intersect with normal crossings, but in at least two cases only
one divisor is present.  
\begin{theorem}\label{thm:moduli-space}
  Let $\mathcal{M}_{1, 1}$ (also known as the moduli space of elliptic
  curves) and $\mathcal{M}_{0, 4}$ be the spaces of, respectively, once
  punctured Riemann surfaces of genus $1$ and $4$ times punctured
  Riemann surfaces of genus zero, modulo conformal diffeomorphism.
  Then the Hodge-Laplacian $\Delta^{g_{WP}}$ on differential forms is essentially self-adjoint on
  $L^2$ with core domain $C^{\infty}_{c, orb}$ (see Section
  \ref{thm:heatkernel}) with discrete spectrum and Weyl asymptotics, and if
  $\ov{\mathcal{M}}_{1,1}$ and $\ov{\mathcal{M}}_{0,4}$
  denote the Deligne-Mumford compactifications (see e.g.\
  \cite{Harris-Morrison, Wolpert-WP-geometry}).  Then the de Rham cohomology
  spaces $H_{dR}(\ov{\mathcal{M}}_{1,1})$ are naturally isomorphic to
  $\mathcal{H}^*_{L^2}(\ov{\mathcal{M}}_{1,1}, g_{WP})$, and the same holds for
  $\ov{\mathcal{M}}_{0, 4}$.
\end{theorem}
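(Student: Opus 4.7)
The plan is to realize this theorem as a direct application of Theorems \ref{thm:essentiallyselfadjoint} and \ref{thm:hodge} to the two specific moduli spaces, the main work being to verify the geometric and topological hypotheses and to address the orbifold issue.

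First I would recall from Wolpert's analysis of the Weil-Petersson metric near a nodal curve (as in \cite{Wolpert-WP-geometry}) that, in suitable local coordinates $(x, \theta)$ near the boundary divisor of $\overline{\mathcal{M}}_{1,1}$ or $\overline{\mathcal{M}}_{0,4}$, one has an expansion of the form $g_{WP} = dx^2 + x^6\, d\theta^2 + \wt{g}$ with $\wt{g}$ a higher order term of the type required for condition \eqref{eq:cuspedgemetric}. Thus $g_{WP}$ is an incomplete cusp edge metric with link $Z \simeq \mathbb{S}^1$ fibered over a point, and with the horn exponent $k = 3$, which in particular satisfies \eqref{eq:k_3}. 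Since $\dim Z = 1$ is odd, the Witt hypothesis is automatic; since moreover $Z \simeq \mathbb{S}^f$ with $f = 1$, the quotient space $X$ defined in \eqref{eq:collapsed-manifold} is a differentiable manifold, and in both cases one checks it is homeomorphic to the Deligne--Mumford compactification: for $\overline{\mathcal{M}}_{0,4} \cong \mathbb{P}^1$ this is immediate (three cusp points, each with circle link collapsed to a point), and for $\overline{\mathcal{M}}_{1,1}$ it is the orbifold $\mathbb{P}^1$ with the usual marked points.

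Next, for $\overline{\mathcal{M}}_{0,4}$, which carries no orbifold points, Theorems \ref{thm:essentiallyselfadjoint} and \ref{thm:hodge} apply directly, giving essential self-adjointness of $\Delta^{g_{WP}}$ on $C^\infty_c$, discrete spectrum with Weyl asymptotics \eqref{eq:weyl-asymptotics}, and a natural isomorphism $\mathcal{H}^*_{L^2}(\overline{\mathcal{M}}_{0,4}, g_{WP}) \simeq H^*_{dR}(\overline{\mathcal{M}}_{0,4})$ via \eqref{eq:Hodge-isomorphism}. For $\overline{\mathcal{M}}_{1,1}$, the strategy is to lift the analysis to a finite manifold cover of each orbifold chart: the orbifold points are quotients of smooth disks by finite cyclic group actions that preserve $g_{WP}$, so $C^\infty_{c, orb}$ corresponds to equivariant compactly supported smooth functions on the cover. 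Working equivariantly, the heat kernel construction and mapping properties from the main body of the paper descend to the orbifold quotient, and Theorem \ref{thm:essentiallyselfadjoint} applied on the cover gives essential self-adjointness on the invariant core $C^\infty_{c, orb}$, together with discrete spectrum and Weyl asymptotics after dividing by the order of the isotropy. The Hodge isomorphism likewise descends, identifying $\mathcal{H}^*_{L^2}(\overline{\mathcal{M}}_{1,1}, g_{WP})$ with the de Rham cohomology of the underlying orbifold.

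The main technical obstacle is the orbifold handling for $\overline{\mathcal{M}}_{1,1}$: one must verify that each step in the construction of the heat kernel in \textsection\ref{sec:spectral-theory}, in particular the polyhomogeneous model at the singular set and the parametrix construction, respects the finite group symmetry so that the resulting operator truly descends to the orbifold and that the self-adjoint extension obtained from $C^\infty_{c, orb}$ coincides with the invariant part of the self-adjoint extension on the cover. Once that equivariance is in place, every conclusion of Theorems \ref{thm:essentiallyselfadjoint} and \ref{thm:hodge} is inherited, and the identification of $X$ with the Deligne--Mumford compactifications completes the proof.
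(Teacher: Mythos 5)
Your proposal is correct and follows essentially the same route the paper itself sketches: verify that $g_{WP}$ is an incomplete cusp edge metric with $k=3$, $Z\simeq\mathbb{S}^1$ (hence Witt, and $X$ a manifold), apply Theorems \ref{thm:essentiallyselfadjoint} and \ref{thm:hodge}, and resolve the orbifold points of $\mathcal{M}_{1,1}$ by lifting to a finite cover and averaging the heat kernel over the deck group, as in \cite{JMMV2014}. The only cosmetic difference is that the paper cites \cite{MZ2015} (rather than \cite{Wolpert-WP-geometry}) for the polyhomogeneous metric asymptotics near the divisor.
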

We discuss the proof at the end of Section \ref{sec:hodge-theory},
though this is really a direct application
of our results together with the recent work on the structure of the  
 Weil-Petersson metric near a divisor in 
 \cite{MS2015} and \cite{MZ2015}.

 This article is partly motivated by Ji, Mazzeo, M\"uller, and Vasy's work \cite{JMMV2014} on the
 spectral theory of the (scalar) Laplace-Beltrami operator on  the
 Riemann moduli spaces $\mathcal M_{g}$, for which it
 was shown by methods different from ours that it is essentially
 self-adjoint and its eigenvalues satisfy a Weyl asymptotic formula.
 Here they analyze incomplete cusp edge spaces with normal crossings,
 and find in particular that the value $k = 3$ in \eqref{eq:k_3} is
 critical; indeed \emph{for values $k < 3$ one does not expect
   self-adjointness}.  It would be
 interesting (though more complex) to find a parametrization of the space of closed
 extensions of incomplete cusp edge Laplacians with $k < 3$, which is
 expected to be infinite dimensional, e.g.\ by \cite{ALMPII}.  

In contrast with \cite{JMMV2014}, since our eventual goal is Hodge and
index theory on moduli space, our main technical contribution is the construction and detailed
description of the heat kernel $H = \exp(- t \Delta^{g_{\ice}})$.
Indeed, our approach to establishing Theorem
\ref{thm:essentiallyselfadjoint} (which justifies the use of the word
`the' in the previous sentence) and Theorem \ref{thm:hodge}, is
to develop in Theorem \ref{thm:heatkernel} below a precise
understanding of the behavior of a fundamental solution to the heat
equation, which we only conclude is \emph{the} heat kernel after using
it to prove Theorem \ref{thm:essentiallyselfadjoint}; we establish
asymptotic expressions for it at the singular set, uniformly down to time
$t = 0$, obtaining in particular in Corollary \ref{thm:heat-trace}, an asymptotic formula for its trace
(which has potential applications to index theory, since our method
for analyzing $\Delta^{g_{\ice}}$ may be used for other natural elliptic
differential operators on these spaces as well) and fine mapping
properties of $\Delta^{g_{\ice}}$ which allow us to analyze its kernel, i.e.\
harmonic forms.  This is all described in detail in Section \ref{sec:proofs}.

Essential self-adjointness of a differential operator $P$ is typically a
statement about the decay of $L^2$ sections $u$ for which $P u \in
L^2$.  (Here the derivative is taken in the distributional sense.)  The set of such sections is denoted
\begin{equation}
  \label{eq:Dmax}
  \mathcal{D}_{\max} := \mathcal{D}_{\max}(\Delta^{g_{ice}}) = \set{ u \in L^2 \mid P u \in
    L^2}.
\end{equation}
This is the largest subset of $L^2$ which is a closed subspace in the
graph norm $\| u \|_\Gamma = \| u \|_{L^2} + \| P u \|_{L^2}$.
On the other hand, the smallest such closed extension from the domain
$C^\infty_c(M)$ is the closure, i.e. the minimal domain
\begin{equation}
  \label{eq:Dmin}
  \mathcal{D}_{\min} := \mathcal{D}_{\min}(\Delta^{g_{ice}}) = \set{ u \in L^2 \mid
    \exists u_k \in C^\infty_c(M) \mbox{ with }\lim_{k \to \infty} \|
    u_k - u \|_\Gamma = 0}.
\end{equation}
The essential self-adjointness statement in Theorem
\ref{thm:essentiallyselfadjoint} says that the smallest closed
extension is equal to the largest, i.e.\ that
\begin{equation}
  \label{eq:Dmax=Dmin}
  \mathcal{D}_{\max} = \mathcal{D}_{\min},
\end{equation}
and therefore there is exactly one closed extension.  On the other
hand, $\mathcal{D}_{\max}$ is dual to $\mathcal{D}_{\min}$ with respect
to $L^2$ and thus if \eqref{eq:Dmax=Dmin} holds then $P$ with core
domain $C^\infty_c(M)$ has exactly one closed extension, which we
denote by $\mathcal{D} = \mathcal{D}_{\min} = \mathcal{D}_{\max}$ and
$(P, \mathcal{D})$ is a self-adjoint, unbounded operator on $L^2$.
Equation \eqref{eq:Dmax=Dmin} is a statement about decay in the sense
that to prove it we will show
that a differential form $\alpha \in \mathcal{D}_{\max}$ decays fast
enough near $\p M$ that it can be approximated in the graph norm by
compactly supported smooth forms.  This we do using the heat kernel.

Recall that the heat kernel $H$ is a section of the form bundle $\Pi \colon \End(\Lambda) \lra M^\circ \times
M^\circ \times [0, \infty)$, where $M^\circ$ is the interior of $M$
and $\End(\Lambda)$ is the vector bundle whose fiber over $(p, q,
t)$ is $\End(\Lambda^*_q(M) ; \Lambda^*_p(M))$, smooth on the interior
$M^\circ \times M^\circ \times [0, \infty)_t$, which solves   
\begin{equation}
  \label{eq:fundamental-solution}
  (\p_t + \Delta^{g_{ice}}) H = 0 \mbox{ and } H_t \lra \operatorname{Id}, \mbox{strongly as } t
  \downarrow 0.
\end{equation}
For a compactly supported smooth differential form $\alpha$, the
differential form
$$
\beta(\omega, t) := \int_M
H(w, \wt{w}, t)\alpha(\wt{w}) \dVol_{g_{ice}}(\wt{w})
$$ 
solves the heat equation
$(\p_t + \Delta^{g_{ice}}) \beta = 0$ with initial data $\beta
\rvert_{t = 0} = \alpha$.  One
consequence of our precise description of $H$ in Theorem
\ref{thm:heatkernel} below will be the following.
\begin{theorem}\label{thm:heatkernelmap}
On a Witt incomplete cusp edge space $(M, g_{ice})$ with metric
satisfying the assumptions in \eqref{eq:cuspedgemetric} below together
with \eqref{eq:k_3}, there exists a
fundamental solution to the heat equation $H_{t} = H(w, \wt{w}, t)$ in
the sense of \eqref{eq:fundamental-solution} such that for $t>0$
\begin{equation}
  \label{eq:heat-mapsto-Dmin}
  H_{t} \colon L^{2}(M; \Omega^{*}(M)) \lra \mathcal{D}_{\min},
\end{equation}
and such that $H_{t}$ and $\p_{t} H_{t}$ are bounded, self-adjoint
operators on $L^{2}$.
\end{theorem}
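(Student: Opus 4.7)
The plan is to deduce every claim from the precise polyhomogeneous structure of $H$ on the resolved heat space promised by Theorem \ref{thm:heatkernel}. That theorem produces $H$ as a polyhomogeneous section of $\End(\Lambda^{*})$ on the resolved heat space $\Mheat$, with explicit index sets at the left face $\Lface$, right face $\Rface$, the temporal face, and at the parabolic front face $\fff$ obtained by blowing up the singular set in $\set{t = 0}$. The $L^{2}$-boundedness of $H_{t}$ and $\p_{t}H_{t}$ for $t > 0$ then follows by Schur's test: the nonnegative leading orders of $H$ (and of $\p_{t}H$, which solves the heat equation with one extra factor of $t$) at $\Lface$ and $\Rface$, combined with smoothness away from $\fff$, yield uniform bounds $\int_{M}|H(w,\tilde w,t)|\,\dVol_{g_{ice}}(\tilde w) \leq C(t)$ and likewise with $w$, $\tilde w$ exchanged. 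Self-adjointness of $H_{t}$ follows from the uniqueness of the fundamental solution inside the polyhomogeneous class constructed by Theorem \ref{thm:heatkernel}: the kernel $H^{*}(w,\tilde w,t) := H(\tilde w,w,t)^{*}$ is polyhomogeneous of the same type, solves \eqref{eq:fundamental-solution} with the formally self-adjoint operator $\Delta^{g_{ice}}$, and therefore equals $H$; self-adjointness of $\p_{t}H_{t}$ follows by differentiating this identity in $t$.

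For the mapping property $H_{t} \colon L^{2} \lra \mathcal{D}_{\min}$, I would use that $C^{\infty}_{c}(M^{\circ};\Omega^{*})$ is dense in $L^{2}$ and $\mathcal{D}_{\min}$ is closed in the graph norm: by the bounds above, if $\alpha_{k} \to \alpha$ in $L^{2}$ then $H_{t}\alpha_{k} \to H_{t}\alpha$ and $\Delta H_{t}\alpha_{k} = -\p_{t}H_{t}\alpha_{k} \to \Delta H_{t}\alpha$ in $L^{2}$, so it suffices to show $H_{t}\alpha \in \mathcal{D}_{\min}$ for smooth compactly supported $\alpha$. For such $\alpha$, the polyhomogeneous pushforward theorem applied to the projection $\Mheat \to M$ onto the left factor shows that $u := H_{t}\alpha$ is smooth on $M^{\circ}$ and polyhomogeneous at $\p M$ with an index set inherited from that of $H$ at $\Lface$. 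Taking a boundary cutoff $\chi_{\epsilon}(w) = \chi(x(w)/\epsilon)$ that vanishes for $x(w) \leq \epsilon$, the products $\chi_{\epsilon}u$ lie in $C^{\infty}_{c}(M^{\circ};\Omega^{*})$, and I would verify that $\chi_{\epsilon}u \to u$ in graph norm: the $L^{2}$ convergence and $\chi_{\epsilon}\Delta u \to \Delta u$ follow by dominated convergence, while the commutator $[\Delta,\chi_{\epsilon}]u$ is supported in $\set{x \in [\epsilon,2\epsilon]}$ with pointwise size $\epsilon^{-1}|x\p_{x}u| + \epsilon^{-2}|u|$ and the strip has $g_{ice}$-volume $O(\epsilon^{kf+1})$, which combined with the polyhomogeneous decay estimates for $u$ and $x\p_{x}u$ gives $\norm[L^{2}]{[\Delta,\chi_{\epsilon}]u} \to 0$.

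The principal technical obstacle is the verification that the Witt hypothesis \eqref{eq:witt} together with $k \geq 3$ place the minimal exponents in the expansion of $u$ at $\p M$ on the correct side of the $L^{2}$-cutoff threshold for this commutator estimate to succeed. This is precisely what those two assumptions are engineered to arrange: the Witt condition kills the middle-degree harmonic forms of the link $Z$ that would otherwise generate non-decaying modes for the indicial operator of $\p_{t} + \Delta^{g_{ice}}$ at $\fff$, and the threshold $k \geq 3$ ensures that the weight picked up from the horn factor $x^{2k}g_{Z}$ is large enough that all remaining indicial roots exceed the critical exponent. Once the index-set bookkeeping on $\fff$ and $\Lface$ has been carried out as part of Theorem \ref{thm:heatkernel}, the commutator estimate above completes the argument.
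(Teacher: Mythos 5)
Your skeleton --- deduce everything from the polyhomogeneous structure of $H$ on $\Mheat$, reduce to compactly supported $\alpha$ by density, and approximate $H_t\alpha$ in graph norm by cutoffs --- matches the paper's strategy, but two steps break. First, the sharp cutoff $\chi_\epsilon(x)=\chi(x/\epsilon)$ fails at the borderline exponent, and the borderline is attained. With $u\sim x^s$ and volume density $x^{kf}\,dx\,dy\,dz$, the worst commutator term scales as $\|\chi_\epsilon''u\|_{L^2}^2\sim\epsilon^{-4}\int_\epsilon^{2\epsilon}x^{2s+kf}\,dx\sim\epsilon^{2s+kf-3}$, which tends to zero only when $s>-\tfrac{kf}{2}+\tfrac32$ strictly; at $s=-\tfrac{kf}{2}+\tfrac32$ it is $O(1)$. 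That equality case occurs --- take $k=3$, $f$ odd, and fiber degree $\NN=(f-1)/2$, where the indicial root $-k\NN$ equals $-\tfrac{kf}{2}+\tfrac32$. The paper's Lemma \ref{lem:AmaxequalsAmin} instead uses a \emph{logarithmic} cutoff $\chi_\epsilon(x)=-\log(x/\epsilon^2)/\log\epsilon$ on $[\epsilon^2,\epsilon]$, whose second derivative is $1/(\log\epsilon\cdot x^2)$; the $1/\log^2\epsilon$ prefactor then gives $O(1/|\log\epsilon|)\to0$ even at the borderline.

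Second, the input you need for the cutoff estimate --- a lower bound $s\geq-\tfrac{kf}{2}+\tfrac32$ on the leading order of $H_t\alpha$ at $\p M$ --- is not supplied by $\mathcal{E}(\lf)$ in Theorem \ref{thm:heatkernel}: that bound is only $\inf\mathcal{E}(\lf)\geq-\tfrac{kf}{2}+1$, see \eqref{eq:left-face-index}. This suffices for $L^2$ membership (the threshold is $s>-\tfrac12(kf+1)$) but not for the cutoff to converge. The paper closes this gap by first noting $H_t\alpha\in\mathcal{D}_{\max}$ (since $\Delta^g H_t\alpha=-\p_t H_t\alpha\in L^2$) and then invoking Proposition \ref{prop:criterionDmax}, a separate indicial-root analysis showing that a polyhomogeneous form \emph{in} $\mathcal{D}_{\max}$ must have $s\geq-\tfrac{kf}{2}+\tfrac32$; it is in that Proposition, not in the heat-kernel construction itself, that the Witt condition and $k\geq3$ enter. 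Two smaller slips: the leading orders of $H$ at $\lf$ and $\rf$ are in fact negative, so the unweighted Schur test does not apply ($\int_M|H_t(w,\wt w)|\,\dVol(\wt w)$ grows like $x^{-kf/2+1}$ as $x\to0$); the paper shows instead that $H_t$ and $\p_tH_t$ are Hilbert--Schmidt, hence compact, and combines this with the symmetry already asserted in Theorem \ref{thm:heatkernel}(2) to get self-adjointness --- a uniqueness statement for polyhomogeneous fundamental solutions is neither established nor needed.
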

Theorem \ref{thm:heatkernelmap} implies the essential
self-adjointness statement; indeed the fundamental solution $H_t$
directly gives a sequence (indeed a path) of sections on
$\mathcal{D}_{\min}$ which approaches a given form in
$\mathcal{D}_{\max}$.  Namely, 
\begin{equation}
  \label{eq:obvious-approximation}
  \alpha \in \mathcal{D}_{\max} \implies H_t \alpha \to \alpha \mbox{
    in } \mathcal{D}_{\min} \mbox{ as } t \downarrow 0.
\end{equation}
As we see now, the proof of this is straightforward functional analysis given the conclusions
of Theorem \ref{thm:heatkernelmap}.
\begin{proof}[Proof of essential self-adjointness using Theorem
  \ref{thm:heatkernelmap}]
The proof has nothing to do with the fine structure of incomplete cusp
edge spaces, it depends only on the soft properties of the
fundamental solution $H$ in Theorem \ref{thm:heatkernelmap}.  To
emphasize this, let $(M,
g)$ be any Riemannian manifold and $P$ a differential
operator of order $2$ acting on sections of a vector bundle $E$ with hermitian
metric $G$, such that $P$ is symmetric on $L^{2}(M; E)$.  For
$t > 0$, let $H_{t}$ be a smooth section of  $\End(E) \to M
\times M$ which depends smoothly on $t$ and satisfies
\begin{equation}
  \label{eq:1}
  (\p_{t} + P) H_{t} = 0 \quad \mbox{ and } \quad \lim_{t \to 0} H_{t} = \id,
\end{equation}
where the latter limit holds in the strong topology on $L^{2}$, and
furthermore such that $H_t$ and $\p_t H_t$ are self-adjoint on $L^2$.

  Let $u \in \mathcal{D}_{\max}(P)$, i.e.~$u \in L^{2}, Pu \in
  L^{2}$.  We will show that $u \in \mathcal{D}_{\min}(P)$ as well, and
  thus $\mathcal{D}_{\min} = \mathcal{D}_{\max}$.  Indeed, we will show
  that
  \begin{equation}
    \label{eq:realdeal}
    H_{t} u \to u \mbox{ in } \mathcal{D}_{\max}, \quad \mbox{i.e.\
      that } H_{t}u \to u
    \mbox{ and } P H_{t} u \to P u \mbox{ in } L^{2}.
  \end{equation}
This suffices to prove that $u \in \mathcal{D}_{\min}$ since $H_{t}u
\in \mathcal{D}_{\min}$ by assumption and $\mathcal{D}_{\min}$ is a
closed subspace of $\mathcal{D}_{\max}$ in the graph norm. To prove \eqref{eq:realdeal}, we note first that $H_{t}u \to u$ in
$L^{2}$ trivially since $H_{t} \to \id$ in the strong topology on
$L^{2}$.  Also note that since $u \in \mathcal{D}_{\max}$, $Pu \in
L^{2}$, so $H_{t} Pu \to Pu $ in $L^{2}$ also.  Of course, this is not
what we want; we want $P H_{t} u \to Pu$, but in fact we claim that
\begin{equation}\label{eq:heat-kernel-commutes}
u \in \mathcal{D}_{\max} \implies P H_{t} u = H_{t} P u,
\end{equation}
which will establish \eqref{eq:Dmax=Dmin}.

It remains to prove \eqref{eq:heat-kernel-commutes}.  Note that for $u \in \mathcal{D}_{\max}$ and $v \in L^{2}$,
then $\la H_{t} P u , v \ra_{L^2} = \la P u , H_{t} v
\ra_{L^2}$ by self-adjointness of $H_t$ on $L^2$, while $\la P u , H_{t} v
\ra_{L^2} = \la u , P H_{t} v
\ra_{L^2}$.  Indeed, the adjoint domain of $\mathcal{D}_{\min}$
is $\mathcal{D}_{\max}$, so for any $f \in \mathcal{D}_{\min}, g \in
\mathcal{D}_{\max}$, $\la P f, g \ra_{L^2} = \la f, Pg \ra_{L^2}$.  But, then since
$PH_t = - \p_t H_t$ we see that $\la H_{t} P u , v
\ra_{L^2} = - \la u , \p_t H_t v \ra_{L^2}$.  But
$\p_t H_t$ is self-adjoint on $L^2$ so $\la u , \p_t H_t v
\ra_{L^2} = \la \p_t H_t u , v \ra_{L^2} = - \la P H_t
u , v \ra_{L^2}$, and thus $\la H_{t} P u , v
\ra_{L^2} = \la P H_{t} u , v
\ra_{L^2}$ for all $u \in \mathcal{D}_{\max}, v \in L^2$, i.e.\
\eqref{eq:heat-kernel-commutes} holds.
\end{proof}

The central vehicle for the construction of the heat kernel is the construction of a manifold with
corners $\Mheat$ via iterated radial blow up of the natural domain of the
heat kernel, namely the space $M \times M \times [0, \infty)_t$; thus
the interiors of these two spaces are diffeomorphic, and the blowup
process furnishes a `blowdown' map
\begin{equation}\label{eq:singularmodelblowdown}
\beta \colon \Mheat \lra M \times M \times [0, \infty)_{t},
\end{equation}
which encodes deeper information about the
relationship between the various boundary hypersurfaces (codimension
one boundary faces) of $\Mheat$
and those of $M \times M \times [0, \infty)_t$.  The upshot is that
the heat kernel $H$, which lives a priori on the latter space, pulls
back via $\beta$ to be ``nice'' (precisely
to be polyhomogeneous, see Appendix \ref{sec:mwc}) on $\Mheat$.  In fact, in
Section \ref{sec:heat-kernel}
\emph{we will construct a parametrix $K$ for the heat equation
  directly on $\Mheat$}.  To obtain the actual heat kernel $H$ we use a Neumann
series argument to iterate away the error.

The latter process builds on what is now a substantial body of work on
analysis (in particular the structure of heat kernels) on singular and
non-compact Riemannian spaces, going back at least to the work of
Cheeger on manifolds with
conical singularities \cite{C1979,C1980,C1983}.  Our approach here is
more closely related to Melrose's geometric microlocal analysis on
asymptotically cylindrical
manifolds \cite{tapsit} (a non-compact example) and Mooers' paper
\cite{Mooers-Edith} on manifolds with conical singularities (an
incomplete, singular example).  The general procedure, which one
sees in both the parabolic and elliptic settings, is to express the
relevant differential operator as an element in the universal
enveloping algebra of a Lie algebra of vector fields, and to `resolve'
this Lie algebra via radial blowup of the underlying space.  

Two papers closely related to our work are, first, Mazzeo-Vertman
\cite{MV2012}, in which the authors study analytic torsion on
incomplete edge spaces, which are
the $k = 1$ case of incomplete cusp edges.  (They analyze the behavior
of elliptic operators which have the same type of degeneracy in their
principal symbols near the singular
set as the Laplacian, so a weider class than considered here.) Their analysis involves a
heat kernel construction using blowup analysis, which is slightly
simpler in their context as the resolved double space has
one less blow up (and thus the triple space is much simpler).  One
substantial difference between the incomplete edge and incomplete cusp
edge cases is that in the incomplete edge case the space of
self-adjoint extensions is substantially more subtle.  In particular,
a Witt space (this is a topological condition and has nothing to do
with the value of $k$) that is incomplete edge may have infinitely
many self-adjoint extensions if the family of induced operators on the
fibers have small non-zero eigenvalues \cite{ALMPII}; on incomplete cusp edge
spaces the small non-zero fiber eigenvalues do not contribute.  On the
other hand, one expects that the zero mode in the fiber (the space of fiber
harmonic forms) makes a simlar contribution in both the cusp and cone
cases, in particular that an incomplete cusp edge space which is not
Witt will have an infinite dimensional space of closed extensions on
which `Cheeger ideal boundary conditions' must be imposed to make the
operator self-adjoint, as is the case in \cite{ALMPII}.

A second closely related
work is Grieser-Hunsicker \cite{Grieser-Hunsicker}, which uses also inhomogeneous radial
blowups, in this case to construct a Green's function for elliptic
operators on a certain class of complete Riemannian manifolds (called
`$\phi$-manifolds') which require similar analysis. There are
many other related works in a similar vein
including, just to name a few, Albin-Rochon
\cite{Albin-Rochon}, Br\"uning-Seeley \cite{Brunin-Seeley},
Gil-Krainer-Mendoza \cite{Gil-Krainer-Mendoza}, Lesch \cite{L1997},
Schultze \cite{Schulze}, and Grieser's notes on parametrix
constructions for heat kernels \cite{Grieser-notes}.  
For analysis of moduli space, to give just a sample recent work, we
refer the reader to the papers of Liu-Sun-Yao, for example
\cite{Liu-Sun-Yau-goodness,Liu-Sun-Yao-New-results}.

\tableofcontents

\section{Incomplete cusp edge differential geometry}\label{sec:differential-geometry}

As described above we work on a smooth manifold with
boundary $M$ whose boundary is the total space of a fiber bundle with
base $Y$ and typical fiber $Z$.  (Again, we assume that $Y, Z$ are
both compact and without boundary.)  We fix once and for all a boundary
defining function $x$, meaning a function $x \in C^{\infty}(M)$ with
$\{ x = 0 \} = \p M$ and $dx$ non-vanishing on $\p M$, and thereby fix
(after possibly scaling $x$ by a constant) a tubular
neighborhood of the boundary, 
\begin{equation}
  \label{eq:tubular}
  \mathcal{U} \simeq \p M \times [0, 1)_x.
\end{equation}
Note that near each point $q$ on the boundary, there is a neighborhood
$V$ in $\partial M$ of $y = \pi(q)$ ($\pi$ is the projection onto the base) such
that $\pi^{-1}(V) \simeq V \times Z$, where $\simeq$ denotes
diffeomorphism.  Below, we will say that we work `locally over the
base' when we fix our attention on a neighborhood of $q$ in $M$ of the
form $\pi^{-1}(V) \times [0, 1)_x$.  If one chooses local coordinates
$y$ on the base and $z$ on $\pi^{-1}(q) \simeq Z$, then
\begin{equation}\label{eq:local-coords-near-boundary}
(x, y, z) \mbox{ form a coordinate chart on $M$ in a neighborhood of $q$.}
\end{equation}
Let
\begin{equation}
  \label{eq:f-and-b}
  f:= \dim Z, \quad b:= \dim Y.
\end{equation}

In the space $X$ defined by
collapsing the fibers $Z$ over the boundary (see \eqref{eq:collapsed-manifold}), the image of $\p M$ via
the projection $\pi \colon M \lra X$ defines a subset $\pi(\p M)$,
which can be identified with $Y$, and $\pi(\p M)$ forms the unique
singular stratum of the stratified space
$X$. (See, for instance \cite{ALMPI} or \cite{DLR2011}, among many
others, for a discussion of stratified spaces.) In the case $Z \simeq \mathbb{S}^f$ a
sphere, $X$ is homeomorphic to a manifold.  In general, given $p \in
\pi(\p M)$, there is a basis of open sets at $p$ consisting of
\textit{canonical neighborhoods} homeomorphic to
\begin{equation}
  \label{eq:canon-neighb}
  V \times C_\delta(Z),
\end{equation}
where $V$ is an open subset of $Y$ containing $p$ and 
$$
C_\delta(Z) = [0, \delta) \times Z / \set{0} \times Z.
$$

\subsection{Differential form bundles and the Hodge-Laplacian}

We will consider differential forms and vector fields which are of
approximately unit
size with respect to Riemannian
metrics of the type in \eqref{eq:fake-incomplete-cusp-edge}.  These
are the incomplete cusp edge forms, which are sections of the
incomplete cusp edge form bundle, $\Lice(M)$, whose smooth sections
are generated locally over the base by the forms
\begin{equation}
  \label{eq:ice-forms}
  dx, \qquad dy_{i}\quad (i=1,\ldots,b=\dim Y), \qquad x^{k} dz_{\alpha}\quad (\alpha=1,\ldots,f=\dim Z).
\end{equation}
Locally over the base, we have the isomorphism
\begin{equation}
  \label{eq:ice-decomp}
  \Lice(M) = \Lambda^{*}(Y) \wedge x^{k \NN}\Lambda^{*}(Z)  \oplus  dx \wedge
  \Lambda^{*}(Y) \wedge x^{k \NN}\Lambda^{*}(Z),
\end{equation}
where $\NN$ is a multiplication  operator 
 on $\Lambda^{*}(Z)$ which takes a
form $\alpha$ of degree $q$ to $q  \alpha$.  Note that with respect to
this decomposition, if $\NN$ is the operator acting on
$\Lambda^{\NN}(Y)$ by multipliction by $\NN$, the exterior derivative
$d$ acts as
\begin{equation}\label{eq:exterior-derivative}
d=\begin{pmatrix}d^Y+ (-1)^{q_Y} x^{-k}d^Z&0\\
\partial_x+kx^{-1}\NN&-d^Y-(-1)^{q_Y} x^{-k}d^Z\end{pmatrix}.
\end{equation}

Moreover, we will use the space of vector fields which are locally $C^\infty(M)$ linear combinations of
the vector fields
\begin{equation}
  \label{eq:ice-fields}
  \p_x, \quad \p_{y_{i}}, \quad x^{-k} \p_{z_{\alpha}}.
\end{equation}
These vector fields are local sections of a bundle $\Tice(M)$ which
is dual to $\Ticec(M) = \Lambda^1_{ice}(M)$.  We denote sections of $\Tice(M)$ by $\mathcal{V}_{ice}$.

We consider metrics $g$ on $M$ which are sections of
$\Sym^{0,2}(\Ticec(M))$, and which locally over the boundary take the form
\begin{equation}
  \label{eq:cuspedgemetric}
  g =  
\lp
\begin{array}{ccc}
  dx & dy^i & x^{k} dz^\alpha
\end{array}
\rp
\lp
\lp
\begin{array}{ccc}
1 & 0 & 0 \\
0 & (h_{ij}) & 0 \\
0 & 0 & (\km_{\alpha \beta})
\end{array}
\rp 
+ O(x^k, g_0)
\rp
\lp
\begin{array}{c}
  dx \\ dy^i \\ x^{k} dz^\alpha
\end{array}
\rp,
\end{equation}
with $h_{ij}$ and  and $\km_{\alpha \beta}$ independent of $x$.
Here $O(x^k, g_0)$ refers to a $O(x^k)$ norm bound with respect to an
exact incomplete cusp edge metric $g_0$ as in
\eqref{eq:exact-cuspedgemetric} below, and furthermore we assume that
the $O(x^k, g_0)$ term is \textit{polyhomogeneous conormal}, a
regularity assumption defined precisely in Section
\ref{sec:mwc} below, which roughly speaking means that the
coefficients have an asymptotic expansion at $x = 0$ analogous to a
Taylor expansion but with non-integer powers and with precise
derivative bounds on the error terms.  Metrics satisfying these
assumptions are what we refer to henceforth as \textbf{incomplete cusp
edge metrics}.  (Note that the assumptions on $g$ are stronger than
merely assuming that $g \in \Sym^{0,2}(\Ticec(M))$, as the latter
space contains e.g.\ $x (x^k dz \otimes_{sym} dx)$, which does not
obey the error bound.)

\begin{remark}
As is shown in \cite{MS2015} (see the introduction for
further discussion) with previous results for example in
\cite{Wolpert-WP-geometry, Yamada} the Weil-Petersson metric on moduli
space takes the form \eqref{eq:cuspedgemetric} near the interior of a
divisor and satisfies the polyhomogeneity assumption.
\end{remark}

As an example of such a metric, one can take an \textit{exact}
incomplete cusp edge metric, constructed as follows.  Let $h_{1} \in
\Sym^{0, 2}(y)$ be a metric on the base, and $h = \pi^{*}h_{1}$
its $x$-independent pullback via the projection $\pi \colon \p M
\lra Y$ extended to our tubular neighborhood $\mathcal{U} \simeq \p M
\times [0, 1)_x$ in the obvious way.  The
tensor $\km$ we take to be the pullback to $\mathcal{U}$ of a tensor on $\p M$ which
by abuse of notation we write as
$\km \in Sym^{0,2}(\p M)$, having the property that $\km$
restricted to any fiber is positive definite.  The metric $g_0 = dx^2 + h +
x^{2k} \km$ is an exact incomplete cusp edge metric. Note that in
coordinates $(x, y, z)$ such a metric takes the form
\begin{equation}
  \label{eq:exact-cuspedgemetric}
  g_0 =  
\lp
\begin{array}{ccc}
  dx & dy^i & x^{k} dz^\alpha
\end{array}
\rp
\lp
\begin{array}{ccc}
1 & 0 & 0 \\
0 & (h_{ij}) & x^k (\km_{i\alpha}) \\
0 & x^k (\km_{\alpha i}) & \km_{\alpha \beta}
\end{array}
\rp
\lp
\begin{array}{c}
  dx \\ dy^i \\ x^{k} dz^\alpha
\end{array}
\rp.
\end{equation}
Thus an incomplete cusp edge metric diffes from an exact one by a
polyhomogeneous error of order $O(x^k)$ in norm and can therefore be
taken to correspond uniquely to an exact cusp edge metric.

Note that an exact incomplete cusp edge metric (and thus an incomplete
cusp metric) gives rise, locally over the boundary to well defined notions of
the operators $d_Y$, $d_Z$, the Hodge star operators $\star_Y$ and
$\star_Z$, obtained by allowing $\star_{(Y,h)}$ the Hodge star associated with $(Y,
h)$, the Riemannian structure on the base $Y$, to act on the
$\Lambda^*(Y)$ factors in the decomposition \eqref{eq:ice-decomp} and
$\star_{(Z, \km)}$, the $y$-dependent Riemannian structure induced on
the fibers, on the $x^{k\NN}\Lambda^*(Z)$ factors.  (So, for
example, $\star_Z x^k dz_\alpha = x^{(f - 1) k} \star_Z dz_\alpha$.)  Thus, we
may also define unambiguously (still locally over the boundary) the Hodge-de Rham operators $\eth_Y$ and
$\eth_Z$.  Indeed, a homogeneous differential form $\alpha$, in the coordinates
$(x, y, z)$ can be written $\alpha = (f(x, y, z) dy^I) \wedge
x^{kp}dz^A(\wedge dx) = \pm(f(x, y, z) x^{kp}dz^A) \wedge dy^I
(\wedge dx)$, where $I$ and $A$ are multi-indices and $|I|$, resp.\
$|A|$,   is their order.  One can act via $\eth_Y = d_Y + (-1)^{\dimY(|I|-1)+1} \star_Y d_Y \star_Y$ on $f(x, y, z) dy^I$, where $\star_Y$ is the
Hodge star operator of $(Y, h)$, and thus define $\eth_Y \alpha$, or
act via $\eth_Z = d_Z + (-1)^{f (|A| -
  1) + 1} \star_Z d_Z \star_Z$ on $f(x, y, z) x^{kp}dz^A$ where $\star_Z$
is the Hodge star operator of $(Z, \km)$, and thus define
$\eth_Z \alpha$.  Note that $\eth_Z$ is thus a \textit{family} of
operators on the fiber $Z$ parametrized by the base point $y$.  
The Hodge-de Rham operator can be decomposed locally over the boundary
in terms of the base and fiber Hodge-de Rham operators according to
the following proposition, which we prove in Section
\ref{sec:proof-of-operator-decomps} below.

\begin{proposition}\label{thm:normaloperatordeRham}
Locally over the boundary (see below \eqref{eq:tubular}), the Hodge-de
Rham operator decomposes as
\begin{equation}
    \begin{split}
      \label{eq:fulldeRham}
      \eth &= \eth_0 + \delta_Z P_1 + P_2 \delta_Z 
      + x^{k -1} E,
    \end{split}
  \end{equation}
where $\eth_0$ acts on sections of $\Lice$ decomposed as in
\eqref{eq:ice-decomp} by
\begin{equation*}
  \eth_0 = \lp
      \begin{array}{cc}
        x^{-k} \eth^{Z} + \eth^{Y} & - \p_{x} - k x^{-1} (f - \NN) \\
        \p_{x} + k x^{-1} \NN & - x^{-k}\eth^{Z} - \eth^{Y}
      \end{array}
      \rp,
\end{equation*}
  $\eth^{Z}$ depends on the base $Y$ parametrically, acts in
  only the fiber direction and only on the $\Lambda^{*}(Z)$ factor, and is
  equal to the Hodge-de Rham operator for the Riemannian manifold 
  $\km \rvert_{y}$, and where the $P_i$ are polyhomogeneous,
  bounded endomorphisms on $\Lice$, and $E$ is as in
  \eqref{eq:errors-in-operator1}-\eqref{eq:errors-in-operator3}.
\end{proposition}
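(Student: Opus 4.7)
The plan is to compute $\eth = d + \delta$ with respect to the decomposition \eqref{eq:ice-decomp} by handling each summand separately. Since $d$ depends only on the smooth structure of $M$, its matrix form is given directly by \eqref{eq:exterior-derivative}, from which one reads off the lower-off-diagonal contribution $\p_x + kx^{-1}\NN$ and the $d^Y + (-1)^{q_Y} x^{-k} d^Z$ terms on the diagonal. The real work lies in computing $\delta = d^*_g$, which I would carry out in two stages: first for an exact incomplete cusp edge metric $g_0$ as in \eqref{eq:exact-cuspedgemetric}, and then by perturbing to the full $g$ of \eqref{eq:cuspedgemetric}.

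For the exact-metric step, I would use the formula $\delta = \pm \star_{g_0} d \star_{g_0}$ together with the fact that the volume form is $x^{kf}\sqrt{\det h\, \det \km}\, dx \wedge dy \wedge dz$. Relative to the decomposition \eqref{eq:ice-decomp}, the Hodge star $\star_{g_0}$ factors as a tensor product of the base and fiber Hodge stars $\star_Y$ and $\star_Z$, with all $x^k$ weights absorbed into the grading $x^{k\NN}\Lambda^{*}(Z)$. A direct computation then presents $\delta$ as an upper-triangular $2 \times 2$ matrix whose diagonal entries are $\pm(\delta^Y + x^{-k}\delta^Z)$ and whose upper-off-diagonal entry is $-\p_x - kx^{-1}(f-\NN)$, where the $(f-\NN)$ arises from differentiating the weight $x^{k(f-q_Z)}$ that appears when Hodge-starring a fiber form of degree $q_Z$. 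Summing $d$ and $\delta$ collects the diagonal entries into $\pm(\eth^Y + x^{-k}\eth^Z)$ and reproduces exactly the matrix $\eth_0$ displayed in the proposition.

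For the perturbation step, I would write $g = g_0 + x^k \wt{h}$ with $\wt{h}$ a polyhomogeneous symmetric tensor bounded with respect to $g_0$; this incorporates both the cross terms $x^k \km_{i\alpha}$ already present in \eqref{eq:exact-cuspedgemetric} and the $O(x^k, g_0)$ tail in \eqref{eq:cuspedgemetric}. Expanding the inverse metric and the volume density in geometric series produces polyhomogeneous $O(x^k)$ corrections to the entries of $\delta$. Most such corrections contribute to the $x^{k-1} E$ error term after at most one $\p_x$ differentiation; the exception, and the source of the $\delta_Z P_1 + P_2 \delta_Z$ terms, is that the leading diagonal entry of $\eth_0$ already carries a factor of $x^{-k}$ in front of a fiber derivative, so an $O(x^k)$ correction that multiplies a $\p_z$ is magnified to an $O(1)$ contribution necessarily containing a factor of $\delta_Z$ (or of $d^Z$, which equals $\pm \star_Z \delta^Z \star_Z$). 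Packaging these fiber-derivative factors into $\delta_Z$ and the remaining polyhomogeneous coefficients into bounded endomorphisms $P_1$, $P_2$ yields the stated form.

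The main obstacle I anticipate is the bookkeeping needed to separate corrections carrying a fiber derivative from those that do not, and to verify that the residue lands in $x^{k-1}E$ with the correct norm. This amounts to a careful local computation in coframes adapted to \eqref{eq:ice-decomp}, keeping track of the signs $(-1)^{q_Y}$ and $(-1)^{q_Z}$ in the graded commutation rules and of whether each $\p_x$ differentiation hits the weight $x^{k\NN}$ or the polyhomogeneous coefficients of $\wt{h}$. Neither step is conceptually difficult, but extracting the precise matrix $\eth_0$ and the specified structure of the error terms requires patience in the algebra.
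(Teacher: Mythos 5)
Your proposal follows the same strategy as the paper: compute the Hodge--de Rham operator exactly for a locally defined product-type metric, then show the perturbation to the full $g$ contributes the fiber-derivative corrections $\delta_Z P_1 + P_2 \delta_Z$ together with an $x^{k-1}E$ tail, using the observation that $d_Z, \delta_Z$ acting on sections of $\Lice$ carry an intrinsic $O(x^{-k})$ which promotes an $O(x^k)$ metric perturbation with a $\p_z$ to an $O(1)$ contribution.

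There is, however, a labeling inconsistency you should correct. You carry out the exact computation for ``$g_0$ as in \eqref{eq:exact-cuspedgemetric}'' and assert that $\star_{g_0}$ factors as a tensor product of $\star_Y$ and $\star_Z$. But the exact incomplete cusp edge metric \eqref{eq:exact-cuspedgemetric} carries off-block terms $x^k(\km_{i\alpha})$ coupling $dy^i$ and $x^k dz^\alpha$, so its Hodge star does \emph{not} factor; the factorization holds only for the warped product $g_p$ of \eqref{eq:local-product-metric}, which drops those cross terms. Your intent is clearly to take $g_p$ as the reference --- you say in the perturbation step that $x^k\wt{h}$ must absorb ``both the cross terms $x^k\km_{i\alpha}$ already present in \eqref{eq:exact-cuspedgemetric} and the $O(x^k,g_0)$ tail'' --- but the first step should be phrased in terms of $g_p$, which is exactly what the paper does. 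The paper then replaces your geometric-series expansion by a cleaner argument: it isolates the pointwise bound $\norm[g]{\star - \star_p} \le Cx^k$ as the key estimate (proved by comparing $\la \star_p e_I, e_J\ra_{g_p}$ and $\la \star e_I, e_J\ra_g$ in a $g$-orthonormal coframe), writes $\delta - \delta_{g_p}$ as a sum of terms each containing a factor of $\star - \star_p$, and splits those terms according to whether they carry a $d_Z$ (hence are $O(1)$) or a $\p_x$, $d_Y$ (hence are $O(x^k)$). Your route through the expansion of $g^{-1}$ and $\dVol_g$ is a workable alternative to the same estimate, but if you carry it out you should verify explicitly that the non-fiber corrections land in $x^{k-1}\Diff^1_{\bl,\phg}$ rather than merely having $O(x^{k-1})$ size, since this structural claim --- that $E$ preserves orders of vanishing as in \eqref{eq:errors-in-operator1}--\eqref{eq:errors-in-operator3} --- is what the rest of the paper actually uses.
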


To describe the term $E$, we say first   it is a  
  differential operator of order one which does not increase the order of blow
up of polyhomogeneous distributions; in particular   
\begin{equation}\label{eq:errors-in-operator1}
E( x^k \gamma) =
O(x^k),
\end{equation}
for $\gamma$ a smooth, bounded section of $\Lice$.  In fact, it is a 
$\bl$-differential operators on $\ice$-forms with polyhomogeneous coefficients
\begin{equation}
  \label{eq:errors-in-operator2}
  E  \in \Diff^1_{\bl,\phg}(M; \Lice).
\end{equation}
Letting $\mathcal{V}_{\bl,\phg}$ denote the polyhomogeneous vector
fields tangent to the boundary $\p M$, \eqref{eq:errors-in-operator2}
means that $E$ lies in the algebra of differential operators
generated $\mathcal{V}_{\bl,\phg}$.  Concretely, it satisfies
\begin{equation}
  \label{eq:errors-in-operator3}
  E = a x \p_x + b^i \p_{y^i} + c^{\alpha} \p_{z^\alpha} + d,
\end{equation}
for polyhomogeneous, bounded endomorphisms $a, b^i, c^\alpha, d$, and  
where repeated indices are summed over.  In general, an element $Q \in
\Diff^m_{\bl,\phg}(M; \Lice)$ also satisfies \eqref{eq:errors-in-operator1}, and is
given locally by polyhomogeneous linear combinations of $x \p_x, \p_y,
\p_z$, i.e.\
$$
Q = \sum_{i + |\alpha| + |\beta| \le m} a_{i,\alpha, \beta} (x \p_x)^i
\p_y^\alpha \p_z^\beta,
$$
where $a_{i, \alpha, \beta}$ is a polyhomogeneous bounded endomorphism
of $\Lice$.

The Hodge-Laplacian $\Delta = \eth^2 = d \delta + \delta d$ can now be
decomposed along the same lines.

\begin{proposition}\label{thm:Hodge-Laplacian}
  Locally over the base, $\Delta$ can be
  decomposed as follows
  \begin{equation}
    \label{eq:Hodge-Laplacian-decomp}
    \Delta = \Delta_0 + x^{-k}\wt{P} + x^{-1} \wt{E},
  \end{equation}
where $\Delta_0$ acts on forms decomposed as in \eqref{eq:ice-decomp} by
\begin{align}\label{eq:Hodge-Laplacian-decomp1}
\begin{split}
\Delta_0 &= \operatorname{Id}_{2 \times 2} \lp - \p_{x}^{2} - \frac{kf}{x}
\p_{x} + \frac{1}{x^{2k}} \Delta^{Z,y} + \Delta^{Y} \rp \\ & \qquad
+ \lp
\begin{array}{cc}
  k\NN(1 - k(f - \NN)) x^{-2} & - 2k x^{-k - 1} d^{Z} \\
  - 2 k x^{-k - 1} \delta^{Z} & k(f - \NN)(1 - k \NN) x^{-2}
\end{array} \rp,
\end{split}
\end{align}
where
$$
\wt{P} = \delta_Z Q_1  + d_Z Q_2  + Q_3 \delta_Z + Q_4 d_Z
$$
and where the $Q_i\in \Diff^1_{{\bl,\phg}}(M)$ and $\wt{E} \in \Diff^2_{{\bl,\phg}}(M)$, as
 described below Proposition \ref{thm:normaloperatordeRham}.
\end{proposition}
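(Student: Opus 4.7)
The plan is to derive the decomposition \eqref{eq:Hodge-Laplacian-decomp} directly by squaring the Hodge-de Rham decomposition established in Proposition \ref{thm:normaloperatordeRham}. Writing $\eth = \eth_0 + R$ with $R = \delta_Z P_1 + P_2 \delta_Z + x^{k-1} E$, we have
\begin{equation*}
\Delta = \eth^2 = \eth_0^2 + \eth_0 R + R\eth_0 + R^2,
\end{equation*}
and the proof reduces to identifying $\eth_0^2$ with $\Delta_0$ and showing that the remaining three summands have the claimed structure.

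First I would verify by direct matrix multiplication that $\eth_0^2 = \Delta_0$. The diagonal entry of the square is, on the upper block, $(x^{-k}\eth^Z + \eth^Y)^2 + (\p_x + kx^{-1}(f-\NN))(\p_x + kx^{-1}\NN)$, and analogously on the lower block. Here I use that $(\eth^Z)^2 = \Delta^{Z,y}$, that $(\eth^Y)^2 = \Delta^Y$, and that the cross term $x^{-k}(\eth^Z \eth^Y + \eth^Y \eth^Z)$ produced by expanding the first square lies in the span of $\delta_Z$ and $d_Z$ acting on a $\bl$-operator of order $x^{-k}$, so it can be absorbed into $x^{-k}\wt P$ rather than into $\Delta_0$. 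The radial factor expands using $[\p_x, x^{-1}] = -x^{-2}$ and gives the $-\p_x^2 - (kf/x)\p_x$ piece along with the $k\NN(1-k(f-\NN))x^{-2}$ term (and its counterpart in the lower block). The off-diagonal $(1,2)$ entry of $\eth_0^2$ collapses, after using $[\p_x, x^{-k}]=-kx^{-k-1}$ and the decomposition $\eth^Z = d_Z + \delta_Z$ together with $\NN d_Z = d_Z(\NN+1)$ and $\NN \delta_Z = \delta_Z(\NN - 1)$, precisely to $-2k x^{-k-1} d_Z$, with the symmetric computation giving $-2kx^{-k-1}\delta_Z$ in the $(2,1)$ slot.

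Next I would analyze $\eth_0 R + R \eth_0$. Each summand in $R$ is either of the form $(\text{op})\, \delta_Z$, $\delta_Z\,(\text{op})$, or $x^{k-1}$ times a $\bl$-differential operator of order one with polyhomogeneous coefficients. Composing on either side with $\eth_0$, whose worst singular behavior is $x^{-k}\eth^Z + x^{-1}\NN$, the first two types remain of the form $\delta_Z Q_i$ or $Q_j \delta_Z$ (with, after commuting $\eth^Z = d_Z + \delta_Z$ past the $P_i$, also terms of the type $d_Z Q$ and $Q d_Z$) where each $Q_\bullet$ is $x^{-k}$ times a $\bl$-operator of order $\leq 1$; these contribute to $x^{-k}\wt P$. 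The third type, upon composition with any of the pieces of $\eth_0$, produces operators bounded by $x^{k-1}\cdot x^{-k} = x^{-1}$ times a $\bl$-operator of order $\leq 2$, and therefore contributes to $x^{-1}\wt E$ with $\wt E \in \Diff^2_{\bl,\phg}(M)$.

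Finally, $R^2$ expands into nine terms; each is a product of two factors each of which is either $\delta_Z P_i$, $P_i \delta_Z$, or $x^{k-1}E$. Since $P_i$ is a bounded polyhomogeneous endomorphism and $\delta_Z$ is a fiber operator, these either retain a $\delta_Z$ (or $d_Z$, upon using fiber Weitzenböck and that $\delta_Z^2 = 0$ to rewrite $\delta_Z P_i \delta_Z$ schematically) on the left or right, hence fall into the $\wt P$ piece, or carry a factor of $x^{2(k-1)} \leq x^{-1}$ (since $k\ge 3$ guarantees $2(k-1) \ge k \ge 3 > -1$) and fall into the $\wt E$ piece. The main obstacle is the bookkeeping in the second step: one must track commutators of $\eth^Y$, $\eth^Z$, $\NN$ and the bounded endomorphisms $P_i$ sufficiently carefully to certify that after expansion every remainder really sits on the correct side of a $\delta_Z$ or $d_Z$, which is what makes the $\wt P$-structure in \eqref{eq:Hodge-Laplacian-decomp} nontrivial beyond mere order counting. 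Once this structural check is made, collecting terms yields \eqref{eq:Hodge-Laplacian-decomp}-\eqref{eq:Hodge-Laplacian-decomp1}.
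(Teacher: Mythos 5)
Your proof carries out in detail exactly the computation the paper invokes in one sentence — squaring $\eth=\eth_0+R$ from Proposition~\ref{thm:normaloperatordeRham} and sorting the terms of $\eth_0^2$, $\eth_0 R + R\eth_0$, and $R^2$ into $\Delta_0$, $x^{-k}\wt P$, and $x^{-1}\wt E$ — so the approach is the same as the paper's. One small sign typo: in the upper diagonal block you write $(\p_x + kx^{-1}(f-\NN))(\p_x + kx^{-1}\NN)$, but the correct factor from the $(1,2)$ entry of $\eth_0$ is $(-\p_x - kx^{-1}(f-\NN))$; with that sign the expansion produces the stated $-\p_x^2 - (kf/x)\p_x + k\NN(1-k(f-\NN))x^{-2}$, consistent with the answer you ultimately quote.
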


\subsection{Proof of Propositions \ref{thm:normaloperatordeRham} and
  \ref{thm:Hodge-Laplacian}}\label{sec:proof-of-operator-decomps}

To prove Proposition \ref{thm:normaloperatordeRham}, we will compare
the Hodge--de Rham operator for $g$ to that of the locally defined
warped product metric
\begin{equation}
  \label{eq:local-product-metric}
    g_p =  
\lp
\begin{array}{ccc}
  dx & dy^i & x^{k} dz^\alpha
\end{array}
\rp
\lp
\begin{array}{ccc}
1 & 0 & 0 \\
0 & (h_{ij}) & 0 \\
0 & 0 & (\km_{\alpha \beta})
\end{array}
\rp 
\lp
\begin{array}{c}
  dx \\ dy^i \\ x^{k} dz^\alpha
\end{array}
\rp.
\end{equation}
We compute the Hodge-de Rham operator for $g_p$ now.  First of all,
the Hodge star operator of $g_p$, $\star_p$, acts on forms decomposed
as in \eqref{eq:ice-decomp} as
\begin{equation}
  \label{eq:star-warped-produect}
  \star_p = 
\lp
\begin{array}{cc}
  0 & (-1)^{\NN(b - q_Y)} \star_Y \star_Z \\
 (-1)^{q_Y + \NN(b - q_Y + 1)} \star_Y \star_Z & 0
\end{array}
\rp,
\end{equation}
where $q_Y$ denotes multiplication by the degree in $Y$ and $\NN$
denotes multiplication by the degree in $Z$, and $b = \dim Y$.    
Using \eqref{eq:exterior-derivative}, we see that the Hodge-de Rham
operator for $g_p$ acting on $k$-forms,
$\eth_p = d + (-1)^{k(n - k)}\star_p d \star_p$ satisfies
\eqref{eq:fulldeRham}.  We now use this to prove the proposition.

\begin{proof}[Proof of Proposition \ref{thm:normaloperatordeRham}]
A brief calculation shows that \eqref{eq:fulldeRham} holds with $P_1=P_2=E=0$ for the warped product metric $g_{p}$, defined locally on $M$ in \eqref{eq:local-product-metric}. Next, we claim that  the general case follows from that of a warped product metric $g_{p}$.  To see this, it suffices to show that if
$\star_{p}$ continues to denote the Hodge star operator for $g_{p}$, then the
pointwise norm
\begin{equation}
  \label{eq:19}
  \norm[g]{\star_{p} - \star} \le C x^{k}.
\end{equation}
Indeed, if this holds, then writing 
\begin{equation}
  \label{eq:20}
  \pm \delta = \star \lp dx \wedge \p_{x} \oplus d_{Y} \oplus d_{Z} \rp \star,
\end{equation}
we consider $\delta - \delta_{g_{p}}$, the latter being the dual of
$d$ with respect to $d_{g_p}$, which is given by $\pm 1$ times
\begin{equation}
  \label{eq:21}
  (\star dx \wedge \p_{x} \star - \star_{p} dx \wedge \p_{x} \star_{p})
  \oplus  (\star d_{Y} \star - \star_{p} d_{Y} \star_{p})\oplus( \star
  d_{Z} \star - \star_{p} d_{Z} \star_{p}).
\end{equation}
The operator $d_{Z}$, as an operator on sections of $\Lice$ satisfies
$d_{Z} = \sum a_{i} \p_{z_{i}}$ where $a_{i} $ are endomorphisms
satisfying $a_{i} = O(x^{-k})$.   (Proof: $d_{Z} \alpha = \sum_{\alpha}(x^{k}dz_{\alpha}) \wedge
x^{-k}\p_{z_{\alpha}}\alpha$ and $x^{k} dz_{i}$ is a bounded endomorphism
since $x^{k}dz_{i}$ has unit length.)  By the same rationale, $dx
\wedge \p_{x}$ and $d_{Y}$ are differential operators with bounded
coefficients.  By  \eqref{eq:19}  it follows  that in the difference  
\begin{eqnarray*}
\delta-\delta_p&=&\pm(\ast d \ast- \ast_p d \ast_p)\\
&=&\pm \delta d(\ast-\ast_p) \pm(\ast-\ast_p) d(\ast-\ast_p)\pm (\ast-\ast_p) d\ast\\
&=&\pm\delta\ast (\ast-\ast_p)\pm (\ast-\ast_p)d (\ast-\ast_p)\pm (\ast-\ast_p)\ast\delta
\end{eqnarray*}
the terms coming from $d_Y$ and $\p_x$ derivatives have coefficients
bounded to order $x^k$, while those coming from $d_Z$ have
coefficients with bounded derivatives. Hence \eqref{eq:fulldeRham}  follows.

It remains to prove \eqref{eq:19}.  First note that, under the assumptions above,
if we let $g^{\otimes k,- \otimes l} = g \otimes \dots \otimes g
\otimes g^{-1} \otimes \dots \otimes g^{-1}$ denote the metric induced
by $g$ on $TM^{\otimes k} \otimes T^{*}M^{\otimes l}$, and similarly
for $g_{p}$, then
\begin{equation}
  \label{eq:23}
  \norm[g]{g^{\otimes k,- \otimes l} - g_{p}^{\otimes k,- \otimes l}}
  < C x^{k},
\end{equation}
where the norm is the one induced by $g$ on the tensor powers.
Indeed this is obvious for $l = 0$.  Moreover, $g = g_{p} + O(x^{k})$ 
implies $g^{-1} = g_{p}^{-1} + O(x^{k})$, and \eqref{eq:23} follows.
Now let $e_{i}$ be an oriented orthonormal basis of $1$-forms for $g$, and for
$I = (i_{1}, \dots, i_{k})$ denoting an ordered subset of $\set{1, \dots, n}$, let $e_{I} =
e_{i_{1}} \wedge \dots \wedge e_{i_{k}}$ be the corresponding $k$-form.  Then $\star e_{I} = e_{I^{c}}$ where $I^{c}$ denotes the
complement of $I$ in $\set{1, \dots, n}$ such that $e_{I} \wedge
e_{I^{c}} $ is positive.  Thus, we want to show that $\norm[g]{\star e_{I} -
\star_{p} e_{I} } < c x^{k}$.  For two ordered multi-indices $I, J$, letting
$\delta_{I,J} = 1$ if $I = J$ and zero otherwise, consider
\begin{equation}
  \label{eq:22}
  \begin{split}
    \la \star_{p} e_{I}, e_{J} \ra_{g_{p}} &= \la \la \star_{p} e_{I},
    e_{J} \ra_{g_{p}} \dVol_{g_{p}}, \dVol_{g_{p}} \ra_{g_{p}} \\
    &= \la e_{I} \wedge e_{J}, \dVol_{g_{p}} \ra_{g_{p}} \\
    &=  \pm \delta_{I^{c}, J}\la \dVol_{g}, \dVol_{g_{p}} \ra_{g_{p}} \\
    &= \pm \delta_{I^{c}, J} (1 + O(x^{k})),
  \end{split}
\end{equation}
so since $\la \star_{p} e_{I}, e_{J} \ra_{g} = \la \star_{p}
e_{I}, e_{J} \ra_{g_{p}} + O(x^{k}) = \pm \delta_{I^{c}, J} (1 +
O(x^{k})) = \la \star
e_{I}, e_{J} \ra_{g} + O(x^{k}),$ the bound in \eqref{eq:19} holds.\end{proof}

To see that Proposition \ref{thm:Hodge-Laplacian} holds, simply use
$\Delta = \eth^* \eth$ together with
Proposition \ref{thm:normaloperatordeRham}.

\subsection{Fiber harmonic forms}\label{sec:fibharmforms}  
We now discuss forms which are
approximately harmonic with respect to the family of metrics $\km$ induced
on the fibers $Z$ by an $\ice$-metric as in \eqref{eq:cuspedgemetric}.
We begin by working directly on the fiber bundle $\pi \colon \p M \lra Y$ with a submersion metric, i.e. a
metric $g^\p = \pi^* h_1 + \km$ (so $h_1$ is a Riemannian metric on
$Y$ and $\km$ is assumed only to be
non-degenerate when restricted to the fibers).  Then $\alpha \in
\Omega^q(\p M)$ is \textbf{fiber
  harmonic} if $\alpha \rvert_{\pi^{-1}(p)}$ is harmonic with respect
to $\km \rvert_{\pi^{-1}(p)}$ for every $p \in Y$, i.e.\ if it lies in
the kernel of $\Delta^{Z_y, \km_y}$.  One can obtain fiber harmonic
forms as follow.  Let $\wt{\alpha} \in \Gamma(Y; \Omega^q_{fib})$,
where $\Omega^q_{fib}$ is the bundle whose fiber over a point $p
\in Y$ is the space of $q$-forms on $\pi^{-1}(p)$.  Since a metric
$g^\p$ is present, there is a natural way to extend such a
$\wt{\alpha}$ to a differential form on $\p M$; consider the space
$H^q := L^2(\pi^*\Omega^q(Y)) \subset
L^2(\p M ; \Lambda^q)$ which is the $L^2$ closure of the space of
differential forms pulled back from $Y$, and note that there is a
unique differential form $\wt{\alpha}_V$ on $\p M$ which is at
every point orthogonal to $H^q$ and which pulls back to $\wt{\alpha}$ on
each fiber.  (This is analogous to being a
horizontal vector field; vertical vector fields are defined in
differential terms and the horizontal vector fields are defined with
the metric.  Indeed, being a vertical form here means being dual via
the metric to a vertical vector field).  Moreover, if $\wt{\alpha}$ is
harmonic on each fiber, we claim that $\alpha := \wt{\alpha}_V$ is fiber harmonic.
Namely,  choosing a local
trivialization of $\p M$ near $p \in Y$ and coordinates $(y, z)$, for
a family of $q$-forms $\wt{\alpha}$ on the fibers, the vertical extension satisfies   
\begin{equation}\label{eq:vertical-local}
\wt{\alpha}_V = \sum_{|A| = q} a^A dz_A + \sum_{|I| + |B| = q, |I| > 1}
b^{I, B} dy_I \wedge dz_B,
\end{equation}
with $\wt{\alpha} = \sum_{|A| = q} a^A dz_A$.  Choosing another local
trivialization$(\wt{y}, \wt{z})$ we have $z = z(\wt{y},
  \wt{z}), y = \wt{y}$, $dz_i = (\p
  z^i/\p \wt{z}^j )d\wt{z}^j + f^i dy_i$.  Thus the
  pullback of $\wt{\alpha}_V$ is the change of
  variables for differential forms on the fiber, on which
  $\Delta^{Z,y}$ acts invariantly, proving the claim.

Let $\ker(\Delta^Z)
\lra Y$ denote the bundle whose fiber above each $y$ is the kernel of $\Delta^{Z_y,
  \km_y}$.  This bundle has a grading by the form degree, 
$$
\mathcal{H}^q_{\p} := \ker(\Delta^Z \colon \Omega^q(Z) \lra \Omega^q(Z)),
$$
and by the above paragraph the vertical extensions of these forms also
forms a vector bundle
$\mathcal{H}^q_{\p ,V} \subset \Lambda^q(\p M)$, the sections of which
are fiber harmonic.  We now define the
\textbf{approximately fiber harmonic} differential forms
$\mathcal{H}$ to be the direct sum of the spaces
\begin{equation}
  \label{eq:fiber-harmonic-space}
\mathcal{H} = \oplus_{q = 0}^f \mathcal{H}^q, \mbox{ where }
\mathcal{H}^q :=   \pi^*\Lambda(Y) \wedge x^{kq} \mathcal{H}^q_{\p, V},
\end{equation}
or, in words, the sections of $\mathcal{H}^q$ form the space of differential forms on
$\mathcal{U} \simeq \p M \times [0, 1)_x$ pulled back from $Y$ wedged
with the vertical extentions of harmonic $q$-forms on the fibers,
thought of as living on $\mathcal{U}$ and weighted by $x^{kq}$ (so as
to make them unit size.)

The main point to note about sections of $\mathcal{H}$ in
\eqref{eq:fiber-harmonic-space} is that a form $\gamma \in \mathcal{H}$
satisfies that, locally over the base, we have
\begin{equation}
  \label{eq:8}
  \Delta^{Z, y} \gamma = O(x^k),
\end{equation}
where $\Delta^{Z, y}$ is the operator defined (again only locally over    
the base) in Proposition \ref{thm:Hodge-Laplacian}, and as usual the $O(x^k)$ bound is
a pointwise norm bound.  Indeed, this follows since by
\eqref{eq:vertical-local}, under a change of local
trivialization of the boundary fibration, we have
\begin{equation}\label{eq:local-trivialization-invariance}
x^k dz_i =
x^k (\p z^i/\p \wt{z}^j )d\wt{z}^j + O(x^k),
\end{equation}
so $x^{kq}$ times $\gamma$ is $O(x^k)$
times the $q$-form part of the pullback to the fiber, together with the fact that the
$q$-form part of the pullback is harmonic.

Below we will often work with forms which are merely polyhomogeneous
(as opposed to smooth).  These are sections $\gamma$ of
$\mathcal{A}_{\phg}(\Lice)$, defined rigorously in \eqref{eq:simbound}
below.  As described above, these have an expansion at $x = 0$
analogous to a Taylor expansion, but with non-interger and possibly
also $\log^p(x)$ factors.  In particular we will be forced by possibly
only polyhomogeneous regularity of the metric to work in the 
larger space $x^{s_0} \mathcal{A}_{\phg}(\mathcal{H}^q)$ defined to
be the subset of $\mathcal{A}_{\phg}(\Lice)$ such that
\begin{equation}\label{eq:phg-appro-fib-harm}
x^{s_0} \mathcal{A}_{\phg}(\mathcal{H}^q) \subset x^{s_0} C^\infty(M ;
\mathcal{H}^q) \oplus O(x^{s_0 + k}).
\end{equation}
This space contains, in particular, sections of $x^{s_0}
\mathcal{H}^q$, and also $\ice$-forms
$\gamma$ which are polyhomogeneous and can be written as $\wt{\gamma} = x^{s_0} \gamma_0 +
O(x^{s_0 + k})$, where $\gamma_0 \in \mathcal{H}^q$, but moreover
contains $x^{s_0} \gamma_1 + x^{s_0 + 1/2} \gamma_2 + O(x^{s_0 + k})$ for $\gamma_{i} \in
\mathcal{H}^q$ and indeed any polyhomogeneous $\ice$-form whose
expansion terms not bounded in norm by $x^{s_0 + k}$ have fiber
harmonic coefficients.  Thus, $\gamma
\in x^{s_0} \mathcal{A}_{\phg}(\mathcal{H}^q)$ implies that locally over the base,
\begin{equation}
\Delta^{Z, y} \wt{\gamma} = O(x^{s_0 + k}).\label{eq:fiber-harmonic-error-1}
\end{equation}

It is in fact possible to define an operator $\Delta_{fib}$ on certain forms which is
an invariant version of the locally defined $\Delta^{Z, y}$, for example on smooth sections of
$\Lice(M)$, if one notices that a section $\alpha \in C^\infty(M ;
\Lice(M))$ defines, by restriction to the boundary, a smooth section
of $\Lambda^* Y \otimes \Omega^*_{fib} \lra Y$.  Indeed, this follows
since \eqref{eq:local-trivialization-invariance} and local computation produces a form on $Y$
with values in $\Omega^*(Z)$.  Letting $\wt{\alpha}$ denote this
section, one can then define $\alpha$ to
be $\beta_1 = x^{k\NN}(\Delta^{Z, y} \wt{\alpha})_V$, the vertical extension of
the fiber-wise Laplacian applied to the $\Omega^*(Z)$ factor of
$\wt{\alpha}$, and weighted by $x^{k\NN}$ to make it a smooth section
of $\Lice(M)$.  Then $\beta_1 - \Delta^{Z, y} \alpha = O(x)$ locally
over the base.  One can in fact iterate this to find a section
$\Delta_{fib} \alpha := \beta$ such that $\beta - \Delta^{Z, y} \alpha = O(x^k)$, since if $x
\alpha'$ is a smooth section then letting $\wt{\alpha}'$ denote its
value (as an $\Omega^*(Z)$-valued form on $Y$) the $\ice$-form
$\beta_2 = x \cdot x^{k\NN}(\Delta^{Z, y} \wt{\alpha}')_V$, then
$(\beta_1 + \beta_2) - \Delta^{Z, y} \alpha = O(x^2)$, and this can be iterated up to the
error $O(x^k)$ (and since all steps introduce $O(x^k)$ errors, no
better.)  We will let $\Delta_{fib} \alpha = \beta$.

We will also need to ask when a, say, smooth $\ice$-form can be written
as $\Delta_{fib}$ of another form, or equivalently, when, given a
$\ice$-form $\alpha$, there exists a $\Lice$-form $\gamma$ such that
locally over the base $\alpha - \Delta^{Z, y} \gamma = O(x^k)$.  A
related question is whether there is an invariant definition of
projection onto the space of fiber harmonic forms.  Indeed, given
$\alpha$ a smooth section of $\Lice(M)$, again considering the
$\Omega^*_{fib}$ valued form on $Y$, $\wt{\alpha}$, over the boundary,
if $\Pi^y$ denotes the projection onto the kernel of $\Delta^{Z, y}$,
we can consider $x^{k\NN}(\Pi^{y} \wt{\alpha})$, and again can iterate
this $k$-steps down the Taylor expansion of $\alpha$ to obtain a
projection $\Pi^\mathcal{H}$ onto the fiber harmonic forms.  In
particular, if $\alpha \in x^{s_0} \mathcal{H}_{phg}$, the definition
of $\Pi^{\mathcal{H}}$ can be extended to
\begin{equation}\label{eq:real-fiber-harmonic-projection}
\Pi_{\mathcal{H}} \colon x^{s_0}L^2(\Lice) \lra x^{s_0}\mathcal{H}, \quad
\Pi_{\mathcal{H}} = \Pi_{\mathcal{H}}^{loc} + O(x^k),
\end{equation} 
where $\Pi^{loc}_{\mathcal{H}}$ is the projection onto $\ker(\Delta^{Z, y})$
defined locally over the base,
so that, again locally over the base we have
\begin{equation}
  \label{eq:harmonic-projection}
  \Pi^{\mathcal{H}} d^Z, \Pi^{\mathcal{H}} \delta^Z,
  d^Z \Pi^{\mathcal{H}},  \delta^Z \Pi^{\mathcal{H}}  \in x^k \Diffb^1(M),
\end{equation}
and thus since $\Delta^{Z, y} = d^Z \delta^Z + \delta^Z d^Z$,
\begin{equation}
  \label{eq:harmonic-projection-2}
  \Pi^{\mathcal{H}} \Delta^{Z, y} \Pi^{\mathcal{H}} \in x^{2k} \Diffb^1(M),
\end{equation}
where $\Diffb^1$ is defined in \eqref{eq:errors-in-operator2}.
Moreover, it is straightforward to show that if $\alpha$ is a smooth section
of $x^{s_0} (\Lice(M))$ then 
\begin{equation}
  \label{eq:solvability}
  \Pi^{\mathcal{H}} \alpha = O(x^{s_0 + k}) \implies \exists \gamma
  \mbox{ such that } \Delta^{Z, y} \gamma - \alpha = O(x^{s_0 + k}),
\end{equation}
this holding locally over the base, with the form
$\gamma$ defined globally.

 \section{The heat kernel}\label{sec:heat-kernel}

In this section we construct a manifold with corners
$\Mheat$ as in \eqref{eq:singularmodelblowdown} together with a
fundamental solution to the heat equation which is a polyhomogeneous conormal distribution on $\Mheat$
with prescribed leading order terms in its asymptotic expansions at
the various faces  (see Theorem \ref{thm:heatkernel}).  To do so,
after the construction of $\Mheat$, we perform a parametrix
construction and then use this parametrix to obtain
the fundamental solution itself via a Neumann series.

\subsection{Heat double space}\label{sec:heat-double-space}

The space $\Mheat$ is obtained by performing
three consecutive inhomogeneous radial blowups of $M \times M \times
[0, \infty)_t$.  Such blowups create,
from a given manifold with corners $X$ and some other data including a
submanifold $N$, another manifold with corners $[X; N]_{inhom}$, which is
diffeomorphic to the complement of $N_+(N)$.  Here $N_+(N)$ denotes the  
inward-pointing normal bundle of $N$ which we think of as an
open neighborhood of $N$ in $X$; thus $[X; N]_{inhom}$ is a manifold with corners
with one more boundary hypersurface (bhs) than $X$.  
It comes together with a blowdown map $\beta \colon [X; N]_{inhom} \lra X$
which is a diffeomorphism of the interiors and satisfies that $C^\infty([X; N]_{inhom})$ contains $\beta^*
C^\infty(X)$ properly, i.e.\ there are functions which are not smooth
on $X$ but nonetheless pull back to smooth functions on the blown up
space.  

In more detail, the submanifold $N$ is a
`p-submanifold,'  meaning that there are local coordinates $(x, x', y,
y') \in (\mathbb{R}^+)^k_{(x, x')} \times \mathbb{R}^{n - k}_{(y,
  y')}$ with $x \in (\mathbb{R}^+)^{p + 1}, y \in \mathbb{R}^q$ in
which $N$ is defined (locally) by $x = 0, y = 0$.
The space $[X, N]_{inhom}$ with homogeneities $x_0 \sim  x_1^{\alpha_1} \sim
\dots \sim x_p^{\alpha_p} \sim y_1^{\beta_1} \sim \dots \sim
y_q^{\beta_q}$ with $\alpha_j, \beta_k \in \mathbb{N}$, $1 \le \alpha_1 \le
\dots \le \alpha_p$, $\beta_1 \le \dots \le \beta_q$ is a manifold
with corners whose set of boundary hypersurfaces
contains that of $X$ naturally, and $[X, N]_{inhom}$  
has one new boundary hypersurface, which we call $\nff$ for `new face'.
Assuming for the moment that $\alpha_p \ge \beta_q$ and also that $\alpha_p |
\alpha_j, \alpha_p | \beta_k$ for all $j, k$, the
space $\mathcal{M}([X, N]_{inhom})$ is by definition the set $X
\setminus N \cup \Gamma$  where $\Gamma$ is the set of
paths $\gamma(s)$ in  $X$ with $\gamma(0) \in N$ satisfying that $x_p
= a_p(s) s$ for smooth non-vanishing $s$, and $x_j = a_j(s)
s^{\alpha_p/\alpha_j}, y_k = b_k(s) s^{\alpha_p/\beta_k}$ for smooth
$a_j, b_k$, all other coordinates being smooth functions of $s$,
modulo the equivalence relation $\gamma_1  \sim \gamma_2$ if the
coordinate functions agree to order higher then the stated vanishing
order (e.g.\ $x_j(\gamma_1(s)) - x_j(\gamma_2(s)) = O(s^{(\alpha_p/
  \alpha_j) + 1})$.  The space $X
\setminus N \cup \Gamma$ is naturally isomorphic to $X
\setminus N \cup N^+(N)$ and can be given a smooth structure so that
the following polar coordinates are smooth and valid on a collar neighborhood
of the introduced face $\nff$,
\begin{equation*}
  \begin{split}
    \rho_{\nff} &= (x_0 + x_1^{\alpha_1} + \dots + x_p^{\alpha_p} +
    |y_1|^{\beta_1} + \dots + |y_q|^{\beta_q})^{1/\alpha_p}, \\
   \phi_{\nff} &= (\frac{x_0}{\rho_{\nff}^{\alpha_p}},
    \frac{x_1}{\rho_{\nff}^{\alpha_p/\alpha_1}}, \dots, \frac{y_q}{\rho_{\nff}^{\alpha_p/\beta_q}} ).
  \end{split}
\end{equation*}
A full set of (polar) coordinates is then $(\rho_{\nff}, \phi_{\nff},
x', y')$

For a detailed definition of such spaces we refer to Melrose's
work \cite[Chapter
5]{damwc} which contains a more general construction which does not assume that
one has in particular a fixed extension for the manifold $N$ away from the
boundary, (whereas here we fix once and for all a boundary defining
function $x$ as in \eqref{eq:tubular}, which will give all the desired
extensions below).  See also \cite{Grieser-Hunsicker, Kottke-Melrose}.

	\begin{enumerate}
		\item  We first blow up the fiber diagonal in the
                  corner.  This is the subset of $\p M \times \p M
                  \times \{ 0 \} \subset M \times M \times
[0, \infty)_t$ consisting of points $(p, q, 0)$ with $\pi(p) = \pi(q)$
where $\pi$ is the projection of the fibration $\p M$ onto its base.  If
local coordinates $(x, y, z)$ are chosen as above, this set can be written
                  \begin{equation}
                    \label{eq:first-blowdown}
                    \mathcal{B}_0 := \set{x = \wt{x} = t =
  \dist_h(y,\wt{y}) = 0},
                  \end{equation}
where $\dist_h(\bullet, \bullet')$ is the distance function on the
base $(Y, h)$.  In fact, $\mathcal{B}_0$  is naturally isomorphic to $\diag_{\fib}(\p M)
\times \{ 0 \}$ where $\diag_{\mbox{fib}}(\p M)$ is the ``fiber
diagonal,'' i.e.\ the fiber product of $\p M \times_{\fib} \p M$.
We let
\begin{equation}\label{eq:firstmodel}
\Mheata := [M \times M \times [0, \infty)_{t} ; \mathcal{B}_0]_{inhom},
\end{equation}
with $t \sim x^{2} \sim \wt{x}^{2} \sim |y - \wt{y}|^{2}$.        
To be precise, $\Mheata$ is the parabolic blowup in time of the set
$\mathcal{B}_0$ as defined in \cite[Chapter 7]{tapsit}. In particular there is a
blowdown map $\beta_1 \colon \Mheata \lra M^2 \times [0, \infty)_t$,
and polar coordinates on $\Mheata$ near $\beta_1^{-1}(\mathcal{B}_0)$ (once
coordinates $y, z$ are chosen on $\p M$) are given by   
\begin{equation} 
  \label{eq:polarfirstmodel}
  \begin{split}
    \rho &= \lp t + x^{2} + \wt{x}^{2} + |y - \wt{y}|^{2}\rp^{1/2}, \\
    \phi &= \lp \frac{t}{\rho^{2}} , \frac{x}{\rho},
    \frac{\wt{x}}{\rho}, \frac{y - \wt{y}}{\rho}\rp \\
    &= (\phi_{t}, \phi_{x}, \phi_{\wt{x}}, \phi_{y}),
    \mbox{ along with } \wt{y}, z, \wt{z}.
  \end{split}
\end{equation}
The set $\set{\rho = 0}$ is a
boundary hypersurface on $\Mheata$ introduced by the blow up; we call it
$\fff$; we will see that only the
projection of the heat kernel onto the zero mode in $Z$
is relevant at the face $\fff$.  Letting $s = x/\wt{x}$, the interior of $\fff$ is the total
space of a fiber bundle over $Y \times (0, \infty)_s$,  which is the fiber product $\p M
\times_{\fib} \p M \times_{\fib} TY \times
\mathbb{R}_{t'} $ where $t'$ is a rescaled time variable (see
\eqref{eq:frontfront} below).  Indeed, the map from $\fff$ to the base
$Y$ is simply $\beta_1 \rvert_{\fff}$

		\item  The preceding blow up does not resolve the
                  term $\frac{t}{x^{2k}} \Delta^{Z,y}$ arising from
                  \eqref{eq:Hodge-Laplacian-decomp1}.  To
                  accomplish this, we blow up the subset of $\fff$
                  defined in polar coordinates by
                  \begin{equation}\label{eq:second-blowdown}
                  \mathcal{B}_1 := \set{ \rho = 0, \phi_t =
                    \phi_y = 0, \phi_x = \phi_{\wt{x}} },
                \end{equation}
i.e.\ by $\rho = 0, \phi = (0, 1/\sqrt{2},
1/\sqrt{2}, 0)$,
inhomogeneously so that 
                  near the new face, $\ff$, the function $t/x^{2k}$ is
                  smooth, and furthermore so that $t \p_x^2$ is
                  non-degenerate, the latter condition being satisfied if $(x -
                  \wt{x}) / \sqrt{t}$ is smooth up to the interior of
                  $\ff$.  Near $\mathcal{B}_1$ we can use projective coordinates 
                  \begin{equation}\label{eq:frontfront}
                  \wt{x}, \quad s = x/\wt{x} , \quad \eta = \frac{y -
                    \wt{y}}{\wt{x}}, \quad t' = t /
                  \wt{x}^{2},
                \end{equation}
along with $\wt{y}, z, \wt{z}$.  Let
                  \begin{equation}
                    \label{eq:secondmodel}
                    \Mheatb := [\Mheata ; \fff \cap \ 
                   \mathcal{B}_1 ]_{inhom},
                  \end{equation}
with $t' \sim |\eta|^2   \sim (s - 1)^2 \sim \wt{x}^{2(k - 1)}$,
so we have polar coordinates near
$\ff$ given by 
\begin{equation}
  \label{eq:polarsecondnmodel}
  \begin{split}
    \overline{\rho} &= \lp  (t/\wt{x}^2) + \wt{x}^{2(k - 1)}  + (s - 1)^2
    + (|y - \wt{y}|/\wt{x})^{2} \rp^{1/2(k-1)}, \\
    \overline{\phi} &:= (\overline{\phi}_{t},\overline{\phi}_{\wt{x}}, 
    \overline{\psi}_{x}, \overline{\psi}_{y}) = \lp 
    \frac{t}{\wt{x}^2\overline{\rho}^{2(k-1)}}, \frac{\wt{x}}{\overline{\rho}} ,
    \frac{x - \wt{x}}{\wt{x}\overline{\rho}^{(k-1)}},
     \frac{y - \wt{y}}{\wt{x}\overline{\rho}^{(k-1)}}
    \rp  \mbox{ along with }
    \wt{y}, z, \wt{z}.
  \end{split}
\end{equation}
Let
            \begin{equation}\label{eq:intermediate-blowdown-double}
\beta_{2} \colon
            \Mheatb \lra M \times M \times [0,
\infty)_{t}
\end{equation}
denote the blowdown map.  Then, similar to the setup at $\fff$, if we
define $\sigma = (x - \wt{x})/\wt{x}$, the interior of $\ff$ is a bundle
over $Y \times \mathbb{R}_\sigma$ whose fiber over $p \in Y$ is isomorphic to $T_p Y \times Z^2
\times \mathbb{R}_{\wt{T}}$ for $\wt{T}$ the
rescaled time variable below.

See Remark \ref{thm:explanation} below for further discussion of the
need for the two distinct blown up faces $\ff$ and $\fff$.
	
	\item Finally, we blow up the time equals zero diagonal,
          $\mathcal{B}_2 := \cl(\beta_{2}(\diag(M^\circ) \times \set{t = 0}))$, where
          $\cl$ denotes the closure, parabolically in time.
Note that $\mathcal{B}_2$  intersects the face $\ff$ at $\overline{\phi}= (1, 0 , 0, 0)$, so
            near the intersection, defining the functions
            \begin{equation}
              \label{eq:projectivesecondmodle}
              \wt{x}, \quad \sigma = \frac{s - 1}{\wt{x}^{k-1}} = \frac{x -
                \wt{x}}{\wt{x}^{k}}, \quad \wt{\eta} = \frac{y -
                \wt{y}}{\wt{x}^{k}}, \quad \wt{T} =
              \frac{t'}{\wt{x}^{2(k-1)}} = \frac{t}{\wt{x}^{2k}},
            \end{equation}
we have the projective coordinates
\begin{equation}
  \label{eq:projectivefff}
  \begin{split}
    \wt{x}, \wt{y}, \sigma, \wt{\eta}, \wt{T}, z, \wt{z}.
  \end{split}
\end{equation}
The full heat space is
\begin{equation}
  \label{eq:4new}
  \Mheat = [\Mheatb ; \mathcal{B}_2 ]_{inhom},
\end{equation}
with $\wt{T} \sim \sigma^{2} \sim (z - \wt{z})^{2}$.
The face $\tf$ introduced by the final blowup satisfies
\begin{equation}
  \label{eq:3}
  \tf^{\circ} \simeq \Tice(M),
\end{equation}
where $\Tice(M)$ is the incomplete cusp edge tangent bundle defined in \eqref{eq:ice-fields}.
Concretely, in coordinates $(x, y, z)$ if we set
\begin{equation}
  \label{eq:coordsfftf}
  \xi = \frac{x - \wt{x}}{\sqrt{t}},\quad \eta_{i} = \frac{y_{i} -
    \wt{y}_{i}}{\sqrt{t}},\quad \zeta_{\alpha} = \frac{z_{\alpha} - \wt{z}_{\alpha}}{\sqrt{t}}
  \ \wt{x}^{k},\quad \tau = \frac{\sqrt{t}}{\wt{x}^{k}},
\end{equation}
then $(x, y, z, \xi, \eta, \zeta, \tau)$ (or $(\wt{x}, \wt{y}, \wt{z}, \xi, \eta, \zeta, \tau)$)  form local coordinates near
the intersection of $\tf$ with $\ff$ and away from $t = 0$, and the
association $\xi \mapsto \p_{x}, \eta_{i} \mapsto \p_{y_{i}},
\zeta_{\alpha} \mapsto x^{-k}\p_{z_{\alpha}}$ induces the map.  
	\end{enumerate}

\begin{figure}[htbp]
\begin{center}
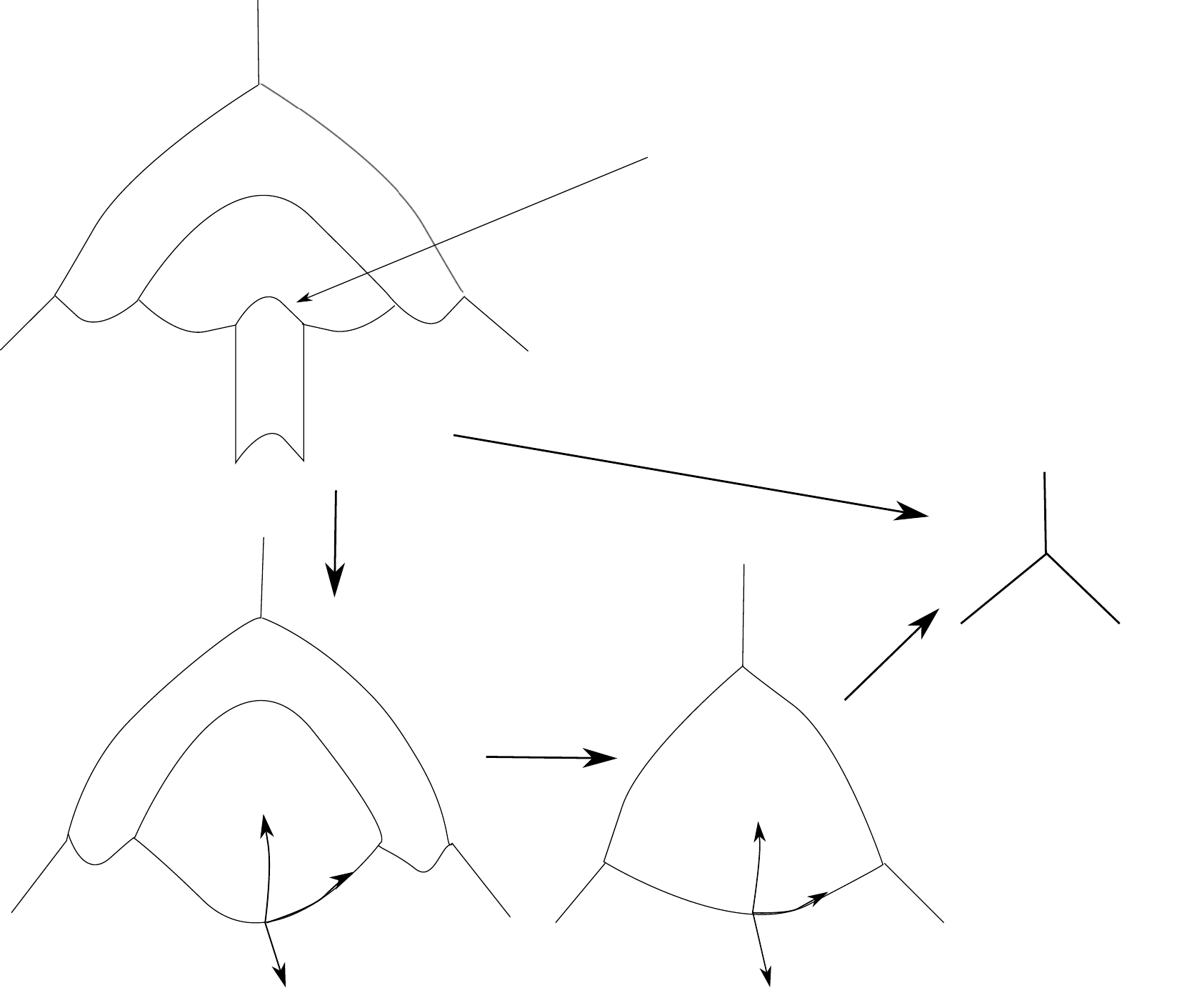
\caption{The heat double space (upper left) and the various
  intermediate blow ups together with their blow down maps.}
\label{fig:heat-double-space}
\end{center}
\end{figure}

\medskip

In summary, we have constructed a manifold with corners $\Mheat$,
depicted in Figure \ref{fig:heat-double-space}, with
a blowdown map $\beta$ as in \eqref{eq:singularmodelblowdown}, such
that $\Mheat$ has six total faces, three of them being the lifts of
the standard faces
\begin{equation}
  \label{eq:names-of-faces}
  \lf :=
  \cl(\beta^{-1}(\{ x = 0 \}^{\circ})), \ \rf := \cl(\beta^{-1}(\{ \wt{x} = 0 \}^{\circ})),  \   \tb := \cl(\beta^{-1}(\{
  t = 0 \}^{\circ})),
\end{equation}
and then the three faces $\fff, \ff,$ and $\tf$ constructed (in that
order) by radial blow up as described above.  Denoting the set of the six boundary
hypersurfaces by $\mathcal{M}(\Mheat) = \{ \lf, \rf, \tb, \fff, \ff, \tf
\}$, and given $\bullet \in \mathcal{M}(\Mheat)$, below we will let
$\rho_\bullet$ denote a boundary defining function for $\bullet$,
 so $\rho_\bullet \in C^\infty(\Mheat)$ satisfies that $\{\rho_\bullet = 0\} = \bullet$ and $d\rho_\bullet$ is
non-vanishing on $\bullet$ and $\rho_\bullet \ge 0$.  We can take
$\rho_{\ff} = \ov{\rho}$ as in \eqref{eq:polarsecondnmodel} and
$\rho_{\fff} = \rho / \overline{\rho}$.  Also note that 
\begin{equation}
  \label{eq:a-bdf}
  x = \rho_{\lf} \rho = \rho_{\lf} \rho_{\fff} \rho_{\ff}.
\end{equation}

In Theorem
\ref{thm:heatkernel} we will show that the heat kernel lifts to be
polyhomogeneous on $\Mheat$.

\subsection{Model operators}

The blown up space $\Mheat$ is useful in the construction of a
parametrix for the heat equation in part because the operator $\p_t +
\Delta$ (more specifically $t(\p_t + \Delta)$) behaves nicely at the
three introduced boundary hypersurfaces $\ff, \fff,$ and $\tf$; in
particular, the first steps in the parametrix construction involve
finding the right asymptotic behavior for the heat kernel so that the
heat equation \eqref{eq:fundamental-solution} is satisfied \emph{at
  least to leading order at} $\ff, \fff, $ and $\tf$.

Thus, we consider the operator $\Delta$ acting on the left spacial factor of $M
\times M \times [0, \infty)_t$, and the pullback $\beta^{*}(t (\p_{t} + \Delta))$
to $\Mheat$, and show that this restricts to an operator at $\tf$.  To be precise, fix a point $p \in M$
and consider the fiber $\tf_{p} = \pi^{-1}(p)$ where $\pi \colon \tf
\lra \diag_{M} = M$ is the projection onto the diagonal (or more
concretely it is $\beta \rvert_{\tf}$).  In the interior of $\tf$, i.e.\ away
from the intersection with $\ff$, this is standard \cite{tapsit}, so we
concern ourselves only with an open neighborhood of the intersection
of $\tf$ with $\ff$.
Indeed, working locally over the base in both spacial factors, consider a
subset of $\tf$ of the form $\{(\wt{x}, \wt{y}, \wt{z}, \xi, \eta,
\zeta, \tau) : (\wt{x}, \wt{y}, \wt{z}) \in \mathcal{O}\}$.
Now note
\begin{equation}
  \label{eq:tflaplace}
  \sqrt{t} \p_{x} = \p_{\xi},\quad \sqrt{t} \p_{y} = \p_{\eta},\quad
  \frac{\sqrt{t}}{x^{k}} \p_{z} = \p_{\zeta} + O(\tau),
\end{equation}
and
\begin{equation}
  \label{eq:tftdt}
  t\p_{t} = \frac 12 \lp \tau \p_{\tau} - R \rp,
\end{equation}
where $R = \xi \p_{\xi} + \eta \cdot \p_{\eta} + \zeta \p_{\zeta}$ is
the radial vector field on the fiber.

Letting $\pi_{L}, \pi_{R} \colon M \times M \times [0, \infty)_{t} \lra M$ denote
the projections onto the left and right $M$ factors, and $\End \lra M \times M$
the endomorphism bundle, whose fiber at $(p, q) \in M^\circ \times M^\circ$
is $\End(\Lambda_{q}^{*}; \Lambda^{*}_{p})$, for $ t > 0$, the heat
kernel \textit{restricted to the interior} will be a smooth section of this
bundle. To study the heat kernel at the boundary we use the incomplete
cusp edge forms and the corresponding endomorphism bundle $\End(\Lice)$ back to $M \times M \times [0, \infty)_{t}$
and then to $\Mheat$ via the blowdown $\beta$.  As usual, restricting
to the spacial diagonal gives the `little endomorphism' bundle 
$$
\End(\Lice) \rvert_{\diag(M)} \simeq \eend(\Lice)
$$
where $\eend(\Lice) \lra M$ is the endomorphism bundle
of the exterior algebra of $M$.   The restriction to the time face,
$\beta^{*}\End \rvert_{\tf}$, is isomorphic to the pullback of
$\eend(\Lambda^{*}_{p})$ to the tangent bundle of $M$ via the
projection map.

Writing $w = (x, y, z), \wt{w} = (\wt{x}, \wt{y}, \wt{z})$, sections of $\beta^{*}\End$ near the fiber
of $\tf$ over $p$ can be written
\begin{equation}
  \label{eq:10}
  \alpha = \sum_{I, J} a_{IJ} dw_{I} \otimes \p\wt{w}_{J},
\end{equation}
where $I, J$ run over all multi-indices and $\p \wt{w}_{J}$ is dual to
$dw_{J}$, and here $a_{IJ} = a_{IJ}(w, \wt{w}, t)$.
We claim that, writing sections
of $\beta^{*}\End$ near $\tf$ as sections of $\beta^{*} \End
\rvert_{\tf} \simeq \Lambda^{*}(M) \otimes \Lambda(M)$, 
\begin{equation}
  \label{eq:tffull}
  t(\p_{t} + \Delta) = (\frac 12 \lp \tau\p_{\tau} - R \rp +
  \sigma(\Delta))\otimes \id + O(\tau),
\end{equation}
where $\sigma(\Delta)$ is a constant coefficient differential
operator in the coordinates $\Mtan = (\xi, \eta, \zeta)$ depending on the
metric $g$ at $p = (\wt{x},\wt{y}, \wt{z})$, namely
\begin{equation}
  \label{eq:asdf}
  \sigma(\Delta) = (d_{\Mtan} + \star_{g(p)}^{-1} d_{\Mtan}  \star_{g(p)})^{2},
\end{equation}
acting on differential forms on the vector space $\Lice_{p}(M)$
with metric $g(p)$.   Indeed, let
$w$ be geodesic normal coordinates.  In the interior of $\tf$ away
from $\ff$ we have coordinates $\Mtan = (w - \wt{w})/\sqrt{t}, \wt{w},
\sqrt{t}$.  
Then $t(\p_{t} + \Delta) \alpha = (t(\p_{t} + \Delta)  \sum_{I,
  J} a_{IJ} dw_{I}) \otimes \p\wt{w}_{J}$ and moreover
\begin{equation}
  \label{eq:interiortmodel}
  \begin{split}
    \star dw_{I} \otimes \p\wt{w}_{J}  &= (\star dw_{I}) \otimes
    \p\wt{w}_{J}  \\
    &=  \pm dw_{I^{c}} \otimes \p \wt{w}_{J} + O(w -
    \wt{w}) \\
    &= \pm (d\wt{w}_{I^{c}} + \sqrt{t}d\Mtan_{I^{c}}) \otimes
    \p \wt{w}_{J} + O(\sqrt{t} \Mtan) \\
    &= (\star_{g(p)} d\wt{w}_{I}) \otimes \p \wt{w}_{J} + O(\wt{t}),
  \end{split}
\end{equation}
Similarly, letting the exterior derivative act on the left gives $d(a
dw_{I} \otimes \p\wt{w}_{J}) = (\p_{\Mtan_{i}} a d\wt{w}_{i} \wedge
\wt{w}_{I} ) \otimes \p\wt{w}_{J}.$

To motivate our construction of the heat kernel further, in a
neighborhood of $\tf$ let $\gamma$ be a section of $\End$
with the property that $\gamma\rvert \diag_{M} = \id$ on the form
bundles, and consider the section of $\beta^{*}\End$ on $\Mheat$ of the form
\begin{equation}
  \label{eq:ansatzbegins}
  K(p, q, t) = \frac{1}{(2 \pi t)^{n/2}} e^{- G(p, q)/2t} \gamma,
\end{equation}
such that
$G(p, q)$ satisfies that $\beta^{*}(G(p,q)/t)
\rvert_{\tf} = \norm[g]{\Mtan}^{2},$ that is, that $G(p, q)/t
$ restricts to the metric function on $\tf$.  Such a form $\gamma$ and
function $G$ can be
constructed but we neither prove nor use this; we merely use it
as motivation. It is straightforward to check that for any smooth
compactly supported form $\alpha$
\begin{equation}
  \label{eq:9}
\lim_{t \to 0}  \int_{M} K(p, q, t) \alpha(q) \dVol_{q} =
(4\pi)^{-n/2} \int_{M} e^{-\norm[g(p)]{\Mtan}^{2}/4} \alpha(p) \sqrt{g(p)}
|d\Mtan| = \alpha(p),
\end{equation}
and in fact the convergence takes place in $L^{2}$.  
(In fact, such an endomorphism $\gamma$ can be constructed easily by
taking the identity map on $\Lice$ over $M$, pulling this back via
$\beta$ to $\beta^{*} \End \rvert_{\tf}$ and extending off smoothly
in a neighborhood.  On each exterior algebra $\Lambda_{p}^{*}M$, the identity
can be expressed in terms of a basis $e_{i}$ with dual basis $e^{*}_{i}$ as $\sum_{I} e_{I}
\otimes e^{*}_{I}$.  In a neighborhood of $\tf \cap \ff$ we can take
the basis $e_{i}$ to be $dx$, $dy_{i}$, $x^{k}dz_{\alpha}$, i.e. we
can take the $e_{i}$ to be a basis of forms for $\Lice$ all the way
down to $x = 0$.)

Working in the coordinates \eqref{eq:coordsfftf}, since $t^{-n/2} = \tau^{-n}\wt{x}^{-nk}$, the Taylor expansion of the
heat kernel at $\tf$ should take the form
\begin{equation}
  \label{eq:tf-expansion}
  \frac{1}{(4\pi)^{n/2} \wt{x}^{kn} } \tau^{-n} \sum_{j = 0}^{\infty}
  \tau^{j} b_{j},
\end{equation}
where the $b_{j} = b_{j}(\wt{x}, \wt{y}, \wt{z}, \xi, \eta, \zeta)$
are sections of $\beta^{*} \End$, which we again write in a
neighborhood of $\tf \cap \ff$ as sections of $\Endbd$ pulled back to
the fibers of $\tf$.  Writing each $b_{j}$ as a finite sum of terms
of the form
\begin{equation}
  \label{eq:14}
  \alpha \otimes g^{-1}\beta,
\end{equation}
where $\alpha$ and $\beta$ are sections of $\Lice$ and $g^{-1}$
indicates taking the dual vector field, we see that
by \eqref{eq:tffull} we have, 
\begin{equation}
  \label{eq:12}
  \lp (\frac n2 - \frac 12 R  +
  \sigma(\Delta))  \otimes \id  \rp b_0 =   \lp (\frac n2 - \frac 12 R  +
 \lp 
 \begin{array}{cc}
   \Delta_{\Mtan} & 0 \\
   0 & \Delta_{\Mtan}
 \end{array}
\rp )
 \otimes \id  \rp b_0.
\end{equation}
The only solution to this equation which gives the identity operator
at $t = 0$ is 
\begin{equation}
  \label{eq:15}
  b_{0} = e^{- ||\Mtan||^{2} / 4} \times \id.
\end{equation}

 The procedure of solving for the
remaining $b_{j}$ is standard \cite[Chapter 7]{tapsit}; letting the
Laplacian act on this expansion we show that on each term $a_j$ it
acts fiberwise like a constant coefficient, second order elliptic
differential operator plus the radial vector field plus a constant corresponding to the order of
the term in the expansion. We have the following
\begin{lemma}\label{thm:tf-expansion}
  There exist sections $b_j$ of $\mathcal{A}_{\phg}(\End \rvert_{\tf})$ satisfying 
$$
b_j =
  e^{- ||\Mtan||^{2} / 4} \wt{b}_{j}(\wt{x}, \wt{y}, \wt{z},\xi, \eta, \zeta)
$$
  where $\wt{b}_j$ is a polynomial in $\xi, \eta, \zeta$ and a  polyhomogeneous
  section of $\End$ over $\tf$, such that for any
  distribution  $H'$ in $\mathcal{A}^{\phg}(\End)$ with asymptotic
  expansion near $\tf$ given by \eqref{eq:tf-expansion} we have
$$
t(\p_t + \Delta) H' = O(\tau^\infty),
$$
i.e.\  $t(\p_t + \Delta) H'$ vanishes to infinite order at the blown up
$t = 0$ diagonal, and, moreover, the asymptotic sum of the $b_j$
exists and yields such an $H'$.
\end{lemma}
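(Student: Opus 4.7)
The plan is to construct the $b_j$ recursively via transport equations along $\tf$, and then to assemble them via Borel summation into the required distribution $H'$. Using the model form \eqref{eq:tffull} of $t(\p_t + \Delta)$ near $\tf$ together with $\tau \p_\tau \tau^{j-n} = (j-n)\tau^{j-n}$ and the fact that the $b_j$ are independent of $\tau$, substitution of the ansatz $(4\pi)^{-n/2}\wt{x}^{-kn}\tau^{-n}\sum_{j\geq 0}\tau^j b_j$ into $t(\p_t + \Delta)$ and matching of powers of $\tau$ yields a sequence of equations
\begin{equation*}
\left(\sigma(\Delta) - \tfrac{1}{2}R + \tfrac{j-n}{2}\right) b_j = F_j,
\end{equation*}
where $F_0 = 0$ and $F_j$ is determined by $b_0,\dots,b_{j-1}$ together with the $O(\tau)$ correction terms in \eqref{eq:tffull}; these corrections contribute to $F_j$ only through $b_i$ with $i<j$, since each correction raises the order in $\tau$ by at least one.

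First I would verify the base case directly: $b_0 = e^{-\|\Mtan\|^2/4}\otimes \id$ solves the $j=0$ equation and normalizes to yield the identity in the sense of \eqref{eq:9}. For the inductive step I would seek $b_j$ of the form $e^{-\|\Mtan\|^2/4}\wt{b}_j(\wt{x},\wt{y},\wt{z},\Mtan)$ with $\wt{b}_j$ polynomial in $\Mtan$. Conjugating the transport operator by the Gaussian converts it into a constant-coefficient operator on polynomials, closely related to a shifted harmonic oscillator, which preserves polynomial degree and, in view of the strictly positive shift $\tfrac{j-n}{2}$ for $j$ large and the direct diagonalization on Hermite polynomials for smaller $j$, is invertible on polynomials of any fixed degree except for the constant mode in degree zero already absorbed by $b_0$. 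Since $F_j$ inherits the form $e^{-\|\Mtan\|^2/4}\times(\text{polynomial in }\Mtan)\times(\text{polyhomogeneous in }(\wt{x},\wt{y},\wt{z}))$ from the inductive hypothesis and from the polyhomogeneity of the metric, this produces $b_j \in \mathcal{A}_{\phg}(\End\rvert_{\tf})$ of the form stated in the lemma.

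Having constructed all $b_j$, standard Borel summation in the $\tau$ variable fiberwise over $\tf$ produces a section $H' \in \mathcal{A}_{\phg}(\End)$ whose Taylor expansion at $\tf$ is the prescribed series \eqref{eq:tf-expansion}. By construction, truncating at order $J$ yields an error $t(\p_t + \Delta)H'$ vanishing to order $\tau^{J-n+1}$ at $\tf$, so the full asymptotic sum gives $t(\p_t + \Delta) H' = O(\tau^\infty)$ as claimed. The main obstacle I anticipate is the verification that the shifted Hermite-type operator is invertible on polynomial-times-Gaussian sections of the endomorphism bundle, together with the bookkeeping needed so that the $O(\tau)$ corrections in \eqref{eq:tffull}, which arise both from the variation of the metric in $w$ and from the polyhomogeneous error $O(x^k,g_0)$ in \eqref{eq:cuspedgemetric}, organize cleanly enough that each $F_j$ depends on only finitely many of the preceding $b_i$ and inherits the correct polyhomogeneous structure.
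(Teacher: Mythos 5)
Your proof is correct and follows the same approach the paper intends but does not write out: the paper dismisses this lemma with a citation to Melrose's book and only computes $b_0$, so you have filled in the gap the authors left to the reader. Your derivation of the transport equation $(\sigma(\Delta) - \tfrac{1}{2}R + \tfrac{j-n}{2})b_j = F_j$ has the sign right (the paper's display \eqref{eq:12} shows $+\tfrac{n}{2}$, but the correct constant for $j=0$ is $-\tfrac{n}{2}$, as your Gaussian check \eqref{eq:15} confirms), and the recursive dependence of $F_j$ on $b_0,\dots,b_{j-1}$ via the $O(\tau)$ errors in \eqref{eq:tffull} is exactly the right bookkeeping.

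One small inaccuracy worth correcting in the invertibility step. You invoke ``the strictly positive shift $\tfrac{j-n}{2}$ for $j$ large'' and a separate diagonalization for smaller $j$, but in fact after conjugating by $e^{-\|\Mtan\|^2/4}$ the dimension cancels: one computes
\begin{equation*}
e^{\|\Mtan\|^2/4}\left(\sigma(\Delta) - \tfrac{1}{2}R + \tfrac{j-n}{2}\right)e^{-\|\Mtan\|^2/4}\,p = \left(\tfrac{1}{2}R + \tfrac{j}{2} - \Delta_0\right)p,
\end{equation*}
where $\Delta_0=\sum\p_{\Mtan_i}^2$. So the conjugated operator does not preserve polynomial degree (the $\Delta_0$ term lowers it by two); rather it is triangular with respect to the degree filtration with diagonal entries $\tfrac{m+j}{2}$ on degree-$m$ polynomials. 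These are strictly positive for every $j\geq 1$ uniformly, so no case split between large and small $j$ is needed, and the kernel for $j=0$ is exactly the constants, consistent with the normalization of $b_0$. With that correction, the induction closes and the Borel summation step is standard.
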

The existence of a distribution $H'$ as in Lemma \ref{thm:tf-expansion} is only a first step
in constructing a parametrix for the heat kernel.  We will discuss the
rest of the process in \textsection\ref{sec:parametrix-construction}.

A useful double check of the order of blow up of the heat kernel at $\ff$ is
        the following.  Near $\ff \cap \tf$ we have
		\begin{align*} 
			\delta(x - \wt{x})   \delta(z - \wt{z})
                        \delta(y - \wt{y}) &= \delta(\xi \tau
                        \wt{x}^{k}) \delta(\eta \tau \wt{x}^{k})
                        \delta(\zeta \tau)
                        \\
			&=  \frac{1}{\tau^{n} \wt{x}^{(n - f)k}}\delta(\xi ) \delta(\eta)\delta(\zeta).
		\end{align*}
Since $ \operatorname{Id} = \lim_{t \searrow 0}H \dVol_g \sim \lim_{t \searrow 0} H \wt{x}^{kf}
d\wt{x}d\wt{y}d\wt{z}$, we confirm that $H$ should have order $-nk$ at
$\ff$.  In fact, we can deduce more; considering $\wt{x}^{kn} H
\rvert_{\ff}$, on the interior of $\ff$ we can use coordinates in
\eqref{eq:projectivefff}, we get that 
\begin{equation}
  \label{eq:5}
 \delta(x - \wt{x})  \delta(y - \wt{y})  \delta(z - \wt{z}) =
 \wt{x}^{-(n - f)k} \delta(\sigma)  \delta(\wt{\eta})  \delta(z - \wt{z}),
\end{equation}
which means that, on the face $\ff$, we expect that the restriction
$\wt{x}^{nk} H \rvert_{\ff}$ will be given by $\delta(\sigma)
\delta(\wt{\eta})  \delta(z - \wt{z})$ \textit{at least as the time
  variable} $\wt{T} = t / \wt{x}^{k}$ \textit{goes to zero}, as that
is the region in which the action of $H$ is definitively approximated
by the identity.  On the other hand, $\wt{x}$ commutes with the heat
operator $\p_{t} + \Delta$.  As we will see in
\eqref{eq:modelheatsecond}, $t(\p_{t} + \Delta)$ restricts to an
operator on $\ff$ and defines a fiber-wise heat type operator on $\ff$, so we expect to have
\begin{equation}
  \label{eq:ff-reason}
  t(\p_{t} + \Delta) \rvert_{\ff} (\wt{x}^{nk} H) \rvert_{\ff} =  0.
\end{equation}
This, together with \eqref{eq:5}, implies that an ansatz for the heat
kernel should include that
\textbf{on each fiber of $\ff$,  $\wt{x}^{nk} H \rvert_{\ff} $ is the
  fundamental solution to the induced heat equation on the fiber},
more precisely, it is the solution which equals $\delta_{\sigma =
  0}\delta_{\wt{\eta} = 0} \operatorname{Id}_{Z}$ at time equals zero.  The induced
heat equations are translation invariant in $\sigma$ and $\wt{\eta}$,
thus induced by convolution operators, and the heat kernels we speak
of are the convolution kernels in $\sigma$ and $\wt{\eta} $.

As for the blow up at $\fff$, as we will see below, the
operator acts as a modified heat operator in $\p_{x}$ and
$Y$ on the bundle of
fiber harmonic forms, so in the coordinates in
\eqref{eq:frontfront} we will have
		\begin{align} \label{eq:fff-reason}
			\delta(x - \wt{x})   
                        \delta(y - \wt{y}) \delta(z - \wt{z}) 
                           =  \frac{1}{\wt{x}^{1 + \dimY}}\delta(s - 1) \delta(z - \wt{z})
                        \delta(\eta).
		\end{align}
In this case, $t(\p_{t} + \Delta)$ only admits a restriction to
$\fff$ on the fiber-harmonic forms $\Harmm$, on which $\delta(z -
\wt{z})$ becomes projections $\Pi_{Z, y}$ onto the kernel of
$\Delta^{Z, y}$.  Thus we expect that $\wt{x}^{1 + \dimY + kf} H
\rvert_{\fff}$ on fiber harmonic forms is given by the convolution kernel
for the heat kernel in $\eta$, times the dilation invariant kernel for
the heat kernel in $s$ with limit $\delta_{s = 1}$ at time $0$.

\medskip

We now compute the asymptotic behavior of $t(\p_t + \Delta)$ at the
faces $\ff$ and $\fff$.  First we will work at $\ff$.	
\begin{proposition}[The model problem on $\ff$]
The operator $ \Nff = t(\p_t + \Delta^g) \rvert_{\ff}$ acts fiberwise on $\ff$, and is 
expressed in the coordinates in \eqref{eq:projectivefff} by
\begin{equation}\label{eq:modelheatsecond}
  \begin{split}
    \Nff &= \wt{T} \lp \p_{\wt{T}} + \lp
    \begin{array}{cc}
      - \p_{\sigma}^{2} + \Delta_{\eta} +
      \Delta^{Z,y} & 0 \\
      0 &  - \p_{\sigma}^{2} + \Delta_{\eta} +
      \Delta^{Z,y}
    \end{array}
    \rp \rp
  \end{split}
\end{equation}
on the fiber above $y \in Y$.  Here $\Delta_{\eta}$ is the constant
coefficient Hodge-Laplacian on the tangent space $T_{y}Y$ with
translation invariant metric $h(y)$, and $\Delta^{Z, y}$ is the
Hodge-Laplacian on $(Z, \km_y)$
\end{proposition}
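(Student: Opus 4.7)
My plan is to prove the proposition by direct computation using Proposition \ref{thm:Hodge-Laplacian} together with the change of variables defining the face $\ff$. Concretely, projective coordinates valid in the interior of $\ff$ are $(\wt{x},\wt{y},\wt{z},\sigma,\wt{\eta},\wt{T},z)$ with $\wt{x}$ serving as a boundary defining function for $\ff$, where $\sigma=(x-\wt{x})/\wt{x}^{k}$, $\wt{\eta}=(y-\wt{y})/\wt{x}^{k}$, $\wt{T}=t/\wt{x}^{2k}$ (as in \eqref{eq:projectivefff}). The first observation is that along $\{\wt{x}=0\}$ we have $x=\wt{x}(1+\wt{x}^{k-1}\sigma)\to 0$ and $y=\wt{y}+\wt{x}^{k}\wt{\eta}\to\wt{y}$, so the fiber over a base point $y\in Y$ is parametrized by $(\sigma,\wt{\eta},\wt{T},z,\wt{z})$ and the metric data $h(\wt{y})$, $\km(\wt{y})$ become frozen at that base point. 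Thus the natural target of restriction is a fiberwise operator, and the content of the proposition is to identify it.

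Next I would pull back $t(\p_{t}+\Delta)$ to these coordinates and compute the leading part at $\wt{x}=0$. From $t=\wt{T}\wt{x}^{2k}$ with $\wt{x},\sigma,\wt{\eta}$ fixed, $t\p_{t}=\wt{T}\p_{\wt{T}}$. For the spatial Laplacian, using $\Delta_0$ from \eqref{eq:Hodge-Laplacian-decomp1} I compute each summand: $\p_{x}=\wt{x}^{-k}\p_{\sigma}$ yields $t(-\p_{x}^{2})=-\wt{T}\p_{\sigma}^{2}$; the drift term gives $t\cdot(kf/x)\p_{x}=kf\wt{T}\wt{x}^{k-1}(1+\wt{x}^{k-1}\sigma)^{-1}\p_{\sigma}$, which vanishes at $\wt{x}=0$ since $k\ge 3$; the fiber Laplacian contributes $tx^{-2k}\Delta^{Z,y}=\wt{T}(1+\wt{x}^{k-1}\sigma)^{-2k}\Delta^{Z,y}$, restricting to $\wt{T}\Delta^{Z,\wt{y}}$; and writing $\p_{y}=\wt{x}^{-k}\p_{\wt{\eta}}$ shows $t\Delta^{Y}$ restricts to $\wt{T}\Delta_{\wt{\eta}}$ with the constant-coefficient metric $h(\wt{y})$. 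The off-diagonal matrix entries of $\Delta_0$, which carry $x^{-k-1}$, acquire a factor $tx^{-k-1}=\wt{T}\wt{x}^{k-1}(1+\wt{x}^{k-1}\sigma)^{-k-1}$, vanishing at $\ff$; and the $x^{-2}$ diagonal matrix potential is killed by $tx^{-2}=\wt{T}\wt{x}^{2k-2}(1+\wt{x}^{k-1}\sigma)^{-2}$. Summing, the restriction of $t\Delta_{0}$ to $\ff$ is exactly the displayed matrix operator.

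It remains to verify that the perturbations $x^{-k}\wt{P}$ and $x^{-1}\wt{E}$ contribute nothing at the leading order. For this I would invoke \eqref{eq:errors-in-operator3}: each $\bl$-differential operator generator $x\p_{x},\p_{y},\p_{z}$ is bounded in $\wt{x}$-scale or better (the worst, $\p_{y}$, costs $\wt{x}^{-k}$), and the polyhomogeneous coefficients remain bounded. Multiplying by $t\cdot x^{-1}=\wt{T}\wt{x}^{2k-1}(1+O(\wt{x}^{k-1}))$ or $t\cdot x^{-k}=\wt{T}(1+O(\wt{x}^{k-1}))$ and combining with the worst derivative cost $\wt{x}^{-k}$ gives at least $\wt{x}^{k-1}$, which vanishes at $\ff$ since $k\ge 3$. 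The main arithmetic obstacle is simply the bookkeeping: ensuring that every generator-by-generator combination of the $\wt{P}$, $\wt{E}$ terms (which involve $d_{Z},\delta_{Z}$ as well) still leaves a strictly positive power of $\wt{x}$ after cancellation with the factors $t$ and the negative powers of $x$ in the decomposition. This reduces to checking that the smallest weight is $\wt{x}^{k-1}$ or better, and the hypothesis $k\ge 3$ is exactly what guarantees this margin. With all error terms killed and the principal part computed as above, the stated formula for $\Nff$ follows.
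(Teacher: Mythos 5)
The computation of $t\Delta_0\rvert_{\ff}$ is correct, and the blowup homogeneities are handled properly for that piece. However, the disposal of the correction terms $x^{-k}\wt P$ and $x^{-1}\wt E$ from Proposition \ref{thm:Hodge-Laplacian} has a genuine gap. You invoke only the $\Diff^m_{\bl,\phg}$ structure from \eqref{eq:errors-in-operator3}, i.e.\ that these operators are polynomials in $x\p_x,\p_y,\p_z$ with bounded coefficients, and you take "the worst derivative cost'' to be a single $\wt{x}^{-k}$ from $\p_y=\wt{x}^{-k}\p_{\wt\eta}$. But $\wt P = \delta_Z Q_1 + \cdots$ with $Q_i\in\Diff^1_{\bl}$ can a priori carry a $\p_z\p_y$ piece with \emph{bounded, non-vanishing} coefficient, for which
$$
t x^{-k}\,\p_z\p_y \;=\; \wt T\,\wt{x}^{k}(1+O(\wt{x}^{k-1}))\cdot\p_z\cdot \wt{x}^{-k}\p_{\wt\eta} \;=\; \wt T\,\p_z\p_{\wt\eta}\bigl(1+O(\wt{x}^{k-1})\bigr),
$$
which does \emph{not} vanish at $\ff$. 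Worse, $\wt E\in\Diff^2_{\bl}$ can a priori carry a $\p_y^2$ term with bounded coefficient, and then
$$
t x^{-1}\,\p_y^2 \;=\; \wt T\,\wt{x}^{2k-1}\cdot \wt{x}^{-2k}\p_{\wt\eta}^2 \;=\; \wt T\,\wt{x}^{-1}\p_{\wt\eta}^2,
$$
which blows up. So the claimed estimate "at least $\wt{x}^{k-1}$'' is simply false as a worst case under the bare $\Diff_{\bl}$ hypothesis; in particular the boundedness of $t(\p_t+\Delta)$ at $\ff$, needed for the restriction $\Nff$ to exist at all, is not established by your counting.

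What actually closes the argument is the extra vanishing of $\p_y$-coefficients in the error terms, and this comes from the metric structure \eqref{eq:cuspedgemetric}, not from the abstract $\Diff_{\bl}$ form of $Q_i$ and $\wt E$: since $g$ differs from the warped product $g_p$ of \eqref{eq:local-product-metric} by $O(x^k)$ in the $\ice$-frame (see \eqref{eq:19} and \eqref{eq:23}), the second-order part of $\Delta-\Delta_0$ is $O(x^k)$ in the $\ice$-vector fields $\p_x,\p_y,x^{-k}\p_z$, and the first-order part is $O(x^{k-1})$. Consequently the coefficient of $\p_z\p_y$ in $x^{-k}\wt P$ is $O(x^k)$, not $O(1)$, and the coefficient of $\p_y^2$ in $x^{-1}\wt E$ is $O(x^k)$, not $O(x^{-1})$. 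Once these factors are inserted, every correction term is $O(\wt T\,\wt{x}^{k-1})$ or better and your conclusion follows. The quickest repair is to bypass Proposition \ref{thm:Hodge-Laplacian} for this step and observe directly that under the scaling $t\sim \wt{x}^{2k}$, $x-\wt{x}\sim y-\wt{y}\sim \wt{x}^{k}$, the $\ice$-vector fields all scale as $\wt{x}^{-k}$, so $t\Delta_0\sim 1$ while the $O(x^k)$ metric error contributes $O(\wt{x}^{k})$; but either way the extra vanishing has to be named explicitly, not inferred from \eqref{eq:errors-in-operator3}.
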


The situation is more delicate at $\fff$.    As we will see in
Section \ref{sec:parametrix-construction}, near $\fff$, it will suffice to consider
$t(\p_t + \Delta)$ restricted to approximately fiber harmonic forms, (see Section \ref{sec:fibharmforms}).  Thus let
$\gamma  \in x^s
\mathcal{H}_{\phg}(\Lice)$ with the space on the right defined in \eqref{eq:phg-appro-fib-harm}, and thus by
assumption $\delta^Z  \gamma, d^Z \gamma$ are both $O(x^{s + k})$.
From \eqref{eq:Hodge-Laplacian-decomp} it follows that for such fiber
harmonic forms,
\begin{equation}
  \label{eq:Hodge-Lapl-on-fiber-harmonic}
  \begin{split}
    \Delta \gamma &= \wt{\Delta}_0 \gamma + x^{-2k} \Delta_Z \gamma +
    x^{-k} (\delta_Z Q_1 + d_Z Q_2) \gamma
    + O(x^{s - 1}) \\
  \end{split}
\end{equation}
where $\wt{\Delta}_0$ acts on forms decomposed as in
\eqref{eq:ice-decomp} as
\begin{align*}
  \wt{\Delta}_0 &=  -
  \p_{x}^{2} - \frac{kf}{x} \p_{x} + \Delta_{Y} 
  + \lp
  \begin{array}{cc}
    k\NN(1 - k(f - \NN)) x^{-2} & - 2k x^{-k - 1} d^{Z} \\
    - 2 k x^{-k - 1} \delta^{Z} & k(f - \NN)(1 - k \NN) x^{-2}
  \end{array} \rp.
\end{align*}
By fiber harmonicity, $x^{-2k} \Delta_Z \gamma = O(x^{s-k})$. Thus the two terms $x^{-2k}\Delta_Z $ and
$x^{-k} \delta_Z Q_1$ act on fiber harmonic forms as operators of
order $x^{-k}$, and thus in the heat operator $t(\p_t + \Delta)$ there are term behaving
like $t x^{-k}$ (on fiber harmonic forms) and
\textit{$t/x^{-k}$ is not a bounded function at $\fff$}!  On the other
hand, if we project back to the fiber harmonic forms, then by
\eqref{eq:harmonic-projection}-\eqref{eq:harmonic-projection-2} we kill
these terms; concretely, with $\Pi_{\mathcal{H}}$ the fiber harmonic
projector in \eqref{eq:real-fiber-harmonic-projection}, we have
\begin{equation}
  \label{eq:Hodge-Lapl-on-fiber-harmonic-2}
    \Pi_{\mathcal{H}} \Delta  \Pi_{\mathcal{H}} = \wt{\Delta}_0 + x^{-1} \wt{E}'
\end{equation}
where $\wt{E}' \in \Diff^2_{\bl, \phg}$ (see \eqref{eq:errors-in-operator2}),
and thus does not decrease the order of vanishing of polyhomogeneous distributions.  Defining
\begin{equation}
  \label{eq:regsingheat}
 P_{A, B} :=  - \p_{s}^{2} - \frac{A}{s}\p_{s} + \frac{B}{s^{2}}.
\end{equation}
and
  \begin{equation}
    \label{eq:alpha-beta-gamma}
    \alpha(\NN) := kf, \quad \beta(\NN) := k\NN(1 - k(f - \NN)), \quad \gamma(\NN) = k(f - \NN)(1 - k
    \NN),
  \end{equation}
we have the following.
\begin{proposition}[Heat operator on fiber harmonic forms at $\fff$]\label{eq:model-first-heat}
Restricted to the fiber harmonic forms $\mathcal{H}$ as defined through
\eqref{eq:fiber-harmonic-space},
\begin{equation}\label{eq:normal-operator-ff}
\Nfff :=  \Pi_{\mathcal{H}} t (\p_{t} + \Delta)
\Pi_{\mathcal{H}}  \rvert_{\fff}
\end{equation}
restricts to
the face $\fff$ in the coordinates \eqref{eq:frontfront} as
\begin{equation}
  \label{eq:modelheatfirst}
  \begin{split}
  &  \Nfff =  t' \lp \p_{t'} +
    \lp
    \begin{array}{cc}
     P_{\alpha({\NN}), \beta({\NN})} + \Delta_{\eta} & 0 \\
      0 & P_{\alpha({\NN}), \gamma({\NN})} + \Delta_{\eta} 
    \end{array} \rp \rp.
  \end{split}
\end{equation}
\end{proposition}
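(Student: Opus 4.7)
The plan is to pull back $\Pi_{\mathcal{H}} t(\p_t + \Delta)\Pi_{\mathcal{H}}$ to the projective coordinates $(\wt{x}, s, \eta, t', \wt{y}, z, \wt{z})$ of \eqref{eq:frontfront} and read off its restriction to $\fff = \{\wt{x} = 0\}$. The starting point is \eqref{eq:Hodge-Lapl-on-fiber-harmonic-2}, $\Pi_{\mathcal{H}} \Delta \Pi_{\mathcal{H}} = \wt{\Delta}_0 + x^{-1}\wt{E}'$, with $\wt{\Delta}_0$ given explicitly and $\wt{E}' \in \Diff^2_{\bl,\phg}$.

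Substituting $t = \wt{x}^2 t'$, $x = \wt{x} s$, $y = \wt{y} + \wt{x}\eta$ and computing with $\p_t = \wt{x}^{-2}\p_{t'}$, $\p_x = \wt{x}^{-1}\p_s$, and $\p_y = \wt{x}^{-1}\p_\eta$ at fixed $\wt{x},\wt{y}$, one finds
\begin{equation*}
t\p_t = t'\p_{t'}, \qquad t\,\p_x^2 = t'\,\p_s^2, \qquad t\,(kf/x)\p_x = t'\,(kf/s)\p_s,
\end{equation*}
and, after Taylor expansion of $h^{ij}(y)$ about $\wt{y}$, $t\,\Delta_Y = t'\,\Delta_\eta + O(\wt{x})$, where $\Delta_\eta$ is the constant-coefficient Laplacian on $T_{\wt{y}}Y$ with respect to $h(\wt{y})$. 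The diagonal zeroth-order entries of $\wt{\Delta}_0$ transform directly to $t'\,k\NN(1-k(f-\NN))/s^2$ and $t'\,k(f-\NN)(1-k\NN)/s^2$, matching $\beta(\NN)$ and $\gamma(\NN)$ in \eqref{eq:alpha-beta-gamma}. Assembling these pieces is exactly the block-diagonal operator displayed in \eqref{eq:modelheatfirst}.

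What then remains is to verify that every other contribution vanishes at $\wt{x} = 0$. The off-diagonal terms $\pm 2k x^{-k-1}d^Z$ and $\pm 2k x^{-k-1}\delta^Z$ in $\wt{\Delta}_0$, once flanked by $\Pi_{\mathcal{H}}$, lie in $x^{-1}\Diffb^1$ by \eqref{eq:harmonic-projection}, and multiplication by $t = \wt{x}^2 t'$ produces an overall prefactor $\wt{x}t'/s$ times a $\bl$-differential operator. The main obstacle is that, under the pullback, the $\p_y$-component of such $\bl$-operators (and of $x^{-1}\wt{E}'$) picks up $\wt{x}^{-1}\p_\eta$, potentially cancelling the $\wt{x}$ gain from $tx^{-1}$. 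To close this gap I would revisit the proof of Proposition \ref{thm:Hodge-Laplacian} and verify that, thanks to the $O(x^k)$ polyhomogeneous perturbation structure imposed in \eqref{eq:cuspedgemetric}, the coefficient of each monomial $(x\p_x)^i\p_y^\alpha\p_z^\beta$ in $\wt{E}'$ vanishes to order at least $x^{|\alpha|}$, and similarly that the $\p_y$-contribution to $\Pi_{\mathcal{H}} d^Z\Pi_{\mathcal{H}}$ (and its adjoint) carries an extra factor of $x$ beyond the $x^k$ supplied by \eqref{eq:harmonic-projection}. Granting these decay statements, every omitted contribution is $O(\wt{x})$, completing the identification of $\Nfff$ with the right-hand side of \eqref{eq:modelheatfirst}.
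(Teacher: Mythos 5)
Your route — pull $\Pi_{\mathcal{H}} t(\p_t + \Delta)\Pi_{\mathcal{H}}$ back through the projective coordinates \eqref{eq:frontfront} and restrict to $\{\wt{x}=0\}$, starting from the decomposition \eqref{eq:Hodge-Lapl-on-fiber-harmonic-2} — is exactly what the paper does in the discussion preceding the proposition, and your transformation of the diagonal entries of $\wt{\Delta}_0$ (the $t\p_t$, $t\p_x^2$, $t\,kf\,x^{-1}\p_x$, $t\Delta_Y$, and zeroth-order blocks) is correct. You also put your finger on the one genuine subtlety: the b-field $\p_y$ lifts to $\wt{x}^{-1}\p_\eta$, so for a $\p_y^2$-monomial in $\wt{E}'$ with a merely bounded coefficient the factor $tx^{-1} = \wt{x}\,t'/s$ is overwhelmed and the pullback would actually behave like $\wt{x}^{-1}$ at $\fff$. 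In other words, the bare statement $\wt{E}' \in \Diff^2_{\bl,\phg}$ in \eqref{eq:Hodge-Lapl-on-fiber-harmonic-2} is not, by itself, strong enough to yield the proposition — a point the paper's exposition elides.

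Where the proposal falls short of a proof is that it records this verification as something you ``would do'' rather than carrying it out, and that is where the content is. It does close: in the proof of Proposition \ref{thm:normaloperatordeRham}, the estimate \eqref{eq:19} shows the $d_Y$- and $\p_x$-contributions to $\delta - \delta_p$ carry $O(x^k)$ coefficients, so the $\p_y$-part of $\eth - \eth_0$ is $O(x^k)$; squaring, the $\p_y^\alpha$-coefficient of $\Delta$ outside of $\wt{\Delta}_0$ and $x^{-k}\wt{P}$ is $O(x^{k+|\alpha|-2})$ for $|\alpha|\ge 1$, hence $x^{-1}\wt{E}'$ has $\p_y^\alpha$-coefficient $O(x^{k+|\alpha|-2})$, which beats the threshold $O(x^{|\alpha|-1})$ needed for the pullback to vanish at $\fff$ as soon as $k \ge 1$ (and a fortiori under \eqref{eq:k_3}). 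Your second ``would verify'' is not actually needed: $d^Z$, $\delta^Z$ and the bundle endomorphism $\Pi_{\mathcal{H}}$ act only in the fiber $z$-variables, so $\Pi_{\mathcal{H}}d^Z\Pi_{\mathcal{H}}$ contains no $\p_y$-terms at all, and after multiplying by $t x^{-k-1}$ its $x^k\p_z$-type terms already pull back to $O(\wt{x})$. With those two points supplied, your argument establishes the proposition.
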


\begin{remark}\label{thm:explanation}
  Analysis of the fiber harmonic forms is necessary in particular
  because the structure of the operator $\Delta^g$ is such that, off
  of the fiber harmonic forms, the leading order term is $x^{-2k}
  \Delta^{Z, y}$, while restricted to the fiber harmonic foms the
  leading order term drops in order.  Indeed, if it weren't for the presence
  of the term $x^{-k} \wt{P}$ in \eqref{eq:Hodge-Laplacian-decomp},
  which presents complications in the analysis, on
  fiber harmonic forms $\Delta^g$ would be given by to leading order
  by $\wt{\Delta}_0$.  
  Indeed, the need for the two different regimes represented by the
  boundary hypersurfaces $\ff$ and $\fff$ is exactly this change in
  asymptotic order of the operator on and off the fiber harmonic
  forms.  Correspondingly, we will see below in the proof of Lemma
  \ref{thm:pre-ansatz}   that the operator $t(\p_t
  + \Delta)$ restricted to $\ff$ has a fundamental solution which
  vanishes at $\fff$ to infinite order \emph{off the fiber harmonic forms}.
\end{remark}

The heat equation
for the regular
singular ODEs in \eqref{eq:regsingheat} has been studied in detail.  
To such an operator there corresponds a pair of indicial roots given
by the order of vanishing of homogeneous solutions, specifically
$P_{A,B} (s^{\ell}) = 0$ if and only if
\begin{equation}
  \label{eq:indicialroots}
  \ell = \frac{-(A - 1) \pm \sqrt{(A - 1)^{2} + 4B}}{2}.
\end{equation}
The numbers $\ell$ give important information about the operator
$P_{A, B}$, in particular they give the order of vanishing of the
Green's function at $s = 0$.  The operators that will arise in our work are those in the matrices
in \eqref{eq:modelheatfirst}.  We define the indicial set
\begin{equation}
  \label{eq:indicial-roots}
  \begin{split}
    \Lambda &= \bigcup_{{\NN} = 1}^{f} \set{ \frac{-(\alpha - 1) \pm
        \sqrt{(\alpha- 1)^{2} + 4\beta}}{2}, \frac{-(\alpha - 1) \pm
        \sqrt{(\alpha- 1)^{2} + 4\gamma}}{2}} \\
& = \bigcup_{{\NN} = 1}^{f}  \set{ -(kf - 1)/2 \pm
        |k({\NN} - f/2) + 1/2|, -(kf - 1)/2 \pm
        |k({\NN} - f/2) - 1/2| }
  \end{split}
\end{equation}
Letting
\begin{equation}\label{eq:nu}
\nu^{2} = B + (\frac{A - 1}{2})^{2} > 0
\end{equation}
where $\nu > 0$, from \cite[Vol.~2, Eqn.~8.45]{taylor:vol2} there is a
fundamental solution $H_{A,B}(s, \wt{s}, t)$  
\begin{equation}
  \label{eq:modelheatpotentialfirst}
  (\p_{t} + P_{A,B}) H_{A,B}(s, \wt{s},t) = 0,\mbox{ and }
 H \to \id \mbox{ as } t \to 0, \mbox{ on } L^{2}(s^{A}ds).
\end{equation}
Indeed, one has the explicit formula
\begin{equation}
  \label{eq:modelheatpotentialfirst2}
  H_{A,B}(s, \wt{s}, t) = (s \wt{s})^{-(A - 1)/2}\frac{1}{2t}
  e^{-(s^{2} + \wt{s}^{2})/4t} I_{\nu}\lp \frac{s \wt{s}}{2t} \rp
\end{equation}
where $I_{\nu}$ is the modified Bessel function of
order $\nu$ of the first kind \cite[Chap.\ 9]{AS1964}.

As discussed below \eqref{eq:fff-reason}, at the face $\fff$ we expect
the heat
kernel to be of order $\wt{x}^{-1 - \dimY - kf}$.  Thus we expect to have
\begin{equation}
  \label{eq:18}
  0 = t (\p_{t} + \Delta) H = \frac{1}{\wt{x}^{1 + \dimY + kf}}
(t (\p_{t} + \Delta))  (\wt{x}^{1 + \dimY + kf} H),
\end{equation}
and since $\Pi_{\mathcal{H}} t (\p_{t} + \Delta) \Pi_{\mathcal{H}} $
defines a differential operator on section of $\mathcal{H} \otimes \ov{\mathcal{H}}^*$ restricted
to $\fff$, 
we include in our ansatz for the   
fundamental solution \eqref{eq:fundamental-solution}, and indeed prove
in Theorem \ref{thm:heatkernel} below, that there is a fundamental
solution $H$ satisfying that $\wt{x}^{1 + \dimY + kf}
H$ has a smooth restriction to $\fff$, and writing
\begin{equation}
  \label{eq:27}
  \NHfff  :=  (\wt{x}^{1 + \dimY + kf}H)\rvert_{\fff},  \mbox{ we have
    } N_{\fff}(t (\p_{t}
  + \Delta)) \NHfff = 0.
\end{equation}
Furthermore, again as discussed below \eqref{eq:fff-reason}, it is sensible to include in the ansatz for $H$ that
$\NHfff$ is the fundamental solution for the model operator $N_{\fff}(t (\p_{t}
  + \Delta))$, meaning specifically that $\NHfff$ is a section of
  the restriction of the sub-bundle $\End(\Harmm)$ to $\fff$ and
  is given using the fundamental solutions to the model heat equations
$H_{A,B}$ from
\eqref{eq:modelheatpotentialfirst}-\eqref{eq:modelheatpotentialfirst2}.
Specifically, we will have as an ansatz that $ \NHfff = \kappa_{\fff}
$, where
  \begin{equation}
    \label{eq:fffimposed}
\kappa_{\fff,y} := \lp
    \begin{array}{cc}
     H_{\alpha, \beta}(s, 1, t') & 0 \\
      0 & H_{\alpha, \gamma}(s, 1, t') 
    \end{array} \rp    (4\pi t')^{-b/2}e^{- |\eta|_{y}^{2}/4t'},
  \end{equation}
where $\alpha, \beta, \gamma$ are as in \eqref{eq:alpha-beta-gamma}, and in
particular continue to be \textit{operators} depending on the fiber
form degree $\NN$.
The distribution $\NHfff$ is polyhomogeneous on $\fff$, and the
leading order behavior at $s = 0$ satisfies that for $0< c \le t' \le
C < \infty$, for some smooth $a(t'), b(t')$,
\begin{equation}
  \label{eq:model-heat}
  \begin{split}
    H_{\alpha, \beta}(s, 1, t') \sim s^{-(kf - 1)/2} a(t')
      s^{\nu(\alpha, \beta)},
\quad H_{\alpha, \gamma}(s, 1, t') \sim s^{-(kf - 1)/2} b(t')
      s^{\nu(\alpha, \gamma)}
  \end{split}
\end{equation}
with $\nu$ as in \eqref{eq:nu}
\begin{equation}
  \begin{split}
    \label{eq:nu-in-terms-of-alphabeta}
    \nu(\alpha, \beta) = \left\{
      \begin{array}{cc}
        k(f/2 - {\NN})  - 1/2 & \mbox{ if }  {\NN} < f/2, \\
        k({\NN} - f/2)  + 1/2 & \mbox{ if }  {\NN} \ge f/2,
      \end{array}
    \right.  \\
 \quad \nu(\alpha, \gamma) = \left\{
      \begin{array}{cc}
        k(f/2 - {\NN})  + 1/2 & \mbox{ if }  {\NN} \le f/2, \\
        k({\NN} - f/2)  - 1/2 & \mbox{ if }  {\NN} > f/2,
      \end{array}
    \right.
  \end{split}
\end{equation}
and thus by \eqref{eq:model-heat} on $\fff$ in the region $0< c \le t' \le
C < \infty$, 
\begin{equation}
  \label{eq:first-model-asymptotics}
  \kappa_{\fff} = O(s^{\ov{\nu}}) \mbox{ where } \ov{\nu}(\NN) = 
\left\{ 
  \begin{array}{cc}
 -k{\NN} & \mbox{ if } {\NN} < f/2, \\
 - k{\NN}+ 1 & \mbox{ if } {\NN} =  f/2,  \\
 {- k(f - {\NN})} & \mbox{ if } {\NN} >  f/2. 
  \end{array}
\right. 
\end{equation}
In words, each $P_{\alpha, \beta}$ has two indicial roots, the 
order of $H_{\alpha,
  \beta}$ for fixed $\wt{s}, t > 0$ is the larger of these two, and $p$ is the \textit{smaller}
of the leading orders of $H_{\alpha, \beta}$ and $H_{\alpha, \gamma}$.

The behavior of the heat kernel at $\fff$ also shows what to expect at
the left face, the lift of $x = 0$.  There we should just have the
projection onto the fiber harmonic forms times the leading order
behavior of the $H_{\alpha, \beta}$ and $H_{\alpha, \gamma}$, acting
appropriately on $\Lice$, times the lifted heat kernel of the base
$Y$.  Indeed, we expect
\begin{equation}
  \label{eq:heat-kernel-left-face}
\Pi_\mathcal{H}  H \Pi_\mathcal{H} \simeq \kappa := \lp
    \begin{array}{cc}
     H_{\alpha(\NN), \beta(\NN)}(x, \wt{x}, t) & 0 \\
      0 & H_{\alpha(\NN), \gamma(\NN)}(x, \wt{x}, t) 
    \end{array} \rp    H_Y 
\end{equation}
where $H_Y$ is the heat kernel on $(Y, h)$ lifted to the tubular
neighborhood $\mathcal{U}$ in \eqref{eq:tubular} via the projection
and $\kappa$ acts on sections of the bundle of fiber harmonic forms
$\mathcal{H}$ with its grading by fiber form degree $\NN$.  (See
Section \ref{sec:fibharmforms}.)  In fact, with $\ov{\nu} = \ov{\nu}(\NN)$
the fiber degree dependent weight in \eqref{eq:first-model-asymptotics},
\begin{equation}
  \label{eq:kappa-section-of}
\kappa \in x^{\wt{\nu}(\NN)} C^\infty(M \times M ; \oplus_{\NN= 0}^f\mathcal{H}^{\NN} \otimes (\wt{\mathcal{H}}^{\NN})^*)
\end{equation}
where $\mathcal{H}$ and $\wt{\mathcal{H}}$, respectively, the pullbacks
of the fiber harmonic form bundle (defined on a neighborhood
$\mathcal{U}$ of the boundary) via the left and right projections of
$M \times M$.

As discussed below \eqref{eq:ff-reason}, on the face $\ff$, we expect that the heat kernel will have leading
asymptotic $\wt{x}^{-nk}$, so we expect and prove that
\begin{equation}
  \label{eq:27-ff}
  \NHff := (\wt{x}^{nk}H)\rvert_{\ff},  \implies N_{\ff}(t (\p_{t}
  + \Delta)) \NHff = 0.
\end{equation}
Again, we will set $\NHff$ equal to a fundamental solution to the heat
equation, namely, using the decomposition in
\eqref{eq:modelheatfirst}, we expect to have $\NHff = \kappa_{\ff}$ where
\begin{equation}
  \label{eq:ffimposed}
  \kappa_{\ff, y}(\sigma, \eta, z, z', \wt{T})
  :=  \operatorname{Id}_{2 \times 2} (4 \pi \wt{T})^{-(b +1)/2}e^{-(\sigma^{2} +
        |\eta|_{h_{1}}^{2})/4\wt{T}} H_{Z,y},
\end{equation}
where $H_{Z,y}  = H_{Z,y}(z, z', \wt{T})$ is the heat kernel for $\Delta^{Z, y}$.
\medskip\\
Before stating the full structure theorem for the heat kernel let us briefly recall the notion of an {\bf{index set}}, which by definition is a set of exponents $\mathcal E(\bullet)=\{(\gamma,p)\}\subset \mathbb C\times\mathbb N$ associated with each face $\bullet\in\{\lf,\rf,\tb,\ff_1,\ff,\tf\}$ such that
\begin{itemize}
\item[(i)] each half-plane $\re \gamma<C$ contains only finitely many $\gamma$;
\item[(ii)] for each $\gamma$, there is a number $P(\gamma)\in\mathbb
  N_0$ such that $(\gamma,p)\in \mathcal E(\bullet)$ for every $0\leq
  p\leq P(\gamma)$ and $(\gamma,p)\notin \mathcal E(\bullet)$ if
  $p>P(\gamma)$; 
\item[(iii)]
If $(\gamma,p)\in\mathcal E(\bullet)$, then $(\gamma+j,p)\in \mathcal E(\bullet)$ for all $j\in\mathbb N$.
\end{itemize}
We give a full rigorous definition of polyhomogeneity in Section
\ref{sec:mwc}, but roughly speaking, we call a differential form
$\alpha$ {\bf{polyhomogeneous with index
    family}} $\mathcal E=\{\mathcal E(\bullet)\mid\bullet\in
\{\lf,\rf,\tb,\ff_1,\ff,\tf\}\}$ if it has an expansion at each
boundary hypersurface $\bullet$ with exponents determined by the
corresponding index set $\mathcal E(\bullet)$ and coefficient
functions which are themselves polyhomogeneous (with exponents
determined by $\mathcal E$).  For example, smooth functions on
$\Mheat$ are polyhomogeneous with indicial set satisfying
$\mathcal{E}(\bullet) = \mathbb{Z} \times \{ 0 \}$ for all $\bullet$,
and if a polyhomogeneous function vanishes to infinite order at a
particular boundary hypersurface $\bullet$, then it is polyhomogeneous
with an index set $\mathcal{E}$ satisfying $\mathcal{E}(\bullet) =
\varnothing$.  We define
$$
\inf \mathcal{E}(\bullet) = \inf \{ \gamma \mid (\gamma, p) \in
\mathcal{E}(\bullet) \}.
$$

\begin{theorem}\label{thm:heatkernel}
  There exists a section $H$ of $\beta^{*} \End$ over $\Mheat$ which is
  polyhomogeneous, i.e.\ $H \in \mathcal{A}_{\phg}^{\mathcal{E}}(\Mheat;
  \beta^{*}\End)$ for some index set $\mathcal{E}$ and satisfying the
  following properties.
  \begin{enumerate}
  \item In the interior of $\Mheat$, $(\p_{t} + \Delta) H =
    0$.
  \item The operator $H_{t}$ defined initially on forms
    $\alpha \in C^{\infty}_{c}(M; \Omega^{*}(M))$ by
    \begin{equation}
      \label{eq:heatflow}
      H_{t} \alpha(w) = \int_{M} H(w, \wt{w}, t) \alpha(\wt{w}) \dVol_{\wt{w}}
    \end{equation}
is symmetric on $L^2(d\Vol_g)$, and for such $\alpha$
\begin{equation}
  \label{eq:39}
  H_{t}\alpha \to \alpha \mbox{ as } t \to 0
\end{equation}
in $L^2$.
  \item The index set $\mathcal{E}$ of $H$ satisfies that
    $\mathcal{E}(\tb) = \varnothing$, while
  \begin{equation}
    \label{eq:indexsets}
    \inf \mathcal{E}(\fff) \ge -1 - \dimY - kf , \quad
    \inf \mathcal{E}(\ff) \ge -kn, 
    \quad
    \mathcal{E}(\tf) = \mathbb{N} - \dim(M),
  \end{equation}
where $\mathcal{E}(\bullet) \ge c$ means that $\inf_{\zeta, p \in
  \mathcal{E}(\bullet)} \mbox{Re} \ \zeta \ge c$.
\item Moreover, at the faces $\ff$ and $\fff$, 
  \begin{equation}\label{eq:ansatzfaces}
(\wt{x}^{1+ \dimY + kf}H) \rvert_{\fff} = \kappa_{\fff}, \quad
  (\wt{x}^{kn}H) \rvert_{\ff} = \kappa_{\ff},
\end{equation}
with $\kappa_{\fff}$ and $\kappa_{\ff}$ the model heat kernels defined
in \eqref{eq:fffimposed} and \eqref{eq:ffimposed}.  
\item At the left 
face $\lf$, with $\kappa$ as in \eqref{eq:heat-kernel-left-face}
  \begin{equation}\label{eq:leftfacebehaviour}
H_{t} = \kappa(1 + O(\rho_{\lf}^k)).
\end{equation}
In particular, for $t > 0$ fixed,  $H_t$ is approximately
  fiber harmonic in the sense of \eqref{eq:fiber-harmonic-space}, and
$\rho_{\lf}$ is a boundary defining function for $\lf$ (see \eqref{eq:a-bdf}).  Thus 
\begin{equation}\label{eq:left-face-index}
\inf \mathcal{E}(\lf) \ge - \frac{kf}{2} + 1.
\end{equation}
The behavior of $H_{t}$ at the right face $\rf$ can be deduced from
symmetry.
Moreover for the behavior at the codimension $2$ face $\lf
\cap \rf$, the leading order behavior is the product of that at $\lf$
and $\rf$, i.e.  $H_t = O((\rho_{\lf} \rho_{\rf})^{-kf/2 + 1})$.
\end{enumerate}
\end{theorem}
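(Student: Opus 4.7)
The plan is to construct $H$ in two stages: first build a parametrix $K$ with prescribed leading behaviour at $\tf,\ff,\fff$, then correct it to an exact fundamental solution by a Neumann series argument. Throughout, I will work directly on $\Mheat$, using the fact that $\beta^{*}(t(\p_{t}+\Delta))$ is well-behaved at each introduced face so that one can restrict the heat equation to each face separately and solve the corresponding model problem.

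The parametrix construction proceeds face-by-face, in the order $\tf \to \ff \to \fff$, reflecting the geometry of $\Mheat$. At $\tf$, I would invoke Lemma \ref{thm:tf-expansion} to construct a section $K_{\tf}$ with expansion $(4\pi)^{-n/2}\wt{x}^{-kn}\tau^{-n}\sum_{j\geq 0}\tau^j b_j$, chosen so that $b_0 = e^{-|\Mtan|^2/4}\cdot\id$ and the transport equations arising from \eqref{eq:tffull} are solved to infinite order; thus $t(\p_t+\Delta)K_{\tf} = O(\rho_{\tf}^{\infty})$. At $\ff$, I would then add a correction whose restriction to $\ff$ is exactly $\kappa_{\ff}$ from \eqref{eq:ffimposed}: this restriction satisfies $\Nff \kappa_\ff = 0$ by construction (the model on $\ff$ decouples into a Euclidean heat kernel in $(\sigma,\eta)$ and the fiber heat kernel $H_{Z,y}$). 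Compatibility with $K_{\tf}$ at $\ff \cap \tf$ must be checked; this follows from the fact that rescaling coordinates at $\tf$ yields precisely the Euclidean heat kernel times the identity on $\Lice$, matching $\kappa_\ff$ at $\wt T \to 0$. Finally, at $\fff$, I impose $\NHfff = \kappa_\fff$ of \eqref{eq:fffimposed} on the fiber harmonic forms. Here the subtlety flagged in Remark \ref{thm:explanation} appears: off fiber harmonic forms the leading order of $\Delta$ is $x^{-2k}\Delta^{Z,y}$, which forces the parametrix to vanish to infinite order at $\fff$ on the orthogonal complement of $\mathcal H$. On $\mathcal H$, Proposition \ref{eq:model-first-heat} reduces $\Nfff$ to the direct sum of operators $P_{\alpha,\beta}+\Delta_\eta$ and $P_{\alpha,\gamma}+\Delta_\eta$, whose fundamental solutions are given by the explicit Bessel expressions \eqref{eq:modelheatpotentialfirst2} tensored with the Euclidean heat kernel in $\eta$. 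Matching this with $\kappa_\ff$ at $\fff\cap\ff$ reduces, after rescaling, to comparing the large-time asymptotics of $H_{A,B}(s,1,t')$ with the small-$\wt T$ behaviour of the product fiber heat kernel, and this is where the polynomial blow-up exponents $\ov\nu(\NN)$ from \eqref{eq:first-model-asymptotics} arise.

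Having constructed a polyhomogeneous parametrix $K$ with error $E := (\p_t+\Delta)K$ vanishing to infinite order at $\tb, \tf$ and to positive order at $\ff,\fff,\lf,\rf$, I would remove the error by a Volterra-type Neumann series. Writing formally $H = K - K *_t E + K *_t E *_t E - \cdots$ where $*_t$ denotes convolution in time followed by composition in the spatial variables, the key analytic step is to show that this series converges and defines a polyhomogeneous distribution on $\Mheat$. This requires a composition theorem for polyhomogeneous distributions on $\Mheat$ — that is, an identification of the triple space $\Mtrip$ (introduced elsewhere in the paper) with the property that the pushforward of the product $K\cdot E$ under the projection back to $\Mheat$ preserves polyhomogeneity while improving the order of vanishing at the problematic faces. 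The infinite order vanishing of $E$ at $\tf$ ensures that repeated convolution tightens the support near $t=0$, giving convergence, while its positive order at $\ff,\fff$ ensures that the iteration gains regularity uniformly and the index sets are preserved in the limit.

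The main obstacle, and the bulk of the work, is the composition/triple-space analysis needed to make the Neumann iteration rigorous: one must verify that each factor in the iteration respects the index family structure at all six boundary hypersurfaces, and in particular that errors do not degrade the prescribed leading behaviour at $\ff$ and $\fff$. Once $H$ is produced, properties (1) and (2) follow from the defining equation and the identity limit at $\tf$ via \eqref{eq:9}; symmetry of $H_t$ is obtained either by building in symmetry at the parametrix stage (using the symmetry of the geometry under $(w,\wt w)\mapsto(\wt w,w)$) or by symmetrizing at the end and verifying that this preserves the index sets. The index estimates in \eqref{eq:indexsets} come directly from reading off the constructed expansions, while \eqref{eq:leftfacebehaviour} and \eqref{eq:left-face-index} follow from tracking the behaviour of $\kappa_\fff$ as $s\to 0$ using \eqref{eq:first-model-asymptotics} together with the fact that the fiber-harmonic projector \eqref{eq:real-fiber-harmonic-projection} annihilates the orthogonal complement to leading order, so the restriction of $H$ to $\lf$ reproduces the product of Bessel asymptotics and the base heat kernel $H_Y$ appearing in \eqref{eq:heat-kernel-left-face}. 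The corner behaviour at $\lf\cap\rf$ then follows by symmetry and standard multiplicativity of leading orders at corners.
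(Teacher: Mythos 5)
Your high-level plan matches the paper's: build a parametrix on $\Mheat$ with the prescribed restrictions at $\tf$, $\ff$, $\fff$ (enforcing compatibility at the corners via the Bessel-function asymptotics), then remove the remaining error by a Volterra--Neumann series whose summability is controlled by a composition theorem on the triple space. However, there is a genuine gap in the prescribed properties of the error $E = (\p_t+\Delta)K$. You assert that it suffices for $E$ to vanish only \emph{to positive order} at $\lf$ and $\rf$; the paper's construction (Proposition~\ref{thm:ansatz}) insists that $t(\p_t+\Delta)K$ vanish \emph{to infinite order} at $\lf$, i.e.\ $\mathcal{E}'(\lf)=\varnothing$, and achieving this occupies the bulk of its proof. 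This is not cosmetic. In the composition theorem (Proposition~\ref{thm:composition}) the integrability hypothesis of Melrose's pushforward theorem at the interior face $\Cfacet$ of the triple space is verified precisely because $\mathcal{F}(\Cfacet)=\mathcal{E}_1(\rf)\oplus\mathcal{E}_2(\lf)=\varnothing$; if $\mathcal{E}_2(\lf)$ is merely positive, then $\mathcal{F}(\Cfacet)$ has $\inf$ roughly $(-kf/2+1)+\inf\mathcal{E}_2(\lf)$, which need not be $>0$, and the pushforward defining $(t^{-1}Q)^j$ may fail to converge. Moreover, even if a single composition were defined, one would have to track a nontrivial index set at $\lf$ through the Neumann series and argue it does not degrade under the extended union; emptiness at $\lf$ is exactly what makes the series close up.

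Concretely, what is missing from your proposal is the second stage of the parametrix construction: after the matching of $\kappa_{\fff}$, $\kappa_{\ff}$, and the $\tf$ expansion produces a first approximation $K_1$, one must still (a) project off the non-fiber-harmonic error at $\fff$ by solving $\Delta^{Z,y}B = \cdots$ and subtracting $x^kB$, and (b) iteratively improve the error at $\lf$ order-by-order using the indicial analysis of $P_{\alpha(\NN),\beta(\NN)}$ and $P_{\alpha(\NN),\gamma(\NN)}$ (the paper's sequence $B_1, C_1, D_1, \ldots$ and Borel summation) until the error vanishes to infinite order at $\lf$. Only after this can the Neumann series be invoked. Adding those two corrective steps to your outline would bring it into line with the paper's actual argument.
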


The proof of Theorem \ref{thm:heatkernel} is at the end of \textsection\ref{sec:parametrix-construction}.

\subsection{Parametrix construction}\label{sec:parametrix-construction}

We will establish the following
\begin{proposition}\label{thm:ansatz}
  There exists a polyhomogeneous section
  \begin{equation}
K \in
  \mathcal{A}_{\phg}(\Mheat; \beta^{*}(\End))\label{eq:13}
\end{equation}
such that $K$ satisfies \eqref{eq:39}--\eqref{eq:leftfacebehaviour} above,
and such that $Q := t(\p_{t} + \Delta) K$, which is polyhomogeneous,
has index set $\mathcal{E}'$ satisfying
  \begin{equation}
    \label{eq:indexsetsprime}
    \mathcal{E}'(\fff) \ge -1 - \dimY - kf + 1, \quad
    \mathcal{E}'(\ff) \ge -kn + 1, 
    \quad
    \mathcal{E}'(\lf) = \mathcal{E}'(\tf) = \mathcal{E}'(\tb) =  \varnothing.
  \end{equation}
\end{proposition}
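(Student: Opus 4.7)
\textbf{Plan for the proof of Proposition \ref{thm:ansatz}.} The parametrix $K$ will be assembled in layers, prescribing the leading asymptotic behavior at the three blown-up faces $\tf$, $\ff$, $\fff$ (in that order), then correcting at $\lf$, with cutoff functions used to blend contributions supported near the respective faces. At each stage we verify two things: first, that the newly prescribed data is compatible at the corner with the previously prescribed data so that the sum lifts to a polyhomogeneous distribution on $\Mheat$; second, that applying $t(\p_t + \Delta)$ kills the leading indicial term at the face just treated, thereby improving the order of the remainder by one. The faces $\tb$ and $\rf$ will be treated trivially: we take $K$ supported away from $\tb$ in a neighborhood of the diagonal where it is governed by the other ansätze, and $\rf$ is controlled by the symmetric dual of the $\lf$ treatment.

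First, near $\tf$, Lemma \ref{thm:tf-expansion} already produces a polyhomogeneous half-density $K_{\tf}$ with expansion $\tau^{-n}\sum \tau^j b_j$, whose leading term is the Euclidean Gaussian $b_0 = e^{-\|\Xi\|^2/4}\mathrm{Id}$ and for which $t(\p_t+\Delta)K_{\tf}$ vanishes to infinite order at $\tf$. The Gaussian decay in the fiber variables $\Xi = (\xi,\eta,\zeta)$ guarantees that $K_{\tf}$ restricts to $0$ on the interior of $\ff$ and $\fff$ (except near $\tau = 0$), so the restrictions we prescribe at $\ff$ and $\fff$ in the next two steps determine the respective leading terms there.

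Second, on $\ff$ we prescribe $(\wt{x}^{kn}K)\rvert_{\ff}=\kappa_{\ff}$ as in \eqref{eq:ffimposed}. By Proposition on the face normal operator \eqref{eq:modelheatsecond}, $\kappa_{\ff}$ is the fiberwise fundamental solution for $\Nff$, so $\Nff(\wt{x}^{kn}K)\rvert_{\ff}=0$, which is exactly the statement that $t(\p_t+\Delta)K$ loses one order of blow up at $\ff$. Matching $K_{\tf}$ with a polyhomogeneous extension of $\wt{x}^{-kn}\kappa_{\ff}$ at the corner $\tf \cap \ff$ is done by comparing the small-$\wt{T}$ asymptotics of $\kappa_{\ff}$ with the large-$|\Xi|$-rescaled asymptotics of the Euclidean Gaussian from step one; both reduce to the same Gaussian factor in $(\sigma,\eta)$ times the Euclidean small-time fiber heat kernel, and the higher-order terms $b_j$ of Lemma \ref{thm:tf-expansion} can be chosen to match the Taylor expansion of $\kappa_{\ff}$ at $\tf \cap \ff$ since both satisfy the same model equation there. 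Third, on $\fff$ we prescribe $(\wt{x}^{1+\dimY+kf}K)\rvert_{\fff}=\kappa_{\fff}$ of \eqref{eq:fffimposed}. By Proposition \ref{eq:model-first-heat}, this solves $\Nfff$ on the range of $\Pi_{\mathcal{H}}$; the key point (cf.~Remark \ref{thm:explanation}) is that $\kappa_{\ff}$, restricted along $\ff \cap \fff$ and multiplied by $\wt{x}^{kn-1-\dimY-kf}$, reduces to the product of the transverse Gaussian in $(\sigma,\eta)$ and the fiber heat kernel $H_{Z,y}$ in $\wt{T}$; the latter is exponentially small off the fiber-harmonic subspace as $\wt{T}\to\infty$, so $(1-\Pi_{\mathcal{H}})\kappa_{\ff}$ vanishes to infinite order at $\fff$, while on $\mathrm{ran}\,\Pi_{\mathcal{H}}$ the time-$\wt{T}$ asymptotics of $H_{Z,y}$ collapse onto constants, matching the initial data of $\kappa_{\fff}$ at $\fff \cap \ff$ in $t' \to 0$.

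Finally, at $\lf$, the ansatz built so far already equals the product $\kappa$ of \eqref{eq:heat-kernel-left-face} to leading order, by the definition of $\kappa_{\fff}$ in terms of $H_{\alpha,\beta}$ and $H_{\alpha,\gamma}$ and the fact that $H_Y$ matches the Gaussian in $\eta$ at $\fff \cap \lf$; consequently \eqref{eq:leftfacebehaviour} and \eqref{eq:left-face-index} hold. Applying $t(\p_t + \Delta)$: the fiber-harmonic projection identity \eqref{eq:Hodge-Lapl-on-fiber-harmonic-2} shows that the off-diagonal terms $x^{-2k}\Delta_Z$ and $x^{-k}\delta_Z Q_1$ annihilate the fiber-harmonic ansatz modulo $x^{-1}\Diffb^2_{\phg}$, so the error has no term of order $\rho_{\lf}^{-kf/2+1}$ at $\lf$; the remaining normal operator on fiber-harmonic forms agrees with the one whose fundamental solution $\kappa$ we used, so by the defining ODE for $H_{A,B}$ the error vanishes to infinite order at $\lf$.

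The main obstacle is the double matching at the corner $\ff \cap \fff$: we must verify simultaneously that the ansatz built from $\kappa_{\ff}$ (rescaled from $\ff$) and the ansatz built from $\kappa_{\fff}$ (rescaled from $\fff$) agree to all polyhomogeneous orders in their overlap, despite the operator changing its leading asymptotic order between the two regimes. This is resolved by the interplay between the large-time asymptotics of $H_{Z,y}$ (projecting onto $\mathcal{H}$) and the small-$t'$ asymptotics of the Bessel-type fundamental solutions $H_{A,B}$ in \eqref{eq:modelheatpotentialfirst2}, which collapse onto the Euclidean heat kernel in $\sigma$. A subsidiary but delicate point is showing polyhomogeneity on $\Mheat$ of the blended distribution; this is standard once the corner matchings are verified to polynomial order, using Borel summation across the face in the resolved space.
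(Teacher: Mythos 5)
Your overall plan—prescribing leading-order data at the introduced faces, checking corner compatibility, and extending to a polyhomogeneous $K$—is the paper's strategy (carried out through Lemma \ref{thm:pre-ansatz} and the matching Lemma \ref{thm:matching}). But the argument has a genuine gap in the error analysis at $\fff$ and, more seriously, at $\lf$.

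The problem at $\fff$: even with $K_1$ approximately fiber-harmonic near $\fff$, the operator $\Delta$ does not preserve fiber harmonicity. By \eqref{eq:Hodge-Lapl-on-fiber-harmonic}, applying $\Delta$ to a fiber-harmonic $K_1$ still produces the off-range terms $x^{-2k}\Delta^{Z,y}K_1$ and $x^{-k}(\delta_Z Q_1 + d_Z Q_2)K_1$, which are $O(x^{-k})$ relative to $K_1$. The identity \eqref{eq:Hodge-Lapl-on-fiber-harmonic-2} says only that the \emph{projected} operator $\Pi_{\mathcal H}\Delta\Pi_{\mathcal H}$ drops these terms; it does \emph{not} say that $(1-\Pi_{\mathcal H})\Delta\Pi_{\mathcal H} K_1$ is negligible. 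Consequently $t(\p_t+\Delta)K_1$ does \emph{not} automatically have order $-1-b-kf+1$ at $\fff$. The paper fixes this by constructing a correction $x^k B$ with $\Delta^{Z,y}B$ cancelling the off-range part (using the solvability statement \eqref{eq:solvability}), and sets $K_2 = K_1 - x^k B$. Your proposal never introduces such a correction; it conflates projected boundedness with actual cancellation.

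The problem at $\lf$ is more substantial. You argue that because the leading coefficient of the ansatz solves the model ODE $(\p_t + P_{\alpha,\beta})H_{A,B}=0$, the error "vanishes to infinite order at $\lf$." Solving the normal operator equation only kills the \emph{leading} indicial term; subleading terms of the error are not controlled by the model problem and persist. Proposition \ref{thm:ansatz} requires $\mathcal{E}'(\lf)=\varnothing$, which demands removal of \emph{every} term in the expansion at $\lf$. The paper achieves this by an iterative scheme: at each stage, solve $\Delta^{Z,y}$ to make the error fiber-harmonic, then invert the scalar indicial operators $P_{\alpha,\beta}, P_{\alpha,\gamma}$ (possible because the exponents encountered stay strictly above the indicial roots, as noted below \eqref{eq:first-model-asymptotics}), then re-correct the fiber-harmonicity; the corrections are Borel-summed to get infinite-order vanishing. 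Without this iteration, the conclusion $\mathcal{E}'(\lf)=\varnothing$ is unsupported. You would need to supply this loop (or an equivalent device) for the proof to close.
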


Our work above allows us to break the proof of the proposition
into two main steps; first we will prove the following lemma:
\begin{lemma}\label{thm:pre-ansatz}
  There exists a polyhomogeneous $K_1$ satisfying
  \eqref{eq:39}--\eqref{eq:leftfacebehaviour} above, together with 
  \eqref{eq:tf-expansion} for
the indicated $b_j$.  Furthermore, $K_1$ can be taken fiber harmonic
in a neighborhood of $\fff$.  In fact we can assume that $K_1 \equiv
\kappa$  in a neighborhood of $\lf$ and also in a neighborhood
of $\rf$, where $\kappa$ is as in \eqref{eq:leftfacebehaviour}.  
\end{lemma}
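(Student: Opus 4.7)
The plan is to build $K_1$ by a sequential matching procedure on the iteratively blown up heat space $\Mheat$, working face-by-face from the most singular outward, Borel-summing within each face and checking corner compatibility before patching with cutoffs.

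First I would set up the prescribed data at each relevant face. At $\tf$, Lemma \ref{thm:tf-expansion} already produces a polyhomogeneous section $K^{\tf}$ with the expansion \eqref{eq:tf-expansion}, vanishing to infinite order at $\tb$, and annihilating $t(\p_t + \Delta)$ to infinite order in $\tau$. At $\ff$ I prescribe $(\wt{x}^{kn} K_1)\rvert_{\ff} = \kappa_{\ff}$, using \eqref{eq:ffimposed}; at $\fff$ I prescribe $(\wt{x}^{1+\dimY + kf} K_1)\rvert_{\fff} = \kappa_{\fff}$ acting only on $\mathcal{H} \otimes \wt{\mathcal{H}}^{*}$ and extended to vanish identically off the fiber harmonic forms; at $\lf$ (and $\rf$, by symmetry) I prescribe $K_1 \equiv \kappa$ as in \eqref{eq:heat-kernel-left-face}.

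Next I would verify the corner compatibility conditions, which is where the real work is. At $\ff \cap \tf$, the leading term $\tau^{-n} e^{-|\Mtan|^{2}/4}\id$ of the expansion \eqref{eq:tf-expansion} must match the $\wt{T} \downarrow 0$ asymptotics of $\kappa_{\ff}$ expressed in the projective coordinates \eqref{eq:projectivesecondmodle}. Because $\kappa_{\ff}$ is built from the Euclidean heat kernel in $(\sigma,\eta)$ times the fiber heat kernel $H_{Z,y}$, its short-time behavior is Gaussian and reproduces exactly $b_{0}$ after rescaling; the higher Taylor coefficients $b_{j}$ then determine the sub-leading behavior of $\kappa_{\ff}$ as $\wt{T}\to 0$, and standard expansion of the parabolic fundamental solutions shows they agree. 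At $\ff \cap \fff$, the large-$\wt{T}$ behavior of $\kappa_{\ff}$ (which on fiber harmonic forms reduces the $\Delta^{Z,y}$ factor to the identity on $\mathcal{H}$) must match the small-$(s-1)$ behavior of $\kappa_{\fff}$ expressed in the coordinates of \eqref{eq:projectivesecondmodle}, and off fiber harmonic forms the fiber heat kernel $H_{Z,y}$ decays exponentially in $\wt{T}$, producing the claimed infinite order vanishing at $\fff$ off $\mathcal{H}$ (cf. Remark \ref{thm:explanation}). At $\lf \cap \fff$, the leading asymptotics \eqref{eq:first-model-asymptotics} of $\kappa_{\fff}$ match the boundary values of $\kappa$ from \eqref{eq:heat-kernel-left-face}, since both are built from the same model operators $H_{\alpha,\beta}$ and $H_{\alpha,\gamma}$ together with the base heat kernel.

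Having verified these compatibilities to leading order, I would then iteratively correct: choose a smooth partition of unity on $\Mheat$ subordinate to neighborhoods of the faces, form the patched section $K_{1}^{(0)}$ as a cutoff-weighted combination of $\kappa, \kappa_{\fff}, \kappa_{\ff}, K^{\tf}$, and then successively adjust by polyhomogeneous terms of higher order at each face (using the Borel lemma for manifolds with corners, as in \cite{damwc}) to absorb the discrepancies introduced by the cutoffs. Near $\fff$ I use the projector $\Pi_{\mathcal{H}}$ of \eqref{eq:real-fiber-harmonic-projection} to enforce fiber harmonicity, which is compatible with the prescription because $\kappa_{\fff}$ already lives on $\mathcal{H}\otimes\wt{\mathcal{H}}^{*}$ and the non-fiber-harmonic part vanishes to infinite order at $\fff$. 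The initial condition $H_t\alpha \to \alpha$ in $L^2$ follows from the $\tf$ expansion and the standard Gaussian convergence argument in \eqref{eq:9}.

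The main obstacle is the corner compatibility at $\ff \cap \fff$: verifying that $\kappa_{\ff}$, whose structure is a Gaussian heat kernel on $(\sigma,\eta)$ tensored with the full fiber heat kernel, glues smoothly to $\kappa_{\fff}$, whose structure is the regular singular heat kernels $H_{\alpha,\beta}, H_{\alpha,\gamma}$ acting on fiber harmonic forms. The reconciliation requires analyzing both model kernels in overlapping projective coordinates, using the explicit formula \eqref{eq:modelheatpotentialfirst2} via Bessel function asymptotics, and separately handling the fiber harmonic and non fiber harmonic components (with the latter giving infinite order vanishing at $\fff$ by positivity of the fiber spectrum off $\mathcal{H}$). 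The remaining constructions --- the $\lf$ and $\rf$ matching, the $\tf$ Borel sum, the extension into the interior of $\Mheat$ --- are comparatively routine once this compatibility is in hand.
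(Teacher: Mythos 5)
Your proposal is correct and follows essentially the same route as the paper: prescribe $\kappa$, $\kappa_{\fff}$, $\kappa_{\ff}$, and the $\tf$ Taylor data, check the corner matching conditions (the crucial one at $\ff\cap\fff$ via the Bessel asymptotics of $H_{\alpha,\beta}$ and the spectral split $H_{Z,y}=\Pi_0+E$ with $E$ exponentially decaying), and assemble a polyhomogeneous $K_1$. The only packaging difference is that the paper delegates the cutoff-and-Borel patching step to the extension Lemma \ref{thm:matching} in Appendix \ref{sec:mwc} rather than carrying it out by hand, and at $\ff\cap\tf$ it argues via uniqueness of the coefficients (both expansions solve the same fiberwise ODE with Gaussian data) where you gesture at ``standard expansion,'' but these amount to the same reasoning.
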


\begin{proof}[Proof of Proposition \ref{thm:ansatz} assuming Lemma \ref{thm:pre-ansatz}]
  Assuming that we have such a distribution $K_1$, we study $t(\p_t +
  \Delta) K_1$.  Automatically we have that $t(\p_t +  \Delta) K_1$ vanishes to infinite
  order at $\tf$ and $\tb$, as follows from Lemma
  \ref{thm:tf-expansion}.  Furthermore, $t(\p_t +  \Delta) K_1$ vanishes to order $-kn +
  1$ at $\ff$ by \eqref{eq:modelheatsecond} and the fact that the
  leading order term $\kappa_{\ff}$ there solves the model problem.  

At $\fff$ things are again more delicate.  Recall that $K_1 = O(\rho_{\ff}^{-1
  - \dimY - kf})$ at $\fff$, where $\rho_{\fff}$ is the boundary
defining function for $\fff$ in, e.g.\ $\rho_{\fff} = \rho /
\ov{\rho}$ with $\rho$ as in \eqref{eq:polarfirstmodel} and
$\ov{\rho}$ as in \eqref{eq:polarsecondnmodel}.  Since $K_1$ is
fiber harmonic near $\fff$, by \eqref{eq:real-fiber-harmonic-projection} and \eqref{eq:Hodge-Lapl-on-fiber-harmonic} we have
\begin{equation*}
  \begin{split}
  \Delta K_1 &= \wt{\Delta}_0 K_1 + x^{-2k} \Delta^{Z, y} K_1 +
    x^{-k} (\delta_Z Q_1 + d_Z Q_2) K_1 + x^{-1} \wt{E} K_1 
\\
&= \wt{\Delta}_0 K_1 + x^{-k} (\delta_Z Q_1  + d_Z Q_2)K_1 + x^{-1} \wt{E} K_1\\
&\qquad + x^{-k} (d_Z \wt{Q}_1 + \delta_Z \wt{Q}_2) K'
+ O(\rho_{\fff}^{-1 - \dimY - kf + 2k}). 
  \end{split}
\end{equation*}
Furthermore, by \eqref{eq:Hodge-Lapl-on-fiber-harmonic-2} we have that
$\Pi_{\mathcal{H}}t(\p_t + \Delta) \Pi_{\mathcal{H}} K_1$ is order $
-1 - \dimY - kf + 1$ since its leading order term solves the model
problem.

We assert the existence of a polyhomogeneous distribution $A$ of order
$-1 - \dimY - kf + k$ such that $t(\p_t + \Delta) (K_1 - A)$ itself vanishes
to order $-1 - \dimY - kf + 1$ at $\fff$.  Indeed, since the leading
order term in $t(\p_t + \Delta)$ is $tx^{-2k} \Delta^{Z, y}$, and since by
\eqref{eq:solvability} we
can solve $\Delta^{Z, y} B = (d_Z \wt{Q}_1 + \delta_Z \wt{Q}_2) K' +
    \delta_Z Q_1 K_1 + O(\rho_{\fff}^{-1 -
  \dimY - kf + k})$ where $B$ is polyhomogeneous with asymptotic
expansion determined by the expansion of the right hand side, in
particular $B = O(\rho_{\fff}^{-1 - \dimY - kf})$.  We take $A = x^{k} B$ and
thus obtain, with $\wt{P}$ as in \eqref{eq:Hodge-Laplacian-decomp}, 
\begin{equation*}
  \begin{split}
    t(\p_t + \Delta)(K_1 - x^k B) &= t(\p_t + \wt{\Delta}_0) (K_1 -
    x^k B) + t x^{-1} \wt{E} (K_1 -
    x^k B) 
\\
    &\qquad 
    - t x^{-k} \wt{P} x^k B  + t O(\rho_{\fff}^{-1 -
  \dimY - kf + 2k})\\
 &= t(\p_t + \wt{\Delta}_0) (K_1 -
    x^k B) +
    t  O(\rho_{\fff}^{-1 - \dimY - kf}) \\
    &\qquad + t x^{-1} O(\rho_{\fff}^{-1 -  \dimY - kf})
    + t O(\rho_{\fff}^{-1 - \dimY - kf })  \\
&= O(\rho_{\fff}^{-1 - \dimY - kf + 1}).
  \end{split}
\end{equation*}
Since the expansion of $B$ at $\ff$
has the same order as $K_1$, the distribution
$$
K_2 = K_1 - x^k B
$$
has all of the desired properties of $K$ in the statement of the
proposition except that $(\p_t + \Delta) K_2$ is not rapidly
decreasing at $\lf$.  Note that, since $\rho_{\fff}^{1 + b + kf} K_1 = O(s^{\ov{\nu}(\NN)})$
where $\ov{\nu}$ is the (fiber degree dependent) order of $\kappa$
computed in \eqref{eq:first-model-asymptotics}, by well-posedness $B$
also satisfies $B= O(\rho_{\lf}^{\ov{\nu}})$.

To deal with the expansion at $\lf$ we argue along similar lines, but
there we iterate the argument to get a parametrix
$K$ with $(\p_t + \Delta) K$ vanishes to infinite order at $\lf$.  (We
work in the interior of $\lf$ though the arguments at the
intersection of $\lf$ and $\fff$ are the same in the projective
coordinates $s' = x/\wt{x}, \eta' = (y - \wt{y})/\wt{x}, \tau' = t/
\wt{x}^2$ together with $z, \wt{x}, \wt{y}, \wt{z}$.)  Recall that $K_1
\equiv \kappa$ near $\lf$ and thus $K_2 = \kappa - x^k B$ near
$\ff_1$. Again with $\wt{P}$ as in \eqref{eq:Hodge-Laplacian-decomp},
we have
\begin{equation}
  \label{eq:left-face-1}
  \begin{split}
    (\p_t + \Delta) K_2 &= x^{-k} \wt{P} \kappa + x^{k - 2} \wt{E}
    \kappa - x^{-k } \Delta^{Z, y} B - x^{-k}\wt{P}x^{k}B + O(x^{\ov{\nu} + k})\\
&= x^{-k} (d_Z Q_1 + \delta_Z Q_2) \kappa - x^{-k} \Delta^{Z, y} B+ O(x^{\ov{\nu} + k -
      2}),
  \end{split}
\end{equation}
where $\ov{\nu}$ is the leading order power of $\kappa$ computed in
\eqref{eq:first-model-asymptotics}.
As in the argument at $\fff$, since the RHS of \eqref{eq:left-face-1}
manifestly gives that $\Pi_{\mathcal{H}}( (\p_t + \Delta) K_2) =
O(x^{\ov{\nu} + k - 2})$, by Section \ref{sec:fibharmforms} there is distribution $A_0$ such that
$x^{\ov{\nu}} \Delta^{Z, y} A_0 = (d_Z Q_1 +
  \delta_Z Q_2) \kappa - \Delta^{Z, y} B + O(x^{\ov{\nu} + k})$.  Here the $x^{\ov{\nu}}$ in front
  makes it so that $A_0$ is $O(1)$.  Thus
$$
(\p_t + \Delta) (K_2  - x^{\ov{\nu} + k} A_0)  = O(x^{\ov{\nu} + k - 2}) - x^{-k}
\wt{P} x^{\ov{\nu} + k} A_0 = O(x^{\ov{\nu}}).
$$

We will now solve away iteratively to decrease the order of the
error.  For this we assume for the moment that we are given, for some $q > \ov{\nu} + \epsilon$,
\textit{any} distribution $A_1 = x^q \wt{A}_1 + O(x^{q + \epsilon})$
with $\wt{A}_1$ smooth and non-vanishing up to the boundary as in $\ice-$form.
First, we find a distribution $B_1$ so that $x^qA_2 := (\p_t + \Delta) (x^{q + 2k}B_1 ) - A_1$ is fiber harmonic.  We
can do this by solving $(I - \Pi_{\mathcal{H}})A_1 = \Delta^{Z, y} B_1 +
O(x^k)$ as in Section \ref{sec:fibharmforms}, where $\Pi_\mathcal{H}$
is the projection onto the fiber harmonic forms, since then $(\p_t +
\Delta) x^{q + 2k}B_1 = x^q \Delta^{Z, y} B_1 + O(x^{q + k})$.  We then construct a
term $C_1$ with $(\p_t + \Delta) x^{q + 2} C_1 \approx  A_2$, as
follows.  Decomposing $A_2 = (A_2^1, A_2^2)$ as in
\eqref{eq:ice-decomp} in the left varibles, and noting that if $C_1 =
((-(q + 2)^2 - (\alpha - 1)(q + 2) + \beta)^1 A_2^{-1} , (-(q + 2)^2 - (\alpha - 1)(q + 2) +
\gamma)^{-1} A_2^2)$, then
$$
\lp
\begin{array}{cc}
  P_{\alpha, \beta} & 0 \\ 0 & P_{\alpha, \gamma}
\end{array}
\rp x^{q + 2} C_1 = x^q  A_2.
$$
(The numbers we divided by above are non-zero, since the indicial roots
of $P_{\alpha, \beta}$ and $P_{\alpha, \gamma}$ are bounded above by
$\ov{\nu} - \epsilon$, as explained below \eqref{eq:first-model-asymptotics}.)
For this $C_1$ we have
\begin{equation*}
  \begin{split}
    x^q A_2 - (\p_t + \Delta) x^{q + 2}C_1 &= x^{q} A_2 -
    \wt{\Delta}_0 x^{q + 2} C_1 + x^{- k} \wt{P}' x^{q + 2}C_1 + O(x^{q + 2}) \\
&= O(x^{q + \delta}) + x^{- k} \wt{P}' x^{q + 2}C_1 + O(x^{q + 2 + k
      - 2}) ,
  \end{split}
\end{equation*}
where $q + \delta$ can be taken to be the order of the subsequent term
in the expansion of $A_2$
where $\wt{\Delta}_0$ is in \eqref{eq:Hodge-Lapl-on-fiber-harmonic}
and $\wt{P}$ is as in \eqref{eq:Hodge-Laplacian-decomp}, and thus
by Section \ref{sec:fibharmforms} we see that the left hand side lies
in the image of $\Delta^{Z, y}$ to order
$x^{k}$. We can thus find a distribution $D_1$ such that
\begin{equation*}
  \begin{split}
    x^q A_2 - (\p_t + \Delta)( x^{q + 2}C_1  - x^{q + 2 + k}D_1) &=
    O(x^{q + 1}) - x^{q + 2 - k} \Delta^{Z, y} D_1 + x^{k} \wt{P}' x^{q +
      2} C_1 \\
&= O(x^{q + 1}) ,
  \end{split}
\end{equation*}
which gives 
\begin{equation}
  \label{eq:improvement}
  (\p_t + \Delta)(x^q(x^{2k} B_1 - x^{2}C_1  + x^{2 + k}D_1)) =
   x^qA_1 + O(x^{q + \delta}).
\end{equation}
It is straightforward to check that the added terms do not increase
the order of blowup at $\fff$.  Thus we can kill off the leading order
term of $x^q A$, and in fact
can kill off all terms iteratively by this process.  (If there are log
terms present the arguement is analogous and left to the reader.)

From the previous two paragraphs, it follows that we can find a
distribution $K'$ such that $K := K_2 - K'$ satisfies the requirements
of the lemma, specifically such that $t(\p_t + \Delta)K$, in addition
to having the same leading order asymptotics at $\tf$ and $\ff$ and
$\fff$ that $t(\p_t + \Delta) K_2$ has, also vanishes to infinite
order at $\lf$.  Indeed, since we can solve away terms to obtain
errors of succesively decreasing order, taking the Borel sum \cite{damwc}
of these distributions gives $K'$.
\end{proof}

\begin{proof}[Proof of Lemma \ref{thm:pre-ansatz}]

We seek a distribution $K'$ with the stated asymptotic properties;
namely \eqref{eq:39}-\eqref{eq:ansatzfaces} and  \eqref{eq:tf-expansion} for
the indicated $b_j$.  Such a $K'$ will exist by  Lemma
\ref{thm:matching} in Appendix \ref{sec:mwc}
provided the hypotheses are satisfied, meaning that the following
matching conditions hold.  We must find a set $\{ \rho_\bullet \}$ of
boundary defining functions for the boundary hypersurfaces, $\bullet
= \lf, \rf,
\tf, \tb, \ff, \fff$ of $\Mheat$ such that
\begin{equation}\label{eq:practical-matching}
  \begin{split}
   \kappa_{\ff} &=
    \frac{1}{(4\pi)^{n/2} } \tau^{-n} \sum_{j \in \mathbb{N}} \tau^{j}
    \wt{b}_{j} \rvert_{\ff},\\
    \wt{x}^{kn}\kappa_{\fff}
   &= \wt{x}^{1 + \dimY + kf}\kappa_{\ff}  \mbox{ on } \ff \cap \fff,
  \end{split}
\end{equation}
and that $\kappa_{\ff}, \kappa_{\fff}$ and the $b_j$ vanish to
infinite order at $\tb$.  Indeed, in the notation of Lemma
\ref{thm:matching} we have $\kappa_1 =     (\rho_{\fff} / \wt{x})^{1 +
  \dimY + kf} \kappa_{\fff}$ and $\kappa_2 = (\rho_{\ff} /
\wt{x})^{kn} \kappa_{\ff}$, and the matching conditions in terms of
$\kappa_1$ and $\kappa_2$ in Lemma \ref{thm:matching} are exactly \eqref{eq:practical-matching}.
We use boundary
defining functions $\rho_{\ff} = \overline{\rho}, \rho_{\fff} = \rho /
\overline{\rho}$ for the faces $\ff$ and $\fff$ defined in
\eqref{eq:polarsecondnmodel} and  \eqref{eq:polarfirstmodel}.  We
define boundary defining functions $\rho_{\lf}, \rho_{\rf}$ for the
faces $\lf$ and $\rf$ by the equations
\begin{equation}
  \label{eq:bdfsatfffff}
  \rho_{\lf}
\rho_{\ff} \rho_{\fff} = \wt{x},  \quad 
\rho_{\rf} \rho_{\ff} \rho_{\fff} = x.
\end{equation}
Finally, we use $\tau$ in \eqref{eq:coordsfftf} as $\rho_{\tf}$;
though it is not valid at $\tb \cap \tf$, all the distributions in
question will vanish to infinite order there and there will be no
conditions to check.

The first matching condition in \eqref{eq:practical-matching} follows
easily since the coefficients of the expansion of $\kappa_{\fff}$ are
determined by the same differential equation which determines the
$b_j$, and the coefficients in both expansions are uniquely determined
by their being equal to polynomials times Gaussians on the fibers of
$\tf \cap \ff$.

  Finally we check that the second condition in
  \eqref{eq:practical-matching} holds.  
First we consider $\kappa_{\fff} = \kappa_{\fff, \wt{y}}(s, 1, \eta, t')$
above the point $\wt{y} \in Y$ (i.e.\ restricted to $\fff_{\wt{y}}$).
In the polar coordinates in \eqref{eq:secondmodel} and using the boundary
defining functions above \eqref{eq:bdfsatfffff}, we have
\begin{equation}
  \label{eq:32}
  s = \frac{\overline{\psi}_{x} \rho_{\ff}^{k-1}}{\rho_{\lf}}  + 1,
  \quad t' =  \frac{\overline{\phi}_{t}
    \rho_{\ff}^{2(k-1)}}{\rho^{2}_{\lf}} , \quad \eta = \frac{\overline{\psi}_{y}
    \rho_{\ff}^{k-1}}{\rho_{\lf}}.
\end{equation}
Using \cite[Eqn.\ 9.7.1]{AS1964}, we have that the modified Bessel function
satisfies $$I_{\nu}(z) = (e^{-z}/\sqrt{2 \pi z}) (1 + O(1/z)),$$  and thus
\begin{equation}
  \label{eq:40}
  \rho_{\ff}^{kn}(\rho_{\lf} \rho_{\ff})^{-1 - \dimY - kf}
    \kappa_{\fff, \wt{y}} = \frac{\rho_{\lf}^{-kf}}{(4 \pi
      \overline{\phi}_{t})^{(b + 1)/2}} e^{-(|\overline{\psi}_{x}^{2} +
    |\overline{\psi}_{y}|^{2}_{\wt{y}}) / 4 \overline{\phi}_{t}} ( 1 + O(\rho_{\ff})).
\end{equation}
On the other hand, above each base
  point $\wt{y} \in Y$, $\kappa_{\ff, \wt{y}}(\sigma, \eta', z,
  z', \wt{T})$ can be written using separation of variables with
  respect to the spectrum of $\Delta^{Z, y}$.  Indeed, since $H_{Z,y}$
  has discrete spectrum, it is standard that $H_{Z,y}(z, \wt{z}, t) =
  \Pi_{0} + E$ where $\Pi_{0}$ is projection onto the kernel of
  $\Delta^{Z,y}$ and $|E| < e^{-\lambda_{0} t}$ as $t \to \infty$,
  $\lambda_{0}$ being the smallest non-zero eigenvalue of $\Delta_{Z,y}$. Thus
  \begin{equation}
    \label{eq:seporvars}
    \kappa_{\ff, y} = (2 \pi \wt{T})^{-(b +1)/2}e^{-(\sigma^{2} +
        |\eta'|_{h_{1}}^{2})/2} \Pi_{0} + E',
  \end{equation}
where $E'$ is exponentially decaying.  Now we have
\begin{equation}
  \label{eq:42}
  \wt{T} = \overline{\phi}_{t} \rho_{\fff}^{-2(k - 1)}
  \rho_{\lf}^{-2k}, \quad \eta' = \overline{\phi}_{t} \rho_{\fff}^{-(k
    - 1)} \rho_{\lf}^{-k}, \quad \sigma = \overline{\psi}_{x}
  \rho_{\fff}^{-(k - 1)} \rho_{\lf}^{-k},
\end{equation}
and thus
\begin{equation}
  \label{eq:43}
  \rho_{\fff}^{1 + \dimY +
      kf}(\rho_{\lf} \rho_{\fff})^{-kn} \kappa_{\ff}  =
    \frac{\rho_{\lf}^{-kf}}{(4 \pi \overline{\phi}_{t})^{(b + 1)/2}}
    e^{-(|\overline{\psi}_{x}^{2} +
    |\overline{\psi}_{y}|^{2}_{\wt{y}}) / 4 \overline{\phi}_{t}},
\end{equation}
so the matching condition at $\ff \cap \fff$ holds.

For the behavior of $K_1$ at $\lf$, since $\kappa_{\fff}$ is given by $\wt{x}^{- 1 - \dimY - kf} \kappa$
at the boundary and we can take $K_1$ to be equal to $\kappa$ at $\lf$
and at $\rf$.\end{proof}

We can now conclude the proof of Theorem \ref{thm:heatkernel} modulo
arguments in Appendix \ref{sec:triple-space}.
\begin{proof}[Proof of Theorem \ref{thm:heatkernel}]
  Having established the existence of a parametrix $K$ as in
  Proposition \ref{thm:ansatz}, we will now prove Theorem
  \ref{thm:heatkernel}.  We do so by inverting the error $ Q =
  t(\p_t + \Delta) K$ from Proposition \ref{thm:ansatz} via a Neumann
  series.  To be precise, it will be convenient to think of distributional
  kernels $A(p, p', t)$ on $M \times M \times \mathbb{R}^+$ acting on
  $C_{c}^{\infty}(M^{\circ} \times (0, \infty))$ by operating as
  convolution kernels in the time variable, so for $\phi \in
  C_{c}^{\infty}(M^{\circ} \times (0, \infty))$ by
  \begin{equation}\label{eq:convolution-operator}
    (A \star \phi)(p, t)  := \int_{M} \int_{0}^{t} A(p, p', t - s)
    \phi(p', s) ds \dVol_{p'}. 
  \end{equation}
  Then
  \begin{equation}\label{eq:real-error-to-iterate}
(\p_t + \Delta) \wt{K} = I + t^{-1}Q,
\end{equation}
and the right hand side
  can be inverted via a Neumann series, i.e.\ $(\operatorname{Id} +
  t^{-1}Q)(I + Q') = \operatorname{Id}$ where $Q' = \sum_{j =
    1}^{\infty}(-1)^j (t^{-1} Q)^{j}$ and $(t^{-1} Q)^j =  t^{-1} Q
  \star \cdots \star t^{-1} Q$, $j$-times.
 In Proposition \ref{thm:composition} below, we show that the summands
 $(t^{-1} Q)^j$ vanishes at successively faster orders at $\ff$ and
 $\fff$.  Moreover, as discussed in \cite{MV2012,BGV2004},
  this series is convergent in $C^\infty$, and the infinite order of
  vanishing of $t^{-1} Q$ at $\lf$ is preserved in the sum, i.e. $Q'$
  vanishes also to infinite order there.  In fact, one sees as in
  \cite{MV2012} that $\wt{K} (I + Q')$ is polyhomogeneous with the
  index set $\mathcal{E}$ statisfying the properties of Theorem \ref{thm:heatkernel}.
\end{proof}

\section{Spectral and Hodge theoretic properties of the Hodge-Laplacian}\label{sec:proofs}

In this section we deduce the main theorems from the
introduction.  We begin with a detailed analysis of the
polyhomogeneous forms in the maximal domain.

\subsection{Polyhomogeneous forms in $\mathcal{D}_{\max}$ and
   $\mathcal{D}_{\min}$}

Recall the definition of $\mathcal{D}_{max}$ and $\mathcal{D}_{min}$
from the introduction, and the space $\mathcal{A}_{\phg}(\Lice)$ of
polyhomogeneous $\ice$-forms (below denoted simply by
$\mathcal{A}_{\phg}$) discussed in Section \ref{sec:mwc}.
We   determine   conditions which assure that  a given polyhomogeneous
differential form $\gamma\in \mathcal{A}_{\phg}$ is contained in the
maximal domain $\mathcal{D}_{\max}$ of $\Delta^g$. This will be used
to show, with an additional assumption on the index set of a
$\phg$ form, that
\begin{equation}\label{eq:7}
  \gamma \in \mathcal{D}_{\max} \cap \mathcal{A}_{\phg} \implies \gamma
  \in \mathcal{D}_{\min} \cap \mathcal{A}_{\phg}.
\end{equation}

Let $\gamma\in \mathcal{A}_{\phg}$ be contained in the maximal domain,
i.e.~we assume that $\gamma\in L^2$ and $\Delta^g\gamma\in L^2$. Let
$\gamma=x^s\tilde\gamma$ where $\tilde\gamma= \tilde{\gamma}_0(y, z) + \mathcal O(x^\epsilon)$. Here notation such as $\mathcal O(x^{\epsilon})$ indicates that the differential form $\gamma$ is locally a combination of basis forms
\begin{eqnarray*}
 \quad dy_I \wedge x^{k\NN}dz_A\qquad \textrm{and}\qquad dx\wedge x^{k\NN}dy_I \wedge x^{k\NN}dz_A,
\end{eqnarray*}
where $I$ and $A$ are multi-indices on the base and fiber, respectively.
with coefficient functions which are bounded by $c x^\epsilon$
pointwise in norm when
$x\searrow 0$, and $\tilde{\gamma}_0$ is a form on $M$ whose
coefficient functions are independent of $x$. Let us determine the
possible range of values $s$. From \eqref{eq:cuspedgemetric} it
follows that in a neighborhood of the boundary, the volume form of the cuspedge metric $g$ is 
\begin{equation*}
\dVol_g=x^{kf}\rho\,dx\wedge dy \wedge dz,
\end{equation*}
where $\rho=  a(y,z) + \mathcal{O}(x^k)$ and $a$ is a
non-vanishing positive function.  It follows that
\begin{eqnarray}\label{eq:condL2}
x^s\tilde\gamma\in L^2(M,g)  \iff s>-\frac{1}{2}(kf+1).
\end{eqnarray}
We begin by analyzing the indicial roots of $\Delta^g$, specifically
we find the order of vanishing of approximately fiber harmonic homogeneous forms in the kernel of
$\Delta^g$.  By Proposition \ref{thm:Hodge-Laplacian}, the leading
order part of $\Delta^g$ restricted to approximately fiber harmonic
forms (see Section \ref{sec:fibharmforms}) is
\begin{eqnarray*}
\Pi_{\mathcal{H}} \Delta^g_0 \Pi_{\mathcal{H}} \sim := \lp
\begin{array}{cc}
 P_{\alpha({\NN}), \beta({\NN})} & 0 \\ 0 &  P_{\alpha({\NN}), \gamma({\NN})} 
\end{array}
\rp,
\end{eqnarray*}
with $P_{\alpha({\NN}), \beta({\NN})},  P_{\alpha({\NN}),  \gamma({\NN})}$ the operators, depending on fiber degree, defined in
\eqref{eq:regsingheat}--\eqref{eq:alpha-beta-gamma}
We note that
\begin{equation}\label{eq:symindroots}
 P_{\alpha({f - \NN}), \beta({f - \NN})} =   P_{\alpha({\NN}),  \gamma({\NN})} \qquad (\NN=0,\ldots,f).
\end{equation}
By \eqref{eq:indicial-roots}, the values $s$ for which
$P_{\alpha(\NN), \beta(\NN)} x^s = 0$ and for which elements of size $x^s$ are in addition contained in $L^2(M,g)$ is
\begin{equation}\label{eq:L2indroots}
s=\begin{cases} -k\NN&\quad\textrm{if}\; \NN<\frac{1}{2}(f+\frac{1}{k}),\\1-k(f-\NN)&\quad\textrm{if}\; \NN>\frac{1}{2}(f-\frac{3}{2k}). \end{cases}
\end{equation}

\begin{proposition}\label{prop:criterionDmax}
Suppose the differential form
$\gamma=(\gamma^1,\gamma^2)=(x^{s_1}\tilde \gamma^1,x^{s_2}\tilde
\gamma^2)\in \mathcal{A}_{\phg}$ and that $\tilde\gamma^j=
\tilde{\gamma}_0^j(y, z) + \mathcal O(x^\epsilon)$ is contained in the
maximal domain $\mathcal{D}_{\max}$.  (Thus the leading order term is
assumed not to have a logarithm, as is a priori allowed for
$\phg$-distributions.) Then  each  $s_j$ is an indicial
root of $P_{\alpha(\NN), \beta(\NN)}$ for some $0\leq \NN_j\leq f$ or $s_j>\frac{1}{2}(-kf+3)$. In either case, $s_j\geq \frac{1}{2}(-kf+3)$.
\end{proposition}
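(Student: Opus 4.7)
The plan is to combine the $L^2$ hypothesis on both $\gamma$ and $\Delta^g\gamma$ with a careful leading-order analysis based on the decomposition in Proposition \ref{thm:Hodge-Laplacian}. The containment $\gamma\in L^2$ together with \eqref{eq:condL2} immediately gives $s_j>-\tfrac12(kf+1)$; everything else comes from exploiting $\Delta^g\gamma\in L^2$.

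For each $j\in\{1,2\}$, I would decompose the leading coefficient fiberwise as $\tilde\gamma_0^j=\tilde\gamma_{0,\mathcal{H}}^j+\tilde\gamma_{0,\perp}^j$ into its fiber harmonic part and its $L^2$-orthogonal complement on each fiber (Section \ref{sec:fibharmforms}), and then consider two cases. If $\tilde\gamma_{0,\perp}^j\neq 0$, then by Proposition \ref{thm:Hodge-Laplacian} the most singular contribution to $\Delta^g\gamma^j$ near $x=0$ is $x^{s_j-2k}\Delta^{Z,y}\tilde\gamma_{0,\perp}^j$, with a non-zero leading coefficient. The $L^2$ criterion \eqref{eq:condL2} then forces $s_j-2k>-\tfrac12(kf+1)$, i.e.\ $s_j>\tfrac12(4k-kf-1)$, which strictly exceeds $\tfrac12(-kf+3)$ under the hypothesis $k\geq 3$ from \eqref{eq:k_3}.

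Suppose instead that $\tilde\gamma_0^j$ is fiber harmonic. Then $x^{-2k}\Delta^{Z,y}$ annihilates $\tilde\gamma_0^j$ at top order, the $x^{-k}\wt P$ coupling drops an order of singularity by \eqref{eq:harmonic-projection}, and the $x^{-1}\wt E$ term is automatically subleading. The surviving leading part of $\Delta^g(x^{s_j}\tilde\gamma_0^j)$ comes from the block-diagonal operator $\Delta_0$ in \eqref{eq:Hodge-Laplacian-decomp1}: specifically, $P_{\alpha(\NN),\beta(\NN)}x^{s_j}\tilde\gamma_0^j$ when $j=1$ and $P_{\alpha(\NN),\gamma(\NN)}x^{s_j}\tilde\gamma_0^j$ when $j=2$. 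Since $P_{A,B}x^{\ell}=-\bigl(\ell^2+(A-1)\ell-B\bigr)x^{\ell-2}$, either $s_j$ is an indicial root of this operator (placing us in the first clause of the conclusion), or the coefficient is non-zero and the $L^2$ condition forces $s_j-2>-\tfrac12(kf+1)$, i.e.\ $s_j>\tfrac12(-kf+3)$.

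Finally, I would verify that every $L^2$-admissible indicial root that actually appears in this way satisfies $s_j\geq\tfrac12(-kf+3)$. By \eqref{eq:L2indroots} and a direct computation using only $k\geq 3$, the bound $-k\NN\geq\tfrac12(-kf+3)$ holds for $\NN\leq (f-1)/2$, and $1-k(f-\NN)\geq\tfrac12(-kf+3)$ holds for $\NN\geq (f+1)/2$. The only potentially offending case is $\NN=f/2$ (with $f$ even), where both indicial roots $-kf/2$ and $1-kf/2$ lie strictly below $\tfrac12(-kf+3)$; this case is ruled out precisely by the Witt assumption \eqref{eq:witt}, which either renders $f/2$ non-integral or kills $\mathcal{H}^{f/2}_{\partial}$, so that no non-zero fiber harmonic form of degree $f/2$ exists. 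The main obstacle in executing the plan will be the rigorous verification that Case~2 is genuinely governed by $P_{\alpha,\beta}$ (respectively $P_{\alpha,\gamma}$), which requires using \eqref{eq:harmonic-projection}--\eqref{eq:harmonic-projection-2} and the no-logarithm assumption on the leading coefficient to control the coupling term $x^{-k}\wt P$ and the $x^{-1}\wt E$ error in Proposition \ref{thm:Hodge-Laplacian}.
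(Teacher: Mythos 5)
Your overall structure matches the paper's proof: split according to whether the leading coefficient is approximately fiber harmonic, handle the non-fiber-harmonic case by the $x^{-2k}\Delta^{Z,y}$ term, handle the fiber harmonic case by the $P_{\alpha,\beta}$ (resp.\ $P_{\alpha,\gamma}$) indicial analysis, and invoke the Witt condition to exclude the middle degree $\NN=f/2$. Your Case~1 and your indicial root verification in Case~2a are correct and match the paper.

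The genuine gap is in Case~2, and you have in fact already named it yourself as ``the main obstacle.'' The difficulty is not merely a technicality: your assertion that ``the $x^{-k}\wt P$ coupling drops an order of singularity by \eqref{eq:harmonic-projection}'' is not correct as stated. Equation \eqref{eq:harmonic-projection} controls the \emph{fiber-harmonic projection} of $d^Z$, $\delta^Z$ composed with $\Pi^{\mathcal H}$; it does not control $\wt P\gamma$ itself. For a fiber harmonic $\gamma$, the pieces $Q_3\delta_Z\gamma$ and $Q_4 d_Z\gamma$ do drop an order, but $\delta_ZQ_1\gamma$ and $d_ZQ_2\gamma$ are generically $O(x^{s_j})$, so $x^{-k}\wt P\gamma = O(x^{s_j-k})$, which for $k\geq 3$ is \emph{more} singular than $P_{\alpha,\beta}x^{s_j}\tilde\gamma_0^j=O(x^{s_j-2})$. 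Likewise, $x^{-2k}\Delta^{Z,y}$ applied to the subleading (possibly non-fiber-harmonic) terms of the polyhomogeneous expansion of $\tilde\gamma^j$ can produce contributions at orders $\leq s_j-2$. So you cannot read off the $L^2$ constraint from the $P_{\alpha,\beta}$ term alone; one must rule out cancellation. The paper's proof of case (2.b) is precisely this argument: it dichotomizes according to whether the expansion of $\tilde\gamma^j$ contains a non-fiber-harmonic term of order $\delta$ with $\delta-2k<s_j-2$, collects all contributions to $\Delta^g\gamma$ at order $x^{s_j-2}$ into a sum of the $P_{\alpha,\beta}$ piece plus terms lying in the image of $\Delta^Z$, $d^Z$, $\delta^Z$, and then uses the Hodge orthogonality of these images to the fiber harmonic forms to conclude that the $P_{\alpha,\beta}$ piece cannot be cancelled. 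Until you supply that sub-case analysis and orthogonality argument, the inequality $s_j>\tfrac12(-kf+3)$ in the non-indicial case is not established. (As a small side remark, your explicit threshold $\tfrac12(4k-kf-1)$ in Case~1 and your derivation $s_j-2>-\tfrac12(kf+1)\Rightarrow s_j>\tfrac12(-kf+3)$ are both the correct arithmetic, and you have implicitly corrected two typographical slips in the paper's printed bounds.)
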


\begin{proof}
Recall form Proposition \ref{thm:Hodge-Laplacian} that 
\begin{equation*}
\Delta^g = \Delta_0 + x^{-k}\wt{P} + x^{-1} \wt{E},
\end{equation*}
where 
\begin{equation}\label{eq:leadingtermLapl}
\Delta_0 =  \lp
\begin{array}{cc}
 P_{\alpha({\NN}), \beta({\NN})} & 0 \\ 0 &  P_{\alpha({\NN}), \gamma({\NN})} 
\end{array}
\rp
+ \lp
\begin{array}{cc}
  \frac{1}{x^{2k}} \Delta^{Z,y} + \Delta_{Y}  & - 2k x^{-k - 1} d^{Z} \\
  - 2 k x^{-k - 1} \delta^{Z} &  \frac{1}{x^{2k}} \Delta^{Z,y} + \Delta_{Y} 
\end{array} \rp.
\end{equation} 
In view of the symmetry  \eqref{eq:symindroots}  it suffices to consider the image of the  the component  $\gamma^1=x^{s_1}\tilde\gamma^1$ under  $\Delta^g$.  The discussion naturally falls into several  cases. 

\begin{itemize}
\item[(1)]   The form $\tilde\gamma_0^1$ is not approximately fiber harmonic in the sense of \textsection\ref{sec:fibharmforms}. Then the lowest nonvanishing  term in \eqref{eq:leadingtermLapl}  is
  $x^{-2k+s_1}\Delta^Z\tilde\gamma_0^1$, which is
  contained in $L^2$ if and only if
\begin{eqnarray*}
s_1>\frac{1}{2}(3kf-1).
\end{eqnarray*}

\item[(2)] The form $\tilde\gamma_0^1$ is approximately fiber harmonic harmonic. We then consider the following subcases.
\begin{itemize}
\item[(2.a)] $s_1$ is an   indicial root of $P_{\alpha({\NN_1}), \beta({\NN_1})}$ and hence equals the number in \eqref{eq:L2indroots}.
\item[(2.b)] $s_1$ is not an indicial root of $P_{\alpha({\NN_1}), \beta({\NN_1})}$, i.e.~$P_{\alpha({\NN_1}), \beta({\NN_1})}(x^{s_1}\tilde\gamma^1)\neq0$. We claim that at least one of the following two statements  holds true:
\begin{itemize}
\item 
The polyhomogeneous expansion of $\tilde\gamma^1$ contains a term $\tilde\gamma_{\ell}^1$ of order $\mathcal O(x^{\delta})$ where $\delta-2k<s_1-2$ and  $\tilde\gamma_{\ell}^1$ is not approximately fibre harmonic.
\item
The  lowest nonvanishing term in the first component of $\Delta^g\gamma$ is  of order $x^{s_1-2}$. 
\end{itemize}
If this claim holds true we conclude that the lowest nonvanishing term in the first component of $\Delta^g\gamma$ is  of order at most $x^{s_1-2}$. To prove the claim, assume that the first statement is false. Then the second one must hold true as  is clear from the form of the  Laplacian $\Delta_0$   in \eqref{eq:leadingtermLapl}. To be specific, collecting the terms of order $x^{s_1-2}$ in the first component of $\Delta_0\gamma$  we obtain 
\begin{multline}\label{eq:termsorders2}
P_{\alpha({\NN_1}), \beta({\NN_1})}(x^{s_1}\tilde\gamma^1)+x^{-2k}\Delta^Z\tau^1 +x^{-k-1}d^Z\tau^2+x^{-k}d_ZQ_2\tau^3+x^{-k}Q_4d_Z\tau^4\\
+x^{-k}\delta_ZQ_1\tau^5+x^{-k}Q_3\delta_Z\tau^6
\end{multline} 
for suitable differential forms $\tau^1,\ldots,\tau^6$ of orders
$$
\tau^1= O(x^{s_1+2k-2}),\quad \tau^2=O(x^{s_1+k-1}),\mbox{ and } \tau^j=O(x^{s_1+k-2})\quad (j=3,4,5,6).
$$
By Hodge theory, the terms $d_Z\tau^4$ and $\delta_Z\tau^6$
both vanish approximately in the sense of \textsection\ref{sec:fibharmforms}, since otherwise a nonvanishing term $x^{-2k}\Delta^Z\tau^4$, respectively $x^{-2k}\Delta^Z\tau^5$ would occur. These are  both  of order strictly less than   $s_1-2$, contradicting our initial assumption. Considering the remaining five terms in \eqref{eq:termsorders2} it follows from Hodge theory and the assumption that  $\tilde\gamma_0^1$ is approximately fibre harmonic  that the sum 
\begin{eqnarray}\label{eq:restterms}
x^{-2k}\Delta^Z\tau^1 +x^{-k-1}d^Z\tau^2+x^{-k}d_ZQ_2\tau^3 +x^{-k}\delta_ZQ_1\tau^5
\end{eqnarray}
is approximately orthogonal to $P_{\alpha({\NN_1}), \beta({\NN_1})}(x^{s_1}\tilde\gamma^1)$ in the sense of \textsection\ref{sec:fibharmforms}. Hence we conclude that the nonzero term $P_{\alpha({\NN_1}), \beta({\NN_1})}(x^{s_1}\tilde\gamma^1)$ cannot cancel with the sum \eqref{eq:restterms}. It follows that the second statement is true, whence the claim.
\end{itemize}
\end{itemize}
The asserted statement follows by inspection of each of the above cases. In case (1) it follows from
\begin{equation*}
s_1> \frac{1}{2}(3kf-1) >\frac{1}{2}(-kf+3),
\end{equation*} 
using that $k\geq3$. In case (2.b) the lowest nonvanishing term in $\Delta^g\gamma$  is of order at most $s_1-2$. Since $\gamma\in\mathcal D_{\max}$ it follows from \eqref{eq:condL2} that
\begin{equation*}
s_1-2>-\frac{1}{2}(kf+1)\Longleftrightarrow s_1>-\frac{1}{2}(kf+3).
\end{equation*}
In case (2.a), the form $\tilde\gamma_0^1$ is approximately fibre harmonic and therefore by the Witt condition $\NN\neq\frac{f}{2}$. The exponent $s_1$ is given by \eqref{eq:L2indroots} from which it follows that if $f$ is even that
\begin{equation*}
s_1\geq-k\big(\frac{f}{2}-1\big)
\end{equation*}
(here we use the Witt condition) and if $f$ is odd that
\begin{equation*}
s_1\geq-k\big(\frac{f}{2}-\frac{1}{2}\big)\geq  -\frac{kf}{2}+\frac{3}{2} ,
\end{equation*}
where the last inequality follows from the assumption $k\geq3$.
\end{proof}

\begin{lemma}\label{lem:AmaxequalsAmin}
Assume  $k\geq3$. Then  $\mathcal D_{\min}\cap\mathcal A_{\phg}=\mathcal D_{\max}\cap\mathcal A_{\phg}$.
\end{lemma}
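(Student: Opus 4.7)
The plan is to show that every $\gamma\in \mathcal D_{\max}\cap\mathcal A_{\phg}$ lies in $\mathcal D_{\min}$ by exhibiting explicit approximating sequences in $C^\infty_c(M^\circ)$ in the graph norm.  The essential input is the sharp lower bound $s\ge \tfrac12(-kf+3)$ on the leading exponent of $\gamma$ provided by Proposition \ref{prop:criterionDmax}.

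First I would introduce \emph{logarithmic} boundary cutoffs, namely $\chi_n(x):=\chi(\log(1/x)/n)$ with $\chi\in C^\infty(\mathbb R)$ equal to $1$ on $(-\infty,1]$ and to $0$ on $[2,\infty)$.  These satisfy $\chi_n\equiv 1$ on $\{x\ge e^{-n}\}$, $\chi_n\equiv 0$ on $\{x\le e^{-2n}\}$, and the key estimates $|\chi_n^{(j)}(x)|\le C_j n^{-1}x^{-j}$ for $j=1,2$, the factor $n^{-1}$ being beyond what linear cutoffs would afford.  Since polyhomogeneous forms are smooth on $M^\circ$, the truncation $\chi_n\gamma$ lies in $C^\infty_c(M^\circ)\subset\mathcal D_{\min}$ for each $n$.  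Convergence $\chi_n\gamma\to\gamma$ in $L^2$ is immediate from dominated convergence, and writing
\begin{equation*}
\Delta^g(\chi_n\gamma)-\Delta^g\gamma=-(1-\chi_n)\Delta^g\gamma+[\Delta^g,\chi_n]\gamma,
\end{equation*}
the first summand tends to zero in $L^2$ by dominated convergence since $\Delta^g\gamma\in L^2$.  The crux is controlling the commutator.

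By inspection of Proposition \ref{thm:Hodge-Laplacian}, only the parts of $\Delta^g$ containing $\partial_x$ produce nonzero commutators with $\chi_n(x)$: the $-\partial_x^2$ and $-(kf/x)\partial_x$ pieces of $\Delta_0$, and the $x\partial_x$-bearing terms in $x^{-k}\wt P$ and in $x^{-1}\wt E$.  Polyhomogeneity gives $|\gamma|,|(x\partial_x)\gamma|\lesssim x^s$, so the commutators of $\Delta_0$ and $x^{-1}\wt E$ with $\chi_n$ are pointwise $\lesssim n^{-1}x^{s-2}$ on $\{e^{-2n}\le x\le e^{-n}\}$, whence
\begin{equation*}
\|\cdot\|_{L^2}^2\lesssim n^{-2}\int_{e^{-2n}}^{e^{-n}}x^{2s+kf-4}\,dx.
\end{equation*}
The bound $s\ge\tfrac12(-kf+3)$ is exactly $2s+kf-3\ge 0$: when strict the integral is exponentially small in $n$; when equal (the borderline case, occurring for instance at $k=3$, $f$ odd and $\NN=(f-1)/2$) the integral equals $n$, so the $n^{-2}$ prefactor still delivers $n^{-1}\to 0$.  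This is precisely where logarithmic (rather than linear) cutoffs are indispensable.

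The remaining commutator $[x^{-k}\wt P,\chi_n]\gamma\sim x^{-k+1}\chi_n'(\delta^Z\gamma+d^Z\gamma)$ looks more dangerous due to the blowing-up coefficient $x^{-k+1}$, but is tamed by approximate fiber harmonicity of $\gamma$ which is forced by $\gamma\in\mathcal D_{\max}$: any non-fiber-harmonic expansion term of $\gamma$ at order $s+\epsilon$ would contribute $x^{s+\epsilon-2k}\Delta^Z\tilde\gamma_j$ to $\Delta^g\gamma$, and $L^2$-integrability of this contribution combined with $k\ge 3$ forces $\epsilon>2k-s-\tfrac12(kf+1)$.  Substituting this bound, the relevant integral has exponent $2s+2\epsilon-2k+kf>2k-1>0$, hence is exponentially small.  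Combined with the preceding estimates and with $\chi_n\gamma\in C^\infty_c(M^\circ)\subset\mathcal D_{\min}$, this yields $\gamma\in\mathcal D_{\min}$, as desired.  The principal obstacle is the borderline case $2s+kf-3=0$; it is overcome by the combined use of logarithmic cutoffs (for the commutators from $\Delta_0$) and the $L^2$-forced approximate fiber harmonicity (for the commutator from $x^{-k}\wt P$).
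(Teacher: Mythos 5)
Your proposal is correct and takes essentially the same route as the paper's proof: a logarithmic cutoff near the boundary, a direct estimate of the commutator $[\Delta^g,\chi]$, the exponent bound $s\ge\tfrac12(-kf+3)$ from Proposition \ref{prop:criterionDmax}, and the observation that the extra $1/\log$ factor rescues the borderline equality case. Your smooth cutoff $\chi(\log(1/x)/n)$ is interchangeable with the paper's piecewise-logarithmic $\chi_\varepsilon$.

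The one substantive point of comparison is your handling of the commutator coming from $x^{-k}\wt P$. The paper's proof writes the commutator remainder as $A^j(\partial_x\chi_\varepsilon)\partial_{z_j}\gamma + B^i(\partial_x\chi_\varepsilon)\partial_{y_i}\gamma$ with the bare assertion $A^j=O(x^k)$, $B^i=O(x^{2k})$, and then dismisses these as negligible. You instead identify the potentially dangerous piece $x^{1-k}\chi_n'(\delta^Z\gamma+d^Z\gamma)$ arising from the $x\partial_x$-components of the $Q_i$ inside $\wt P$ and show that it is rendered harmless by the approximate fiber harmonicity that is \emph{already} forced on polyhomogeneous elements of $\mathcal D_{\max}$ by the analysis underlying Proposition \ref{prop:criterionDmax}; the resulting integrand has exponent at least $2k-1>0$, so the term is subleading. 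Your version is the more careful accounting of the term that the paper's asserted $O(x^k)$ bound is meant to cover, so this is a difference in rigor at one step rather than a different method.

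One small caveat to watch: your sentence ``$L^2$-integrability of this contribution combined with $k\ge3$ forces $\epsilon>2k-s-\tfrac12(kf+1)$'' tacitly assumes the term $x^{s+\epsilon-2k}\Delta^Z\tilde\gamma_j$ cannot be cancelled by other contributions to $\Delta^g\gamma$ at the same order. That non-cancellation is not automatic; it is precisely the content of case (2.b) in the proof of Proposition \ref{prop:criterionDmax} (where the orthogonality to $P_{\alpha,\beta}(x^{s_1}\tilde\gamma^1)$ is used). Since you already invoke that proposition, this is fine, but it is worth stating that you are relying on the structural orthogonality established there rather than a formal counting of orders.
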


\begin{proof}
It suffices to prove the inclusion $\mathcal D_{\max}\cap\mathcal A_{\phg} \subseteq\mathcal D_{\min}\cap\mathcal A_{\phg}$. For $\varepsilon>0$ we define the logarithmic cutoff function $\chi_{\varepsilon}\colon[0,\infty)\to[0,1]$ by
\begin{eqnarray*}
\chi_{\varepsilon}(x):=\begin{cases}0,&x\leq\varepsilon^2,\\-\frac{\log(x/\varepsilon^2)}{\log(\varepsilon)},&\varepsilon^2<x<\varepsilon,\\1,&x\geq\varepsilon.\end{cases}
\end{eqnarray*}
For $\varepsilon^2<x<\varepsilon$ it satisfies
\begin{eqnarray}\label{eq:dercutoff}
\chi_{\varepsilon}'(x)=-\frac{1}{\log(\varepsilon)x}\qquad\textrm{and}\qquad\chi_{\varepsilon}''(x)=\frac{1}{\log(\varepsilon)x^2}.
\end{eqnarray}
Let $\gamma \in\mathcal D_{\max}\cap\mathcal A_{\phg}$ and set $\gamma_{\varepsilon}=\chi_{\epsilon}\gamma$. Then
\begin{multline}\label{eq:Laplacecutoff}
\Delta^g\gamma_{\varepsilon}=\chi_{\varepsilon}\Delta^g\gamma-(\partial_x^2\chi_{\varepsilon})\gamma-(\partial_x\chi_{\varepsilon})(\partial_x\gamma)-\frac{kf}{x}(\partial_x\chi_{\varepsilon})\gamma\\
+A^j(\partial_x\chi_{\varepsilon})(\partial_{z_j}\gamma)+B^i(\partial_x\chi_{\varepsilon})(\partial_{y_i}\gamma),
\end{multline}
where $A^j= \mathcal O(x^k)$ and $B^i= \mathcal O(x^{2k})$ are bounded functions with that order of decay in $x$. We show that
\begin{eqnarray}\label{eq:condDmin}
\|\Delta^g\gamma_{\varepsilon}-\Delta^g\gamma\|_{L^2(M,g)}\to0\qquad\textrm{as}\quad\varepsilon\to0,
\end{eqnarray}
hence establishing that $\gamma\in\mathcal D_{\min}$. It is clear that
\begin{equation*}
\|\chi_{\varepsilon}\Delta^g\gamma-\Delta^g\gamma\|_{L^2(M,g)}\to0\qquad\textrm{as}\quad\varepsilon\to0,
\end{equation*}
and thus it  suffices to consider the next three terms in \eqref{eq:Laplacecutoff} and to show that
\begin{eqnarray}\label{eq:errorterms}
\frac{1}{\log^2(\varepsilon)}\int_{\varepsilon^2}^{\varepsilon}\frac{1}{x^4}|\gamma|^2x^{kf}\,dx+\frac{1}{\log^2(\varepsilon)}\int_{\varepsilon^2}^{\varepsilon}\frac{1}{x^2}|\partial_x\gamma|^2x^{kf}\,dx +\frac{k^2f^2}{\log^2(\varepsilon)}\int_{\varepsilon^2}^{\varepsilon}\frac{1}{x^4}|\gamma|^2x^{kf}\,dx
\end{eqnarray}
converges to $0$ as $\varepsilon\to0$. Let $\gamma=x^s\tilde\gamma$ for some   $\tilde\gamma=\mathcal O(1)$. A short calculation   shows that each  integrand in   \eqref{eq:errorterms} is of order $x^{-1+\delta}$ for  some $\delta>0$ and hence converges   to $0$ as $\varepsilon\to0$  if
\begin{eqnarray}
s>-\frac{kf}{2}+\frac{3}{2}.
\end{eqnarray}
In the borderline case $s=-\frac{kf}{2}+\frac{3}{2}$ we still get convergence since then the first integral in \eqref{eq:errorterms} becomes
\begin{eqnarray*}
\frac{1}{\log^2(\varepsilon)}\int_{\varepsilon^2}^{\varepsilon}\frac{1}{x}\,dx=\frac{1}{\log^2(\varepsilon)}(\log(\varepsilon)-\log(\varepsilon^2))=-\frac{1}{\log(\varepsilon)}\to0\quad\textrm{as}\;\varepsilon\to0,
\end{eqnarray*}
and analogously for the second and third integral. Hence 
\begin{equation*}
s\geq-\frac{kf}{2}+\frac{3}{2}\quad \Longrightarrow\quad  \gamma \in\mathcal D_{\min} 
\end{equation*}
for any  $\gamma=x^s\tilde\gamma \in\mathcal D_{\max}\cap\mathcal A_{\phg}$. On the other hand, Proposition \ref{prop:criterionDmax} shows that
\begin{equation*}
\gamma=x^s\tilde\gamma \in\mathcal D_{\max}\cap\mathcal A_{\phg}\quad \Longrightarrow\quad s\geq-\frac{kf}{2}+\frac{3}{2},
\end{equation*}
and hence the claim follows.
\end{proof}

\subsection{Spectral theory}\label{sec:spectral-theory}

\begin{proof}
[Proof of Theorem  \ref{thm:heatkernelmap}]
The existence of a fundamental solution $H_t$ is the content of
Theorem \ref{thm:heatkernel}, so it suffices to show that $H_t$ has
the properties stated in Theorem \ref{thm:heatkernelmap}.  Since $H_t$
and $\p_t H_t$ are formally self-adjoint (i.e.\ symmetric), to show
that they are self-adjoint it suffices to show that they are compact
operators.  But indeed they are, as follows from \cite[Thm
VI.23-24]{RSI} together with
$$
H_t, \p_t H_t \in L^2(\End; M \times M),
$$
where, given a smooth section $A$ of $\End$, then $A \in L^2(\End; M
\times M)$ if
$$
\int \| A(p, q) \|^2_{\End} \dVol_{M}(p) \dVol_M(q) < \infty.
$$
 For $t>0$, $H_{t}$ is given by an $L^{2}$ integral kernel, so is a
 compact operator; indeed, by \eqref{eq:leftfacebehaviour}, the index
 set $\mathcal{F}$ of $H_t \in \mathcal{A}_{\phg}(M \times M)$
 restricted to $t > 0$ constant is $\mathcal{F}(\lf) =
 \mathcal{E}(\lf)$ and $\mathcal{F}(\rf) = \mathcal{E}(\rf)$, for
 $\mathcal{E}$ the index family of $H$.  From \eqref{eq:left-face-index}, these satisfy the lower bound
\begin{equation}\label{eq:lowerbindsetlf}
\inf  \mathcal{F}(\lf), \inf  \mathcal{F}(\rf)\geq -\frac{kf}{2}+1
\end{equation} 
(meaning $H_t$ is a bounded endomorphism) and
\begin{equation}\label{eq:expvolform}
\dVol_{M}(p) \dVol_M(q) \simeq x^{kf} \wt{x}^{kf}\, dx\,  d\wt{x}\, dy\,
 d\wt{y}\, dz\, d\wt{z},
\end{equation}
so the kernel of $H_t$ is square
 integrable.  Since the restriction of  $\p_{t} H_{t}$ to a fixed time
 $t$ has the same index set on $M \times M$ as $H_{t}$, it is also
 compact.  
\medskip\\
It remains to establish \eqref{thm:heatkernelmap}, i.e.~that $H_t(\alpha) \in \mathcal{D}_{\min}$ for every $\alpha\in L^2$. In fact, $H_t(\alpha)$ is a polyhomogeneous distribution with index
set $\mathcal{E}(\lf)$.  This is straightforward: writing the expansion
of $H_t$ at $x = 0$ up to some order $N$ we have
$$
H_t = \sum_{\substack{(s,p) \in \mathcal{E}(\lf) \\ |s| \le N}} x^s
\log^p(x) a_{s, p}(y, z, \wt{w})  + E_N
$$
where $\wt{w} = (\wt{x}, \wt{y}, \wt{z})$, and the coefficients
$a_{s,p}$ are polyhomogeneous endomorphisms on the manifold with
boundary $\p M \times M$ and $E_N$ is a polyhomogeneous endomorphism
on $M \times M$ with $E_N = o(x^N)$.  Thus
\begin{equation}\label{eq:image-is-phg}
H_t(\alpha) = \int_M \Big(\sum_{\substack{(s,p) \in \mathcal{E}(\lf) \\
    \Re s \le N}} x^s \log^p(x) a_{s, p}(z,
y, \wt{w}) \alpha(\wt{w})  + E_N \alpha(\wt{w}) ) \Big) \dVol_g(\wt{w}).
\end{equation}
For example by  \cite[Proposition 3.20]{Ma1991}, since the $x^{-N}E_N$
are given by a polyhomogeneous integral kernel, they define 
bounded maps of $L^2$, and the conormality estimates
(see \eqref{eq:simbound}--\eqref{eq:conormal-estimates}) follow by
differentiating $x^{-N} E_N$. The integrals coming from the partial expansion terms are finite and
give the expansion coefficients of $H_t(\alpha)$. This shows that
$H_t(\alpha)\in\mathcal A_{\phg}$, and moreover that the leading order
term has no logarithmic factor. Thus, In view of Lemma \ref{lem:AmaxequalsAmin} it suffices to prove $H_t(\alpha)\in\mathcal D_{\max}$ in order to conclude that $H_t(\alpha)\in\mathcal D_{\min}$. But indeed, $\inf  \mathcal{E}(\lf)$ satisfies the lower bound \eqref{eq:lowerbindsetlf}, hence it follows that the lowest order term in the polyhomogeneous expansion \eqref{eq:image-is-phg} is of order at least $-\frac{kf}{2}+1$ which by \eqref{eq:condL2} is sufficient to conclude $H_t(\alpha)\in L^2$. Because $H_t$ is a fundamental solution of the heat equation, it follows that $\Delta^gH_t(\alpha)=-\partial_tH_t(\alpha)$ which by the same argument is contained in $L^2$ since  $\partial_tH_t$  has the same index set as $H_t$ for $t>0$.
\end{proof}

It now follows that the fundamental solution $H_t$ from Theorem
\ref{thm:heatkernel} is in fact the heat kernel in the following
sense.
\begin{proposition}\label{thm:fund-soln-is-heat-kernel}
  \label{eq:spectral-heat-kernel}
  The heat kernel $\exp(-t \Delta^g)$ defined by applying the spectral
  theorem to the self-adjoint operator $(\Delta^g, \mathcal{D})$ has
  Schwartz kernel equal to the fundamental solution $H_t$ in Theorem
  \ref{thm:heatkernel}, meaning
$$
(e^{-t \Delta^g} \alpha)(w)  = \int_M H_t(w, \wt{w}, t) \alpha(\wt{w}) \dVol_g(\wt{w}).
$$
\end{proposition}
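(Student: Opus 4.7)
The plan is to show that both $t \mapsto H_t\alpha$ and $t \mapsto e^{-t\Delta^g}\alpha$ solve the same abstract Cauchy problem in $L^2$, and then to invoke uniqueness via an energy estimate that uses the non-negativity $\Delta^g = \eth^*\eth \ge 0$ on $\mathcal{D}$. This approach bypasses the need to verify a semigroup identity for $H_t$ directly.

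First I would establish, for each $\alpha \in L^2$, the regularity statement $t \mapsto H_t\alpha \in C([0,\infty), L^2) \cap C^1((0,\infty), L^2)$, with $H_t\alpha \in \mathcal{D}$ for all $t > 0$ and $\tfrac{d}{dt}H_t\alpha = -\Delta^g H_t\alpha$ in $L^2$. The continuity at $t = 0$ is \eqref{eq:39}; the mapping property $H_t\alpha \in \mathcal{D}_{\min} = \mathcal{D}$ is Theorem \ref{thm:heatkernelmap}. For differentiability at $t > 0$, the key observation is that $\partial_t H$ is polyhomogeneous on $\Mheat$ with the same index sets at $\lf, \rf$ as $H$ itself (on any time slice), and moreover the identity $\partial_t H = -\Delta^g H$ holds in the interior by construction. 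Using the $L^2$-boundedness criterion for polyhomogeneous kernels derived in the proof of Theorem \ref{thm:heatkernelmap} (via \eqref{eq:lowerbindsetlf} and \eqref{eq:expvolform}), uniformly on compact subsets of $(0,\infty)$, one concludes that the difference quotients $h^{-1}(H_{t+h}\alpha - H_t\alpha)$ converge in $L^2$ to $\partial_t H_t\alpha = -\Delta^g H_t\alpha$.

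Next, by the spectral theorem the family $w(t) := e^{-t\Delta^g}\alpha$ satisfies the same list of properties: $w \in C([0,\infty), L^2) \cap C^1((0,\infty), L^2)$, $w(t) \in \mathcal{D}$, $w'(t) = -\Delta^g w(t)$, and $w(0) = \alpha$. Setting $v(t) := H_t\alpha - w(t)$, one has $v(t) \in \mathcal{D}$ for $t > 0$ and $v(t) \to 0$ in $L^2$ as $t \downarrow 0$. Now for $0 < s < t$,
\begin{equation*}
\|v(t)\|_{L^2}^2 - \|v(s)\|_{L^2}^2
= 2\,\mathrm{Re}\int_s^t \la v'(r), v(r)\ra_{L^2}\, dr
= -2\int_s^t \la \Delta^g v(r), v(r)\ra_{L^2}\, dr \le 0,
\end{equation*}
since $\Delta^g = \eth^*\eth$ is non-negative on $\mathcal{D}$. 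Letting $s \downarrow 0$ yields $\|v(t)\|_{L^2}^2 \le 0$, so $v(t) = 0$. This gives $H_t\alpha = e^{-t\Delta^g}\alpha$ for every $\alpha \in L^2$, which is precisely the claimed identification of Schwartz kernels.

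The main obstacle I anticipate is the rigorous justification of the $L^2$-valued differentiability in Step one, since the heat equation $\partial_t H = -\Delta^g H$ is a pointwise statement about distributional kernels on the interior and must be upgraded to a statement about $L^2$-valued functions of $t$. This will follow from the polyhomogeneous structure of $H$ on $\Mheat$ described in Theorem \ref{thm:heatkernel}, combined with the observation that $\partial_t$ and $\Delta^g$ (acting in the left variables) both preserve polyhomogeneity with uniform control on compact time intervals $[t_0, t_1] \subset (0,\infty)$. Given that, the remainder of the argument is purely functional-analytic and uses only the non-negativity of $\Delta^g$.
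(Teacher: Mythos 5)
Your proof is correct, and it supplies exactly the kind of argument the paper leaves implicit when it states the proposition with only the lead-in ``It now follows that the fundamental solution $H_t$ \dots is in fact the heat kernel.'' The paper gives no explicit proof of this proposition, so the natural route is indeed the one you take: verify that $u(t) = H_t\alpha$ and $w(t) = e^{-t\Delta^g}\alpha$ both solve the abstract Cauchy problem for $\Delta^g$ on $\mathcal{D} = \mathcal{D}_{\min} = \mathcal{D}_{\max}$, then invoke uniqueness via the monotonicity of $t \mapsto \|v(t)\|_{L^2}^2$ afforded by $\Delta^g = \eth^*\eth \ge 0$. All the ingredients you use are already established before this point in the paper: $H_t \colon L^2 \to \mathcal{D}_{\min}$ and the $L^2$-boundedness of $H_t$ and $\partial_t H_t$ come from Theorem \ref{thm:heatkernelmap}; essential self-adjointness (and hence $\mathcal{D}_{\min} = \mathcal{D}_{\max} = \mathcal{D}$) follows once Theorem \ref{thm:heatkernelmap} is in hand, as the paper notes; and the identity $\partial_t H_t\alpha = -\Delta^g H_t\alpha$ in $L^2$ for $t>0$, together with the uniform control on compact $t$-intervals needed to upgrade the pointwise heat equation to $C^1((0,\infty), L^2)$ regularity, comes from the polyhomogeneous structure of $H$ and $\partial_t H$ on $\Mheat$ exactly as you say. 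The strong convergence $H_t \to \id$ on all of $L^2$ (not just on $C^\infty_c$) is also asserted in the paper's functional-analytic proof of essential self-adjointness and is what lets $v(s) \to 0$ as $s \downarrow 0$ in your energy inequality. The only places one could quibble are minor: you could note explicitly that non-negativity of $\Delta^g$ on $\mathcal{D}$ follows from non-negativity on $C^\infty_c$ plus essential self-adjointness, and that passing from the operator identity $H_t = e^{-t\Delta^g}$ to equality of Schwartz kernels is the Schwartz kernel theorem; but neither is a real gap.
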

Using this we may finish the proof of Theorem \ref{thm:essentiallyselfadjoint}.
 
\begin{proof}[Proof of Theorem \ref{thm:essentiallyselfadjoint}]
As discussed below the statement of Theorem \ref{thm:heatkernelmap},
Theorem \ref{thm:heatkernelmap} itself establishes essential
self-adjointness of $\Delta^g$. It remains to prove that the
spectrum is discrete, but this follows immediately from the spectral
theorem and the fact that $H_t$ is a compact operator (hence has
discrete spectrum.)

Moreover, the Weyl asymptotic formula in \eqref{eq:weyl-asymptotics} follows
from the standard heat kernel argument in \cite[\textsection8.3]{taylor:vol2} together with the heat trace asymptotics in
Corollary \ref{thm:heat-trace}.
\end{proof}

\begin{corollary}[Heat trace asymptotics]\label{thm:heat-trace}
  For each $t > 0$, the fundamental solution $H_t$ in Theorem \ref{thm:heatkernel} is trace
  class and satisfies that $F(t) := \Tr H_t$ is a polyhomogenous
  conormal distribution on $\mathbb{R}^+$ satisfying
  \begin{equation}
    \begin{split}
      \label{eq:heat-trace}
      F(t) &= t^{-n/2} \Vol(M,g) + (\sum_{j = 1}^{\infty} a_j t^{-n/2 + j} )+
      c_0 t^{-(\dimY + 1)/2 + 1/2k}  \\
      &\qquad + O(t^{-(\dimY + 1)/2 + 1/2k
        + \epsilon}).
    \end{split}
  \end{equation}
\end{corollary}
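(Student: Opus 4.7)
The plan is to express the heat trace $F(t) = \Tr H_t$ as a pushforward of a polyhomogeneous conormal density on the lifted diagonal in $\Mheat$, and then to deduce the expansion \eqref{eq:heat-trace} from Melrose's pushforward theorem combined with the explicit form of $H$ at $\tf$ and $\ff$ provided by Theorem \ref{thm:heatkernel}. Trace class is immediate: by Theorem \ref{thm:heatkernel} one has $\inf \mathcal{E}(\lf), \inf \mathcal{E}(\rf) \geq -kf/2 + 1$, and combined with $\dVol_g \sim x^{kf}\,dx\,dy\,dz$ near $\p M$ this gives pointwise integrability of $\tr H(w,w,t)\dVol_g$ on $M$ for each $t > 0$.

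To set up the pushforward, observe that $\diag(M^\circ)\times[0,\infty)_t$ lifts under $\beta$ to a p-submanifold $D \subset \Mheat$, meeting the faces $\tf$, $\ff$, $\fff$ transversally and otherwise only the corner $\lf\cap\rf$ (which projects to $t > 0$). The restriction of $\tr H$ to $D$ is polyhomogeneous, and $F$ equals the pushforward of $\tr H|_D \cdot \beta^*\dVol_g$ under the projection $\pi_t : D \to [0,\infty)_t$. Since $\pi_t$ is a $b$-fibration near $t = 0$, Melrose's pushforward theorem \cite{damwc} implies that $F$ is polyhomogeneous conormal on $[0,\infty)_t$, with index set at $t = 0$ determined by those of $H$ at $D\cap\tf$, $D\cap\ff$, $D\cap\fff$ together with the scaling of each boundary defining function in $t$.

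The contribution from $\tf$ reproduces the classical Weyl series $\sum_{j\geq 0} a_j t^{-n/2+j}$ with $a_0 = \Vol(M,g)$, as in the standard parametrix construction \cite[\textsection7]{tapsit}: the $b_j$ of Lemma \ref{thm:tf-expansion} restricted to the diagonal $\xi=\eta=\zeta=0$ integrate to the usual heat invariants. The new contribution comes from $\ff$: the model kernel $\kappa_\ff$ of \eqref{eq:ffimposed} restricts on the diagonal to $(4\pi \wt T)^{-(b+1)/2} H_{Z,y}(z,z,\wt T)$ with $\wt T = t/\wt x^{2k}$; since $\wt x$ defines $\ff$ and $\wt T$ is unit size there, the natural scale is $\wt x \sim t^{1/(2k)}$, and the substitution $\wt x = t^{1/(2k)} u$ produces the leading power $t^{-(b+1)/2 + 1/(2k)}$, with $c_0$ given by an explicit integral in $(u,y,z)$ involving $\Vol(Z_y)$ and the Bessel-function kernels $H_{\alpha,\beta}, H_{\alpha,\gamma}$ of \eqref{eq:modelheatpotentialfirst2}. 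Higher-order terms at $\ff$ and the contributions from $\fff$ (whose defining function scales as $t^{1/2}$, producing powers at most $t^{-(b+1)/2 + 1/(2k) + \epsilon}$ for some $\epsilon > 0$ under the Witt assumption) are absorbed into the error. The principal technical hurdle will be verifying that $\pi_t$ is a $b$-fibration with the regularity required for the pushforward theorem, and confirming at the corners $\tf\cap\ff$ and $\ff\cap\fff$ that no logarithmic factors appear in the leading $t^{-(b+1)/2 + 1/(2k)}$ term.
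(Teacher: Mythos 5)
Your proposal takes essentially the same route as the paper: express $F(t)$ as a pushforward of a polyhomogeneous $b$-density on the lifted diagonal of $\Mheat$, apply Melrose's pushforward theorem, and read off the Weyl series from $\tf$ and the new $t^{-(b+1)/2+1/(2k)}$ term from $\ff$ via the scaling $\wt{x}\sim t^{1/(2k)}$. Your bookkeeping of the exponent matrix agrees with the paper's, and you correctly identify that the $\fff$ contribution is subleading (its defining function scales as $t^{1/2}$ and its index set starts at $-b$, giving $t^{-b/2}$, which is $O(t^{1/2-1/(2k)})$ relative to the $\ff$ term).

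One step you elide deserves a flag: the pushforward theorem applied to $\tf$ (where $\rho_{\tf}\sim t^{1/2}$ and the index set is $\{-n,-n+1,\dots\}$) produces a priori the half-integer series $\sum_{j\geq 0}a_j t^{-n/2+j/2}$, not $\sum_{j\geq 0}a_j t^{-n/2+j}$. You quote the integer-power Weyl series as if it were automatic from Lemma \ref{thm:tf-expansion}, but the vanishing of the odd-$j$ coefficients is a separate parity argument: the paper invokes the evenness of the heat calculus (citing \cite[Section 3.3]{MV2012}), which is the precise mechanism forcing $a_j = 0$ for $j$ odd. Without noting this, your expansion would be ambiguous about half-integer powers below the $\ff$ contribution. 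Apart from this, your "principal technical hurdle" paragraph correctly identifies the remaining verifications (that $\push$ is a $b$-fibration and that the leading $\ff$ coefficient is log-free), both of which the paper handles by direct computation of the exponent matrix $e_\push(\sff)=0,\ e_\push(\fff^d)=2,\ e_\push(\ff^d)=2k,\ e_\push(\tf^d)=2$ on the closure of the lifted diagonal.
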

The proof of Corollary \ref{thm:heat-trace}, which uses Theorem
\ref{thm:heatkernel} and Melrose's pushforward theorem, is deferred to  Section
\ref{sec:heat-trace-asymptotics} below.

\subsection{Harmonic forms and Hodge theory}\label{sec:hodge-theory}

We begin our discussion of Hodge theory by pointing out that elements
$\gamma \in L^2$
satisfying $\Delta^g\gamma = 0$, admit asymptotic expansions at the
boundary of $M$.  Indeed, for such forms $\gamma$, by the spectral
theorem and the fact that $H_t$ is the heat kernel (Corollary
\ref{thm:fund-soln-is-heat-kernel}), we see that 
\begin{equation}
  \label{eq:phg-kernel}
  \begin{split}
    \gamma = H_t \gamma &  = \int_M H_t(w, \wt{w}, t) \gamma(\wt{w})
    \dVol_g(\wt{w}).
  \end{split}
\end{equation}
By the proof of Theorem \ref{thm:heatkernelmap}, specifically
\eqref{eq:image-is-phg}, we have the following.
\begin{lemma}\label{thm:phg-kernel}
  Assume that $\gamma \in \ker(\Delta^g \colon L^2 \lra
  L^2)$.  Then $\gamma$ is polyhomogeneous conormal and $\gamma =
  \mathcal{O}(1)$, i.e.\ is bounded in norm.
\end{lemma}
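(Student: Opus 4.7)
The plan is to combine the identity $\gamma = H_t\gamma$ with the structure of the heat kernel $H_t$ from Theorem \ref{thm:heatkernel} and the indicial-root analysis of Proposition \ref{prop:criterionDmax}. Since $\Delta^g\gamma = 0$ and $\Delta^g$ is essentially self-adjoint by Theorem \ref{thm:essentiallyselfadjoint}, the spectral theorem together with Proposition \ref{thm:fund-soln-is-heat-kernel} gives
\begin{equation*}
\gamma(w) = \int_M H_t(w,\wt w)\,\gamma(\wt w)\,\dVol_g(\wt w), \qquad t>0.
\end{equation*}

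First I would establish polyhomogeneity of $\gamma$ by a direct repetition of the argument surrounding \eqref{eq:image-is-phg} in the proof of Theorem \ref{thm:heatkernelmap}. Truncating the polyhomogeneous expansion of $H_t$ at the left face $\lf$ to order $N$,
\begin{equation*}
H_t(w,\wt w) = \sum_{\substack{(s,p)\in\mathcal E(\lf)\\ \re s \le N}} x^s\log^p(x)\, a_{s,p}(y,z,\wt w) + E_N(w,\wt w),
\end{equation*}
the coefficients $a_{s,p}$ are polyhomogeneous endomorphisms on $\p M \times M$, while $x^{-N}E_N$ defines a bounded operator on $L^2$ via Melrose's pushforward theorem, so the integrals of each partial expansion term against $\gamma\in L^2$ are absolutely convergent and yield polyhomogeneous coefficient functions in $(y,z)$. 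Letting $N\to\infty$ and Borel summing produces the claimed expansion $\gamma \in \mathcal A_{\phg}$, and in particular the leading term has no logarithmic factor, since by the conormal estimates on $E_N$ the remainders decay faster than any fixed power of $x$.

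Second, to obtain the pointwise bound $\gamma = \mathcal O(1)$, I would combine the a priori bound $\gamma = \mathcal O(x^{-kf/2+1})$ (which is immediate from $\inf \mathcal E(\lf) \ge -kf/2+1$ in Theorem \ref{thm:heatkernel}(5)) with the refined restriction imposed by Proposition \ref{prop:criterionDmax}. Having produced the polyhomogeneous expansion, the leading exponent of each component of $\gamma$ must either be one of the $L^2$ indicial roots in \eqref{eq:L2indroots} or strictly greater than $-kf/2+3/2$; the Witt hypothesis excludes the critical fiber degree $\NN = f/2$, and cataloging the remaining fiber-harmonic indicial roots together with the lower bound from Proposition \ref{prop:criterionDmax} forces the leading exponent into the bounded range.

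The main technical obstacle I anticipate is the bookkeeping around the polyhomogeneous expansion of the convolution integral: one must track how the expansions of $H_t$ at $\lf$ and $\rf$ interact at the corner $\lf\cap\rf$, where Theorem \ref{thm:heatkernel}(5) gives $H_t = \mathcal O((\rho_{\lf}\rho_{\rf})^{-kf/2+1})$ so that, combined with the volume form $x^{kf}\wt x^{kf}\,dx\,d\wt x\,dy\,d\wt y\,dz\,d\wt z$ near the boundary, the integral of each expansion term against $\gamma \in L^2$ is absolutely convergent in $\wt w$ and contributes to a genuine polyhomogeneous term in $w$. Once this is in hand, the remainder of the argument is the straightforward indicial-root and Witt-condition arithmetic already carried out in the proof of Proposition \ref{prop:criterionDmax}.
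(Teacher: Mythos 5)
Your first step — establishing polyhomogeneity of $\gamma$ by truncating the expansion of $H_t$ at $\lf$, using the conormal estimates on the remainder $E_N$ and the pushforward theorem to get boundedness of $x^{-N}E_N$ on $L^2$, and then Borel-summing the expansion coefficients — is exactly the argument the paper cites (the passage around \eqref{eq:image-is-phg} in the proof of Theorem \ref{thm:heatkernelmap}); this part is correct.

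Your second step, the deduction of $\gamma = \mathcal{O}(1)$, has a genuine gap. You assert that combining the a priori bound $\gamma = \mathcal{O}(x^{-kf/2+1})$ with Proposition \ref{prop:criterionDmax} and the Witt condition ``forces the leading exponent into the bounded range.'' But Proposition \ref{prop:criterionDmax} only gives $s_j \ge -\frac{kf}{2}+\frac{3}{2}$, which is strictly negative whenever $kf \ge 4$, and the $L^2$ indicial roots \eqref{eq:L2indroots} explicitly include $s = -k\NN$ for every fiber degree $0 < \NN < \frac{1}{2}(f + \frac1k)$. The Witt hypothesis removes only the single degree $\NN = f/2$; it does nothing to the degrees $0 < \NN < f/2$, whose indicial roots $-k\NN$ are negative, $L^2$, and attainable (e.g.\ $k=3$, $f=3$, $\NN=1$ gives $s=-3$, well above the $L^2$ threshold $-5$, and $H^1(Z)$ can be nonzero for odd $f$ even on a Witt space, say $Z = \mathbb{T}^3$). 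Indeed, in the exact warped-product model the pullback of a harmonic $\NN$-form on $Z$ is itself closed and coclosed with pointwise $\mathrm{ice}$-norm $\sim x^{-k\NN}$, and these are precisely the local representatives of $I\!H^\NN_{\overline{\mm}}$ used later in the proof of Theorem \ref{thm:hodge}. So the catalogue of indicial roots you invoke does not close the argument, and your sketch does not establish the pointwise $\mathcal{O}(1)$ bound. (Note also that the paper's own justification — the single citation of \eqref{eq:image-is-phg} — yields only $\gamma = \mathcal{O}(x^{-kf/2+1})$; the stronger $\mathcal{O}(1)$ bound as stated holds in the cases relevant to Theorem \ref{thm:moduli-space}, where $Z$ is a sphere so that $\mathcal{H}^\NN_\p = 0$ for $0 < \NN < f$ and all the $\ov{\nu}(\NN)$ in \eqref{eq:first-model-asymptotics} that survive are zero, but it is not a consequence of the stated indicial-root bookkeeping in the general Witt case.)
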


Lemma \ref{thm:phg-kernel} allows us to conclude that the $L^2$ kernel of
$\Delta^g$ is equal to the Hodge cohomology in
\eqref{eq:hodge-cohomology} .
\begin{lemma}
  Notation as above, $\mathcal{H}_{L^2}(M, g) = \ker(\Delta^g \colon L^2 \lra
  L^2)$.
\end{lemma}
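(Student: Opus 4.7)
The inclusion $\mathcal{H}_{L^2}(M,g) \subseteq \ker(\Delta^g \colon L^2 \lra L^2)$ is immediate from $\Delta = d\delta + \delta d$, so the content of the lemma is the reverse inclusion. The plan is to deduce it from essential self-adjointness (Theorem \ref{thm:essentiallyselfadjoint}) by showing that $\gamma \in \ker(\Delta^g) \cap L^2$ can be approximated in the graph norm by compactly supported smooth forms strongly enough to integrate by parts in the identity $\langle \Delta^g \gamma,\gamma\rangle = \|d\gamma\|^2 + \|\delta \gamma\|^2$.

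First, since $\Delta^g \gamma = 0 \in L^2$, we have $\gamma \in \mathcal{D}_{\max}$. By Theorem \ref{thm:essentiallyselfadjoint}, $\mathcal{D}_{\max} = \mathcal{D}_{\min}$, so there exists a sequence $\gamma_k \in C^{\infty}_c(M^\circ;\Omega^*(M))$ with $\gamma_k \to \gamma$ and $\Delta^g \gamma_k \to \Delta^g \gamma = 0$ in $L^2$. Note that \emph{a priori} we only have $d\gamma$ and $\delta\gamma$ as distributions; the key step is to upgrade these to $L^2$ sections and simultaneously justify the integration by parts.

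For each $k$, since $\gamma_k$ is compactly supported and smooth, the classical identity
\begin{equation*}
\langle \Delta^g (\gamma_j - \gamma_k), \gamma_j - \gamma_k\rangle_{L^2} = \|d(\gamma_j - \gamma_k)\|_{L^2}^2 + \|\delta(\gamma_j - \gamma_k)\|_{L^2}^2
\end{equation*}
holds. The left side is bounded by $\|\Delta^g(\gamma_j - \gamma_k)\|_{L^2} \|\gamma_j - \gamma_k\|_{L^2}$ via Cauchy--Schwarz, which tends to $0$ as $j,k \to \infty$ by graph-norm convergence. Hence $\{d\gamma_k\}$ and $\{\delta\gamma_k\}$ are Cauchy in $L^2$, so they converge to $L^2$ limits, which must coincide with the distributional $d\gamma$ and $\delta\gamma$ by continuity of $d,\delta$ on distributions. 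Passing to the limit in $\langle \Delta^g \gamma_k,\gamma_k\rangle = \|d\gamma_k\|^2 + \|\delta\gamma_k\|^2$ yields
\begin{equation*}
0 = \langle \Delta^g \gamma, \gamma\rangle_{L^2} = \|d\gamma\|_{L^2}^2 + \|\delta\gamma\|_{L^2}^2,
\end{equation*}
forcing $d\gamma = 0 = \delta\gamma$, so $\gamma \in \mathcal{H}_{L^2}(M,g)$.

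The only step requiring genuine input beyond standard Hilbert space manipulations is the essential self-adjointness used to produce the approximating sequence; this is precisely the point at which the entire heat-kernel construction of the preceding sections is needed. Once that is in hand, no further information about the asymptotic structure of $\gamma$ (such as the polyhomogeneity from Lemma \ref{thm:phg-kernel}) is strictly required, although polyhomogeneity would provide an alternative, more direct approach via truncation and explicit integration by parts near $\p M$.
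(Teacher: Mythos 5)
Your proof is correct, but the key inclusion $\ker(\Delta^g)\subseteq\mathcal{H}_{L^2}$ is obtained by a different route from the paper. The paper invokes Lemma \ref{thm:phg-kernel} to conclude that a harmonic $\gamma\in L^2$ is polyhomogeneous and bounded, and then justifies the integration by parts $0=\langle\Delta^g\gamma,\gamma\rangle=\|d\gamma\|^2+\|\delta\gamma\|^2$ directly from that regularity. You instead use $\mathcal{D}_{\max}=\mathcal{D}_{\min}$ (Theorem \ref{thm:essentiallyselfadjoint}) to produce a graph-norm approximating sequence $\gamma_k\in C_c^\infty$, show via the polarization argument that $d\gamma_k$ and $\delta\gamma_k$ are $L^2$-Cauchy, identify the limits with the distributional $d\gamma,\delta\gamma$, and pass to the limit in the Bochner identity. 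Both arguments ultimately rest on the heat-kernel construction (it underlies both essential self-adjointness and the regularity lemma), but your version is a purely soft functional-analytic deduction that never needs to know anything about the asymptotics of $\gamma$ near $\partial M$; it would apply verbatim to any nonnegative second-order operator of the form $D^*D+DD^*$ once essential self-adjointness is known. The paper's approach is tighter once Lemma \ref{thm:phg-kernel} is on the table (and that lemma is needed elsewhere anyway), but your argument is a legitimate and arguably more robust alternative. One small remark: your opening claim that the forward inclusion is ``immediate'' is fine as stated, since $d\gamma=\delta\gamma=0$ in $L^2$ forces $\Delta^g\gamma=0$ distributionally, but the paper spells out the duality $\langle\gamma,d\delta\beta\rangle=\langle\delta\gamma,\delta\beta\rangle$ etc., which is worth keeping in mind since it uses that $\gamma$ lies in the maximal domains of $d$ and $\delta$.
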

\begin{proof}
  If $\gamma \in \mathcal{H}_{L^2}(M, g)$ then $\gamma$ is in the
  maximal domains of both $d$ and $\delta$, and so for smooth
  compactly supported $\beta$,
  \begin{align*}
   \la \Delta^g \gamma , \beta \ra_{L^2} &:=  \la \gamma ,
   \Delta^g\beta \ra_{L^2} 
   =  \la \gamma , d\delta \beta \ra_{L^2}  + \la \gamma ,  \delta d
   \beta \ra_{L^2} = 0 + 0 = 0,
  \end{align*}
so $\gamma \in \ker(\Delta^g \colon L^2 \lra
  L^2)$

On the other hand, if  $\gamma \in \ker(\Delta^g \colon L^2 \lra
  L^2)$, then by Lemma \ref{thm:phg-kernel} we can integrate by parts  
  to obtain
  \begin{equation*}
  0 =   \la \Delta^g \gamma , \gamma \ra_{L^2} = \| d \gamma
  \|^2_{L^2} + \| \delta \gamma \|^2_{L^2},
  \end{equation*}
so $\gamma \in \mathcal{H}^*_{L^2}(M, g)$. \end{proof}

We can now follow the arguments in \cite{HHM2004, HR2012} to prove
Theorem \ref{thm:hodge} above.  Before we begin we recall some facts
about intersection cohomology, a cohomology theory that applies to
stratified spaces.  We do not attempt to make a full explanation of it
here, but mention only that there is in fact a family of intersection
cohomology groups for our stratified space $X$ defined in
\eqref{eq:collapsed-manifold} (obtained by collapsing the boundary of
$\p M$ over the base $Y$) depending on a function
$\pp \colon \mathbb{N} \lra \mathbb{N}$ called the `perversity,' which
is non-decreasing and whose values matter only on the codimensions of
the strata of $X$.  Here we have only one singular stratum, $Y \subset
X$, the image of the boundary $\p M$ via the projection onto $X$, and
its codimension is $f + 1$, where $\dim Z = f$.  The
`upper middle degree' perversity $\ov{\mm}$ is a special example of a
perversity, which satisfies    
\begin{equation}
  \label{eq:upper-middle}
      \ov{\mm}(f + 1)  = \left\{
    \begin{array}{cc}
(f - 1)/2 & \mbox{ if } f \mbox{ is odd,} \\
f /2  - 1& \mbox{ if } f \mbox{ is even.}
    \end{array}
\right.
\end{equation}
The `lower middle perversity' $\underline{\mm}$ differs from
$\ov{\mm}$ only when $f$ is even, in which case $\underline{\mm}(f +
1) = f/2$.
As we will rely on the spectral sequence arguments from \cite{HHM2004,
  HR2012} during the proof, we will only need to study the
intersection cohomology locally, specifically on a basis of open sets
of $X$.  Concretely, from \cite{HHM2004}, for canonical neighborhoods $U
= V \times C_1(Z)$ as in \eqref{eq:canon-neighb} with
contractible $V$, we have
\begin{equation*}
  \label{eq:local-intersection-cohomology*}
  I\!H^p_{\pp}(U) = \left\{
    \begin{array}{cc}
     H^p(Z) & \mbox{ if }  p < f - 1 -  \pp(f + 1), \\
      \set{0} & \mbox{ if }  p \ge f - 1 -  \pp(f + 1).
    \end{array}
\right.
\end{equation*}
From the Witt condition \eqref{eq:witt}, we see that
\begin{equation}
  \label{eq:local-intersection-cohomology}
  I\!H^p_{\ov{\mm}}(U) =   I\!H^p_{\underline{\mm}}(U) = \left\{
    \begin{array}{cc}
     H^p(Z) & \mbox{ if }  p < f /2, \\
      \set{0} & \mbox{ if }  p \ge f / 2,
    \end{array}
\right.
\end{equation}
regardless of the parity of $f$.
\begin{proof}[Proof of Theorem \ref{thm:hodge}]
Although Theorem \ref{thm:hodge} describes a relationship between the
Hodge cohomology and the intersection cohomology, to prove it we go
through the standard route and use the intermediary of $L^2$-cohomology.  Thus consider the chain complex
\begin{equation}
  \label{eq:L2-coho-complex}
\dots \lra L^2_d\Omega^{p-1}(M, g) \lra L^2_d\Omega^{p}(M, g) \lra
L^2_d\Omega^{p+1}(M, g) \lra \dots.
\end{equation}
where $L^2_d\Omega^{p}(M, g)$ is the maximal domain of the exterior
derivative $d$, specifically 
$$
L^2_d\Omega^{p}(M, g) = \{ \alpha \in L^2\Omega^p(M, g) : d\alpha \in
L^2\Omega^{p + 1}(M, g) \}.
$$
Then the $L^2$-cohomology is the quotient
$$
L^2H^p (M, g) = \frac{\set{\alpha \in L_d^2\Omega^{p}(M, g) : d\alpha = 0}}
{\set{d\eta : \eta \in L^2_d\Omega^{p-1}(M, g)}}.
$$
As explained in \cite[page 6]{HR2012}, it suffices to show that 
\begin{equation}
  \label{eq:L2-equals-singular}
  L^2H^p(M, g) \simeq I\!H^p_{\ov{\mm}}(X; \mathbb{R}),
\end{equation}
for then the $L^2$-cohomology is finite dimensional, which implies
that the range of $d$ (and thus its adjoint $\delta$) is closed.  From
\cite[\textsection2.1]{HHM2004} it then follows using the Kodaira
decomposition theorem that $\mathcal{H}^p_{L^2}(M, g)$ is
isomorphic to $L^2H^p(M, g)$ and thus by
\eqref{eq:L2-equals-singular} Theorem \ref{thm:hodge} holds.

Thus it suffices to prove \eqref{eq:L2-equals-singular}, and for this
we also follow the arguments in \cite[pp. 5-6]{HR2012}, where it is
explained that it suffices to show that for canonical neighborhoods $U
= V \times C_1(Z)$ as in \eqref{eq:canon-neighb} with
contractible $V$, the local
chain complex 
\begin{equation}
  \label{eq:L2-coho-complex-local}
\dots \lra L^2_d\Omega^{p-1}(U, g) \lra L^2_d\Omega^{p}(U, g) \lra
L^2_d\Omega^{p+1}(U, g) \lra \dots,
\end{equation}
satisfies
\begin{equation}
  \label{eq:L2-equals-singular-local}
  L^2H^p(U, g) \simeq I\!H^p_{\ov{\mm}}(U)
\end{equation}
Here $L^2H^p(U, g)$ is defined as above with $U$ replacing $M$.  The
intersection cohomology groups for $\ov{\mm}$ are computed in
\eqref{eq:local-intersection-cohomology}, and thus we need only to
analyze the groups on the left.  To see \eqref{eq:L2-equals-singular-local} we use the
K\"unneth formula of Zucker, \cite[Corollary 2.34]{Z1982}, whose
assumptions are satisfied here by the fact that the exterior
derivative on $Z$ is closed on its maximal domain.  Thus,
in the notation of \cite[Page 5]{HR2012}, we have 
\begin{equation}\label{eq:local-L2-coho-check}
  \begin{split}
    L^2H^p(U, g)  &= \bigoplus_{i = 0}^1  W\!H^i((0, 1), dx^2, k(p - i -
    f/2))) \otimes H^{p -i}(Z ; \mathbb{R}) ,  \end{split}
\end{equation}
where $W\!H^i((0, 1), dx^2, a)$ is the
cohomology of the complex
\begin{equation}
  \label{eq:L2-coho-complex-local-2}
0  \lra (x^a L^2\Omega^{0}((0, 1), dx^2))\stackrel{d}{\lra} x^a L^2\Omega^{1}((0, 1), dx^2)\lra
0,
\end{equation}
where the space on the left is the maximal domain of $d$ on $x^a
L^2\Omega^{0}((0, 1), dx^2)$.  Again from \cite{HR2012} (via
\cite{HHM2004}), $W\!H^1((0, 1), dx^2, a) = 0$ if $a \neq 1/2$ and
$W\!H^0((0, 1), dx^2, a) = \mathbb{R}$ if $a < 1/2$ and $\{ 0 \}$ if $a
\ge 1/2$.  When $i = 1$, $k(p - i -
    f/2) \neq 1/2$ since $k > 1$, so the $i = 1$ terms do not
    contribute.  When $i = 0$, we have $k(p - i -
    f/2) = k(p - f/2)$ which satisfies
$$
k(p - f/2) < 1/2 \mbox{ if } p \le f/2 \quad \mbox{ and } \quad k(p - f/2) > 1/2
\mbox{ if } p > f/2.
$$
Using the Witt condition then gives
\begin{equation}
  \label{eq:2}
  \begin{split}
    L^2H^p(U, g)  &= 
\left\{
    \begin{array}{cc}
      H^p(Z) &\mbox{ if } p < f/2, \\
      \{ 0 \} &\mbox{ if } p \ge f/2,
    \end{array} \right.
  \end{split}
\end{equation}
matching \eqref{eq:local-intersection-cohomology} and completing the proof.
\end{proof}

We now discuss the proof of Theorem \ref{thm:moduli-space}.  As the
spaces in the theorem are incomplete cusp edge spaces in a
neighborhood of the divisor by \cite{MZ2015}, our results would apply
to these spaces, if not for the fact that moduli spaces such as these
have interior orbifold points.  This is not a problem, since, as in
\cite{JMMV2014} we may lift to a finite cover with no such points.
One can then work on the space $C^\infty_{c, orb}(\mathcal{M}_{1,1})$
of functions which near each orbifold point are smooth when lifted to
a local finite cover resolving the singularity.  Constructing a heat
kernel on the lift and averaging over the group action then gives  
a fundamental solution to the heat kernel downstairs which has all the
desired properties.  We leave the details of this simple extension to
the reader.

\begin{appendix}

\section{Manifolds with corners}\label{sec:mwc}

In this section we recall some of the facts about distributions on
manifolds with corners (mwc's) used in this paper.  This material is
due largely to Melrose, and the reader is referred to his book
\cite{tapsit} for more details.  See also \cite{Hvol1}.

The objects considered here, for example the ice-metrics
\eqref{eq:cuspedgemetric}, have polyhomogeneous regularity, which we
define now.  The sheaf of \textbf{polyhomogeneous conormal} (or
polyhomogeneous, or simply phg) functions $\mathcal{A}_{\phg}(X)$ is
defined as follows.  First, an index set $\mathcal{E}$ on a manifold with corners
$X$ is an association to each boundary hypersurface $H$ of $X$ a set
\begin{equation}
  \label{eq:indexset}
  \begin{split}
    \mathcal{E}(H) \subset \mathbb{C} \times \mathbb{N} \mbox{
      satisfying that the subset} \qquad \\
\set{(z, p) \in \mathcal{E}(H) : \mbox{ Re } z < c} \mbox{ is finite for
  all } c \in \mathbb{R}. 
  \end{split}
\end{equation}
Given an index set $\mathcal{E}$, for a boundary  face
$F = \cap_{i = 1}^{\delta} H_{i}$ for boundary hypersurfaces $H_{i}$,
define the subset $\mathcal{E}(F) \subset \mathbb{C}^{p} \times
\mathbb{N}^{p}$ by $(z, p) = (z_{1}, \dots, z_{\delta}, p_{1}, \dots,
p_{\delta}) \in \mathcal{E}(F)$ if and only if $(z_{i},
p_{i}) \in \mathcal{E}(H_{i})$.  We define the Frechet space
$\mathcal{A}^{\mathcal{E}}_{\phg}(X)$ as follows.  We write $u \in
\mathcal{A}^{\mathcal{E}}_{\phg}(X)$ if and only if for each boundary face $F
= \cap_{i = 1}^{\delta} H_{i}$, writing $\rho_{i}$ for a boundary
defining function of $H_{i}$, $u$ satisfies
\begin{equation}
  \label{eq:phgexpansion}
  \begin{split}
    u &\sim  \sum_{(z, p) \in
      \mathcal{E}(F)} a_{z,p} \rho^{z} \log^{p} \rho \mbox{ where} \\
   \rho^{z} &= \prod_{i = 1}^{\delta} \rho_{i}^{z_{i}}, \qquad \log^{p} \rho =
   \prod_{i  = 1}^{\delta} \log^{p_{i}} \rho_{i} ,
  \end{split}
\end{equation}
and the symbol $\sim$ means that
\begin{equation}
  \label{eq:simbound}
    E_N = u  -   \sum_{\substack{(z, p) \in
      \mathcal{E}(F) \\  \mbox{ Re } z_i < N \ \forall i }} a_{z,p} \rho^{z} \log^{p}
    \rho,
\end{equation}
where $E_N$ is a smooth function on the interior of $X$ which is
$O(|\rho|^N)$, where $|\rho| = (\rho_{1}^{2} + \dots +
\rho_{\delta}^{2})^{1/2}$.  Moreover, $E_N$ is conormal, meaning that if
$\mathcal{V}_b = \mathcal{V}_b(X)$ denotes the set of smooth vector fields on $X$ that are
tangent to all boundary hypersurfaces, then
\begin{equation}\label{eq:conormal-estimates}
|\rho|^{-N} \mathcal{V}_b^k E  \subset L^\infty.
\end{equation}

Note that if a phg function $u$ vanishes to infinite order at $H$,
then $u$ is polyhomogeneous with index set $\mathcal{E}$ satisfying
$\mathcal{E}(H) = \varnothing$.

\begin{lemma}\label{thm:matching}
  Let $X$ denote a mwc, $\mathcal{M}(X) = \set{H_{i}}_{i \in
    \mathcal{I}}$ its boundary hypersurfaces,
  and for each $i \in \mathcal{I}$, let $\rho_{i}$ denote a boundary
  defining function of $H_i$.  Given a smooth vector bundle $E \lra X$, if $\kappa_{i}$ are
  polyhomogeneous sections on
  $H_{i}$, then provided
  \begin{equation}\label{eq:matchingcondition}
  \rho_{i}^{c_{i}} \kappa_{j} \rvert_{H_{i} \cap H_{j}} =
  \rho_{j}^{c_{j}} \kappa_{i} \rvert_{H_{i} \cap H_{j}}
  \end{equation}
there exists a polyhomogeneous conormal distribution $K$ on $X$
satsifying
\begin{equation}\label{eq:desired_restriction}
\rho_{i}^{c_{i}}K \rvert_{H_{i} } = \kappa_{i}
\end{equation}

Assume moreover that at a particular boundary hypersurface which we
take to be $H_1$, that we are given an index set $F_1 \subset
\mathbb{C} \times \mathbb{N}$ and polyhomogeneous sections $b_{j, p}
\in \mathcal{A}_{\phg}(E\rvert_{H^1}; H_1)$.  Then given functions
$\kappa_i$ on $H_i$, $i \neq 1$, there exists a distribution $K$
satisfying \eqref{eq:desired_restriction} for $i \neq 1$ and such that
\begin{equation}\label{eq:desired-expansion}
K \simeq \sum_{s,p \in F_1} \rho_1^{s} \log^p (\rho_1) b_{s, p}
\end{equation}
provided \eqref{eq:matchingcondition} holds for $i, j \neq 1$ and
furthermore for $i \neq 1$
\begin{equation}\label{eq:second-matching-condition}
  \kappa_{i} \simeq
 \rho_i^{c_i} \sum_{s,p \in F_1} \rho_1^{s} \log^p (\rho_1) b_{s, p} \rvert_{H_i}.
\end{equation}

\end{lemma}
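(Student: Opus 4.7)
The plan is to proceed by induction on the codimension of the corners of $X$, combining local extensions via Borel summation with a partition of unity on $X$.

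First I would treat the easy case: away from all the boundary hypersurfaces of $X$ other than $H_i$, a collar neighborhood $H_i \times [0,1)_{\rho_i}$ is available, and one simply sets $K$ equal to $\rho_i^{-c_i}\tilde\kappa_i$, where $\tilde\kappa_i$ is the product-structure extension of $\kappa_i$. This trivially satisfies $\rho_i^{c_i}K|_{H_i}=\kappa_i$ on this set.

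Next I would handle the codimension-two corners $H_i \cap H_j$. The matching condition \eqref{eq:matchingcondition} says that the two prescriptions $\rho_i^{-c_i}\kappa_i$ and $\rho_j^{-c_j}\kappa_j$ agree at $H_i\cap H_j$ as polyhomogeneous sections. The Borel-type lemma for polyhomogeneous distributions on manifolds with corners (see \cite[Chapter 5]{damwc}) then provides a polyhomogeneous distribution in a neighborhood of $H_i \cap H_j$ whose restrictions after multiplication by $\rho_i^{c_i}$ and $\rho_j^{c_j}$ reproduce $\kappa_i$ and $\kappa_j$ respectively. Concretely, one expands both prescribed boundary values in their polyhomogeneous expansions, asymptotically sums the coefficients using the compatibility at the corner to fix the overlapping terms, and invokes Borel summation to realize the resulting double expansion as an honest polyhomogeneous function. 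Iterating this argument at triple and higher corners—where the pairwise matching conditions guarantee global consistency of the leading order datum at each boundary face—produces local polyhomogeneous distributions $K_\alpha$ defined in neighborhoods $U_\alpha$ covering a neighborhood of $\partial X$.

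I would then glue with a partition of unity $\{\chi_\alpha\}$ subordinate to $\{U_\alpha\}$, taking $K=\sum_\alpha \chi_\alpha K_\alpha$, extended smoothly to the interior of $X$ by zero (or any convenient extension). Multiplication by the $\chi_\alpha$ preserves polyhomogeneity, and the differences $K_\alpha - K_\beta$ at the overlap vanish to order $c_i$ at each $H_i$ in the overlap by construction, so \eqref{eq:desired_restriction} still holds for $K$.

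For the second part of the lemma, the expansion prescribed at $H_1$ is realized first by Borel-summing $\sum_{s,p\in F_1}\rho_1^s\log^p(\rho_1)b_{s,p}$ in a collar neighborhood of $H_1$, yielding a local model $K^{(1)}$ with the desired expansion. Away from $H_1$, the construction above produces a distribution $K^{(2)}$ with the prescribed restrictions at the other $H_i$. Condition \eqref{eq:second-matching-condition} guarantees that $K^{(1)}$ already has the correct restriction to each $H_i\cap H_1$ after multiplication by $\rho_i^{c_i}$, so the two can be patched via a partition of unity in the $\rho_1$ direction without disturbing either the expansion at $H_1$ or the restrictions at the other faces.

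The main obstacle I expect is the bookkeeping at corners of codimension three or more: while the lemma only states pairwise matching, one needs to check that at a triple corner the three pairwise-prescribed expansions fit into a single polyhomogeneous expansion in all three boundary defining functions. This is where the double-index version of Borel's lemma and the fact that $\kappa_i$ themselves are polyhomogeneous (hence already satisfy pairwise matching at all their own corners) are essential. Assuming these standard tools from \cite[Chapter 5]{damwc}, the induction closes and both parts follow.
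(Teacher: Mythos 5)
Your proposal is correct in spirit, but it takes a genuinely different and somewhat heavier route than the paper's, and the Borel-summation framing is misplaced for the first part of the lemma. The paper constructs $K$ by an \emph{explicit global inclusion--exclusion formula}: setting $\kappa_{i_1\cdots i_p} = (\prod_{i\neq i_k}\rho_i^{c_i})\,\kappa_{i_k}|_{\rho_{i_1}=\cdots=\rho_{i_p}=0}$ (well-defined by the matching conditions), one takes
\begin{equation*}
K = \sum_{p=1}^{m}(-1)^{p-1}\sum_{1\le i_1<\cdots<i_p\le m}\kappa_{i_1\cdots i_p}\prod_{j\in\{i_1,\ldots,i_p\}}\chi(\rho_j)\,\rho_j^{-c_j},
\end{equation*}
and verifies directly that the telescoping cancellations give $\rho_i^{c_i}K|_{H_i}=\kappa_i$. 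This is cleaner for two reasons. First, as you yourself half-notice, the data being prescribed in the first part is only a \emph{leading coefficient} at each $H_i$, not a full expansion at a corner; there is no double expansion to Borel-sum, and a simple product-type extension plus cutoffs suffices. Invoking a ``double-index Borel lemma'' at codimension-two corners is therefore more machinery than the statement requires, and it obscures which data is actually constrained. Second, the inclusion--exclusion formula automatically handles all codimensions at once, whereas your induction on codimension plus partition of unity requires the kind of careful bookkeeping at triple and higher corners that you flag as the main obstacle. That the pairwise matching implies the multifold matching is noted by both you and the paper (it is the paper's Remark (2) after the lemma), so the obstacle is surmountable, but the paper's formula sidesteps it.

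For the second part, your approach and the paper's converge: the paper Borel-sums the prescribed expansion at $H_1$ to get some $K''$, corrects it by subtracting $\sum_{i\neq 1}\chi(\rho_i)a_i$ where $a_i$ is the infinite-order-vanishing mismatch on $H_i$, and then feeds the corrected $K'$ into the inclusion--exclusion formula in place of $\chi(\rho_1)\rho_1^{-c_1}\kappa_1$. This is exactly the Borel step you describe. Condition \eqref{eq:second-matching-condition} is what makes the mismatch $a_i$ vanish to infinite order at $H_1$, which is what lets the correction term be added without disturbing the expansion there -- worth spelling out if you carry your sketch further. Overall: your argument would close with care, but you would be better served by writing down the explicit formula than by appealing to Borel summation where it is not needed.
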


\begin{remark}
(1) Note the converse; if $K = \rho_i^{-c_i} \rho_j^{-c_j} a$ for some
positive function $a$
near $H_i \cap H_j$ then setting $\rho_l^{c_l} K \rvert_{H_l}= \kappa_l$ for $l = i,
j$, we have $\rho_j^{c_j} \kappa_i = \rho_i^{c_i} \kappa_j$ on $H_i
\cap H_j$.

(2) The matching condition \eqref{eq:matchingcondition} implies
further matching conditions on multifold intersections, e.g.\ it
implies that
$$
\rho_i^{c_i} \rho_j^{c_j} \kappa_l =  \rho_i^{c_i} \rho_l^{c_l} \kappa_j =
\rho_l^{c_l} \rho_j^{c_j} \kappa_i \mbox{ on } H_i \cap H_j \cap H_l.
$$

(3) The second matching condition \eqref{eq:second-matching-condition}
merely says that the desired data on a bhs $H_i$ has the same
asymptotic expansion at $H_1$ as the the desired distribution
restricted to $H_i$.
\end{remark}

\begin{proof}
  
Denote the number of boundary hypersurfaces of $X$ by
$m = |\mathcal{M}|$. There is a number $\delta$ and boundary defining
functions $\rho_i$ such that the set
$\{ \rho_i < \delta\}$ is diffeomorphic as mwc's to $H_i \times [0,
\delta)$.  Without loss of generality we take $\delta = 1$.
Following the remark, for a collection of bhs' $H_{i_1}, \dots, H_{i_p}$, the distrubution
$$
\kappa_{i_1 \dots i_p} = ( \prod_{i \neq  i_k} \rho^{c_i} )
\kappa_{i_k} \rvert_{\rho_{i_1} = \dots = \rho_{i_p} = 0}
$$
is well-defined independently of the choice of $i_k \in \{ 1, \dots, m
\}$.

Let $\chi(x)$ be a
cutoff function with $\chi \equiv 1$ for $x \le 1/3$ and $\chi
\equiv 0$ for $x \ge 2\epsilon/3$.
For the distribution $K$ we may take
$$
K = \sum_{p = 1}^m (-1)^{p - 1}  \sum_{1 \le i_1 < \dots < i_p \le m}
\kappa_{i_1 \dots i_p}
 (\prod_{j \in  \{ i_1, \dots , i_p \}} \chi(\rho_{j}) \rho_j^{-c_j}).
$$
For example if $m = 2$ then
$$
K = \chi(\rho_1) \rho_1^{- c_1} \kappa_1  + \chi(\rho_2) \rho_2^{-
  c_2} \kappa_2 - \chi(\rho_1) \chi(\rho_2)\rho_1^{- c_1} \rho_2^{- c_2} \kappa_{12}.
$$
Note that each term in the sum defining $K$ defines a polyhomogeneous
conormal distribution on all of $X$, as the
distribution $\kappa_{i_1 \dots i_p} $ is defined on a neighborhood
of $H_{i_1} \cap \dots \cap H_{i_p}$ off which the product $\prod_{j
  \in  \{ i_1, \dots , i_p \}} \chi(\rho_{j})$ vanishes.

Letting $A_{i_1 \dots i_p}$ be the term corresponding term in the
definition of $K$, note that if $i \not \in \{ i_1, \dots, i_p \}$
then $\rho_i^{c_i} A_{i_1 \dots i_p} = \rho_i^{c_i} A_{i_1 \dots i
  \dots i_p} \rvert_{\rho_i = 0}$.
Fixing $i$, multiplying by $\rho^{c_i} K$ and restricting to $\rho_i =
0$ gives
\begin{align*}
  \rho_i^{c_i} K \rvert_{\rho_i = 0} &= \kappa_i + \sum_p^{m - 1}
  (-1)^{p - 1} \rho_i^{c_i} ( \sum_{\substack{
   1 \le i_1 < \dots < i_p \le m \\
    i     \not \in \{i_1 \dots i_p\}
  }} A_{i_1 \dots i_p} - \sum_{\substack{
   1 \le i_1 < \dots < i_{p + 1} \le m \\
    i \in \{i_1 \dots i_{p + 1}\}
  }} A_{i_1
    \dots i_{p + 1}}) \rvert_{\rho_i = 0} \\
  &= \kappa_i,
\end{align*}
which establishes \eqref{eq:desired_restriction}.

We now prove the final statement of the lemma.  Let $\chi$ be the
cutoff function defined above.  First, we claim that
under the stated assumptions there exists a distribution $K'$
supported in $\{ \rho_1 \leq 1 \}$ satisfying both
\eqref{eq:desired-expansion} (with $K$ replaced by $K'$) and that
\begin{equation}\label{eq:desired-expansion-restriction}
\rho_i^{c_i} K'  \rvert_{H_i}= \chi(\rho_1) \kappa_i
\end{equation}
for each $i \neq 1$.  To see
this, take any distribution $K''$ supported in $\{ \rho_1 \leq 1 \}$ satisfying
\eqref{eq:desired-expansion}, and note that $
a_i := \rho_i^{c_i}  K''\rvert_{H_i} - \chi(\rho_1)\kappa_i = O(\rho^\infty_1)$.  By the support
condition, the distribution $K' = K'' - \sum_{i \neq 1} \chi(\rho_i)
a_i$ is defined globally, has the same asymptotic expansion at $H_1$
as $K''$, and satisfies \eqref{eq:desired-expansion-restriction}.
This $K'$ will play the role of $\chi(\rho_1)\rho^{-c_1}\kappa_1$ from
the previous paragraph.  Concretely, for $1 < i_1 < \dots < i_p \le
m$, let $a_{i_1 \dots i_p} = (\Pi_{j \in \{ i_1, \dots, i_p}
\rho_j^{c_j} K') \rvert_{H_{i_1} \cap \dots \cap H_{i_p}}$.  Then we
may take
\begin{align*}
  K &= \sum_{p = 1}^m (-1)^{p - 1} \sum_{1 < i_1 < \dots < i_p \le m}
  \kappa_{i_1 \dots i_p} (\prod_{j \in \{ i_1, \dots , i_p \}}
  \chi(\rho_{j}) \rho_j^{-c_j}) \\
&\quad + K' + \sum_{p = 1}^m (-1)^{p - 1} \sum_{1 < i_1 < \dots < i_p \le m}
  a_{i_1 \dots i_p} (\prod_{j \in \{ i_1, \dots , i_p \}}
  \chi(\rho_{j}) \rho_j^{-c_j}).
\end{align*}
Again, for example if $m = 2$ then
$$
K = K'  + \chi(\rho_2) \rho_2^{-
  c_2} \kappa_2 - (\rho_2^{c_1}K')\rvert_{H_2} \rho_2^{- c_2} \chi(\rho_2).
$$
The given expression for $K$ can be directly checked to satisfy \eqref{eq:desired_restriction}--~\eqref{eq:desired-expansion}.
\end{proof}

\subsection{Melrose's pushforward theorem}\label{sec:pushforward}

Given a map $\beta \colon X \lra Y$ between manifolds with corners, if
$\mathcal{M}(\bullet)$ with $\bullet = X, Y$ denotes the space of
boundary hypersurfaces, then $\beta$
is a \textbf{b-map} if it is smooth and if for each $H \in \mathcal{M}(Y)$ with
$\rho_H$ a
boundary defining function for $H$ then 
$$
\beta^* \rho_H
= a \Pi_{H'_j \in \mathcal{M}(X)} \rho_{H'_1}^{e(H'_1, H)}
  \rho_{H'_1}^{e(H'_2, H)} \dots \rho_{H'_1}^{e(H'_N, H)}
$$ 
where $a
      \in C^\infty(X)$ is non-vanishing and $N$ is the number of
      boundary hypersurfaces
      of $Y$ and the $e(H', H)$ are non-negative integers.    This
      means foremost that $\rho_H$ pulls back to a smooth function,
      and the numbers $e(H', H)$ simply keep track of the order of
      vanishing of $\beta^* \rho_H$ at each face of $X$.
      The function
\begin{equation}\label{eq:exponent-matrix}
e \colon \mathcal{M}(X) \times \mathcal{M}(Y) \lra \mathbb{N}_0
\end{equation}
is
      the \textbf{exponent matrix} of $\beta$, and $e(H', H) > 0$ means $H'$ maps into
      $H$ via $\beta$.

If a b-map has a few additional properties then it pushes forward
polyhomogeneous distributions (more accurately, densities) to polyhomogeneous distributions and
their index sets change in a way dictated by the exponent matrix.
Note that it follows from the definition of a b-map that every boundary face
$F$ of $X$ (meaning an intersection of boundary hypersurfaces),
can be associated to a face  $\overline{\beta}(F)$ of $Y$ defined to be
  the unique face with $\beta(x) \in
  \overline{\beta}(F)^\circ$ for every $x \in F^\circ$.
A b-map $\beta \colon X \lra Y$ is a \textbf{b-fibration} if:
\begin{itemize}
\item $\beta$ does not increase the codimension of faces, i.e.\ for
  each boundary face $F$ of $X$, the associated face
  $\overline{\beta}(F)$ in $Y$ satisfies that $\codim(F) \le
  \codim(\overline{\beta}(F))$.
\item  Restricted to the interior of any face $F^\circ$, $\beta \colon
  F^\circ \lra (\overline{\beta}(F))^\circ$ is a fibration of open
  manifolds in the standard sense. 
\end{itemize}

According to a theorem of Melrose \cite{Melrose1992} which we state
below, a b-fibration pushes forward phg densities to phg densities in
a manner we describe now.
First, on a manifold with corners we choose a non-vanishing b-density $\mu$, meaning
a section of $|\Lambda|^n(\Tbc X)$, the density bundle of the
b-cotangent bundle.  The b-tangent bundle $\Tb X$ is the bundle
whose smooth sections are $\mathcal{V}_b$, the vector fields tangent
to the boundary.  The bundle $\Tbc X$ is the dual bundle of $\Tb X$,
and near a face $F = \cap_{i = 1}^{\delta} H_i$ where $\rho_i$ are
bdf's and $y$ are coordinates on $F$ then, the sections of $\Tbc X$ take the form
$$
\sum_i \xi_i \frac{d \rho_i}{\rho_i}  + \eta\ dy.
$$
It follows that near any intersection $F = \cap_{j \in J} H_{j_q}$ of
boundary hypersurfaces for $J \subset \mathcal{I}$ where $\mathcal{I}$ indexes $\mathcal{M}(X)$  (i.e.\ any face of $X$) that 
a non-vanishing b-density takes the form
\begin{equation}\label{eq:b-density}
\mu =\left|a \frac{ dy \prod_{j \in J} d\rho_{j}}{\prod_{j \in J} \rho_{j}}\right|
\end{equation}
for some smooth non-vanishing function $a$ on $X$.  A polyhomogeneous
b-density $u \in \mathcal{A}^{\mathcal{E}}_{\phg}(X) \otimes |\Lambda|^n(\Tbc
X)$ can be written as $f \mu$ for a phg function $f$ and the index
set of $u$ is by definition the index set of $f$.
\begin{theorem}[Melrose \cite{Melrose1992}]\label{thm:pushforward}
  Let $u \in \mathcal{A}^{\mathcal{E}}_{\phg}(X) \otimes |\Lambda|^n(\Tbc X)$
  be a polyhomogeneous b-density on $X$ with index set
  $\mathcal{E}$, let $f \colon X \lra Y$ be a b-fibration with
  exponent matrix $e$, and
  define the pushforward $f_* u$ to be the distribution on smooth
  functions $v \in C_{comp}^\infty(Y)$ acting by $\la f_* u, v \ra_Y = \la u,
  f^* v \ra_X$.  Then provided for each $H \in \mathcal{M}(X)$ we have 
  \begin{equation}\label{eq:integrability}
e(H, H') = 0  \  \forall H' \in
  \mathcal{M}(Y) \quad\implies\quad \mathcal{E}(H) > 0,
\end{equation}
then $f_* u \in \mathcal{A}^{\mathcal{E}'}_{\phg}(Y) \otimes
  |\Lambda|^n(\Tbc Y)$ where
$$
\mathcal{E}'(H) = \overline{\bigcup}_{H'} \{ (\frac{z}{e(H', H)}, p) :
(z, p) \in \mathcal{E}(H) \},
$$
with the (extended) union taken over $H'$ with $e(H', H) > 0$.  
\end{theorem}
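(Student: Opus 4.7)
The plan is to follow the strategy of Melrose's original proof \cite{Melrose1992}. Since the space of polyhomogeneous b-densities is a sheaf, the statement is local on $Y$, and it suffices to work near a chosen point $q \in Y$ lying in the interior of a boundary face $F$. By the b-fibration condition, $f^{-1}(F^{\circ})$ is smoothly fibered over $F^{\circ}$, so one can reduce further to a neighborhood of a single face of $X$ of the same codimension as $F$, on which the exponent matrix $e$ is constant.

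The first main step is to \emph{factor} $f$ into a finite composition of elementary b-fibrations of two types: (i) fiber-bundle projections $Y' \times N \to Y'$ with $N$ a compact manifold with corners, and (ii) simple radial projections $[0,\infty)_x \times Y' \to Y'$. Such factorizations exist after an appropriate choice of coordinates adapted to $e$, and since the assignment $(u, f) \mapsto f_* u$ is functorial under composition of b-maps, it is enough to verify the formula for each elementary type separately.

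The second step is a direct verification for these model maps. For a fiber-bundle projection, integration along the compact fiber manifestly preserves polyhomogeneity and the index sets at the remaining faces. For a radial projection, one reduces to computing $\int_0^\infty x^{z} \log^p(x)\, \phi(x)\, dx/x$, where $\phi$ is smooth in $x$ with compact support and depends polyhomogeneously on the remaining variables; the integrability hypothesis \eqref{eq:integrability} is precisely what guarantees convergence at the face $H = \{x = 0\}$ being eliminated. Contributions from other faces of $X$ feed into the expansion at the corresponding face of $Y$ with exponent $z/e(H',H)$, reflecting the relation $\rho_H = (\text{unit}) \cdot \rho_{H'}^{e(H',H)}$.

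The main obstacle lies in the bookkeeping when several boundary hypersurfaces $H'_j$ of $X$ all map with positive exponent onto a single face $H \subset Y$: the asymptotic expansion of $f_* u$ at $H$ is then the formal superposition of the rescaled expansions from each $H'_j$, and coinciding (or integer-shifted) exponents generate additional logarithmic factors via residues. This is exactly the content of the extended union $\overline{\bigcup}$. A clean way to verify this combinatorial recipe is by taking the Mellin transform in each boundary defining function, which converts the pushforward integral into a meromorphic function of a spectral parameter whose pole structure directly encodes the multiplicities and log-orders appearing in $\mathcal{E}'(H)$. Assembling these local computations and checking compatibility at higher-codimension corners then completes the proof.
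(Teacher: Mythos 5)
The paper does not actually prove this theorem; it is stated as a cited result of Melrose \cite{Melrose1992}, and both the 1992 IMRN paper and Melrose's book on manifolds with corners \cite{damwc} are the standard references for the argument. So there is no internal proof in the paper to compare against, and any review must be against Melrose's published proof.

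Your sketch captures the correct overall shape of that argument: localize on $Y$, reduce to model maps, use the integrability hypothesis to control the faces that get integrated out, and use the Mellin transform to convert the pushforward into a meromorphic family whose poles read off the output index set (including the extra logarithms from colliding poles, i.e.\ the extended union). That much is faithful.

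However, there is a genuine gap in your choice of elementary building blocks. You factor into (i) fiber-bundle projections $Y' \times N \to Y'$ with $N$ compact and (ii) radial projections $[0,\infty)_x \times Y' \to Y'$. Type (ii) only models the situation where a boundary hypersurface $H$ of $X$ maps to the \emph{interior} of $Y$ --- exactly the integrability case $e(H,H')=0$ for all $H'$. Neither type contains the essential phenomenon the theorem is really about: two (or more) boundary hypersurfaces $H'_1, H'_2$ of $X$ both mapping with positive exponent onto the \emph{same} boundary hypersurface $H$ of $Y$. The model for that is the monomial b-fibration $[0,1)_{x_1}\times[0,1)_{x_2}\to[0,1)_{x}$, $x = x_1^{a_1}x_2^{a_2}$, which collapses a corner to a face and cannot be written as a fiber-bundle projection over a manifold with corners followed by radial projections. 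This is precisely where the rescaled exponents $z/e(H',H)$ and the extended union arise, so an elementary-factorization argument must include this corner-collapsing model (or, equivalently, invoke Melrose's blow-up argument that reduces a general b-fibration to a b-normal one). Your appeal to "an appropriate choice of coordinates adapted to $e$" quietly assumes such a factorization exists, but that assertion needs justification and is where the work actually happens. The Mellin-transform computation you defer to is in fact the core of the argument for the monomial model, not merely a clean verification of the recipe.
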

The extended union, defined in \cite{tapsit}, contains the standard
union and possibly more log terms.

\subsection{Heat trace asymptotics}\label{sec:heat-trace-asymptotics}

We now use Theorem \ref{thm:pushforward} to prove the heat trace
formula in Corollary \ref{thm:heat-trace} above.  The heat trace is
equal to
\begin{equation}
  \label{eq:heat-trace-formula}
  \Tr (e^{-t \Delta}) = \int_M H_t(w, w) \dVol_g = \push_* ((\iota^* H_t) \dVol),
\end{equation}
where $\iota \colon M \times [0, \infty) \lra M
\times M \times [0, \infty)$ is the diagonal inclusion and $\push \colon M \times [0, \infty) \lra [0, \infty)$ is the
projection onto the right factor.  The natural space here on which to
consider $H_t$ is $\Mheat$, and thus to evalute this pushforward we
must see how $\push$ and $\iota$ act on the natural blown up
spaces. The following may be easily verified.
\begin{enumerate}
\item The closure $(\Mheat)_\Delta := \cl (\iota(M^\circ \times (0, \infty)))$ is a
  manifold with corners with $4$ boundary hypersurfaces, $\sff, \fff^d,
  \ff^d, \tf^d$, equal to the intersection of $\cl (\iota(M^\circ
  \times (0, \infty)))$ with $\rf \cap \lf, \fff, \ff,$ and $\tf$,
  respectively
\item The map $\push$ extends form the interior $M^\circ \times (0,
  \infty)$ to a b-fibration $\push \colon (\Mheat)_\Delta \lra [0,
  \infty)$ with exponent matrix
$$
e_\push(\sff) = 0 , e_\push(\fff^d) = 2,
  e_\push(\ff^d) = 2k , e_\push(\tf^d) = 2.
$$
\end{enumerate}

To apply the pushforward theorem, we note that the volume density 
$$
\mu = |\dVol_g \frac{dt}{t} |
= x^{kf + 1} \frac{|dx dy dz dt|}{xt}
$$
is equal on $(\Mheat)_\Delta$ to
$$
\mu = a \ (\rho_{\sff} \rho_{\fff^d}
  \rho_{\ff^d})^{kf + 1} \mu_0,
$$
where $\mu_0$ is a non-vanishing b-density on $(\Mheat)_\Delta$.  
 Thus
$(\iota^*H)  \mu$ is phg on $(\Mheat)_\Delta$ with index family
$\mathcal{E}^d$ satisfying
\begin{gather*}
  \inf \mathcal{E}^d(\sff) = 3, \quad
 \inf \mathcal{E}^d(\fff^d) = -  \dimY, \\
   \inf \mathcal{E}^d(\ff^d) = k(f - n) + 1, \quad \mathcal{E}^d(\tf^d) = \{-n, -
  n + 1, \dots \} .
  \end{gather*}
Note that $\Tr e^{- t\Delta} \frac{dt}{t}= \push_* ((\iota^* H_t)
\mu)$. The integrability condition \eqref{eq:integrability} must be
checked
only for $\sff$ and thus holds by Theorem \ref{thm:heatkernel}, and we apply the pushforward theorem
to obtain that $\Tr e^{- t\Delta}$ is
polyhomogeneous with index set
\begin{gather*}
  \{ (\zeta_1/2, p_1) \colon (\zeta_1, p_1) \in \mathcal{E}^d(\fff^d) \}
  \ \ov{\cup} \ \{ (\zeta_2/(2k), p_2) \colon (\zeta_2, p_2) \in
  \mathcal{E}^d(\ff^d) \}  \\ \ \ov{\cup} \ \{ (\zeta_3/2, p_3) \colon
  (\zeta_3, p_3) \in \mathcal{E}^d(\tf^d) \}.
\end{gather*}
In particular, 
\begin{equation*}
  \begin{split}
    F(t) = (\sum_{j = 0}^f a_j t^{-n/2 + j/2} )+ c_0 t^{-(\dimY +
      1)/2 + 1/(2k)} + O(t^{-(\dimY + 1)/2 + 1/(2k) + \epsilon}),
  \end{split}
\end{equation*}
for some $\epsilon > 0$.  As discussed in \cite[Section 3.3]{MV2012},
the heat kernel in fact lies in an even calculus and thus the terms
for odd $j$ in this sum are equal to $0$, giving the trace formula
\eqref{eq:heat-trace}.  The fact that the leading order term is the
volume is standard.

\section{Triple Space}\label{sec:triple-space}

We will now analyze composition properties for ``Volterra'' type
convolution operators as described in \eqref{eq:convolution-operator}. To do so,
following \cite{tapsit,Grieser-Hunsicker}, we construct a ``triple space,''
which we denote by $\Mtrip$, which is designed specifically to
accomodate the process of composing operators which have the structure
of the error terms in \eqref{eq:real-error-to-iterate}.  The structure of  
our triple space is analogous to that constructed by Grieser and
Hunsicker in \cite{Grieser-Hunsicker}, with slightly different
homogeneities and with the added complication that there are time
variables involved.

Note that, given $A_i$, $i = 1, 2$, we want is to make sense of the integral
	\begin{equation}\label{eq:voltera-composition}
		\int_M\! \int_0^{t'} A_1(w, w', t') A_2(w', \wt{w}, t - t') \dVol_g(w') dt'.
	\end{equation}
Define the wedge 
\begin{equation}\label{eq:wedge}
W := \set{ t - t' \geq 0} \subset \mathbb{R}^+_{t}
\times \mathbb{R}^+_{t'},
\end{equation}
and define the left, center, and right projections
	\begin{equation}
          \begin{split}
            \pi_L : M \times M \times M \times W &\lra M \times M \times [0, \infty)_{\wtf} \\
            (w, w', \wt{w}, t, t') &\longmapsto (w, w', t') \\
            \pi_C : M \times M \times M \times W &\lra M \times M \times [0, \infty)_{\wtf} \\
            (w, w', \wt{w}, t, t') &\longmapsto (w, \wt{w}, t) \\
            \pi_R : M \times M \times M \times W &\lra M \times M \times [0, \infty)_{\wtf}\\
            (w, w', \wt{w}, t, t') &\longmapsto (w', \wt{w}, t - t').
          \end{split}
	\end{equation}
Then, formally, the integral in \eqref{eq:voltera-composition} says
that the integral kernel of $A_1 A_2$ (as an operator acting by
convolution in time) is
\begin{equation}
  \label{eq:fake-pushforward}
  (A_1 A_2)(w, \wt{w}, t) = (\pi_C)_*(\pi^*_L A_1)(\pi^*_R A_2),
\end{equation}
where $(\pi_C)_*$ denotes the pushforward, i.e.\ the integral along
the fibers of $\pi_C$ (which, by the way we have set up the problem,
requires the choice of a metric on the fibers which we come to
shortly.)  Analysis of \eqref{eq:fake-pushforward} 
becomes tractable if the space $M^3 \times W$ is blown up so that the
pushforward theorem described in Section \ref{sec:pushforward} applies.

Note that $M^3 \times W$ is a manifold with corners with $5$ boundary
hypersurfaces
\begin{gather*}
  \label{eq:triple-initial-bhs}
  \Lfacet = \{x = 0 \}, \quad   \Cfacet = \{x' = 0 \}, \quad
  \Rfacet = \{\wt{x} = 0 \}\\
  \Tfacet_1 = \{ t' = 0 \}, \quad \Tfacet_2 = \{ t - t' = 0 \}
\end{gather*}
It is easy to check that, in the language of Appendix \ref{sec:mwc}, the maps
$\pi_\bullet$ with $\bullet \in \{ L, C, R \}$ are b-maps from $M^3
\times W$ to $M^2 \times [0, \infty)_t$ and the exponent matrices
are also easy to compute, 
\begin{gather}
  \label{eq:initial-exponent-matrices}
e_{\pi_L}( \bullet , \bullet') = \left\{
    \begin{array}{cc}
      1 &    \bullet = \Lfacet, \bullet' = \lf \\
      1 &    \bullet = \Cfacet, \bullet' = \rf \\
      1 &    \bullet = \tb'_1, \bullet' = \tb \\
      0  &  \mbox{ otherwise }
    \end{array}\right., \ 
e_{\pi_C}( \bullet , \bullet') = \left\{
    \begin{array}{cc}
      1 &    \bullet = \Lfacet, \bullet' = \lf \\
      1 &    \bullet = \Rfacet, \bullet' = \rf \\
      0  &  \mbox{ otherwise }
    \end{array}\right., \\
e_{\pi_R}( \bullet , \bullet') = \left\{
    \begin{array}{cc}
      1 &    \bullet = \Cfacet, \bullet' = \lf \\
      1 &    \bullet = \Rfacet, \bullet' = \rf \\
      1 &    \bullet = \tb'_2, \bullet' = \tb \\
      0  &  \mbox{ otherwise }
    \end{array}\right..
\end{gather}
We blow up $M^3 \times W$ to form a space $\wt{\beta} \colon \Mtrip \lra M^3 \times
W$ in a sequence of steps as follows.

First, consider the three pullbacks of the submanifold $\mathcal{B}_0 = \{ x = \wt{x} ,
y = \wt{y} , \wt{t} = 0\}\subset M^2 \times
[0, \infty)_{\wt{t}}$ defined in \eqref{eq:first-blowdown}
\begin{equation}
  \label{eq:triple-first-blowdown}
  \pi_L^{-1}(\mathcal{B}_0), \quad \pi_C^{-1}(\mathcal{B}_0), \quad \pi_R^{-1}(\mathcal{B}_0).
\end{equation}
These three sets intersect pair-wise in the triple intersection:
\begin{equation}
  \label{eq:triple-first-blowdown-intersection}
  \pi^{-1}_L(\mathcal{B}_0) \cap \pi^{-1}_C(\mathcal{B}_0) =
  \pi^{-1}_C(\mathcal{B}_0) \cap \pi^{-1}_R(\mathcal{B}_0)  =
  \pi^{-1}_L(\mathcal{B}_0) \cap  \pi^{-1}_R(\mathcal{B}_0) = \mathcal{S} ,
\end{equation}
where 
\begin{equation}
  \label{eq:51}
  \mathcal{S} = \set{x  = x' =\wt{x} = t = t' = y - y' = y' - \wt{y} =
    0}.
\end{equation}
We blow up the set $\mathcal{S}$, with appropriate homogeneities,
specifically letting
\begin{equation}
  \label{eq:52}
  \Mtripa = [M^3 \times W ; \mathcal{S} ]_{inhom},
\end{equation}
with $t \sim
x^{2} \sim (x')^2 \sim \wt{x}^{2} \sim |y - y' |^{2} \sim  |y' - \wt{y}|^2$,
and let $\wt{\beta}_0 \colon \Mtripa \lra M^3 \times W$ denote the
blowdown map.  Call the introduced boundary hypersurface
$\fff^\cap$. Near  to $\fff^\cap$, we have polar coordinates
\begin{equation}
  \label{eq:triple-polar-first-intersection}
  \begin{split}
    \rho_{\cap} &= \lp t  + x^{2} + (x')^2 + \wt{x}^{2} + |y -
    y'|^{2} + |y' - \wt{y}|^2 \rp^{1/2}, \\
    \phi^\cap &= \lp \frac{t'}{\rho_{\cap}^{2}} , \frac{t - t'}{\rho_{\cap}^{2}},
    \frac{x}{\rho_{\cap}}, \frac{x'}{\rho_{\cap}}
    \frac{\wt{x}}{\rho_{\cap}}, \frac{y - y'}{\rho_{\cap}}, \frac{y' - \wt{y}}{\rho_{\cap}}
  \rp \\
    &=: (\phi^\cap_{t'}, \phi^\cap_{t - t'},  \phi^\cap_{x}, \phi^\cap_{x'},
    \phi^\cap_{\wt{x}}, \phi^\cap_{y - y'}, \phi^\cap_{y' - \wt{y}}),
    \mbox{ along with } y',  z, z',  \wt{z}.
  \end{split}
\end{equation}
The asymmetry of the $y, y', \wt{y}$ in the coordinates is spurious in
the sense that if one defines $\phi^\cap_{y - \wt{y}} = (y -
  \wt{y})/\rho_{\cap}$, then any two of the $\phi^\cap_{y - y'}, \phi^\cap_{y' - \wt{y}}$
can be used in $\phi^\cap$ by redefining $\rho_{\cap}$
using e.g.\ $|y - y'|^2$ and $|y - \wt{y}|^2$  (and then using
$\phi^\cap_{y - y'}, \phi^\cap_{y' - \wt{y}}$).  Either set of coordinates is defined in
a colar neighborhood of $\fff^\cap$.

We then blow up the closures of the lifts 
$$\mathcal{S}_{\bullet} := \cl((\pi_{\bullet} \circ
\wt{\beta}_0)^{-1}(\mathcal{B}_0) \setminus \fff^\cap),
$$
i.e.\ the rest of the lifts
of the $\mathcal{B}_0$ via the three projections, 
where $\bullet \in \set{L, C, R}$.
These are disjoint subsets and we blow them up in any order, setting
\begin{equation}
  \label{eq:triple-first-blowup}
  \Mtripb = [\Mtripa; \cup_{\bullet = L, C, R} \mathcal{S}_\bullet ]_{inhom},
\end{equation}
with the appropriate homogeneties, e.g.\ for $\mathcal{S}_L$ we have
$t' \sim
x^{2} \sim (x')^2 \sim |y - y'|^2$.  Again, we have a
blowdown map
\begin{equation}
  \label{eq:Mheat1-blowdown}
  \wt{\beta}_1 \colon \Mtripb \lra M \times M \times M \times W.
\end{equation}
The new faces we call
$\fffb$ with $\bullet \in \{ L, C, R \}$.  Coordinates at
$\fffL$ can be determined as follows.  Note that $\mathcal{S}_L$ is given in the coordinates
\eqref{eq:triple-polar-first-intersection} by $\phi^\cap_{t'} = \phi^\cap_x =
\phi^\cap_{x'} = \phi^\cap_{y - y'} = 0$, and that in a neighborhood
of $\mathcal{S}_L$ away from $\fff^\cap$, $\phi^\cap_{t'} \sim t'$. 
Thus, to match homogeneities with the blowups
  of the double space, we want to blow this up so that the following
  give polar coordinates near 
the intersection of $\fffL$ and $\fff^\cap$:
\begin{equation}
  \label{eq:triple-polar-first-Left}
  \begin{split}
    \rho^L &= \lp \phi^\cap_{t'} + (\phi^\cap)^2_x +
(\phi^\cap)^2_{x'} + |(\phi^\cap)_{y - y'}|^2 \rp^{1/2}, \\
    \phi^L &= \lp \frac{\phi^\cap_{t'}}{(\rho^L)^{2}} ,
    \frac{\phi^\cap_{x}}{\rho^L}, \frac{\phi^\cap_{x'}}{\rho^L},
    \frac{\phi^\cap_{y - y'}}{\rho^L} \rp \\
    &=: (\phi^L_{t'},  \phi^L_{x}, \phi^L_{x'},
    \phi^L_{y - y'}),
    \mbox{ along with } y',  z, z', \wt{z}, \rho_{\cap}, \phi^\cap_{\wt{x}},
    \phi^\cap_{y' - \wt{y}}, \phi^\cap_{t - t'}
  \end{split}
\end{equation}
with functions as in \eqref{eq:triple-polar-first-intersection}.  It
is also possible to use simpler projective coordinates, as we will see
below.  Coordinates near
$\fffR$ can be derived similarly by switching $\phi^\cap_{t'}$ with
$\phi^\cap_{t - t'}$  and $\phi^\cap_x$ with $\phi^\cap_{\wt{x}}$.
The situation at $\fffC$ is slightly different since, writing
$\phi^\cap_t = \phi^\cap_{t'} + \phi^\cap_{t - t'}$,  the
pullback of $\phi^\cap_t$ on $\Mheata$ via $\pi_C$ vanishes at $\phi^\cap_{t'} =
0 = \phi^\cap_{t - t'}$, and thus $\mathcal{S}_C$ is codimension $1$
higher than $\mathcal{S}_\bullet$ for $\bullet = L, R$.  

Here we
blow up so that the following give coordinates
\begin{equation} 
  \label{eq:triple-polar-first-center}
  \begin{split}
    \rho^C &= \lp \phi^\cap_{t} + (\phi^\cap)^2_x +
(\phi^\cap)^2_{\wt{x}} + |\phi^\cap_{y - \wt{y}}|^2 \rp^{1/2}, \\
    (\phi^\cap)^C &= \lp \frac{\phi^\cap_{t'}}{(\rho^C)^{2}},
    \frac{\phi^\cap_{t - t'}}{(\rho^C)^{2}} ,
    \frac{\phi^\cap_{x}}{\rho^C}, \frac{\phi^\cap_{\wt{x}}}{\rho^C},
    \frac{\phi^\cap_{y - \wt{y}}}{\rho^C} \rp \\
    &=: (\phi^C_{t'}, \phi^C_{t - t'},  \phi^C_{x}, \phi^C_{\wt{x}},
    \phi^C_{y - \wt{y}}),
    \mbox{ along with } y',  z, z', \wt{z}, \rho_{\cap}, \phi^\cap_{x'},
    \phi^\cap_{y' - \wt{y}}.
  \end{split}
\end{equation}
\begin{lemma}
  With terminology as in Appendix \ref{sec:pushforward}, the maps
  $\pi_\bullet$ extend from the interior to b-maps 
  \begin{equation}
    \label{eq:cone-triple-space-projections}
    \wt{\pi}_{\bullet} \colon \Mtripb \lra \Mheata
  \end{equation}
for $\bullet \in \{L, C, R\}$ with exponent matrices $e_{\wt{\pi}_{\bullet}}$ satisfying
\begin{equation}
  \label{eq:first-exponent-matrix}
  \begin{split}
    e_{\wt{\pi}_{\bullet}}(\ff_1^\cap, \fff) &= 1,
    e_{\wt{\pi}_{\bullet}}(\ff_1^{\bullet'}, \fff) = \delta_{\bullet,
      \bullet'}, e_{\wt{\pi}_C}(\fffL, \lf) = 1,
    e_{\pi_C}(\fffR, \rf) = 1, \\
e_{\wt{\pi}_R}(\fffL, \lf) &= 1, e_{\wt{\pi}_L}(\fffR, \rf) = 1,
e_{\wt{\pi}_R}(\fffC, \tb) > 0 , e_{\wt{\pi}_L}(\fffC, \tb) > 0, 
  \end{split}
\end{equation}
where $\delta_{\bullet, \bullet'} = 1$ if $\bullet = \bullet'$ and
zero otherwise.  When $\bullet \in \{\Lfacet, \Cfacet, \Rfacet, \tb'_1,
\tb'_2 \} $, i.e.\ when it is the pullback of a boundary hypersurface
of $M \times M \times M \times W$ via the blowdown map, then the
exponent matrix satisfies \eqref{eq:initial-exponent-matrices} with
$\wt{\pi}$ replacing $\pi$.

Moreover, $\wt{\pi}_C$ is a b-fibration in the sense of Appendix \ref{sec:pushforward}.
\end{lemma}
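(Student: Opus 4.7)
The plan is to proceed in three stages: extend each $\pi_\bullet$ to a b-map $\wt{\pi}_\bullet \colon \Mtripb \to \Mheata$ via the universal property of (inhomogeneous) radial blowup, read off the exponent matrices from pullbacks of boundary defining functions in the projective coordinates \eqref{eq:triple-polar-first-intersection}, \eqref{eq:triple-polar-first-Left}, \eqref{eq:triple-polar-first-center}, and finally verify that $\wt{\pi}_C$ is b-normal and a b-submersion (hence a b-fibration in the sense of \cite{Melrose1992}). The cases $\wt{\pi}_L, \wt{\pi}_R$ need only the first two stages.

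For the first step, I will invoke the universal property of radial blowup from \cite[Ch.~5]{damwc}: if $f\colon X \to Y$ is a b-map and $N \subset Y$ is a p-submanifold, then $f$ lifts to a b-map $[X;M]_{inhom} \to [Y;N]_{inhom}$ whenever $f^{-1}(N) \subset M$ and the homogeneities on $M$ dominate those pulled back from $N$. In our setting, $\pi_\bullet^{-1}(\mathcal{B}_0)$ is a p-submanifold of $M^3 \times W$ for each $\bullet \in \{L, C, R\}$, and the three preimages meet pairwise precisely along $\mathcal{S}$. Blowing up $\mathcal{S}$ first, with the stated parabolic weights $t \sim x^2 \sim (x')^2 \sim \wt{x}^2 \sim |y-y'|^2 \sim |y'-\wt{y}|^2$, separates their proper transforms $\mathcal{S}_L, \mathcal{S}_C, \mathcal{S}_R$ into mutually disjoint p-submanifolds of $\Mtripa$; the homogeneity check reduces to elementary inequalities like $|y-\wt{y}|^2 \le 2|y-y'|^2 + 2|y'-\wt{y}|^2$, which ensure that the weights of $\mathcal{S}$ dominate those induced by $\mathcal{B}_0$ through each $\pi_\bullet$. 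Blowing up $\mathcal{S}_L \sqcup \mathcal{S}_C \sqcup \mathcal{S}_R$ then produces $\Mtripb$ on which each $\pi_\bullet$ lifts to a b-map $\wt{\pi}_\bullet$. The exponent matrices are then read off by direct calculation in coordinates: for instance, pulling back $\rho = (t+x^2+\wt{x}^2+|y-\wt{y}|^2)^{1/2}$ (a bdf of $\fff$ in $\Mheata$) via $\wt{\pi}_C$ and expressing it near $\fff^\cap \cap \fffC$ gives $\rho_\cap \rho^C$ times a positive smooth factor, yielding $e_{\wt{\pi}_C}(\fff^\cap,\fff) = e_{\wt{\pi}_C}(\fffC,\fff)=1$; analogous computations with $x, \wt{x}, t'$, and $t-t'$ complete the table \eqref{eq:first-exponent-matrix}.

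The b-fibration claim for $\wt{\pi}_C$ requires b-normality (each bhs of $\Mtripb$ contributes a nonzero entry to at most one column of the exponent matrix) together with surjectivity of the b-differential and the condition that each boundary face restricts to a fibration onto its image. b-Normality is immediate from the exponent matrix: going through $\{\Lfacet, \Cfacet, \Rfacet, \Tfacet_1, \Tfacet_2, \fff^\cap, \fffL, \fffC, \fffR\}$, the images under $\wt{\pi}_C$ land, respectively, in $\lf, $ the interior, $\rf, $ the interior, the interior, $\fff, \lf, \fff, \rf$, so at most one entry per row is nonzero. The b-submersion property and fibration-on-faces property are verified face by face in the projective coordinates by producing explicit local sections of $\wt{\pi}_C$. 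The main obstacle---and the entire reason for constructing $\Mtripb$ rather than working on $M^3 \times W$ directly---is the codimension-two corner $\Tfacet_1 \cap \Tfacet_2 = \{t=t'=0\}$, where $\pi_C^* t = t' + (t-t')$ is a sum of two boundary defining functions so that $\pi_C$ is not even a b-map there. The closure of $\mathcal{S}$ contains this corner, and in the polar coordinates of \eqref{eq:triple-polar-first-intersection} one has $\pi_C^* t = \rho_\cap^2(\phi^\cap_{t'} + \phi^\cap_{t-t'})$; since $\phi^\cap_t := \phi^\cap_{t'} + \phi^\cap_{t-t'}$ is a single smooth function on $\Mtripa$, the subsequent blowup of $\mathcal{S}_C$ (along which $\phi^\cap_t$ vanishes parabolically together with the other squared coordinates) factors $\phi^\cap_t = (\rho^C)^2 \cdot (\text{positive smooth})$ near $\fffC$, yielding $\pi_C^* t = \rho_\cap^2 (\rho^C)^2 \cdot (\text{positive smooth})$ on $\Mtripb$. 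Checking that no new obstruction arises at the triple corners $\fff^\cap \cap \fffC \cap \Tfacet_j$ is then a direct coordinate verification.
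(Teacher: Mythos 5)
The overall strategy — reading off exponent matrices from the explicit projective/polar coordinates and checking the pullback $\wt{\pi}_C^*\rho = \rho_\cap \rho^C$ — matches the paper's proof, and the tabulation you give for $\fff^\cap$, $\fffL$, $\fffC$, $\fffR$, and the non-blown-up faces is correct.

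However, your treatment of the time corner $\Tfacet_1 \cap \Tfacet_2$ contains a genuine error. You assert ``the closure of $\mathcal{S}$ contains this corner,'' but that is false: $\mathcal{S} = \{x = x' = \wt{x} = t = t' = 0,\ y = y' = \wt{y}\}$ is a proper subset of $\Tfacet_1 \cap \Tfacet_2 = \{t = t' = 0\}$; the containment runs the other way. Consequently, the blowups producing $\Mtripa$ and $\Mtripb$ are all localized inside the set where the spatial variables sit on the boundary/fiber-diagonal locus, and they leave the corner $\Tfacet_1 \cap \Tfacet_2$ untouched at points where $x, x', \wt{x}$ (or $y - y'$, etc.) are bounded away from zero. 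At such a point, $\Mtripb$ coincides with $M^3 \times W$ near the corner, the map $\wt{\pi}_C$ coincides with $\pi_C$, and $\wt{\pi}_C^*t = t' + (t-t')$ remains a sum of two boundary defining functions — not a monomial — so the b-map condition literally fails there. Your factorization $\wt{\pi}_C^*t = \rho_\cap^2 (\rho^C)^2 \cdot (\text{positive smooth})$ is valid only in the range of the two blowups, i.e.\ near $\fff^\cap \cap \fffC$; it does not extend to the rest of $\Tfacet_1 \cap \Tfacet_2$.

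The fact that the issue exists at all is worth noting, and you deserve credit for isolating $\Tfacet_1 \cap \Tfacet_2$ as the delicate spot — the paper's own proof only says ``the rest of the definitions of b-fibration are easy to check'' and does not spell this out. In practice the defect is harmless because every distribution actually fed into the pushforward vanishes to infinite order at $\tb$, hence at both $\tb_1'$ and $\tb_2'$, so the composition integral is insensitive to whatever smooth structure one chooses near the unresolved corner; equivalently, one could blow up $\{t' = t - t' = 0\}$ globally without altering any of the index sets. But as written, your argument claims to establish the b-map property everywhere on $\Mtripb$ via a containment that does not hold, so the step needs to be either repaired by that extra blowup or replaced with a direct appeal to the infinite-order vanishing at $\tb_1', \tb_2'$ when the pushforward theorem is invoked.
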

\begin{remark}
  The significance of the inequalites in
  \eqref{eq:first-exponent-matrix} involving $\tb$ is that all the
 distributions under consideration vanish to infinite order at $\tb$, and thus the
  pullbacks of these distributions via $\pi_R$ will vanish to infinite
  order at $\fffC$, and the same for $\pi_L$.
\end{remark}
\begin{proof}
  We verify the lemma for for $\wt{\pi}_C$ and leave the other nearly
  identical calculations to the reader.  That $\wt{\pi}_C$ extends to a
  b-map follows easily by writing the pulling back the coordinates in
 \eqref{eq:polarfirstmodel} and writing them in terms of those in
  \eqref{eq:triple-polar-first-intersection}.  In particular, note that the pullback
  \begin{equation}
    \label{eq:38}
    \wt{\pi}_C^* \rho = \rho_{\cap} \rho^C,
  \end{equation}
so the exponent matrix claim holds.  The rest of the definitions of
b-fibration are easy to check.
\end{proof}
\begin{remark}
The extended map $\wt{\pi}_{L}$ is \emph{not} a b-fibration as it maps
the interior of $\ff_1^C$ to the interior of the face $\tb \cap
\lf$ due to the fact that $t = 0$ on $W$ implies that $t' = 0$
also, thus the map increases the codimension of a face.  The same
holds for $\wt{\pi}_R$, i.e.\ $\wt{\pi}_R(\fffC) \subset \tb
\cap \rf$.
\end{remark}

Next we must blow up the lifts of $\mathcal{B}_1$ in
\eqref{eq:second-blowdown}.  Since by
\eqref{eq:first-exponent-matrix}, $\wt{\pi}_\bullet$
only maps $\ff_1^{\bullet'}$ to $\ff_1$ if $\bullet = \bullet'$, any
of the pair-wise intersections is again equal to the triple intersection
\begin{gather*}
  \mathcal{S}' =  \wt{\pi}_L^{-1}(\mathcal{B}_1) \cap \wt{\pi}_C 
    ^{-1}(\mathcal{B}_1) = \wt{\pi}_C 
    ^{-1}(\mathcal{B}_1) \cap \wt{\pi}_R 
    ^{-1}(\mathcal{B}_1) 
=  \wt{\pi}_L 
    ^{-1}(\mathcal{B}_1) \cap \wt{\pi}_R 
    ^{-1}(\mathcal{B}_1) .
\end{gather*}
Indeed, each is a subset of $\ff_1^\cap$, and in the polar coordinates  defined on the
interior of $\ff_1^\cap$, using the definition of $\mathcal{B}_1$ in
\eqref{eq:second-blowdown}  
\begin{equation}
  \label{eq:triple-intersection-second-round}
  \mathcal{S}'  = \set{ \rho = \phi^\cap_{t'} = \phi^\cap_{t - t'} = 0, \phi^\cap_x = \phi^\cap_{x'} =
    \phi^\cap_{\wt{x}},   \phi^\cap_{y - y'} =  \phi^\cap_{y - \wt{y}} = 0},
\end{equation}
with no restrictions on $y', z, z', \wt{z} $.
We form a space $[\Mtripb ; \mathcal{S}']_{inhom}$ with appropriate homogeneities.
To understand this space, note first that near $\mathcal{S}'$ we can use
projective coordinates on $\fff^\cap$, concretely we can take for
example $\wt{x}$ to be a boundary defining function of $\fff\cap$ and
coordinates $\wt{x}, t'/\wt{x}^2, (t - t')/\wt{x}^2, x/\wt{x},
x'/\wt{x}, (y - y')/\wt{x}, (y' - \wt{y})/\wt{x}$ to replace the polar
coordinates in \eqref{eq:triple-polar-first-intersection}.  
Then the homogeneities are determined by those in the $\ff$ blowdown
of the double space, and one has coordinates 
\begin{equation}
  \label{eq:polarsecondnmodel-triple}
  \begin{split}
    \overline{\rho}_{\cap} &= \lp  \wt{x}^{2(k - 1)} + \frac{t}{\wt{x}^2} +
    (\frac{x - \wt{x}}{\wt{x}})^2 + (\frac{x' - \wt{x}}{\wt{x}})^2
    + (\frac{|y - \wt{y}|}{\wt{x}})^{2} +  (\frac{|y' -
      \wt{y}|}{\wt{x}})^{2} \rp^{1/2(k-1)}, \\
    \overline{\phi} &:= (\overline{\phi}_{\wt{x}}, \overline{\phi}_{t'},
    \overline{\phi}_{t - t'}, 
    \overline{\phi}_{x - \wt{x}}, \overline{\phi}_{x' - \wt{x}}, 
    \overline{\phi}_{y - \wt{y}},
    \overline{\phi}_{y' - \wt{y}}) \\
    & = \lp \frac{\wt{x}}{\overline{\rho}_\cap} ,
    \frac{t'}{\wt{x}^2\overline{\rho}_\cap^{2(k-1)}}, \frac{t -
      t'}{\wt{x}^2\overline{\rho}_\cap^{2(k-1)}}
    \frac{x - \wt{x}}{\wt{x}\overline{\rho}_\cap^{(k-1)}}, \frac{x' -
      \wt{x}}{\wt{x}\overline{\rho}_\cap^{(k-1)}}
     \frac{y - \wt{y}}{\wt{x}\overline{\rho}_\cap^{(k-1)}}, \frac{y' - \wt{y}}{\wt{x}\overline{\rho}_\cap^{(k-1)}}
    \rp  \mbox{ along with }
    \wt{y}, z, z', \wt{z}.
  \end{split}
\end{equation}
One can also take coordinates in which $x, x', \wt{x}$ are permuted,
and the same with $y, y', \wt{y}$.
$$
\mbox{ We let $\ff^\cap$ denote the introduced boundary hypersurface.}
$$

The lifts of the $\wt{\pi}_\bullet^{-1}(\mathcal{B}_1)$ minus their intersections now
have disjoint closures.  For example, we have
$$
\wt{\pi}_{\Lfacet}^{-1}(\mathcal{B}_1) \cap \fff^\cap  \setminus \ff^\cap =
\{ \overline{\phi}_{t'} = \overline{\phi}_{x - x'} =
\overline{\phi}_{y - y'} = 0 \}
$$
while 
$$
\wt{\pi}_{\Cfacet}^{-1}(\mathcal{B}_1) \cap \fff^\cap \setminus \ff^\cap =
\{
  \overline{\phi}_{t} = \overline{\phi}_{x -\wt{x}} = 
  \overline{\phi}_{y - \wt{y}} = 0 \},
$$
where $  \overline{\phi}_{t} = \overline{\phi}_{t'} +
\overline{\phi}_{t - t'}$ and $\overline{\phi}_{x -\wt{x}} =
\overline{\phi}_{x -x'} + \overline{\phi}_{x' -\wt{x}}$ and for
$\wt{\pi}_{\Rfacet}$ we have $\overline{\psi}_{x'} = \psi_{t  - t'}
= 0, \psi_{y'} = 0$;  since $|\overline{\phi}| = 1$, these sets
are disjoint. 
Furthermore, the pullbacks satisfy that 
$$
\wt{\pi}_{\bullet}^{-1}(\mathcal{B}_1) \cap \ff_1^{\bullet'} =
\delta_{\bullet, \bullet'},
$$
for $\bullet, \bullet' \in \{ R, C, L \}$, and each intersection is
straightforward to write down, e.g.\ with coordinates as in \eqref{eq:triple-polar-first-center},
$$
\wt{\pi}_{\Cfacet}^{-1}(\mathcal{B}_1) \cap \fffC =
\{ \rho^C =  \phi^C_{t'} = \phi^C_{t - t'} =
\phi^C_{x} - \phi^C_{\wt{x}} = 0, 
    \phi^C_{y - \wt{y}} = 0\} .
$$
We will blow up first the  $\wt{\pi}_\bullet^{-1}(\mathcal{B}_1) \cap  \ff_1^\cap$ and then the $\wt{\pi}_\bullet^{-1}(\mathcal{B}_1) \cap \ff_1^\bullet$  with for $\bullet \in
\{L, C, R\}$.

In the interior of $\ff_1^\bullet$ with $\bullet \in \{ L, R\}$ the
blow ups of the pullbacks of $\mathcal{B}_1$ are particularly easy
to understand as there we can just pullback the projective coordinates
in \eqref{eq:projectivesecondmodle} and use these together with the
other uneffected coordinates to obtain projective coordinates e.g.\
near $\pi_{\Lfacet} \circ \wt{\beta}_0^{-1}(\mathcal{B}_1) \cap \fffL$
valid near the interior of the introduced boundary hypersurface.  
            \begin{equation}
              \label{eq:projectivesecondmodle-interior-side-triple-right}
              x', \quad \sigma = \frac{s - 1}{(x')^{k-1}} = \frac{x -
                x'}{(x')^{k}}, \quad \eta' = \frac{y -
                y'}{(x')^{k}}, \quad T' =  \frac{t'}{(x')^{2k}},
            \end{equation}
together with $\wt{w}, t$ on the introduced boundary hypersurface.  In
the interior of $\ff_1^C$, one needs only to remember that the vanishing
of the pullback of the $\phi_t$ coordinate implies the vanishing of
both $\phi_{t'}$ and $\phi_{t - t'}$.  One can use $\wt{x}$ as a
boundary defining function and then two projective time coordinates
$T' = t'/\wt{x}^{2k}$ and $\wt{T} = (t - t')/\wt{x}^{2k}$.  
In the
interior of $\ff_1^{\cap}$ but away from $\ff^\cap$, we want the same
homogeneities, but now the pullback of $\wt{x'}$ in the interior of
$\ff^\cap$ is proportional to $\rho^\cap$ and in the interior
of the introduced blow up we will have coordinates as in
\eqref{eq:projectivesecondmodle-interior-side-triple-right} with all
the functions replaced by their $\phi$ couterparts, e.g.\ $x'$ replaced
by $\phi_{x'}$ and $\frac{y -  \wt{y}}{(x')^{k}}$ replaced by
$\psi_{y - y' }/\phi_{x'}$.

We focus at the intersection $\fff^\cap \cap \fff^\bullet$, first with
$\bullet = C$.  Near $\mathcal{S}_C$, we can simplify things slightly
by using projective coordinates, derived from
\eqref{eq:triple-polar-first-center} by noting that
$\phi^\cap_{\wt{x}}$ is non-zero at $\fff^\cap \cap \fff^\bullet \cap
\ \mathcal{S}_C$ and can thus be used as a boundary defining function.
Specifically, take
$$
\wt{\XX} = \phi^\cap_{\wt{x}},\  \XX =
\frac{x}{\wt{x}},\  \TT =
\frac{t}{\wt{x}^2},\ \YY = \frac{y - \wt{y}}{\wt{x}},
$$
together with the other (non-polar) coordinates in
\eqref{eq:triple-polar-first-center}.  Blowing up to introduce a face
$\ff^{\cap, C}$, have
\begin{equation*}
  \begin{split}
    \PP &= (\TT + \rho_\cap^{2(k - 1)} + (\XX - 1)^2 + |\YY|^2
    )^{1/(2(k -1))},  \\
    \PS &=  (\TT/\PP^{2(k-1)} , \rho_\cap/\PP , (\XX - 1)/\PP^{k-1} , \YY/\PP^{k-1}
    ),
  \end{split}
\end{equation*}
but it follows that $\mathcal{S}_1 \cap \fff^C$ intersects $\ff^{\cap,
  C}$ at $\PS = (0, 1, 0, 0)$ and thus $\rho^\cap$ can be used as a
boundary defining function. Again working near $\mathcal{S}_C$ we can take
$\rho_\cap$ as a boundary defining function for $\ff^{\cap, C}$ and
use projective coordinates $ \rho_\cap , \TT/\rho_\cap^{2(k-1)}, (\XX
- 1)/\rho_{\cap}^{k-1} , \YY/\rho_{\cap}^{k-1} $.  Using these we blow
up $\mathcal{S}_1 \cap \fff^C$ with
\begin{equation*}
  \begin{split}
    \NPP &= (\frac{\TT}{\rho_{\cap}^{2(k-1)}} +
    (\phi^{\cap}_{\wt{x}})^{2(k - 1)} + \frac{(\XX -
      1)^2}{\rho_{\cap}^{k-1}}+ \frac{|\YY|^2}{\rho_{\cap}^{k-1}}
    )^{1/(2(k -1))}, \\
    \NPS &=   (\frac{\TT}{(\NPP\rho_\cap)^{2(k-1)}} , \frac{\phi^{\cap}_{\wt{x}}}{\NPP} , \frac{\XX - 1}{(\NPP\rho_\cap)^{k-1}} , \frac{\YY}{(\NPP\rho_\cap)^{k-1}}
    ) \\
&=   (\frac{t}{\wt{x}^2(\NPP\rho_\cap)^{2(k-1)}} ,
\frac{\wt{x}}{\NPP\rho_\cap} , \frac{x- \wt{x}}{\wt{x}(\NPP\rho_\cap)^{k-1}} ,
\frac{y - \wt{y}}{\wt{x}(\NPP\rho_\cap)^{k-1}}
    ), 
  \end{split}
\end{equation*}
and this is the final blow up of $\mathcal{S}_C$.  The blowups for
$\mathcal{S}_{L}, \mathcal{S}_R$ are similar and left to the reader.

\begin{proposition}[Incomplete cusp edge heat triple space]
The above construction yields a space and blowdown map
\begin{equation}\label{eq:real-triple-blowdown}
\wt{\beta} \colon \Mtrip \lra M \times M \times M \times W,
\end{equation}
such that the maps $\wt{\pi}_\bullet$ from \eqref{eq:cone-triple-space-projections} extend to b-maps
$\dpi_\bullet \colon \Mtrip \lra \Mheatb$ with exponent matrix satisfying \eqref{eq:initial-exponent-matrices},
\eqref{eq:first-exponent-matrix} (with $\pi$ and $\wt{\pi}$ replaced
by $\dpi$), and
\begin{equation}  \label{eq:second-exponent-matrix}
  \begin{split}
    e_{\dpi_{\bullet}}(\ff^\cap, \ff) = 1, \quad  
    e_{\dpi_{\bullet}}(\ff^{\cap, \bullet'}, \ff)  =
    e_{\dpi_{\bullet}}(\ff^{\bullet'}, \ff) &= \delta_{\bullet,
      \bullet'} \\
    e_{\dpi_{L}}(\ff^{\cap, R}, \fff) = e_{\dpi_{R}}(\ff^{\cap, L},
    \fff) = e_{\dpi_{C}}(\ff^{\cap, R}, \fff) =
    e_{\dpi_{C}}(\ff^{\cap, L}, \fff) &= 1 \\
    e_{\dpi_{L}}(\ff^{R}, \rf) = e_{\dpi_{R}}(\ff^{L},
    \lf) = e_{\dpi_{C}}(\ff^{R}, \rf) =
    e_{\dpi_{C}}(\ff^{L}, \lf) &= 1 \\
    e_{\dpi_{L}}(\ff^{\cap, C}, \tb),    e_{\dpi_{L}}(\ff^{C}, \tb),
    e_{\dpi_{R}}(\ff^{\cap, C}, \tb),    e_{\dpi_{R}}(\ff^{C}, \tb)
    &\ge 1.
  \end{split}
\end{equation}
Moreover, apart from components of $e_{\dpi_{\bullet}}(\ff^{\cap, C},
\bullet')$ and $e_{\dpi_{\bullet}}(\ff^{C}, \bullet')$ with $\bullet
\in \{L, R \}$, all other components are zero.

  Again, $\dpi_C$ is a b-fibration.
\end{proposition}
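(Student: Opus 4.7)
The plan is to verify three things in sequence: (i) that the three projections $\pi_L, \pi_C, \pi_R$ lift to b-maps $\dpi_\bullet \colon \Mtrip \lra \Mheatb$, (ii) the claimed values of the exponent matrices, and (iii) the b-fibration property for $\dpi_C$. Throughout, I would use the universal property of inhomogeneous radial blow-up: a b-map $f \colon X \lra Y$ lifts through a blow-up of a p-submanifold $N \subset X$ provided $f(N)$ is contained in a p-submanifold of $Y$ which has either already been blown up (with matching homogeneities) or lies in a boundary hypersurface of $Y$. The former case produces an explicit contribution to the exponent matrix at the introduced face; the latter produces a zero entry there but a smooth extension.

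For (i), I would proceed stage by stage through the construction. At the level of $\Mtripa \lra M^3 \times W$, the set $\mathcal{S}$ maps under each of $\pi_L, \pi_C, \pi_R$ into $\mathcal{B}_0$ with the same parabolic homogeneities, so the lifts are b-maps landing in $\Mheata$. At the next stage, $\mathcal{S}_\bullet$ maps under $\wt{\pi}_\bullet$ into $\mathcal{B}_0$ (giving the introduced face $\fff^\bullet$), but under the other two projections it is carried into a face already present; for instance $\wt{\pi}_L(\mathcal{S}_C) \subset \Tfacet_1$ and $\wt{\pi}_L(\mathcal{S}_R) \subset \Rfacet$, so the respective lifts remain smooth. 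The same dichotomy occurs at the subsequent $\mathcal{B}_1$-blowups, which produce $\ff^\cap$ and $\ff^\bullet$, and at the final $\mathcal{S}_1 \cap \fff^\bullet$-blowups, which produce $\ff^{\cap, \bullet}$. The anomalous entries $e_{\dpi_{L}}(\ff^{\cap, C}, \tb) \geq 1$ and its relatives come from the fact that on the centers producing $\ff^C$ and $\ff^{\cap, C}$ both $t'$ and $t - t'$ vanish to the relevant orders, so $\wt{\pi}_L^* t' = \wt{\pi}_L^* t$ in this regime and $\pi_L$ drags these centers into the face $\tb \subset \Mheatb$ (and analogously for $\pi_R$).

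For (ii), the explicit exponent matrix entries I would obtain by tracking pullbacks of the boundary defining functions $\rho_{\fff}, \rho_{\ff}, \rho_{\lf}, \rho_{\rf}, \rho_{\tb}$ on $\Mheatb$ through the iterated blow-ups. The identity $\wt{\pi}_C^* \rho = \rho_\cap \rho^C$ from \eqref{eq:38}, together with its analogues $\wt{\pi}_\bullet^* \rho = \rho_\cap \rho^\bullet$ for $\bullet \in \{L, R\}$, and the corresponding identities after the $\mathcal{B}_1$ blow-ups ($\dpi_\bullet^* \ov{\rho} = \ov{\rho}_\cap$ times the appropriate boundary defining function of $\ff^\bullet$ or $\ff^{\cap, \bullet}$), recover every entry in \eqref{eq:second-exponent-matrix} by inspection. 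The entries involving $\lf$ and $\rf$ are forced by the observation that $\dpi_C^*(x/\rho_{\ff} \rho_{\fff})$ restricts non-trivially only at the faces $\Lfacet, \ff^L, \ff^{\cap, L}$, and similarly for $\wt{x}$.

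For (iii), the b-fibration property has two parts. Non-increase of codimension can be verified directly from (ii): no row of the exponent matrix for $\dpi_C$ has more than one non-zero entry except where the range face is a corner, and one checks by inspection of the coordinates in \eqref{eq:triple-polar-first-center} and \eqref{eq:polarsecondnmodel-triple} that the image face has the correct codimension. The submersion property on face interiors follows because $\pi_C$ already has $M$ as fiber (the center factor $w'$) on the unblown space, and the projective coordinates introduced by each blow-up are chosen so that the fiber coordinates remain smooth and the map remains submersive onto the corresponding projective coordinates downstairs. The hardest step will be the verification at multi-face intersections such as $\fff^\cap \cap \fff^C \cap \ff^{\cap, C}$, where one must show simultaneously in all the projective coordinate charts that the composite map is smooth, the exponent matrix is as claimed, and the submersion property persists; this is primarily a matter of carefully comparing the coordinates in \eqref{eq:triple-polar-first-intersection}, \eqref{eq:triple-polar-first-center}, and \eqref{eq:polarsecondnmodel-triple} against those defining $\Mheatb$.
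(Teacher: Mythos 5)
Your plan is correct and converges to the same essential computation the paper carries out, but the organizational viewpoint is different in a way worth noting. The paper's proof is a bare coordinate calculation: for $\dpi_C$ it pulls back the polar coordinate functions $(\ov{\rho},\ov{\phi})$ defining $\ff$ on $\Mheatb$, exhibits $\wt{\pi}_C^*\ov{\rho}=\NPP\rho_\cap$ and $\wt{\pi}_C^*\ov{\phi}=\NPS$ directly, reads off the exponent matrix entries from that factorization, and dismisses the remaining b-fibration axioms and the $L,R$ cases as routine. You instead run the argument through a universal lifting criterion for inhomogeneous blow-ups (the map lifts provided each blow-up center maps into a compatible center, with matching homogeneities, or into a boundary hypersurface), applied stage by stage through $\Mtripa\to\Mtripb\to\Mtrip$, and only then fill in the exponent entries by tracking boundary defining functions. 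That structural framing is cleaner — it explains \emph{why} the lift exists at each stage rather than merely exhibiting it, it makes the dichotomy behind the zero versus nonzero exponent entries transparent, and it is the form of argument that scales most directly to the $L$ and $R$ projections and to the anomalous $\tb$-entries for $\ff^{C}$ and $\ff^{\cap,C}$ (where both $t'$ and $t-t'$ vanish). What it costs is that you must actually formulate and justify the lifting criterion you invoke — the paper never states such a lemma, relying instead on the one explicit computation — and your verification of the submersion condition in (iii) at multi-face corners still has to bottom out in exactly the kind of coordinate comparison the paper does. So the approaches are two presentations of the same calculation; yours is better organized for exhaustive verification, while the paper's is faster once one trusts the implicit "and similarly for the other faces" closure.
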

\begin{proof}
Again, we focus on $\dpi_C$.  To check that $\wt{\pi}_C$ extends to a
$b-map$, we pull back the polar coordinates $\overline{\rho},
\overline{\phi}, \wt{y}, z, \wt{z},$ from
\eqref{eq:polarsecondnmodel} defined at $\ff$ in $\Mheatb$.  First, we
compute
\begin{equation*}
  \begin{split}
    \wt{\pi}_C^* \ov{\rho} &=  \wt{\pi}_C^*(\lp  (t/\wt{x}^2) + \wt{x}^{2(k - 1)}  + (s - 1)^2
    + (|y - \wt{y}|/\wt{x})^{2} \rp^{1/2(k-1)}) \\
&= \lp  \TT + \wt{x}^{2(k - 1)}  + (\XX - 1)^2
    + |\YY|^{2} \rp^{1/2(k-1)} \\
&= \NPP\rho_\cap,
  \end{split}
\end{equation*}
and then note that
\begin{equation*}
\begin{split}
  \wt{\pi}_C^* \ov{\phi} &= \wt{\pi}_C^*\lp
  \frac{t}{\wt{x}^2\overline{\rho}^{2(k-1)}},
  \frac{\wt{x}}{\overline{\rho}} , \frac{x -
    \wt{x}}{\wt{x}\overline{\rho}^{(k-1)}}, \frac{y -
    \wt{y}}{\wt{x}\overline{\rho}^{(k-1)}} \rp \\
&= \lp
  \frac{t}{\wt{x}^2(\NPP \rho_{\cap})^{2(k-1)}},
  \frac{\wt{x}}{\NPP \rho_{\cap}} , \frac{x -
    \wt{x}}{\wt{x}(\NPP \rho_{\cap})^{(k-1)}}, \frac{y -
    \wt{y}}{\wt{x}(\NPP \rho_{\cap})^{(k-1)}} \rp \\
&= \NPS.
\end{split}
\end{equation*}
This establishes both claims for $\dpi_C$.  The $R, L$ case are left
to the reader.
\end{proof}

\begin{proposition}\label{thm:composition}
  For $i = 1, 2$, let $A_i \in \mathcal{A}_{\phg}^{\mathcal{E}_i}(\Mheatb)$
  with the index sets $\mathcal{E}_i$ satisfying $\mathcal{E}_i(\fff)
  \ge - 3 - \dimY - kf$, $\mathcal{E}_i(\ff) \ge - kn - 2k$,
  $\mathcal{E}_i(\lf) = \mathcal{E}_i(\tb) = \varnothing$ and
  $\mathcal{E}_i(\rf)$ satisfying \eqref{eq:leftfacebehaviour}.
  Then $A_3 := \iint_0^t A_1(w, w', t') A_2(w', \wt{w}, t - t')
  \dVol_{w'} dt'$ lies in $\mathcal{A}_{\phg}^{\mathcal{E}_3}(\Mheatb)$ where
  for any $\epsilon > 0$,
  \begin{equation}
    \label{eq:composition-index-sets}
    \begin{split}
      \inf \mathcal{E}_3(\fff) &\ge \inf \mathcal{E}_1(\fff) +
      \inf \mathcal{E}_2(\fff) + 3 + \dimY + kf - \epsilon, \\
      \inf \mathcal{E}_3(\ff) &\ge \inf \mathcal{E}_1(\ff) +
      \inf \mathcal{E}_2(\ff) + kn + 2k - \epsilon.
    \end{split}
  \end{equation}
\end{proposition}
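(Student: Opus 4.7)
The strategy is to realize the composition $A_3$ as a pushforward of $(\dpi_L^* A_1)(\dpi_R^* A_2)$ along the b-fibration $\dpi_C \colon \Mtrip \to \Mheatb$ and apply Melrose's pushforward theorem (Theorem \ref{thm:pushforward}). Concretely, one writes the convolution integrand times the volume element $\dVol_g(w')\, dt'$ as a polyhomogeneous b-density $\nu$ on $\Mtrip$ with an explicit prefactor of boundary defining functions, so that
$$
A_3 \cdot \mu_{\Mheatb} \;=\; (\dpi_C)_{*}\bigl((\dpi_L^* A_1)(\dpi_R^* A_2)\cdot \nu\bigr)
$$
for some chosen non-vanishing b-density $\mu_{\Mheatb}$ on $\Mheatb$.

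First I would determine the index set of $(\dpi_L^* A_1)(\dpi_R^* A_2)\cdot \nu$ at each boundary hypersurface $H$ of $\Mtrip$. For the pullbacks, this is governed by the exponent matrices \eqref{eq:initial-exponent-matrices}, \eqref{eq:first-exponent-matrix}, \eqref{eq:second-exponent-matrix}: if $\dpi_{\bullet}$ sends $H$ into a face $H' \in \mathcal{M}(\Mheatb)$ with $e_{\dpi_{\bullet}}(H, H') = m_{\bullet}$, the pullback has order $m_{\bullet}\cdot \inf \mathcal{E}_i(H')$ at $H$. Faces that $\dpi_L$ or $\dpi_R$ sends to $\tb$ automatically carry infinite order vanishing because $\mathcal{E}_i(\tb) = \varnothing$; this handles in particular $\ff_1^C, \ff^C, \ff^{\cap,C}$ (via both $\dpi_L$ and $\dpi_R$) and the temporal faces $\tb_1', \tb_2'$. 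The density $\nu$ contributes an extra boundary shift at each face: writing $\dVol_g(w')\,dt' = (x')^{kf}(1+O((x')^k))\, dx'\, dy'\, dz'\, dt'$ and dividing by all boundary defining functions of $\Mtrip$ to produce a b-density yields explicit positive powers of $\rho_{\ff^\cap}$, $\rho_{\ff_1^\cap}$, $\rho_{\ff^{\cap,\bullet}}$, $\rho_{\ff^\bullet}$, etc.\ which I would compute in the projective coordinates introduced in Section~\ref{sec:triple-space}.

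Next I would verify the integrability hypothesis \eqref{eq:integrability} of Theorem \ref{thm:pushforward} at every face $H$ of $\Mtrip$ that $\dpi_C$ sends into the interior of $\Mheatb$, namely $\Cfacet, \tb_1', \tb_2', \fff^L, \fff^R, \ff^L, \ff^R, \ff^{\cap,L}, \ff^{\cap,R}$. At each such face either $\dpi_L$ or $\dpi_R$ maps it to $\tb$ (giving infinite-order vanishing) or to $\lf$ or $\rf$ (where the assumed index sets combined with the density shift are positive). Granted integrability, the pushforward theorem gives polyhomogeneity of $A_3$ on $\Mheatb$ with
$$
\mathcal{E}_3(H')
\;=\; \overline{\bigcup}_{H:\, e_{\dpi_C}(H,H')>0}\bigl\{(z/e_{\dpi_C}(H,H'), p)\;:\; (z,p)\in \mathcal{E}_{\mathrm{int}}(H)\bigr\},
$$
where $\mathcal{E}_{\mathrm{int}}$ denotes the index set of the integrand.

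The main obstacle, and the heart of the argument, is identifying, for each output face of $\Mheatb$, which input face of $\Mtrip$ carries the sharpest order and thereby controls the bound in \eqref{eq:composition-index-sets}. For $\mathcal{E}_3(\fff)$ the faces with $e_{\dpi_C}(\cdot, \fff) > 0$ are $\ff_1^\cap$ and $\ff_1^C$; only at $\ff_1^\cap$ do both $\dpi_L^* A_1$ and $\dpi_R^* A_2$ see their $\fff$-behavior (both with exponent $1$), which combined with the $\nu$-shift of $3 + \dimY + kf$ gives the claimed additive bound, while $\ff_1^C$ contributes infinite order. Analogously, for $\mathcal{E}_3(\ff)$ the relevant faces are $\ff^\cap, \ff^C, \ff^{\cap, C}$, and $\ff^\cap$ produces the sharp additive bound with $\nu$-shift $kn + 2k$; the $-\epsilon$ in \eqref{eq:composition-index-sets} is there to absorb the extra log factors potentially introduced by the extended union in Melrose's theorem.
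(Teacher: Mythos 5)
Your overall strategy matches the paper's: write $A_3$ as a pushforward $(\dpi_C)_*\bigl((\dpi_L^* A_1)(\dpi_R^* A_2)\cdot\nu\bigr)$ of a b-density, compute the index family of the integrand face-by-face using the exponent matrices, check integrability where $\dpi_C$ fails to hit the boundary, and read off $\mathcal{E}_3$ from the pushforward theorem. The density bookkeeping (moving between $\dVol_g\, dt$ and b-densities, tracking the shifts) is also handled in the same spirit.

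However, there is a concrete gap rooted in a misreading of the exponent matrix for $\dpi_C$ in \eqref{eq:second-exponent-matrix}. You state that the only faces of $\Mtrip$ with $e_{\dpi_C}(\cdot, \fff)>0$ are $\fff^\cap$ and $\fff^C$, and that $\ff^{\cap,L}$, $\ff^{\cap,R}$ are sent by $\dpi_C$ into the interior of $\Mheatb$. In fact
\[
e_{\dpi_C}(\ff^{\cap,L},\fff)=e_{\dpi_C}(\ff^{\cap,R},\fff)=1,
\]
so these two faces also contribute to $\mathcal{E}_3'(\fff)$. Neither carries infinite-order vanishing: $\dpi_L$ maps $\ff^{\cap,L}$ into $\ff$ and $\dpi_R$ maps it into $\fff$, so the pullbacks see $\mathcal{E}_1(\ff)$ and $\mathcal{E}_2(\fff)$ respectively (and symmetrically for $\ff^{\cap,R}$), producing the cross-terms $\mathcal{E}_1(\ff)\oplus\mathcal{E}_2(\fff)$ and $\mathcal{E}_1(\fff)\oplus\mathcal{E}_2(\ff)$ with their own density shifts. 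The paper's \eqref{eq:high-quality-mathematics} records these as $\ov{\mathcal{F}}(\ff^{\cap,L})$ and $\ov{\mathcal{F}}(\ff^{\cap,R})$, and the extended union in \eqref{eq:6} explicitly includes them in $\mathcal{E}_3'(\fff)$. Your proof omits them, so the asserted lower bound for $\mathcal{E}_3(\fff)$ rests only on the $\fff^\cap$ contribution and is incomplete: you must also verify that the cross-term shifts $(1+2k+kf+(k+1)\dimY)$ minus the output normalization $\dimY$ dominate the claimed bound $\inf\mathcal{E}_1(\fff)+\inf\mathcal{E}_2(\fff)+3+\dimY+kf-\epsilon$.

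Relatedly, your integrability check targets too large a set of faces: $\fff^L,\fff^R,\ff^L,\ff^R,\ff^{\cap,L},\ff^{\cap,R}$ all map under $\dpi_C$ to boundary hypersurfaces of $\Mheatb$ ($\lf$, $\rf$, or $\fff$), not the interior, so \eqref{eq:integrability} is vacuous there. The only faces with identically vanishing row in $e_{\dpi_C}$ are $\Cfacet$, $\tb_1'$, $\tb_2'$, where integrability follows from $\mathcal{E}_i(\tb)=\varnothing$ and the positive power of $(x')^{kf}$ in the density; this is exactly the check made in the paper. The inflated list is symptomatic of the exponent-matrix misreading above and is what hides the missing $\ff^{\cap,L}$, $\ff^{\cap,R}$ contributions.
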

\begin{remark}
  The constants $kn + 2k$ and $3 + \dimY + kf$ in
  \eqref{eq:composition-index-sets} should be interpreted, for
  instance in the case of $\ff$, as saying that the (Volterra type)
  composition of two operators given by Schwartz kernels as in the
  theorem has Schwartz kernel whose leading order asymptotic behavior
  at $\ff$ increases \emph{relative to the rate} $-kn - 2k$, in particular
  if both the composed operators grow like $-kn - 2k$ then so does the
  composition.  These are, incidentally, the exact rates of blowup of
  the heat kernel times $t^{-1}$ at the faces $\ff$ and $\fff$, and
  furthermore the fact that the errors $t^{-1}Q$ in
  \eqref{eq:real-error-to-iterate} vanish one order faster than
  $t^{-1} H$ means, as described above, that taking powers makes them
  vanish at increasing rates at both $\ff$ and $\fff$.
\end{remark}

\begin{proof}
  We write $A_3$ as the pushforward of a b-density and then apply the
 Pushforward Theorem from Section \ref{sec:pushforward}.  First we
 define a non-vanishing b-density $\mu_0$ on $M \times M \times M
 \times W$ as follows.  We let $\nu$ be a
 non-vanishing b-density on $M$ satisfying $\nu = a|\frac{dx dy
   dz}{x}|$ for a smooth nonvanishing function $a$ near the boundary, and consider
$$
\mu_0 = \nu \  \nu' \ \wt{\nu} \  | \frac{dt' dt}{t' (t - t')} |
$$
where $\nu', \wt{\nu}$ are equal to $\nu$ in
the primed and tilded coordinates, respectively.  Since the blowdown
map $\wt{\beta}$ from \eqref{eq:real-triple-blowdown} is a
b-map, $\wt{\beta}^* \mu_0$ is a b-density on $\Mtrip$, and one checks
that 
\begin{equation}
\wt{\beta}^* \mu_0 = G \ov{\mu}_0,\label{eq:G}
\end{equation}
for a non-vanishing b-density $\ov{\mu}_0$ on $\Mtrip$ and $G \in
C^\infty(\Mtrip)$ satisfying that for some non-vanishing smooth
function $G'$,
\begin{equation*}
  \begin{split}
    G &= G' (\rho_{\fff^L}\rho_{\fff^C}\rho_{\fff^R})^{\dimY}
    \rho_{\fff^\cap}^{2 \dimY}
    (\rho_{\ff^L}\rho_{\ff^C}\rho_{\ff^R})^{k\dimY + k - 1} \\
    &\qquad \times (\rho_{\ff^{\cap,L}}\rho_{\ff^{\cap, C}}\rho_{\ff^{\cap, R}})^{(k
      +1) \dimY + k - 1} \rho_{\ff^{\cap}}^{2k \dimY + 2(k-1)}.
  \end{split}
\end{equation*}
  Then we can write the desired pushforward as a pushforward of a
  b-density, specifically
\begin{equation}\label{eq:thing-you-want-to-compute}
  \begin{split}
    A_3 \ (\nu  \ \wt{\nu} \ |\frac{dt}{t}|) &= (\pi_C)_* \lp \pi_L^* A_1
    \pi_R^*A_2 \cdot ((t'/t) (t - t')) F(w') \mu_0 \rp \\
    &= (\dpi_C)_* \lp \dpi_L^* A_1
    \dpi_R^*A_2 \cdot \wt{\beta}^* ( (t'/t) (t - t') F(w') \mu_0 ) \rp
  \end{split}
\end{equation}
where $F$ is the
function defined by $\dVol_g= F \nu$ and in particular
$$
F = a \ x^{kf + 1} 
$$
where $a$ is a non-vanishing polyhomogeneous function on $M$, and
$\nu, \wt{\nu}$ are the pullbacks of the density $\nu$ above to the
left and right spacial factors of $M \times M \times \mathbb{R}^+$.
To find the asymptotics of $A_3$ itself we must compute the
asymptotics of the densities on the left hand side of
\eqref{eq:thing-you-want-to-compute};  Letting $\beta_2$ again denote the blowdown map
$\Mheatb \lra M \times M \times [0, \infty)$ in
\eqref{eq:intermediate-blowdown-double}, we check that
$$
\beta_2^*((\pi_L')^*\nu \ (\pi_R')^* \nu \frac{dt}{t})=
\rho_{\fff}^{\dimY} \rho_{\ff}^{\dimY k + k - 1 } \mu_2,
$$
where $\mu_2$ is a non-vanishing b-density on $\Mheatb$.  Thus from
\eqref{eq:thing-you-want-to-compute}, if the distribution $(\dpi_C)_* ( \dpi_L^* A_1
    \dpi_R^*A_2 \cdot \wt{\beta}^* ( (t'/t) (t - t') F(w') \mu_0 ) )$ is
    polyhomogeneous with index set $\mathcal{E}_3'$ then $A_3$ is phg
    with index set $\mathcal{E}_3$ satisfying
    \begin{equation}
      \label{eq:real-index-set}
      \mathcal{E}_3(\fff) = \mathcal{E}_3'(\fff) - \dimY, \quad
      \mathcal{E}_3(\ff) = \mathcal{E}_3'(\ff) - (k \dimY + k - 1),
    \end{equation}
and $\mathcal{E}_3(\bullet) = \mathcal{E}_3'(\bullet)$ otherwise.

Thus the index family of $\dpi_L^* A_1
    \dpi_R^*A_2 \cdot \wt{\beta}^* ( (t'/t) (t - t') F(w') )$ must be determined.
To determine $\dpi_L^* A_1$, we see that, at a bhs $H$ of $\Mtrip$,
the index set of $\dpi_L^* A_1$ is simply the index set of $A_1$ at
the bhs $H'$ of $\Mheatb$ at which $H$ is incident.  Thus from our
work above we see that $\dpi_L^* A_1$ has index set $\ov{\mathcal{E}}_1$ satisfying 
\begin{equation*}
  \begin{split}
    \ov{\mathcal{E}}_1(\Lfacet) &= {\mathcal{E}}_1(\lf) = \varnothing
    \\
    \ov{\mathcal{E}}_1(\Cfacet) =     \ov{\mathcal{E}}_1(\fff^R) =
    \ov{\mathcal{E}}_1(\ff^R)
    &=     \mathcal{E}_1(\rf) \\
    \ov{\mathcal{E}}_1(\tb'_1) =     \ov{\mathcal{E}}_1(\fff^C) =
    \ov{\mathcal{E}}_1(\ff^{\cap, C}) =
    \ov{\mathcal{E}}_1(\ff^{C}) &=     \mathcal{E}_1(\tb) = \varnothing
    \\
    \ov{\mathcal{E}}_1(\fff^\cap) = \ov{\mathcal{E}}_1(\fff^L) =
    \ov{\mathcal{E}}_1(\ff^{\cap, R})
    &=     \mathcal{E}_1(\fff) \\
    \ov{\mathcal{E}}_1(\ff^\cap) = \ov{\mathcal{E}}_1(\ff^{\cap, L}) =
    \ov{\mathcal{E}}_1(\ff^L) &=     \mathcal{E}_1(\ff) \\
    \ov{\mathcal{E}}_1(\Rfacet) & = \mathbb{Z},
  \end{split}
\end{equation*}
the last line coming from the fact that $\dpi_L^* A_1$ is independent
of $\wt{x}$, in particular is smooth up to $\Rfacet$.
The index set $\ov{\mathcal{E}}_2$ of $\dpi_R^* A_2$ has the same
expression in terms of $\mathcal{E}_2$ but with
all `R's switched with `L's, all $\lf$'s with $\rf$'s, and all $1$'s
with $2$'s (except of course for the $1$ in the subscript of $\fff$).
For example, (c.f.\ the second line above) $\ov{\mathcal{E}}_2(\Cfacet), \ov{\mathcal{E}}_2(\fff^L),
\ov{\mathcal{E}}_2(\ff^L)$ are all equal to $\mathcal{E}_2(\lf)$,
which is assumed to be $\varnothing$.  If we define the operation
$\mathcal{E}_1 \oplus \mathcal{E}_2$ on index sets to denote the index
set whose elements are sums of elements of the two index sets, 
It follows that $\dpi_L^* A_1 \
\dpi_R^* A_2$ is polyhomogeneous with index set $\mathcal{F}$
satisfying
\begin{equation}
  \label{eq:4}
  \begin{split}
   \{      \mathcal{F}(\Cfacet) , \mathcal{F}(\Lfacet)  ,     \mathcal{F}(\fff^L) ,
    \mathcal{F}(\ff^L) ,
     \mathcal{F}(\tb'_1) ,     \mathcal{F}(\tb'_2) ,     \mathcal{F}(\fff^C) ,
    \mathcal{F}(\ff^{\cap, C}) ,
    \mathcal{F}(\ff^{C}) \} & = \varnothing
    \\
    \mathcal{F}(\fff^\cap) = \mathcal{E}_1(\fff) \oplus
    \mathcal{E}_2(\fff), \quad     \mathcal{F}(\ff^\cap) = \mathcal{E}_1(\ff) \oplus
    \mathcal{E}_2(\ff), \quad  \mathcal{F}(\Rfacet)
    =     \mathcal{E}_2(\rf) &\\
    \mathcal{F}(\fff^R) = \mathcal{E}_1(\rf) \oplus
    \mathcal{E}_2(\fff), \quad \mathcal{F}(\ff^R) = \mathcal{E}_1(\rf)
    \oplus \mathcal{E}_2(\ff) & \\
     \mathcal{F}(\ff^{\cap, R}) = \mathcal{E}_1(\fff) \oplus
   \mathcal{E}_2(\ff), \quad    \mathcal{F}(\ff^{\cap, L}) = \mathcal{E}_1(\ff) \oplus \mathcal{E}_2(\fff)&.
  \end{split}
\end{equation}

Now we compute the asymptotics of the term $\wt{\beta}^* ( ((t'/t) (t - t')) F(w') \mu_0 )
= \wt{\beta}^* ( ((t'/t) (t - t')) F(w')) G \ov{\mu}_0 $ with $G$ in
\eqref{eq:G}.  First, write $\wt{\beta}^* ( (t
(t - t')) F(w')) = \ov{\pi}_L^*(\wtf) \ov{\pi}_R^*(\wtf)
\ov{\pi}^*_R(F)$ where $F$ is thought of as a function of the left
factor of $M \times M \times [0, \infty)$.  Recalling $\rho$,
$\ov{\rho}$ from \eqref{eq:polarfirstmodel} and
\eqref{eq:polarsecondnmodel}, respectively, and letting $a$ denote a polyhomogeneous 
function which  is smooth and non-vanishing up to boundary hypersurfaces
$\bullet$ for which $\mathcal{F}(\bullet) \neq \varnothing$ (and whose
value will change from line to line), we compute
\begin{equation*}
  \begin{split}
    \wt{\beta}^* ( \frac{t' (t - t')}{t} F(w')) 
&= a \ \frac{\ov{\pi}_L^*(\rho^2 \ov{\rho}^{2k}) \ov{\pi}_R^*(\rho^2
  \ov{\rho}^{2k})}{\ov{\pi}^*_C(\rho^2 \ov{\rho}^{2k})}
\ov{\pi}^*_R((\rho_{\lf} \rho \ov{\rho})^{kf + 1}) \\
&= a \ \frac{(\rho_{\fff^\cap} \rho_{\fff^L} \rho_{\ff^{\cap, R}})^2
(\rho_{\ff^\cap} \rho_{\fff^{\cap, L}} \rho_{\ff^{L}})^{2k}}{(\rho_{\fff^\cap} \rho_{\fff^C} \rho_{\ff^{\cap, R}}\rho_{\ff^{\cap, L}})^2
(\rho_{\ff^\cap} \rho_{\fff^{\cap, C}} \rho_{\ff^{C}})^{2k}}
\\
&\quad \times (\rho_{\fff^\cap} \rho_{\fff^R} \rho_{\ff^{\cap, L}})^2
(\rho_{\ff^\cap} \rho_{\fff^{\cap, R}} \rho_{\ff^{R}})^{2k}
\\
&\quad \times (\rho_C \rho_{\fff^L} \rho_{\ff^L} \rho_{\fff^{\cap}}
\rho_{\fff^R} \rho_{\ff^{\cap, L}} \rho_{\ff^\cap}
\rho_{\ff^{\cap, R}} \rho_{\ff^{R}})^{kf + 1} \\
&= a \ ( \rho_{\fff^L})^2
(\rho_{\fff^{\cap, L}} \rho_{\ff^{L}})^{2k}
(\rho_{\fff^\cap} \rho_{\fff^R})^2
(\rho_{\ff^\cap} \rho_{\fff^{\cap, R}} \rho_{\ff^{R}})^{2k}
\\
&\quad \times (\rho_C \rho_{\fff^L} \rho_{\ff^L} \rho_{\fff^{\cap}}
\rho_{\fff^R} \rho_{\ff^{\cap, L}} \rho_{\ff^\cap}
\rho_{\ff^{\cap, R}} \rho_{\ff^{R}})^{kf + 1} \\
&= a \   (\rho_{\fff^\cap}  \rho_{\fff^R})^2(\rho_{\ff^\cap} \rho_{\ff^{\cap, L}} \rho_{\ff^{\cap, R}}
    \rho_{\ff^R})^{2k} \\
&\quad \times ( \rho_{\fff^R}  \rho_{\fff^{\cap}}
\rho_{\ff^{\cap}} \rho_{\ff^{\cap, R}}
\rho_{\ff^{\cap, L}} \rho_{\ff^{R}})^{kf + 1}.  
  \end{split}
  \end{equation*}
Putting this all together, we see that $\dpi_L^* A_1
    \dpi_R^*A_2 \cdot \wt{\beta}^* ( ((t'/t) (t - t')) F(w') \mu_0 )$
    is polyhomogeneous with index set $\ov{\mathcal{F}}$ 
    \begin{equation}\label{eq:high-quality-mathematics}
      \begin{split}
    \ov{\mathcal{F}}(\fff^\cap) &= \mathcal{E}_1(\fff) \oplus
    \mathcal{E}_2(\fff) + (3 + kf + 2b), \\     \ov{\mathcal{F}}(\ff^\cap) &= \mathcal{E}_1(\ff) \oplus
    \mathcal{E}_2(\ff) + (1 + 2k + kf + 2kb), \\  \ov{\mathcal{F}}(\Rfacet)
    &=     \mathcal{E}_2(\rf) \\
    \ov{\mathcal{F}}(\fff^R) &= \mathcal{E}_1(\rf) \oplus
    \mathcal{E}_2(\fff) + (3 + kf + b), \\ \ov{\mathcal{F}}(\ff^R) &= \mathcal{E}_1(\rf)
    \oplus \mathcal{E}_2(\ff) + (1 + 2k + kf +kb)  \\
     \ov{\mathcal{F}}(\ff^{\cap, R}) &= \mathcal{E}_1(\fff) \oplus
   \mathcal{E}_2(\ff) + (1 + 2k + kf + (k + 1)b), \\
   \ov{\mathcal{F}}(\ff^{\cap, L}) &= \mathcal{E}_1(\ff) \oplus
   \mathcal{E}_2(\fff) + (1 + 2k + kf + (k + 1)b),
      \end{split}
    \end{equation}
and $\ov{\mathcal{F}}(\bullet) = \varnothing$ for all other values of $\bullet$.

Now we apply Theorem \ref{thm:pushforward} to analyze $ (\ov{\pi}_C)_* \lp
\pi_L^* A_1  \pi_R^*A_2 \cdot ((t'/t) (t - t')) F(w') \mu_0 \rp$ 
from
\eqref{eq:thing-you-want-to-compute}.  To check
that the conditions of the theorem hold, we first recall that
$\ov{\pi}_C$ is a b-fibration.  Also, note that
$e_{\ov{\pi}_C}(\Cfacet, H') =  e_{\ov{\pi}_C}(\tb_1', H') =
e_{\ov{\pi}_C}(\tb_2', H') = 0$ for all $H' \in
\mathcal{M}(\Mheatb)$, and so we must check the inte
grability condition there, but by below \eqref{eq:high-quality-mathematics}
we have $\ov{\mathcal{F}}(\Cfacet) = \ov{\mathcal{F}}(\tb_1') =
\ov{\mathcal{F}}(\tb_2') = \varnothing$, so the
integrability condition holds.  Thus $    A_3 (\pi_L')^*\nu \ (\pi_R')^* \nu$
 is phg on
$\Mheatb$ with index set $\mathcal{E}_3'$ satisfying 
\begin{equation}
  \label{eq:6}
  \begin{split}
    \mathcal{E}_3'(\lf)&= \ov{\mathcal{F}}(\Lfacet) \ \ov{\cup} \ 
    \ov{\mathcal{F}}(\fff^L) \ \ov{\cup} \  \ov{\mathcal{F}}(\ff^L) = \varnothing\\
    \mathcal{E}_3'(\rf)&= \ov{\mathcal{F}}(\Rfacet) \ \ov{\cup} \ 
    \ov{\mathcal{F}}(\fff^R) \ \ov{\cup} \  \ov{\mathcal{F}}(\ff^R)\\
    \mathcal{E}_3'(\fff)&= \ov{\mathcal{F}}(\fff^\cap) \ \ov{\cup} \ 
    \ov{\mathcal{F}}(\fff^\cap) \ \ov{\cup} \  \ov{\mathcal{F}}(\ff^{\cap,
      L}) \ \ov{\cup} \  \ov{\mathcal{F}}(\ff^{\cap, R})\\
    \mathcal{E}_3'(\ff)&= \ov{\mathcal{F}}(\ff^\cap) \ \ov{\cup} \ 
    \ov{\mathcal{F}}(\ff^{\cap, C}) \ \ov{\cup} \
    \ov{\mathcal{F}}(\ff^{C}) = \ov{\mathcal{F}}(\ff^\cap)\\
    \mathcal{E}_3'(\tb)&= \varnothing,
  \end{split}
\end{equation}
where we used from below \eqref{eq:high-quality-mathematics} that
various bhs's have infinite order vanishing.  From this we see that
the bounds in Proposition \ref{thm:composition} hold, in particular
that for any $\epsilon > 0$,
\begin{equation*}
  \begin{split}
    \inf \mathcal{E}_3'(\fff) \ge \inf \mathcal{E}_1(\fff) + \inf
    \mathcal{E}_2(\fff) + 3 + kf + 2b - \epsilon,\\
    \mathcal{E}_3'(\ff) =
    \inf \mathcal{E}_1(\ff) + \inf \mathcal{E}_2(\ff) + 1 + k + kn - \epsilon,
  \end{split}\end{equation*}
and thus by \eqref{eq:real-index-set} the acutal index set
$\mathcal{E}_3$ of $A_3$ satisfies \eqref{eq:composition-index-sets},
and the proof is complete.
\end{proof}

 \end{appendix}


\begin{thebibliography}{10}

\bibitem{AS1964}
M.~Abramowitz and I.~A. Stegun.
\newblock {\em Handbook of mathematical functions with formulas, graphs, and
  mathematical tables}, volume~55 of {\em National Bureau of Standards Applied
  Mathematics Series}.
\newblock For sale by the Superintendent of Documents, U.S. Government Printing
  Office, Washington, D.C., 1964.

\bibitem{ALMPI}
P.~Albin, {\'E}.~Leichtnam, R.~Mazzeo, and P.~Piazza.
\newblock The signature package on {W}itt spaces.
\newblock {\em Ann. Sci. \'Ec. Norm. Sup\'er. (4)}, 45(2):241--310, 2012.

\bibitem{ALMPII}
P.~Albin, {\'E}.~Leichtnam, R.~Mazzeo, and P.~Piazza.
\newblock Hodge theory on {C}heeger spaces.
\newblock 2013.
\newblock Available online.

\bibitem{Albin-Rochon}
P.~Albin and F.~Rochon.
\newblock Families index for manifolds with hyperbolic cusp singularities.
\newblock {\em Int. Math. Res. Not. IMRN}, (4):625--697, 2009.

\bibitem{BGV2004}
N.~Berline, E.~Getzler, and M.~Vergne.
\newblock {\em Heat kernels and {D}irac operators}.
\newblock Grundlehren Text Editions. Springer-Verlag, Berlin, 2004.
\newblock Corrected reprint of the 1992 original.

\bibitem{Brunin-Seeley}
J.~Br{\"u}ning and R.~Seeley.
\newblock The expansion of the resolvent near a singular stratum of conical
  type.
\newblock {\em J. Funct. Anal.}, 95(2):255--290, 1991.

\bibitem{C1979}
J.~Cheeger.
\newblock On the spectral geometry of spaces with cone-like singularities.
\newblock {\em Proc. Nat. Acad. Sci. U.S.A.}, 76(5):2103--2106, 1979.

\bibitem{C1980}
J.~Cheeger.
\newblock On the {H}odge theory of {R}iemannian pseudomanifolds.
\newblock In {\em Geometry of the {L}aplace operator ({P}roc. {S}ympos. {P}ure
  {M}ath., {U}niv. {H}awaii, {H}onolulu, {H}awaii, 1979)}, Proc. Sympos. Pure
  Math., XXXVI, pages 91--146. Amer. Math. Soc., Providence, R.I., 1980.

\bibitem{C1983}
J.~Cheeger.
\newblock Spectral geometry of singular {R}iemannian spaces.
\newblock {\em J. Differential Geom.}, 18(4):575--657 (1984), 1983.

\bibitem{DLR2011}
C.~Debord, J.-M. Lescure, and F.~Rochon.
\newblock Pseudodifferential operators on manifolds with fibred corners.
\newblock available online at arXiv:1112.4575, 2011.

\bibitem{Gil-Krainer-Mendoza}
J.~B. Gil, T.~Krainer, and G.~A. Mendoza.
\newblock On the closure of elliptic wedge operators.
\newblock In {\em Microlocal methods in mathematical physics and global
  analysis}, Trends Math., pages 55--58. Birkh\"auser/Springer, Basel, 2013.

\bibitem{GM1980}
M.~Goresky and R.~MacPherson.
\newblock Intersection homology theory.
\newblock {\em Topology}, 19(2):135--162, 1980.

\bibitem{GM1983}
M.~Goresky and R.~MacPherson.
\newblock Intersection homology. {II}.
\newblock {\em Invent. Math.}, 72(1):77--129, 1983.

\bibitem{Grieser-notes}
D.~Grieser.
\newblock Notes on heat kernel asymptotics.
\newblock {\em Available online}, 2004.
\newblock
  http://www.staff.uni-oldenburg.de/daniel.grieser/wwwlehre/Schriebe/heat.pdf.

\bibitem{Grieser-Hunsicker}
D.~Grieser and E.~Hunsicker.
\newblock Pseudodidfferential operator calculus for generalized
  $\mathbb{Q}$-rank 1 locally symmetric spaces, i.
\newblock {\em J. Funct. Anal.}, 257:3748--3801, 2009.

\bibitem{Harris-Morrison}
J.~Harris and I.~Morrison.
\newblock {\em Moduli of curves}, volume 187 of {\em Graduate Texts in
  Mathematics}.
\newblock Springer-Verlag, New York, 1998.

\bibitem{HHM2004}
T.~Hausel, E.~Hunsicker, and R.~Mazzeo.
\newblock Hodge cohomology of gravitational instantons.
\newblock {\em Duke Math. J.}, 122(3):485--548, 2004.

\bibitem{Hvol1}
L.~H{\"o}rmander.
\newblock {\em The analysis of linear partial differential operators. {I}}.
\newblock Classics in Mathematics. Springer-Verlag, Berlin, 1983.

\bibitem{HR2012}
E.~Hunsicker and R.~Rochon.
\newblock Hodge cohomology of iterated fibred cusp metrics on witt spaces.
\newblock available online at arXiv:1206.0984, 2012.

\bibitem{JMMV2014}
L.~Ji, R.~Mazzeo, W.~M\"uller, and A.~Vasy.
\newblock Spectral theory for the weil-petersson laplacian on the riemann
  moduli space.
\newblock {\em Comment. Math. Helv.}, 89(4):867--894, 2014.

\bibitem{Kottke-Melrose}
C.~Kottke and R.~Melrose.
\newblock Generalized blow-up of corners and fiber products.
\newblock In {\em Microlocal methods in mathematical physics and global
  analysis}, Trends Math., pages 59--62. Birkh\"auser/Springer, Basel, 2013.

\bibitem{L1997}
M.~Lesch.
\newblock {\em Operators of {F}uchs type, conical singularities, and asymptotic
  methods}, volume 136 of {\em Teubner-Texte sur Math.}
\newblock B.G. Teubner, Stuttgart, Leipzig, 1997.

\bibitem{Liu-Sun-Yao-New-results}
K.~Liu, X.~Sun, and S.-T. Yau.
\newblock New results on the geometry of the moduli space of {R}iemann
  surfaces.
\newblock {\em Sci. China Ser. A}, 51(4):632--651, 2008.

\bibitem{Liu-Sun-Yau-goodness}
K.~Liu, X.~Sun, and S.-T. Yau.
\newblock Goodness of canonical metrics on the moduli space of {R}iemann
  surfaces.
\newblock {\em Pure Appl. Math. Q.}, 10(2):223--243, 2014.

\bibitem{Ma1991}
R.~Mazzeo.
\newblock Elliptic theory of differential edge operators. {I}.
\newblock {\em Comm. Partial Differential Equations}, 16(10):1615--1664, 1991.

\bibitem{MS2015}
R.~Mazzeo and J.~Swoboda.
\newblock Asymptotics of the {W}eil-{P}etersson metric.
\newblock available online at arXiv:1503.02365, 2015.

\bibitem{MV2012}
R.~Mazzeo and B.~Vertman.
\newblock Analytic torsion on manifolds with edges.
\newblock {\em Adv. Math.}, 231(2):1000--1040, 2012.

\bibitem{MZ2015}
R.~Melrose and X.~Zhu.
\newblock forthcoming work.
\newblock 2015.

\bibitem{damwc}
R.~B. Melrose.
\newblock {\em Differential Analysis on Manifolds with Corners}.
\newblock Available online.

\bibitem{Melrose1992}
R.~B. Melrose.
\newblock Calculus of conormal distributions on manifolds with corners.
\newblock {\em Int Math Res Notices}, 1992:51--61, 1992.

\bibitem{tapsit}
R.~B. Melrose.
\newblock {\em The {A}tiyah-{P}atodi-{S}inger index theorem}, volume~4 of {\em
  Research Notes in Mathematics}.
\newblock A K Peters Ltd., Wellesley, MA, 1993.

\bibitem{Mooers-Edith}
E.~A. Mooers.
\newblock Heat kernel asymptotics on manifolds with conic singularities.
\newblock {\em J. Anal. Math.}, 78:1--36, 1999.

\bibitem{RSI}
M.~Reed and B.~Simon.
\newblock {\em Methods of modern mathematical physics. {I}}.
\newblock Academic Press Inc. [Harcourt Brace Jovanovich Publishers], New York,
  second edition, 1980.
\newblock Functional analysis.

\bibitem{Schulze}
B.-W. Schulze.
\newblock Pseudo-differential calculus on manifolds with geometric
  singularities.
\newblock In {\em Pseudo-differential operators: partial differential equations
  and time-frequency analysis}, volume~52 of {\em Fields Inst. Commun.}, pages
  37--83. Amer. Math. Soc., Providence, RI, 2007.

\bibitem{taylor:vol2}
M.~E. Taylor.
\newblock {\em Partial differential equations {II}. {Q}ualitative studies of
  linear equations}, volume 116 of {\em Applied Mathematical Sciences}.
\newblock Springer, New York, second edition, 2011.

\bibitem{Wolpert-WP-geometry}
S.~Wolpert.
\newblock On the {W}eil-{P}etersson geometry of the moduli space of curves.
\newblock {\em Amer. J. Math.}, 107(4):969--997, 1985.

\bibitem{Yamada}
S.~Yamada.
\newblock On the geometry of {W}eil-{P}etersson completion of {T}eichm\"uller
  spaces.
\newblock {\em Math. Res. Lett.}, 11(2-3):327--344, 2004.

\bibitem{Z1982}
S.~Zucker.
\newblock {$L_{2}$} cohomology of warped products and arithmetic groups.
\newblock {\em Invent. Math.}, 70(2):169--218, 1982/83.

\end{thebibliography}
\end{document}